\begin{document}

\begin{center}

  {\bf{\LARGE{ \mbox{Alternating minimization for generalized rank one matrix} sensing: Sharp predictions from a random initialization}}}

\vspace*{.2in}

{\large{
\begin{tabular}{ccc}
Kabir Aladin Verchand$^{*,\circ,\dagger}$, Mengqi Lou$^{*,\dagger}$, Ashwin Pananjady$^{\dagger, \ddagger}$
\end{tabular}
}}
\vspace*{.2in}

\begin{tabular}{c}
Department of Pure Mathematics and Mathematical Statistics$^\circ$, University of Cambridge\\
Schools of Industrial and Systems Engineering$^\dagger$ and Electrical and Computer Engineering$^\ddagger$ \\
Georgia Institute of Technology
\end{tabular}
\def\thefootnote{*}\footnotetext{Kabir Aladin Chandrasekher and Mengqi Lou contributed equally to this work.}
\def\thefootnote{\arabic{footnote}}

\vspace*{.2in}

\today

\vspace*{.2in}

\begin{abstract}
  We consider the problem of estimating the factors of a rank-$1$ matrix with i.i.d. Gaussian, rank-$1$ measurements that are nonlinearly transformed and corrupted by noise. Considering two prototypical choices for the nonlinearity, we study the convergence properties of a natural alternating update rule for this nonconvex optimization problem starting from a random initialization. We show sharp convergence guarantees for a sample-split version of the algorithm by deriving a deterministic one-step recursion that is accurate even in high-dimensional problems. Notably, while the infinite-sample population update is uninformative and suggests exact recovery in a single step, the algorithm---and our deterministic one-step prediction---converges geometrically fast from a random initialization. Our sharp, non-asymptotic analysis also exposes several other fine-grained properties of this problem, including how the nonlinearity and noise level affect convergence behavior.	

  On a technical level, our results are enabled by showing that the empirical error recursion can be predicted by our deterministic one-step updates within fluctuations of the order $n^{-1/2}$ when each iteration is run with $n$ observations. Our technique leverages leave-one-out tools originating in the literature on high-dimensional $M$-estimation and provides an avenue for sharply analyzing complex iterative algorithms from a random initialization in other high-dimensional optimization problems with random data.
\end{abstract}
\end{center}

\small
\addtocontents{toc}{\protect\setcounter{tocdepth}{2}}
\tableofcontents

\normalsize

\section{Introduction} \label{sec:intro}

Worst-case efficiency theory for nonconvex optimization suggests that convergence to global near-optimality is prohibitive~\citep{nemirovski1983problem} and that iterative algorithms converge sub-linearly fast to stationary points~\citep[e.g.,][]{vavasis1993black,nesterov2012make,carmon2020lower}. Despite these pessimistic results, primitive and natural iterative algorithms are routinely applied to nonconvex problems in many modern data science applications, and several of them are known to converge quite quickly to accurate solutions. For instance, iterative algorithms often converge to statistically useful solutions when the data in the problem is suitably random (see~\cite{jain2013low,loh2012high} for some early examples), and the resulting random ensemble of optimization problems only exhibits benign forms of nonconvexity. At the same time, the observed convergence behavior for these algorithms is often delicate, and the presence or absence of convergence---as well as the rate of convergence---depends critically on how well the algorithm is initialized (see~\cite{jain2017non,chi2019nonconvex,chen2018harnessing,zhang2020symmetry,sun2022} for a slew of such examples). This wide range of possible behavior motivates the need for a \emph{sharp, average-case} theory of efficiency for iterative nonconvex optimization from a random initialization in settings with random data.

With this broad goal in mind, we consider the concrete problem of sensing a rank-one matrix under a generalized bilinear model. This is defined by two unknown coefficient vectors $\bcoefX_{\star}, \bcoefZ_{\star} \in \mathbb{R}^d$ and i.i.d. observations $(y_i, \bx_i, \bz_i)$ drawn according to
\begin{align} \label{eq:model}
	y_i = \psi \left( \langle \bx_i, \bcoefX_{\star} \rangle \cdot \langle \bz_i, \bcoefZ_{\star} \rangle \right) + \epsilon_i, \quad i = 1,\ldots, N.
\end{align}
Here $\bx_i \in \mathbb{R}^d$ and $\bz_i \in \mathbb{R}^d$ are \emph{sensing vectors}, typically drawn i.i.d. from some distribution, $\epsilon_i$ denotes zero-mean noise in the measurements, and $\psi: \mathbb{R} \rightarrow \mathbb{R}$ is some potentially unknown nonlinearity. Throughout, we make the assumption that $\| \bcoefX_{\star} \|_2 = \| \bcoefZ_{\star} \|_2 = 1$ for convenience, noting that this assumption can be straightforwardly relaxed.

The problem of solving systems of bilinear equations finds applications in diverse areas of science and engineering, including astronomy, medical imaging and communications~\citep{jefferies1993restoration,wang1998blind,campisi2017blind}.

For example, the model~\eqref{eq:model} with $\psi$ taken to be the identity map is an example of the \emph{blind deconvolution} problem in statistical signal processing (see, e.g.,~\cite{recht2010guaranteed,ahmed2013blind} and the references therein for several applications of this problem). In addition, the nonlinearity $\psi$ models cases in which we have some additional misspecification or quantization, which is common in many applications involving matrix estimation problems and their relatives~\cite[see, e.g.,][]{davenport20141,plan2016generalized,ganti2015matrix,yang2019misspecified,thrampoulidis2019lifting,ongie2021tensor}.

We are interested in the model-fitting problem, and the natural least squares objective \sloppy\mbox{$F_N: \mathbb{R}^d \times \mathbb{R}^d \rightarrow \mathbb{R}$} corresponding to the scaled negative log-likelihood of our observations under Gaussian noise\footnote{Note that $F_N$ is also a reasonable loss function to minimize when the noise is sub-Gaussian.} can be written as
\begin{align} \label{eq:global-loss}
	F_N(\bcoefX, \bcoefZ) = \frac{1}{N} \sum_{i = 1}^N \bigr(y_i - \psi(\langle \bx_i, \bcoefX \rangle \cdot \langle \bz_i, \bcoefZ \rangle) \bigl)^2.
\end{align}
From an optimization-theoretic standpoint, $F_N$ is a jointly nonconvex function in the parameters $(\bcoefX, \bcoefZ)$---even in special cases where $\psi$ is linear---and gives rise to a nonconvex problem on which the behavior of iterative algorithms can be analyzed. Indeed, when $\psi$ is the identity function, gradient methods and their variants~\citep{bhojanapalli2016global,ma2020implicit,davis2019stochastic,chen2021convex,soltani2017improved}, composite optimization methods~\citep{charisopoulos2021low,charisopoulos2021composite}, alternating update methods~\citep{zhong2015efficient}, and other methods in the Burer--Monteiro~\citep{burer2003nonlinear} family of algorithms~\citep{park2017non,chen2015fast} have all been analyzed for this problem, typically from ``good” initializations in the neighborhood of the unknown pair $(\bcoefX_{\star}, \bcoefZ_{\star})$. The aforementioned papers have proved upper bounds on the local rates of convergence of all these algorithms that hold with high probability---provided the pair $(\bx_i, \bz_i)$ is drawn from a suitable random ensemble. In this paper, we study the rank one generalized bilinear sensing problem under the canonical assumption that the vectors $\bx_i$ and $\bz_i$ are Gaussian with $\bx_i, \bz_i \overset{\mathsf{i.i.d.}}{\sim} \mathsf{N}(0, \bI_{d})$ and the noise $\epsilon_i$ is distributed as $\epsilon_i \overset{\mathsf{i.i.d.}}{\sim} \mathsf{N}(0, \sigma^2)$.

As previously mentioned, our goal is to establish \emph{sharp efficiency} estimates for natural iterative algorithms on this problem, and our investigations are informed by the following two questions:
\begin{itemize}
	\item Do complex iterative algorithms—that are based on more than first-order information at every iteration—converge from a random initialization to a statistically useful solution?
	\item Given the wide variation in the behavior of algorithms on related problems---for instance, their rates of convergence can vary from linear to superlinear in closely related problems~\citep{chandrasekher2021sharp,ghosh2020alternating}---can we establish tight efficiency estimates (both upper and lower bounds) on their convergence behavior?
\end{itemize}
Inspired by analogous one-shot procedures for single-index models~\citep{brillinger2012generalized,plan2016generalized}, our focus is on answering these questions for a natural alternating minimization algorithm that ignores the nonlinearity and alternates between estimates of the two factors $(\bcoefX_{\star}, \bcoefZ_{\star})$ by solving least squares problems. The algorithm is defined precisely in Section~\ref{sec:setup1} to follow, and our analysis proceeds under a natural \emph{sample-splitting} assumption that is pervasive in the nonconvex optimization literature~\citep[see, e.g.,][]{jain2013low,hardt2014fast,netrapalli2015phase,kwon2019global}. While operating in this specific setting allows us to state sharp and concrete results, we expect our technique itself to be much more broadly applicable to answering similarly posed questions in other nonconvex optimization problems with random data.

\subsection{Setup, contributions, and techniques} \label{sec:setup1}

One method to minimize the nonconvex loss $F_N$~\eqref{eq:global-loss} is to use the alternating minimization (AM) heuristic, which is a natural algorithm with classical roots~\citep{von1949rings} and the focus of our paper. The version of AM that we consider proceeds by fixing the coefficients $\bcoefX$ at the current iteration, and computes an estimate for the coefficients $\bcoefZ$ by solving a least squares problem that ignores the nonlinearity $\psi$.
 
Subsequently, it fixes the set of coefficients $\bcoefZ$ to find the next estimate of the coefficients $\bcoefX$ by solving another least squares problem. This iteration is executed iteratively until some stopping criterion is satisfied. 
As mentioned above, we analyze this iteration under a \emph{sample-splitting} assumption. More precisely, suppose we initialize the algorithm at some $\bcoefX_0$ and draw $2n$ \emph{fresh} observations $(y_i, \bx_i, \bz_i)$ i.i.d. per iteration $t$. Then we consider the following procedure run for each $t = 0, \ldots, T - 1$: 

\begin{subequations}\label{eq:AM}
	\begin{align} 
		\bcoefZ_{t+1} &= \argmin_{\bcoefZ \in \mathbb{R}^d}\; \sum_{i=1}^{n} \bigr(y_i - \langle \bx_i, \bcoefX_t \rangle \langle \bz_i, \bcoefZ \rangle \bigl)^2,\label{eq:coefZ-update}\\
		\bcoefX_{t+1} &=\argmin_{\bcoefX \in \mathbb{R}^d}\; \sum_{i=n+1}^{2n} \bigr(y_i - \langle \bx_i, \bcoefX \rangle \langle \bz_i, \bcoefZ_{t+1} \rangle \bigl)^2. \label{eq:coefX-update} 
	\end{align}
\end{subequations}
Our motivation for ignoring the link function in the algorithm comes from the fact that there are many situations where the link function is unknown to us. For example, in a closely related class of low-rank matrix completion problems~\citep{ganti2015matrix}, each entry of the low-rank matrix is observed after applying an unknown nonlinear but monotone transformation corresponding to a type of quantization. An unknown link function is also a critical component of single-index models, which have been applied in nonlinear dimensionality reduction and statistical signal processing (see, e.g.,~\citet{li1989regression,plan2016generalized} and the references therein). Nonetheless, in the sequel, we demonstrate that non-trivial estimation of the coefficients is still possible with iteration~\eqref{eq:AM} when $\psi$ satisfies a natural assumption.

Note that owing to our sample-splitting assumption, every appearance of $(\bx_i, \bz_i)$ in the iteration~\eqref{eq:AM} is independent of everything else. 
We abuse notation slightly and write $(\bX, \bZ) \in \real^{n \times d} \times \real^{n \times d}$ for the pair of data matrices used in each substep, so that we may write, e.g., Eq.~\eqref{eq:coefX-update} in closed form as
\begin{align} \label{eq:equiv-step}
	\bcoefX_{t+1} = \bigl(\bX^{\top} \bW_{t+1}^2 \bX\bigr)^{-1} \bX^{\top} \bW_{t+1}\by, \qquad \text{ where } \qquad \bW_{t+1} = \diag(\bZ\bcoefZ_{t+1}). 
\end{align}
Taking stock, we have let $n$ denote the \emph{per-substep} sample size, and we define $\Lambda := n/d$ to be the per-substep oversampling ratio. We assume throughout\footnote{Unlike results in high-dimensional asymptotic statistics that operate in the regime where $\Lambda$ is constant and $d \to \infty$, our treatment is fully non-asymptotic, requiring only that $\Lambda > 1$. The reader should think of $\Lambda$ just as convenient shorthand for the quantity $n/d$.} that $\Lambda > 1$, so that each matrix inversion in the update is well-defined with probability $1$. Note that if $T$ steps of the algorithm are run with each step defined by the pair of operations~\eqref{eq:AM}, we sample $2T$ pairs of data matrices.\footnote{In reality, each pair of data matrices is a fresh sample and ought to be indexed by the iteration count, but we drop this dependence for brevity. Also note that the total sample size in the problem is given by $N = 2T n$.} 

Recall that our goal is to estimate the ground-truth pair $(\bcoefX_{\star}, \bcoefZ_{\star})$. In order to assess the convergence of the AM iterations to this pair, it is convenient to define the following two-dimensional \emph{state evolution} for the parameter estimates over iterations, given by
\begin{subequations} \label{eq:comps}
	\begin{align} \label{eq:right-comps}
		\parcompZ_{t+1} &= \langle \bcoefZ_{t+1}, \bcoefZ_{\star} \rangle, \qquad  \perpcompZ_{t+1} = \bigl\| \bP_{\bcoefZ_{\star}}^{\perp} \bcoefZ_{t+1} \bigr \|_{2}, \qquad \text{ and } \\
		\parcompX_{t+1} &= \langle \bcoefX_{t+1}, \bcoefX_{\star} \rangle, \qquad  \perpcompX_{t+1} = \bigl \| \bP_{\bcoefX_{\star}}^{\perp} \bcoefX_{t+1} \bigr \|_{2}. \label{eq:left-comps}
	\end{align}
\end{subequations}
In words---and taking Eq.~\eqref{eq:left-comps} as an example---the scalar $\parcompX_{t+1}$ tracks the component of $\bcoefX_{t + 1}$ that is parallel to its ground truth analog $\bcoefX_{\star}$, and $\perpcompX_{t+1}$ tracks the norm of the component of $\bcoefX_{t + 1}$ perpendicular to the ground truth $\bcoefX_{\star}$. State evolutions---first introduced in the AMP literature~\citep{donoho2009message,bayati2011lasso}---have been used in several papers studying iterative optimization problems with random data both from a random initialization (e.g.~\cite{wu2019randomly,chen2019gradient}) as well as from local initializations (e.g.~\cite{donoho2009message,celentano2020estimation}) as succinct representations of an algorithm's behavior over iterations. Note that several pertinent error metrics for parameter estimation (e.g., the $\ell_2$ distance or angular distance to the ground truth) can be expressed purely in terms of the state evolution.
At this juncture, it is also useful to mention that the individual parameters $(\bcoefX_{\star}, \bcoefZ_{\star})$ are only identifiable from the observations~\eqref{eq:model} up to scale factors, i.e., the identifiable quantity is actually the matrix $\bcoefX_{\star}\bcoefZ^\top_{\star}$. 
This motivates us to look at functionals of the state evolution (e.g., the ratio $\perpcompX_{t}/\parcompX_{t}$) that are proxies for the angle between estimated quantities and their ground-truth analogues.

With this setup in hand, we are now in a position to describe our contributions, which are answers under this setup to the two questions posed in Section~\ref{sec:intro}.
\begin{enumerate}
	\item \textbf{Sharp, deterministic one-step predictions from a random initialization.} For any choice of the nonlinearity $\psi$ satisfying a set of regularity conditions (see Assumption~\ref{assptn:Y-psi} to follow), we derive explicit \emph{deterministic} one-step predictions (depending on $\psi$) for $(\parcompZ^{\mathsf{det}}_{t}, \perpcompZ^{\mathsf{det}}_{t}, \parcompX^{\mathsf{det}}_{t}, \perpcompX^{\mathsf{det}}_{t})$ that closely track their empirical counterparts, showing that with high probability, we have
	\begin{align} \label{eq:conc-intro}
		\max \left\{ | \parcompZ_{t} - \parcompZ^{\mathsf{det}}_{t} | , |\perpcompZ_{t} - \perpcompZ^{\mathsf{det}}_{t}|, |\parcompX_t - \parcompX^{\mathsf{det}}_{t}|, |\perpcompX_t - \perpcompX^{\mathsf{det}}_{t} | \right\} \lesssim n^{-1/2},
	\end{align}
	where the notation $\lesssim$ hides polylogarithmic factors in $n$. Note that this guarantee is fully non-asymptotic; see Theorem~\ref{thm:one-step} for the formal claim.  This presents a significant improvement over previous work~\citep{chandrasekher2021sharp}, which applies Gaussian comparison inequalities to obtain a sub-optimal rate of $n^{-1/4}$ on the orthogonal component $\perpcompX_{t}$.  Crucially, this improvement enables a sharp analysis from a random initialization, when the parallel component satisfies $\parcompX_{0}\asymp d^{-1/2}$.  See Section~\ref{sec:one-step-main} for a detailed discussion. 
	
	\item \textbf{Fine-grained convergence guarantees.} We use our deterministic one-step predictions to execute an iterate-by-iterate analysis of the algorithm from a random initialization for two canonical choices of the function $\psi$: the identity function, corresponding to the blind deconvolution problem, and the sign function, corresponding to such a problem with one-bit measurements. 
	This analysis reveals several similarities and differences between these models.  On the one hand, both models exhibit sharp linear convergence (see Definition~\ref{def:linear-convergence}) with rate $\asymp d/n$, thus rigorously justifying the phenomenon that ``larger problems are harder" to optimize; this was observed by~\cite[][see Figure 1]{agarwal2012fast} on a related problem, but only based on proving upper bounds on the rate.  On the other hand, the two models that we consider exhibit distinct behavior in low noise problems, in which the linear model converges to the level $\sigma^2 d/n$ and the non-linear model converges to the level $(1 + \sigma^2) d/n$.  See Theorems~\ref{thm:global-conv-linear} and~\ref{thm:global-conv-nonlinear} for precise statements. 
\end{enumerate}

From a technical perspective, there are many steps involved in 
arriving at the bounds~\eqref{eq:conc-intro}, several of which may be of independent interest. 
To show that our deviation bound~\eqref{eq:conc-intro} enjoys the $1/ \sqrt{n}$ rate---which in turn simultaneously captures both low-dimensional and high-dimensional problems---we proceed from a coordinate-by-coordinate characterization of the estimate $\bcoefX_{t + 1}$ that is derived using leave-one-out techniques.\footnote{Our leave-one-out techniques are distinct from those that have appeared in the recent nonconvex optimization literature~\citep{ma2020implicit,chen2019gradient}, and it would be interesting to combine the two approaches.} While a technique of this form has appeared before in the context of studying (one-shot) $M$-estimators in high dimensions~\citep{el2013robust,karoui2013asymptotic,el2018impact}, our analysis is significantly more involved given the iterative nature of the problem, the non-asymptotic character of our results, and the nonlinear dependence of the response on the planted signal. In particular: 
\begin{itemize}
	\item Obtaining the solution $\bcoefX_{t + 1}$ involves a least squares estimation task with the random design $\bX$, but also includes contributions from the other ``current" parameter, i.e., the diagonal matrix $\bW_{t+1} = \diag(\bZ\bcoefZ_{t+1})$ in Eq.~\eqref{eq:equiv-step}.  The presence of these interactions presents two difficulties in comparison to the work~\citep{el2013robust,karoui2013asymptotic,el2018impact}.  First, the effective data matrix---formed by the product $\bW_{t+1}\bX$---no longer consists of i.i.d. entries.  Second, the nonlinear observations preclude the previously used strategy of removing the signal; indeed, the signal component must be carefully handled. 
	
	\item Given the fact that we track a two-dimensional state evolution, we must prove non-asymptotic concentration bounds at the $n^{-1/2}$ rate on both the parallel and perpendicular components. As alluded to before, deviation bounds of the order $n^{-1/2}$ on both components are critical for us to obtain sharp guarantees from a random initialization.  We use a careful truncation argument in conjunction with Warnke's typical bounded differences inequality~\citep{warnke2016method} to obtain these guarantees.  Along the way, we develop some non-asymptotic random matrix theory which may be of independent interest (see Section~\ref{sec:non-asymptotic-rmt}). 
\end{itemize}
Overall, we hope that many of these technical tools---and the general perspective of reducing iterative nonconvex optimization to a sequence of convex $M$-estimation problems---will find broader application in the analysis of other such algorithms.

\subsection{Related work} \label{sec:related}

This work touches upon several themes in iterative optimization in statistical settings, which has a formidable literature. We discuss those papers that are most closely related to (and contextualize) our contributions.

\paragraph{Deterministic predictions beyond first-order methods.}
A natural approach to understanding the iterations of any algorithm in high-dimensional problems with random data is to understand these iterations in the infinite sample limit---using what is commonly known as the \emph{population update}. This approach has been influential in analyzing several iterative algorithms, specifically complex and higher-order algorithms~\citep{balakrishnan2017statistical,daskalakis2017ten,xu2018EM,kwon2019global,wu2019randomly,klusowski2019estimating,ho2020instability} but also several first-order algorithms~\citep{chen2019gradient,tian2017analytical}. To take the specific example of AM for our model, we may evaluate the update~\eqref{eq:coefX-update} in the  limit via a straightforward calculation, obtaining
\begin{align} \label{eq:pop}
	\lim_{n \to \infty} \Bigl(\frac{1}{n} \cdot \bX^{\top} \bW_{t+1}^2 \bX\Bigr)^{-1} \left( \frac{1}{n} \bX^{\top} \bW_{t+1}\by \right) = \frac{1}{\parcompZ_{t+1}^2 + \perpcompZ_{t+1}^2} \cdot \EE \bX^{\top} \bW_{t+1}\by = \widetilde{\tau}_{t + 1} \cdot \bcoefX_{\star},
\end{align}
where the constant of proportionality $\widetilde{\tau}_{t + 1}$ depends on $(\parcompZ_{t+1},\perpcompZ_{t+1})$ and the nonlinearity $\psi$.
In other words, after \emph{one} step, the population limit of the update~\eqref{eq:equiv-step} obtains an estimate which is a constant multiplied by the ground truth coefficient $\bcoefX_{\star}$. This suggests exact recovery in a single step, which is inconsistent with the empirical performance of the algorithm (see Figure~\ref{fig:intro-fig}).

\begin{figure}[!h]
	\centering
	\begin{subfigure}[b]{0.48\textwidth}
		\centering
		\includegraphics[scale=0.49]{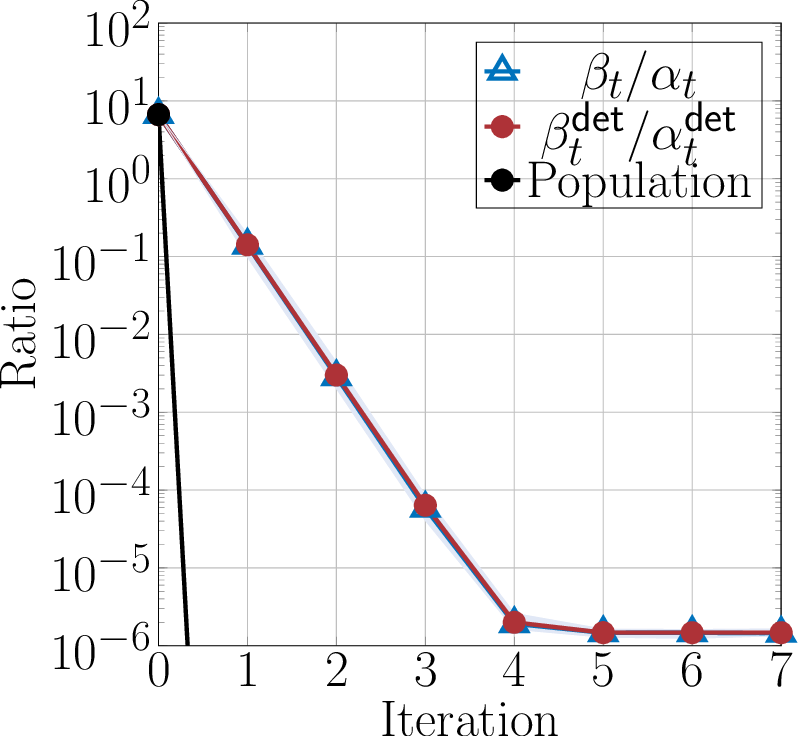} 
		\caption{Linear model ($\psi(w) = w$).} 
	\end{subfigure}
	\hfill
	\begin{subfigure}[b]{0.48\textwidth}  
		\centering 
		\includegraphics[scale=0.49]{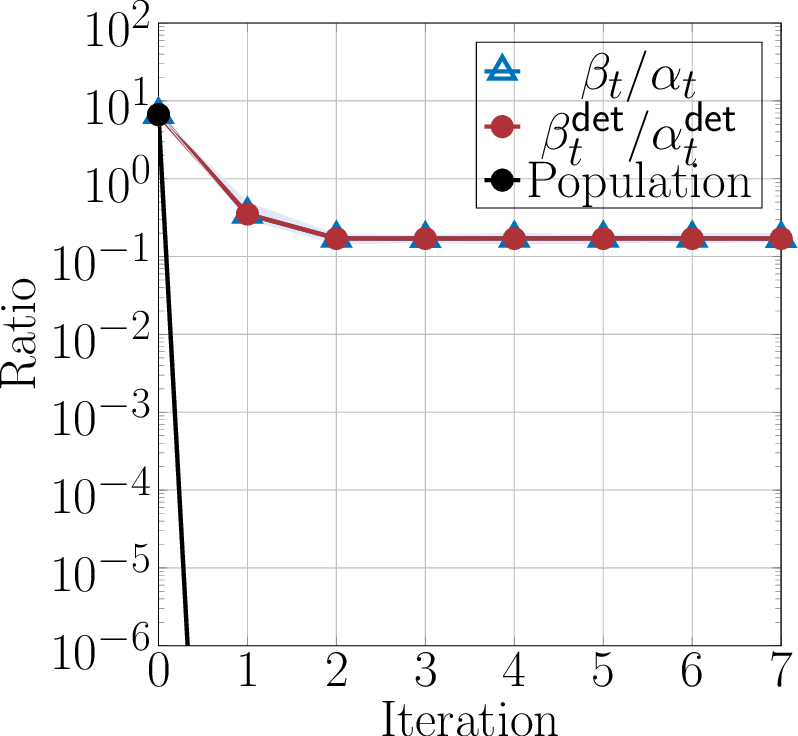}
		\caption{One-bit model ($\psi(w) = \sign(w)$).}
	\end{subfigure}
	\caption{Comparison of the population (infinite sample) prediction with our deterministic one-step prediction.  The (barely visible) blue shaded region denotes the minimum and maximum values over $50$ independent trials.  Each trial was run from a random initialization and used parameters $d=200, \Lambda=50$, and $\sigma=10^{-5}$.} 
	\label{fig:intro-fig}
\end{figure}
Besides the population update, the only work that we are aware of that produces deterministic one-step predictions for complex and higher-order iterative algorithms is a recent paper by a subset of the current authors~\cite{chandrasekher2021sharp}. This paper studies a general setup of nonconvex optimization with Gaussian data with sample-splitting at each iteration, and algorithms that solve convex optimization problems at each iteration. Using the machinery of Gaussian comparison theorems,
the paper derives a deterministic one-step prediction---a so-called \emph{Gordon update}---that is shown to match the state evolution at each iteration up to fluctuations of the order $n^{-1/4}$. These guarantees are derived for a class of generalized linear models and do not apply to the problem considered in the current paper, but it is worth mentioning two other limitations of the results in~\cite{chandrasekher2021sharp}. First, non-asymptotic guarantees using the Gordon machinery are challenging to derive, and rely on the growth properties of several intermediate loss functions. Second, deviation bounds of the order $n^{-1/4}$ derived using this machinery render such results suboptimal for low-dimensional problems and are not sharp enough to provide nontrivial information at random initialization.\footnote{The analysis at random initialization in~\cite{chandrasekher2021sharp} is based on an additional, involved argument that only yields $n^{-1/2}$ deviations for the parallel component.} The techniques of the current paper overcome both of these limitations with a more direct approach.

\paragraph{AMP and first-order methods.}
Several works have focused on sharp characterizations of efficiency for first-order methods in both convex and nonconvex problems. For instance,~\citet{oymak2017sharp} analyzed the projected gradient descent algorithm on constrained least squares problems, providing both upper and lower bounds on the convergence rate and enabling sharp time-data tradeoffs.  Another line of work---studying online methods rather than large batch or full sample methods---exploits random matrix theory to precisely understand the average-case behavior of SGD and related methods in quadratic models, and demonstrates substantially faster rates than those predicted by worst-case theory~\citep{paquette2021sgd,paquette2021dynamics}. A parallel literature (dating back to the seminal papers~\citep{donoho2009message,bayati2011lasso}) allows the analysis of a particular iterative algorithm---approximate message passing or AMP---through a state evolution. More recently, this perspective has been used to derive state evolutions for other ``generalized'' first-order methods~\citep{celentano2020estimation}---revealing several appealing statistical optimality properties about the Bayes-AMP---as well as for gradient flow~\citep{celentano2021high}. In the context of nonconvex problems, these analyses proceed from a correlated initialization (i.e. they analyze local convergence), and are able to produce iterate-by-iterate predictions for first-order methods in the challenging setting without sample-splitting.  From an optimization perspective, these methods have also been analyzed in certain settings to produce (asymptotically valid) convergence rates~\cite[see, e.g.,][Theorem 15]{berthier2020state}.  The closest example from this family to our setting is the generalized-AMP for matrix sensing problems~\citep{parker2016parametric}. While these algorithms and their resulting guarantees provide a powerful machinery, they do not immediately apply beyond first-order methods.  

\paragraph{Algorithms and guarantees for matrix sensing.} In addition to the algorithm-specific results alluded to in Section~\ref{sec:intro}, there are several structural results known about the landscape of the loss function~\eqref{eq:global-loss}, and related loss functions\footnote{The matrix sensing examples in~\citet{ge2017no}, for example, deal with matrix measurement ensembles satisfying an RIP condition; however, this does not hold in our problem, see~\citet[Appendix B]{zhong2015efficient}.} in matrix sensing and completion~\citep{ge2016matrix,ge2017no,bhojanapalli2016global,park2017non,diaz2019nonsmooth,zhang2019structured}. On the one hand, these are powerful results proved without the sample-splitting assumption that show that as soon as the sample size grows above a certain threshold, there are no local minima, ensuring (for instance) that saddle-avoiding algorithms can converge from an \emph{arbitrary initialization} to global minima~\citep{jin2017escape,carmon2018accelerated}. On the other hand, such a result does not directly imply a quantitative convergence rate for natural and popularly employed algorithms such as gradient descent and alternating minimization, and so cannot characterize their efficiency especially from a random initialization.  We also note that several recent papers~\citep[][to name a few]{stoger2021small,jiang2022algorithmic} have analyzed gradient descent from a (small) random initialization and demonstrated appealing statistical properties when coupled with early stopping; these analyses, however, are specific to the trajectory of gradient descent and, to our knowledge, do not have immediate implications for complex methods such as alternating minimization.  Most closely related to our development is the recent work~\citep{lee2022randomly}, which analyzes a similar alternating minimization method from a random initialization without a sample splitting assumption.  We note that the measurements considered in that work differ from our own~\eqref{eq:model} and in particular endow the loss~\eqref{eq:global-loss} with global geometric structure through the $\ell_2$--RIP (which our measurements do not satisfy, see the discussion following Theorem~\ref{thm:global-conv-linear} for details).  Additionally, we improve upon the convergence guarantees by precisely quantifying the dependence of the convergence rate on problem-specific parameters such as the noise level and sample size. This is in line with our principal motivation, which is not to develop a state-of-the-art algorithm---although the iteration~\eqref{eq:AM} is new to our knowledge when there are nonlinearities in the model---but to pursue a sharp understanding of a natural iterative method for this problem.

\subsection{Notation and organization}
\label{subsect:notation}

\paragraph{Notation.}
We let $[d]$ denote the set of natural numbers less than or equal to $d$.  
We use boldface small letters to denote vectors and boldface capital letters to denote matrices. We let $\sign(v)$ denote the sign of a scalar $v$, with the convention that $\sign(0) = 1$. We use $\sign(\bv)$ to denote the sign function applied entrywise to a vector $\bv$. Let $\mathbbm{1}\{\cdot\}$ denote the
indicator function. 

For two sequences of non-negative reals $\{f_n\}_{n
	\geq 1}$ and $\{g_n \}_{n \geq 1}$, we use $f_n \lesssim g_n$ to indicate that
there is a universal positive constant $C$ such that $f_n \leq C g_n$ for all
$n \geq 1$. The relation $f_n \gtrsim g_n$ indicates that $g_n \lesssim f_n$,
and we say that $f_n \asymp g_n$ if both $f_n \lesssim g_n$ and $f_n \gtrsim
g_n$ hold simultaneously. We also use standard order notation $f_n = \order
(g_n)$ to indicate that $f_n \lesssim g_n$ and $f_n = \ordertil(g_n)$ to
indicate that $f_n \lesssim
g_n \log^c n$, for a universal constant $c>0$. We say that $f_n = \Omega(g_n)$ (resp. $f_n = \widetilde{\Omega}(g_n)$) if $g_n = \order(f_n)$ (resp. $g_n = \ordertil(f_n)$). The notation $f_n = o(g_n)$ is
used when $\lim_{n \to \infty} f_n / g_n = 0$, and $f_n =
\omega(g_n)$ when $g_n = o(f_n)$. Throughout, we use $c, C$ to denote universal
positive constants, and their values may change from line to line. All logarithms are to the natural base unless
otherwise stated.

We denote by $\NORMAL(\bm{\mu}, \bSig)$ a normal distribution with mean $\bm{\mu}$ and covariance matrix $\bSig$. Let $\mathsf{Unif}(S)$ denote the uniform distribution on a set $S$, where the distinction between a discrete and continuous distribution can be made from context. We say that $X \overset{(d)}{=} Y$ for two random variables $X$ and $Y$ that are equal in distribution. For $q \geq 1$ and a random variable $X$ taking values in $\real^d$, we write $\| X \|_q = (\EE[ |X|^q ])^{1/q}$ for its $L^{q}$ norm. Finally, for a real valued random variable $X$ and a strictly increasing convex function $\psi: \real_{\geq 0} \to \real_{\geq 0}$ satisfying $\psi(0) = 0$, we write $\| X \|_{\psi} = \inf\{ t > 0 \; \mid \; \EE[\psi(t^{-1} |X| )] \leq 1\}$ for its $\psi$-Orlicz norm. We make particular use of the $\psi_q$-Orlicz norm for $\psi_q(u) = \exp(|u|^q) - 1$. We say that $X$ is sub-Gaussian if $\| X \|_{\psi_2}$ is finite
and that $X$ is sub-exponential if $\| X \|_{\psi_1}$ is finite.

\paragraph{Organization.}
The rest of the paper is organized as follows. In Section~\ref{sec:one-step-main}, we provide our deterministic one-step predictions for general nonlinear $\psi$, in Theorem~\ref{thm:one-step}. In Section~\ref{sec:convergence-main}, we use our one-step updates in two special cases---the linear model in which $\psi$ is the identity function and the one-bit model in which $\psi$ is the sign function---to prove a sharp linear convergence result for the AM algorithm (in Theorems~\ref{thm:global-conv-linear} and~\ref{thm:global-conv-nonlinear}, respectively). In Section~\ref{sec:proof-one-step}, we prove Theorem~\ref{thm:one-step}, and Sections~\ref{sec:proof-conv-linear} and~\ref{sec:proof-conv-nonlinear} are dedicated to proofs of Theorems~\ref{thm:global-conv-linear} and~\ref{thm:global-conv-nonlinear}, respectively. Proofs of technical lemmas are postponed to the appendices.

\section{General deterministic one-step prediction} \label{sec:one-step-main}
We begin by deriving our one-step, deterministic updates, which are general in that they hold for a wide class of nonlinearities $\psi$.  The updates are defined in terms of a positive scalar $C(\oversamp)$, which in turn is defined implicitly as the unique\footnote{See Lemma~\ref{lem:uniqueness} for a proof that the solution is unique.} solution to the fixed point equation 
\begin{align}\label{definition-of-C}
	\frac{1}{\oversamp} = \EE\Bigl\{ \frac{G^2}{C(\oversamp) + G^2} \Bigr\} \qquad \text{ where } \qquad G \sim \mathsf{N}(0, 1).
\end{align}
We will often denote $\tau = 1/C(\oversamp)$ for convenience.  
It can be shown that $0.3\oversamp \leq C(\oversamp) \leq \oversamp$ for $\oversamp \geq 10$ (see Lemma~\ref{lemma:C(lambda)-lambda}), so that the reader should think of $C(\Lambda)$ as scaling linearly in $\Lambda$.
Given the pair $(\parcompZ_{t+1}$, $\perpcompZ_{t+1})$---see Eq.~\eqref{eq:comps}---and a function $\psi:\mathbb{R} \rightarrow \mathbb{R}$, define the random variables
\begin{align}\label{definition-X-G-V-Y}
	X,G,V \overset{\mathsf{i.i.d.}}{\sim} \mathsf{N}(0,1)\quad\text{ and }\quad Y := \psi\biggl( \frac{X}{(\parcompZ_{t+1}^{2} + \perpcompZ_{t+1}^{2})^{1/2}}\cdot \bigl(\parcompZ_{t+1}G+ \perpcompZ_{t+1}V\bigr)\biggr).
\end{align}
With this setup, we are now ready to define the following deterministic one-step state prediction for the $\bcoefX_{t + 1}$-update; we note that a symmetric update can be derived for the pair $(\parcompZ_{t+1}, \perpcompZ_{t + 1})$:
\begin{subequations}\label{eq-equivalence}
	\begin{align}
		\parcompX_{t+1}^{\mathsf{det}} &= \frac{1}{\sqrt{\parcompZ_{t+1}^{2} + \perpcompZ_{t+1}^{2}}} \cdot	\EE\left\{ \frac{G X Y}{1+\tau G^{2}} \right\} \bigg/ \EE\left\{ \frac{G^{2}}{1+\tau G^{2}} \right\},\qquad \text{ and } \label{par-equiv}\\
		(\perpcompX_{t+1}^{\mathsf{det}})^{2} &=  \frac{\EE\Big\{ \frac{G^{2} Y^{2}}{(1+\tau  G^{2})^{2}}\Big\} +  \EE\Big\{ \frac{\sigma^{2}G^{2}}{(1+\tau G^{2})^{2}} \Big\}}{ (\parcompZ_{t+1}^{2} + \perpcompZ_{t+1}^{2}) \cdot \EE\Big\{ \frac{C(\oversamp)G^{2}}{(1+\tau G^{2})^{2}} \Big\} } - \frac{2\parcompX_{t+1}^{\mathsf{det}}  \EE\Big\{ \frac{G^{3}XY}{(1+\tau G^{2})^{2}} \Big\}}{ (\parcompZ_{t+1}^{2} + \perpcompZ_{t+1}^{2})^{1/2} \cdot \EE\Big\{ \frac{C(\oversamp)G^{2}}{(1+\tau G^{2})^{2}} \Big\} } \label{perp-equiv} \\
		&\hspace{7.8cm}+ \frac{ (\parcompX_{t+1}^{\mathsf{det}})^{2} \EE\Big\{ \frac{G^{4}}{(1+\tau G^{2})^{2}} \Big\} }{ \EE\Big\{ \frac{C(\oversamp)G^{2}}{(1+\tau G^{2})^{2}} \Big\} }. \nonumber
	\end{align}
\end{subequations}
Note that the pair $(\parcompX_{t+1}^{\mathsf{det}}, \perpcompX_{t+1}^{\mathsf{det}})$ depends on the nonlinearity $\psi$ only through the random variable $Y$. \revision{ We also note that the population update can be obtained by taking $\oversamp \rightarrow \infty$ in Eq.~\eqref{eq-equivalence}. By noting $C(\oversamp) \rightarrow +\infty$ and $\tau = 1/C(\oversamp) \rightarrow 0$ as $\oversamp \rightarrow +\infty$, we obtain
\[
	\parcompX_{t+1}^{\mathsf{det}} \rightarrow \frac{\EE\left\{ GXY \right\}}{\sqrt{\parcompZ_{t+1}^{2} + \perpcompZ_{t+1}^{2}}} \quad \text{ and } \quad (\perpcompX_{t+1}^{\mathsf{det}})^{2} \rightarrow 0, \quad \text{as} \quad \oversamp \rightarrow +\infty.
\]
Thus, in the population update, the perpendicular component becomes zero in one step of alternating minimization, which is the main issue caused by taking $n \rightarrow +\infty$.
} 

While the deterministic one-step updates in Eq.~\eqref{eq-equivalence} can be defined under mild assumptions, we now state a convenient assumption under which these quantities track their empirical counterparts. 

\begin{assumption}\label{assptn:Y-psi}
	The function $\psi$ is either bounded with $\| \psi \|_{\infty} \leq C_{\psi}$ or linear, with $\| \psi' \|_{\infty} \leq C_{\psi}$.
\end{assumption}

While Assumption~\ref{assptn:Y-psi} seems strict on the face of it, we note that it can be weakened to accommodate polynomially growing functions via standard truncation arguments. It suffices for us since it covers the two canonical cases of $\psi$ that we intend to address in Section~\ref{sec:convergence-main} to follow. We also state the following assumption on the distribution of sensing vectors and noise in statistical model~\eqref{eq:model}.
\begin{assumption}\label{assptn:distribution}
\revision{
		The sensing vectors are drawn as $\{\bx_i,\bz_i\}_{i=1}^{n} \overset{\mathsf{i.i.d.}}{\sim} \mathsf{N}(0, \bI_{d}) \otimes \mathsf{N}(0, \bI_{d})$ and the noise is drawn as $\{\epsilon_{i}\}_{i=1}^{n} \overset{\mathsf{i.i.d.}}{\sim} \mathsf{N}(0, \sigma^2)$, independently of the sensing vectors. Moreover, we draw fresh samples in each step of alternating minimization~\eqref{eq:AM}.
	}
\end{assumption}
We are now poised to state our main result for this section, which is proved in Section~\ref{sec:proof-one-step}.
\begin{theorem}\label{thm:one-step}
	Suppose the data are drawn from the model~\eqref{eq:model} and that Assumptions~\ref{assptn:Y-psi} and~\ref{assptn:distribution} hold.  Consider the pairs of empirical updates $(\parcompX_{t+1}, \perpcompX_{t+1})$~\eqref{eq:left-comps} and $(\parcompZ_{t+1}, \perpcompZ_{t+1})$~\eqref{eq:right-comps} as well as the pair of deterministic one-step updates $(\parcompX_{t+1}^{\mathsf{det}},\perpcompX_{t+1}^{\mathsf{det}})$~\eqref{eq-equivalence}.  There exists a pair of universal, positive constants $(C, C')$ which depend only on the parameter $C_{\psi}$ and a universal, positive constant $C''$ such that if $\oversamp \geq C''$, the following hold with probability at least $1 - C'n^{-10}$, 
	\begin{subequations}
		\begin{itemize}
			\item[(a)] The parallel component satisfies 
			\begin{align}
				\bigl \lvert \parcompX_{t+1} - \parcompX_{t+1}^{\mathsf{det}} \bigr \rvert  &\leq \frac{C(\parsigma)}{\sqrt{\parcompZ_{t+1}^2 + \perpcompZ_{t+1}^2}} \cdot \pardevn. \label{eq:par-deviation}
			\end{align}
			\item[(b)] The perpendicular component satisfies 
			\begin{align}
				\bigl \lvert \perpcompX^2_{t+1} - (\perpcompX_{t+1}^{\mathsf{det}})^2 \bigr \rvert &\leq \frac{C (\perpsigma)}{\parcompZ_{t+1}^2 + \perpcompZ_{t+1}^2} \cdot \frac{\log^8 (n)}{\sqrt{n}} \label{eq:perp-deviation}.
			\end{align}
		\end{itemize}
	\end{subequations}
\end{theorem}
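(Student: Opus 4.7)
My plan is to analyze the closed-form alternating minimization step
\[
\bcoefX_{t+1} = (\bX^\top \bW^2 \bX)^{-1} \bX^\top \bW \by, \qquad \bW = \diag(\bZ \bcoefZ_{t+1}),
\]
while exploiting the sample-splitting assumption, which makes $(\bX, \beps)$ independent of $(\bZ, \bcoefZ_{t+1})$. The first step is to perform a rotation in $\real^d$ that aligns one axis with $\bcoefX_{\star}$, writing $\bX = \boldsymbol{g}\, \bcoefX_{\star}^\top + \bX_0$ where $\boldsymbol{g} = \bX \bcoefX_{\star} \in \real^n$ has i.i.d.\ $\NORMAL(0,1)$ entries and is independent of $\bX_0$, the $(d-1)$-dimensional Gaussian design on the orthocomplement. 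Decomposing $\bcoefX_{t+1} = \parcompX_{t+1} \bcoefX_{\star} + \boldsymbol{q}$ with $\boldsymbol{q} \perp \bcoefX_{\star}$ and applying the Schur complement to the normal equations yields the decoupled formulas
\[
\parcompX_{t+1} = \frac{\boldsymbol{g}^\top \bW \bP \by}{\boldsymbol{g}^\top \bW \bP \bW \boldsymbol{g}}, \qquad \boldsymbol{q} = \bM \bigl( \bX_0^\top \bW \by - \parcompX_{t+1} \bX_0^\top \bW^2 \boldsymbol{g} \bigr),
\]
where $\bM = (\bX_0^\top \bW^2 \bX_0)^{-1}$ and $\bP = \bI - \bW \bX_0 \bM \bX_0^\top \bW$. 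The independence of $\boldsymbol{g}$ and $\bX_0$ isolates the signal direction from the perpendicular design, addressing head-on the ``planted signal'' difficulty flagged by the authors.

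The second step is to apply Sherman--Morrison one observation at a time. For each $i \in [n]$, with $\bM_i$ denoting the leave-one-out inverse and $a_i = \bx_{0,i}^\top \bM_i \bx_{0,i}$, one derives
\[
(\bP \bv)_i = \frac{v_i - w_i \bx_{0,i}^\top \boldsymbol{q}_i(\bv)}{1 + w_i^2 a_i}, \qquad \boldsymbol{q}_i(\bv) = \bM_i \bX_{0,-i}^\top \bW_{-i} \bv_{-i},
\]
valid for any $\bv$, where crucially $\bM_i$ and $\boldsymbol{q}_i(\bv)$ are independent of row $i$. Applied with $\bv = \by$ and $\bv = \bW \boldsymbol{g}$, this re-expresses the numerator and denominator of $\parcompX_{t+1}$ as sums indexed by $i$ whose summands are nearly independent; an analogous unrolling handles $\|\boldsymbol{q}\|_2^2 = \bT^\top \bM^2 \bT$. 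The scalar $a_i$ concentrates on $\trace(\bM_i) \approx \trace(\bM)$, and the latter is pinned down via the identity $d-1 = \trace(\bX_0^\top \bW^2 \bX_0 \bM) = \sum_i w_i^2 a_i /(1 + w_i^2 a_i)$: substituting $w_i = \rho G_i$ with $\rho^2 = \parcompZ_{t+1}^2 + \perpcompZ_{t+1}^2$ and $G_i \sim \NORMAL(0,1)$, then taking $n \to \infty$, matches the fixed-point equation~\eqref{definition-of-C} and gives $\trace(\bM) \approx 1/(\rho^2 C(\oversamp)) = \tau / \rho^2$.

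Third, to identify the limits with the predictions~\eqref{eq-equivalence}, I would compute the expectations of the near-iid summands above using the joint law of per-observation quantities: $\langle \bx_i, \bcoefX_{\star} \rangle = X \sim \NORMAL(0,1)$ is independent of $(w_i, \langle \bz_i, \bcoefZ_{\star} \rangle)$, which is jointly Gaussian and can be parameterized as $w_i = \rho G$ and $\langle \bz_i, \bcoefZ_{\star} \rangle = (\parcompZ_{t+1}/\rho) G + (\perpcompZ_{t+1}/\rho) V$ with $G, V \overset{\mathsf{i.i.d.}}{\sim} \NORMAL(0,1)$, so that $y_i = \psi(\langle \bx_i, \bcoefX_{\star} \rangle \langle \bz_i, \bcoefZ_{\star} \rangle) + \epsilon_i$ reproduces the variable $Y$ of Eq.~\eqref{definition-X-G-V-Y} up to additive noise. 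Matching term-by-term recovers the quantities $\EE\{GXY/(1+\tau G^2)\}$, $\EE\{G^2/(1+\tau G^2)\}$, and the $\bM^2$-type expressions appearing in the perpendicular formula.

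Finally, to upgrade these approximations to the optimal rate $\log^c(n)/\sqrt{n}$, I would combine a careful truncation argument --- capping every Gaussian at the scale $\log^c(n)$, with superpolynomially small failure probability --- with Stein's method of exchangeable pairs, taking as the exchangeable pair the sum obtained by resampling a single row $(\bx_i, \bz_i, \epsilon_i)$. Executing this requires uniform control on $\|\boldsymbol{q}_i(\bv)\|_\infty$, the spectrum of $\bM_i$, and the stability of $a_i$ under resampling, which is exactly the non-asymptotic random matrix theory the paper develops in its appendix. The main obstacle will be the perpendicular bound~\eqref{eq:perp-deviation}: it involves squared-resolvent quadratic forms such as $\by^\top \bW \bX_0 \bM^2 \bX_0^\top \bW \by$ and $\boldsymbol{g}^\top \bW^2 \bX_0 \bM^2 \bX_0^\top \bW^2 \boldsymbol{g}$, whose concentration is not a direct consequence of the single-resolvent analysis. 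Establishing a sharp bound here requires higher-order resolvent identities (e.g.\ for $\bx_{0,i}^\top \bM^2 \bx_{0,i}$) and significantly more aggressive truncation, which is precisely why Eq.~\eqref{eq:perp-deviation} carries a $\log^8(n)$ factor rather than the $\sqrt{\log(n)}$ appearing in~\eqref{eq:par-deviation}.
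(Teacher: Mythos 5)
Your parallel-component plan is essentially the paper's argument in different notation: the rotation $\bX = \boldsymbol{g}\bcoefX_{\star}^\top + \bX_0$ followed by the Schur complement is the same as the paper's choice $\bcoefX_{\star} = \boldsymbol{e}_1$ followed by the per-coordinate KKT characterization~\eqref{eq:k-coord}, your projector $\bP = \bI - \bW\bX_0\bM\bX_0^\top\bW$ is exactly $\bS_{\backslash 1}$, and the per-sample Sherman--Morrison unrolling, the identification of $\trace(\bM)$ with $\tau/\rho^2$ via the fixed-point equation, and the matching of per-observation laws to $(X,G,V,Y)$ all coincide with the paper's Lemmas~\ref{lem:k-coord-denom}--\ref{lem:k-coord-num}.

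The perpendicular component is where you genuinely depart, and where the gap sits. You propose to concentrate the squared-resolvent quadratic form $\bT^\top\bM^2\bT$ directly via row-resampling Stein's method, flagging this yourself as ``the main obstacle'' that would need ``higher-order resolvent identities'' for quantities like $\bx_{0,i}^\top\bM^2\bx_{0,i}$. The paper never touches $\bM^2$. Instead it keeps the coordinate decomposition $\perpcompX_{t+1}^2 = \sum_{k\neq 1}(\coefX_{t+1,k})^2 = \sum_{k\neq 1}A_k^2/B_k^2$ where $A_k = \langle\Xcol{k},\bG\bS_{\backslash k}\by\rangle$: after stabilizing the denominators $B_k$, the task is to concentrate $\sum_{k\neq 1}A_k^2$, and the exchangeable pair is built by resampling one of the $d-1$ off-signal \emph{columns} (not a row). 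Each column swap perturbs the sum by $\ordertil(1/n)$, and a coupon-collector bound gives the $n^{-1/2}$ rate. The target $(d-1)\EE\{\|\bG\bS\by\|_2^2\}$ is then a sum of squared residuals $R_i^2$, and the leave-one-sample-out identity $R_i = r_{i,(i)}/(1+G_i^2\tau_i)$ expresses these through only the \emph{first-order} resolvent $\tau_i = \bx_i^\top\bSig_i^{-1}\bx_i$. The row-resampling Stein's argument does appear in the paper, but only as the auxiliary Lemma~\ref{lem:norm-concentration} for $\|\bmuG\|_2^2$, which is spliced into the residual computation via a self-consistent linear equation in Step 3. Your plan would need the second-order resolvent machinery from scratch --- the appendix RMT covers $\trace(\bM)$ and $\lambda_{\min}(\bM)$, not $\bM^2$ --- and a single-row perturbation of $\bM^2$ expands into cross terms that are substantially harder to control than the rank-one update of $\bM$; nothing in the proposal shows how to obtain the $n^{-1/2}$ rate there. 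The column decomposition is the non-obvious step that makes the whole thing tractable, and it is absent from your plan.
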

By symmetry, analogous results hold for the iterates $\{\bcoefZ_{t} \}_{t \geq 1}$. We additionally note that we have made no attempt to optimize the log factors in either deviation component.  

In light of Theorem~\ref{thm:one-step}, we deduce the key takeaway mentioned in Section~\ref{sec:intro}: Up to polylogarithmic factors, we have the optimal\footnote{To illustrate optimality of the fluctuations, consider a slightly simplified but analogous setting of the standard linear model with a standard Gaussian design $\bX\in \mathbb{R}^{n \times d}$ and observations $\by = \bX\boldsymbol{\theta} + \beps$ for standard Gaussian $\beps$. Suppose we are interested in estimating the linear functional $T(\boldsymbol{\theta}) = \langle \bv, \boldsymbol{\theta}\rangle$ for a fixed unit-norm vector $\boldsymbol{v}$.  By Le Cam's two point argument~\citep[see, e.g.,][]{wainwright2019high}, one can show that $\inf_{\hat{T}} \sup_{\boldsymbol{\theta} \in \mathbb{R}^d } \mathbb{E} \bigl[ \lvert \hat{T} - T(\boldsymbol{\theta}) \rvert \bigr] \gtrsim n^{-1/2}$.  Note that for our one-step predictions, the parallel component corresponds to estimating a fixed linear functional, and the argument sketched above illustrates that the optimal deviations are on the order $\Omega(n^{-1/2})$.} deviation bound
\begin{align}
	\max \left\{ |\parcompX_t - \parcompX^{\mathsf{det}}_{t}|, |(\perpcompX_{t})^{2} - (\perpcompX^{\mathsf{det}}_{t})^{2} | \right\} \lesssim n^{-1/2},
\end{align}
improving on the analogous bounds shown in~\cite[Theorem 3.1]{chandrasekher2021sharp}, where deviations were controlled up to $\widetilde{\mathcal{O}}(n^{-1/4})$.  This polynomial improvement is crucial to ensure that our downstream convergence results are sharp \emph{from a random initialization}.  By contrast, previous work~\citep{chandrasekher2021sharp} shows an upper bound on the convergence rate from a random initialization, only obtaining a matching lower bound upon entering a locally converging region.

Our proof of Theorem~\ref{thm:one-step} relies crucially on exploiting the fact that the iterate $\bcoefX_{t+1}$~\eqref{eq:AM} can be written as the solution to a high dimensional $M$-estimation problem.  In light of this observation, we employ and develop a leave-one-out argument due to~\cite{el2013robust}.  This technique allows us to directly analyze the optimizers, which in turn allows us to establish the sharp, non-asymptotic $\widetilde{O}(n^{-1/2})$ fluctuations.  By contrast, the Gaussian comparison inequalities leveraged by previous work~\citep{chandrasekher2021sharp} analyze the \emph{minimum} of the loss function defining each iterate and recover the iterate by utilizing strong convexity of the loss, thereby losing a factor of $n^{1/4}$ in the deviation bound.  

In spite of the fact that we leverage the technique introduced in~\cite{el2013robust}, we note that several complications arise when applying the leave-one-out argument. In particular, since our measurements do not come from the standard linear model, we are unable to remove the contribution of the signal and reduce it to the study of a pure noise model.  In order to handle this, we carefully handle the deviations of the signal component and orthogonal component separately, leveraging tools such as Warnke's typical bounded differences inequality~\citep{warnke2016method} to facilitate our analysis.  We believe that these tools may be of independent interest.  

A technical takeaway from the proof is a non-asymptotic random matrix theory result which may be of independent interest.  In particular, in Section~\ref{sec:concentration-trace-inverse}, we provide a proof that the trace inverse concentrates around the quantity $\tau$~\eqref{definition-of-C} with exponential tails:
\[
\Prob\bigl\{\bigl \lvert\trace\bigl((\bX^{\top} \bG^2 \bX)^{-1}\bigr) - \tau \bigr \rvert \geq t \bigr\} \leq Ce^{-cnt^2},
\]
where here $\bG= \diag(G_1, G_2, \dots, G_n)$ and $(G_i)_{1 \leq i \leq n} \overset{\mathsf{i.i.d.}}{\sim} \mathsf{N}(0, 1)$.  
Note that the quantity $\trace\bigl((\bX^{\top} \bG^2 \bX)^{-1}\bigr)$ corresponds to the Stieltjes transform of the empirical spectral distribution of the random matrix $\bX^{\top} \bG^2 \bX$, evaluated at $z=0$, whereby, as long as $\oversamp$ is of constant order,~\cite[][Theorem 1.1]{bai2004clt} yields $\bigl \lvert\trace\bigl((\bX^{\top} \bG^2 \bX)^{-1}\bigr) - \tau \bigr \rvert \lesssim n^{-1/2}$.  We complement this result by providing an exponential tail bound, noting that standard arguments which prove concentration of the Stieltjes transform~\cite[see, e.g,][]{el2009concentration,tao2012topics} do not apply as stated at $z = 0$ and typically assume $\oversamp = \order(1)$; note that we require $\oversamp = \Omega(\log{d})$ for our random initialization results (see Assumption~\ref{asm:random-linear} to follow).  Closer to our development are related nonasymptotic results holding at $z=0$~\cite[see, e.g.,][]{guionnet2000concentration,guntuboyina2009concentration} that bypass the direct use of the Stieltjes transform.  We layer a careful truncation argument upon these results and further characterize the typical value of the trace-inverse\footnote{In order to do so, we require the empirical spectral distribution to be bounded away from zero; we provide a proof of such a result for our setting in Section~\ref{sec:bounds-minimum-eigenvalue}, which---in this special case---forms a simple, finite-sample alternative to the proof of~\cite[Theorem 1.1]{bai1998no}.} (i.e. the quantity $\tau$) for all values of $\Lambda$. 
\begin{remark} \label{rem:poly-growth}
	\revision{
			We note that Theorem~\ref{thm:one-step} can be extended to accomodate polynomially growing link function $\psi$, i.e., $|\psi(x)| \leq C_{\psi} |x|^{D}$ for all $x \in \mathbb{R}$ (see Remark~\ref{rem:weaken-tails} in the proof). In particular, there exists a pair of constants $(C,C')$ which depend only on $C_{\psi}$ and $D$ such that with probability $1-C'n^{-10}$, 
			\begin{align}
				\bigl \lvert \parcompX_{t+1} - \parcompX_{t+1}^{\mathsf{det}} \bigr \rvert  &\leq \frac{C(\parsigma)}{\sqrt{\parcompZ_{t+1}^2 + \perpcompZ_{t+1}^2}} \cdot \frac{\log^{4D}(n)}{\sqrt{n}}. \label{eq:par-deviation-general-psi}
			\end{align}
			Thus, we still obtain the $n^{-1/2}$ deviation rate up to polylogarithmic factors for the parallel component and this enables us to analyze the convergence of the algorithm from a random initialization.
	}
\end{remark}
Let us now simplify the expressions for $\parcompX_{t+1}^{\mathsf{det}}$ and $\perpcompX_{t+1}^{\mathsf{det}}$ 
by considering two prototypical choices for the function $\psi$.

\begin{example}[Linear measurements: Identity function] \label{ex:lin}
	In the case where $\psi(w) = w$, consider the following two functions mapping $\real^2 \to \real$:
	\begin{subequations}
		\label{eq:idmaps}
		\begin{align}
			\parmapid(\alpha, \beta) &= \frac{\alpha}{\alpha^2 + \beta^2}, \qquad \text{ and } \label{eq:parmapid}\\
			\perpmapid(\alpha, \beta) &= \frac{1+\sigma^2}{C(\oversamp)} \cdot \frac{\perpcompX^2}{(\parcompX^2 + \perpcompX^2)^2} + \frac{\sigma^2}{C(\oversamp)} \cdot \frac{\parcompX^2}{(\parcompX^2 + \perpcompX^2)^2}. \label{eq:perpmapid}
		\end{align}
	\end{subequations}
	In Section~\ref{sec:proof-det-equivalence}, we show that\footnote{We note that in the special case of the identity model, one can derive the predictions~\eqref{eq:idmaps} via more direct means involving an orthogonalization argument with respect to Gaussian measure. Such a technique does not appear to extend to our general model.}
	\begin{align} \label{eq:alphabeta-FG-linear}
		\parcompX_{t+1}^{\mathsf{det}} = \parmapid\big( \parcompZ_{t+1},\perpcompZ_{t+1} \big) \quad \text{and} \quad (\perpcompX_{t+1}^{\mathsf{det}})^{2} = \perpmapid\big( \parcompZ_{t+1},\perpcompZ_{t+1} \big),
	\end{align}
	so that the deterministic one-step updates can be succinctly described using the pair of maps $(\parmapid, \perpmapid)$. Clearly, Assumption~\ref{assptn:Y-psi} holds with $C_{\psi} = 1$, so that Theorem~\ref{thm:one-step} applies to show adherence of the empirical state evolution to our deterministic one-step updates. We use this result to derive a global convergence guarantee in Theorem~\ref{thm:global-conv-linear} to follow. \hfill $\clubsuit$
\end{example}

For a second example, consider an observation model with signed measurements.
\begin{example}[One-bit measurements: Sign function] \label{ex:nonlin}
	Suppose the nonlinearity is given by $\psi(w) = \sign(w)$.
	For convenience, additionally define the scalars
	\begin{align}
		\label{eq:C2-C3}
		C_{2}(\oversamp) = \EE\left\{ \frac{W^{2}}{(C(\oversamp) + W^{2})^{2}}\right\} \qquad \text{ and } \qquad C_{3}(\oversamp) = \EE\left\{ \frac{W^{4}}{(C(\oversamp) + W^{2})^{2}} \right\},
	\end{align}
	where $W \sim \mathsf{N}(0, 1)$.  
	Using these, define the maps $\parmapbit: \mathbb{R}^2 \rightarrow \mathbb{R}$ and $\perpmapbit:\mathbb{R}^2 \rightarrow \mathbb{R}$ via
	\begin{subequations}
		\label{eq:bitmaps}
		\begin{align}
			\parmapbit(\alpha, \beta) &= \frac{2}{\pi}\frac{\oversamp}{\sqrt{\parcompX^{2} + \perpcompX^{2}}} \cdot \EE\biggl\{ \frac{|W| \phi(\frac{\parcompX}{\perpcompX} |W|)}{C(\oversamp) + W^{2}} \biggr\}, \qquad \text{ and } \label{eq:parmapbit}\\
			\perpmapbit(\alpha, \beta) &= \frac{1+\sigma^{2}}{C(\oversamp)} \cdot \frac{1}{\parcompX^{2} + \perpcompX^{2}} + \frac{\parmapbit(\parcompX, \perpcompX)^{2}}{C(\oversamp)}  \cdot \frac{C_{3}(\oversamp)}{C_{2}(\oversamp)}
			\nonumber\\
			&\qquad \qquad \qquad \qquad- \frac{4}{\pi} \cdot \frac{\parmapbit(\parcompX, \perpcompX)}{C(\oversamp)\sqrt{\parcompX^{2} + \perpcompX^{2}}} \cdot \EE\biggl\{ \frac{|W|^{3}  \phi(\frac{\parcompX}{\perpcompX} |W|)}{C_{2}(\oversamp) (C(\oversamp) + W^{2})^{2}} \biggr\},\label{eq:perpmapbit}
		\end{align}
	\end{subequations}
	where in the formulas above, $\phi(x) = \int_{0}^{x} e^{-t^2/2} \mathrm{d}t$ and $W \sim \mathsf{N}(0, 1)$.
	In Section~\ref{sec:proof-det-equivalence}, we evaluate the expectations in the formulas~\eqref{eq-equivalence} to show that
	\begin{align} \label{eq:alphabeta-FG-nonlinear}
		\parcompX_{t+1}^{\mathsf{det}} = \parmapbit\big( \parcompZ_{t+1},\perpcompZ_{t+1} \big) \quad \text{and} \quad (\perpcompX_{t+1}^{\mathsf{det}})^{2} = \perpmapbit\big( \parcompZ_{t+1},\perpcompZ_{t+1} \big),
	\end{align}
	so that the deterministic one-step updates can be succinctly described using the pair of maps $(\parmapbit, \perpmapbit)$. Assumption~\ref{assptn:Y-psi} holds with $C_{\psi} = 1$, so that Theorem~\ref{thm:one-step} then establishes adherence of the empirical state evolution to our deterministic one-step updates. This in turn enables a sharp global convergence guarantee in Theorem~\ref{thm:global-conv-nonlinear} to follow. \hfill $\clubsuit$
\end{example}

\begin{remark}[Even link function]\label{rmk:even-link-func}
\revision{
	Consider any case where the link function is even, i.e., $\psi(x) = \psi(-x)$ for all $x\in \real$. We see that Eq.~\eqref{eq-equivalence} becomes
			\begin{align*}
				\parcompX_{t+1}^{\mathsf{det}} = 0 \qquad \text{and} \qquad \big(\perpcompX_{t+1}^{\mathsf{det}}\big)^{2} = \frac{ \EE \left\{ \frac{ G^2(Y^2+\sigma^2)}{(1+\tau G^2)^2} \right\} }{(\parcompZ_{t+1}^{2} + \perpcompZ_{t+1}^{2} ) \cdot \EE\left\{ \frac{C(\oversamp) G^2}{(1+\tau G^2)^2} \right\}} \neq 0,
			\end{align*}
			where $G,X,Y$ are defined in Eq.~\eqref{definition-X-G-V-Y}. In words, one iteration of the algorithm makes the parallel component $\alpha_{t + 1}$ essentially zero (up to random fluctuations on the order $1/\sqrt{n}$), so that we no longer retain any overlap with the signal and the algorithm cannot converge to the true signal. The lack of the algorithm's convergence is related to phenomena noted in the literature~\citep[e.g.][]{plan2016generalized} and can be seen immediately by inspecting the deterministic one-step updates in Eq.~\eqref{eq-equivalence}: A necessary condition for convergence is that $\parcompX_{t+1}^{\mathsf{det}} \neq 0$, so the link function must (at the very least) satisfy $\mathbb{E}\left\{ \frac{GXY}{1 + \tau G^2} \right\} \neq 0$. 
}
\end{remark}

Having established our deterministic one-step predictions for a general class of nonlinearities $\psi$ and having showcased explicit expressions for these predictions in two canonical settings, we are now in a position to present our global convergence guarantees for these settings.

\section{Sharp global convergence guarantees} \label{sec:convergence-main}
Our results in this section rely on the following notion of \emph{sharp} linear convergence, presented in the definition below.
\begin{definition}[Sharp linear convergence]\label{def:linear-convergence}
	For parameters $0 < \rho_{1} \leq \rho_{2} <1$ and $0 \leq \varepsilon_{1} \leq \varepsilon_{2}$, the iterates $\{ \parcompX_t,\perpcompX_t \}_{t \geq 0}$ are said to exhibit $(\rho_{1}, \rho_{2}, \varepsilon_{1}, \varepsilon_{2})$--linear convergence in the squared ratio metric for $T$ iterations if for all $0 \leq t \leq T - 1$, we have
	\[
	 \rho_{1} \cdot \frac{\perpcompX_{t}^2}{\parcompX_{t}^{2}} +  \varepsilon_{1}  \leq \frac{\perpcompX_{t+1}^2}{\parcompX_{t+1}^2} \leq \rho_{2} \cdot \frac{\perpcompX_{t}^2}{\parcompX_{t}^2} + \varepsilon_{2}.
	\]
\end{definition}
In the definition, the positive scalars $\rho_{1}$ and $\rho_{2}$ should be thought of as a rate parameter, where we have $\rho_{2} < 1$ to ensure that the ratio converges. The parameters $\varepsilon_{1}$ and $\varepsilon_{2}$ are the level---or error floor---up to which linear convergence occurs. A particular feature of this definition is that if $\rho_{1} \asymp \rho_{2}$ and $\varepsilon_{1} \asymp \varepsilon_{2}$ then it postulates tight upper and lower bounds on the rate at which the error decreases and also on the eventual error floor, thus capturing the \emph{exact} rate of convergence and the \emph{exact} error floor---up to universal constants---of a linearly convergent algorithm.

Additionally, note that we define convergence in the nonstandard squared-ratio metric.  We do so as the factors $\bcoefX_{\star}, \bcoefZ_{\star}$ are identifiable only up to a scaling factor; that is, for any scalar $a \in \mathbb{R}$, the change of variables $\bcoefX_{\star} \mapsto a \bcoefX_{\star}$ and $\bcoefZ_{\star} \mapsto a^{-1} \bcoefZ_{\star}$ leaves the observations $y_i$~\eqref{eq:model} unchanged.  In other words, under the model~\eqref{eq:model}, the factor $\bcoefX_{\star}$ is identifiable only up to the one-dimensional subspace spanned by  $\bcoefX_{\star}$.  In light of this, the squared ratio provides a natural metric as it is scale-invariant and captures the distance to the subspace spanned by the factor $\bcoefX_{\star}$.  Note additionally that the convergence in the squared ratio metric implies convergence in the angular metric, as $\angle (\bcoefX_{t+1}, \bcoefX_{\star}) = \arccos\bigl((\perpcompX_{t+1}^2/\parcompX_{t+1}^2 + 1)^{-1}\bigr)$.

Having defined our notion of sharp linear convergence, we now state two assumptions on the pair $(\parcompX_{0}, \perpcompX_0)$ corresponding to the initialization $\bcoefX_{0}$. Our main theorem will hold under either of these two assumptions, provided the oversampling ratio $\Lambda$ is correspondingly controlled.
\begin{assumption} \label{asm:random-linear}
	The initialization satisfies both
	\[
	\frac{1}{50\sqrt{d}} \leq \parcompX_{0} \leq \frac{1}{30}\sqrt{\frac{1 + \sigma^2}{C(\oversamp)}}, \quad \text{ and } \quad 0.8 \leq \perpcompX_{0}^2 \leq 1.2,
	\]
	and the oversampling ratio satisfies $C_0(1+\sigma^{2}) \log(d) \leq \oversamp \leq \sqrt{d}$, for a universal, positive constant $C_0$.  
\end{assumption}
Note that this assumption encapsulates a random initialization, since if $\bcoefX_{0}$ is chosen uniformly at random from the unit ball $\mathbb{B}_{2}(1)$ with $d \geq 130$, then the conditions $\frac{1}{50\sqrt{d}} \leq \parcompX_{0} \leq \frac{1}{\sqrt{d}}$ and $\parcompX_{0}^{2} + \perpcompX_{0}^{2}=1$ are satisfied with probability at least $0.95$~\cite[see, e.g.,][Lemma 24]{chandrasekher2021sharp}.  
\begin{assumption}\label{asm:local-linear}
	The initialization satisfies 
	\[
	\frac{\perpcompX_0}{\parcompX_0} \leq 50\sqrt{\frac{C(\oversamp)}{1 + \sigma^2}}.
	\]
	Further, the oversampling ratio satisfies $C_0(1+\sigma^{2}) \leq \oversamp \leq \sqrt{d}$, for a universal, positive constant $C_0$.
\end{assumption}

Assumption~\ref{asm:local-linear} holds in a local region around the ground truth parameters, where the region gets larger as $C(\Lambda)$ (or correspondingly, $\Lambda$) gets larger. Note that from such a local initialization, we require only constant oversampling, $\oversamp \geq C_0(1+\sigma^{2})$, whereas from a random initialization, we require logarithmic oversampling, e.g. $\oversamp \geq C_0 (1+\sigma^{2})\log(d)$ (cf. Assumption~\ref{asm:random-linear}). 

Under Assumption~\ref{asm:random-linear} and Assumption~\ref{asm:local-linear}, we require the upper bound $\oversamp \leq \sqrt{d}$, which is equivalent to a sample complexity upper bound of $n \lesssim d^{3/2}$.  This should not be thought of a limiting requirement: If by contrast, $n \gtrsim d^{3/2}$, then we show in the section~\ref{sec:global-convergence-large-oversamp} that---from a random initialization---in both the linear observation model and the nonlinear observation model, two steps of AM guarantee an estimate with error 
\[
	\frac{\perpcompX_{2}^{2}}{\parcompX_{2}^{2}} \lesssim \frac{(1+\sigma^{2})d}{n} + \frac{(1+\sigma^{2})\perplogn}{\sqrt{n}}.
\]  

Having defined both assumptions, we are now in a position to state global convergence results for both the linear and one-bit models. We do so by stating a sharp convergence bound that holds when \emph{either} Assumption~\ref{asm:random-linear} or~\ref{asm:local-linear} is true, thereby providing a unified, sharp convergence claim both locally and globally around the ground truth. We state these results for the sequence $\{ \bcoefX_t \}_{t \geq 0}$, but by symmetry, identical results hold for the sequence $\{ \bcoefZ_t \}_{t \geq 0}$.

\subsection{Linear observation model}

We begin with the linear observation model. 
\begin{theorem} \label{thm:global-conv-linear}
	Suppose Assumption~\ref{assptn:distribution} holds. Consider the linear observation model~\eqref{eq:model} with $\psi(x) = x$ and the AM algorithm~\eqref{eq:AM} run for $T$ iterations. There exists a tuple of universal, positive constants $(c, c_{0}, d_0, C, C_0)$ such that the following statement holds with probability at least $1 - 2Tn^{-10}$: \\
	If  one of Assumptions~\ref{asm:random-linear} or~\ref{asm:local-linear} holds with constant $C_0$, as well as 
	\begin{align*}
		c_{0} \frac{\sigma^2}{C(\oversamp)} \geq \frac{\perplogn}{\sqrt{n}} \qquad \text{ and } \qquad d \geq d_0,
	\end{align*}
	then AM enjoys $(\rho_{1},\rho_{2},\varepsilon_{1},\varepsilon_{2})$--linear convergence in the sense of Definition~\ref{def:linear-convergence} with
	\[
		c \cdot  \Big( \frac{1+\sigma^{2}}{C(\oversamp)} \Big)^{2} \leq \rho_1 \leq \rho_{2} \leq C \cdot \Big( \frac{1+\sigma^{2}}{C(\oversamp)} \Big)^{2} \quad \text{and} \quad c \cdot   \frac{\sigma^{2}}{C(\oversamp)}  \leq \varepsilon_1 \leq \varepsilon_{2} \leq C \cdot \frac{\sigma^{2}}{C(\oversamp)} 
	\]
	Consequently, starting from an initialization satisfying Assumption~\ref{asm:random-linear} and running the algorithm for 
	\begin{align} \label{eq:lin-consq}
		\tau = \Theta\Bigl(\log_{\frac{\oversamp}{1+\sigma^{2}}}\Bigl( \frac{1}{\sigma} \cdot\frac{\perpcompX_0}{\parcompX_0}\Bigr)\Bigr) \qquad \text{  iterations, we obtain } \qquad 
		\frac{\perpcompX_{\tau}^{2}}{\parcompX_{\tau}^{2}} \lesssim \frac{\sigma^{2}}{C(\oversamp)}.
	\end{align}
\end{theorem}
Our proof shows a finer-grained characterization than the consequence~\eqref{eq:lin-consq} stated in the theorem: After running the AM algorithm for \sloppy\mbox{$\tau =  \Theta\Big(\log_{\frac{\oversamp}{1+\sigma^{2}}}(\sqrt{d}/\sigma)\Big)$} from a random initialization ($\bcoefX_{0}$ is chosen uniformly at random from the unit ball $\mathbb{B}_{2}(1)$) or $\tau = \Theta\Big(\log_{\frac{\oversamp}{1+\sigma^{2}}}(\sigma^{-1})\Big)$ iterations from a local initialization (Assumption~\ref{asm:local-linear}), we obtain $
\frac{\perpcompX_{\tau}^{2}}{\parcompX_{\tau}^{2}} \lesssim \frac{\sigma^{2}}{C(\oversamp)}
$.

A few comments on this theorem are in order, noting that $C(\oversamp) \asymp \oversamp = n/d$ (see Lemma~\ref{lemma:C(lambda)-lambda}) to aid the discussion.
First, note that eventually (i.e. after $\tau$ iterations), the AM iterates converge to a $\sigma^2 d / n$--neighborhood of the ground truth parameters, which is the typical statistical error of the problem. \revision{As a consequence, the total sample complexity for reaching estimation error $\sigma^{2}d/n$ is $\mathcal{O}\big(n \log_{\frac{n}{d(1+\sigma^{2}) }}\big(\sqrt{d}/\sigma\big) \big)$, where we additionally require that $n\gtrsim (1+\sigma^2)d\log(d)$.}

Second, Theorem~\ref{thm:global-conv-linear} implies that AM \emph{adapts} to problem difficulty in terms of the rate at which it converges.  In particular, as the number of samples $n$ increases, the rate parameter $\rho$  decreases proportionally, leading to convergence in a proportionally fewer number of steps.  To provide a particularly striking example of adapting to problem difficulty, suppose that $n \geq d^{1 + \delta}$ and $\sigma = c$ for some universal positive constants $\delta$ and $c$.  Then, $\tau = \Theta(1/\delta)$, and the algorithm converges in a constant number of iterations from a random initialization.  By contrast, if $\oversamp$ is of constant or poly-logarithmic order, the algorithm converges in $\Theta(\log{d})$ iterations.  We illustrate this difference in convergence behavior in Figure~\ref{fig:linear}, where we consider oversampling values $\oversamp \in \{15, 30, 60\}$, noting the monotonic relationship (in $\oversamp$) of the speed of convergence.

Third, we note that Theorem~\ref{thm:global-conv-linear} covers values of the noise level $\sigma$ going all the way down to near-noiseless problems---it allows for instance $\sigma^2 \asymp C(\oversamp) \cdot \perplogn/\sqrt{n} \lesssim \log^{8}(d)d^{-1/4}$ when $\oversamp\leq \sqrt{d}$. Our weak lower bound requirement on $\sigma$ arises purely due to technical reasons: The deterministic one-step updates of Theorem~\ref{thm:one-step} are accurate only up to $\widetilde{\mathcal{O}}(n^{-1/2})$--sized fluctuations, so a sharp rate can only be shown provided the error is larger than this order.  Figure~\ref{fig:linear}(b) demonstrates that our deterministic one-step updates continue to provide high-fidelity predictions even when the amount of noise is very small (i.e. with $\sigma = 10^{-5}$).  Restricting ourselves to upper bounds on the convergence rate, this assumption can be removed.  Indeed, in Section~\ref{sec:proof-conv-linear}, we show that for all $\sigma \geq 0$, if one of Assumption~\ref{asm:random-linear} or~\ref{asm:local-linear} holds, then the following holds for all $0 \leq t\leq T-1$ with probability at least $1-2Tn^{-10}$,
\begin{align}\label{linear-convergence-id-low-noise}
	\frac{\perpcompX_{t+1}^{2}}{\parcompX_{t+1}^{2}} \leq \bigg( \frac{100(1+\sigma^{2})}{C(\oversamp)} \bigg)^{2} \cdot \frac{\perpcompX_{t}^{2}}{\parcompX_{t}^{2}} + \frac{5\sigma^{2}}{C(\oversamp)} + \frac{C\log^{8}(n)}{\sqrt{n}}.
\end{align}

Fourth, our convergence guarantee differs from typical such algorithmic results in the literature, in that we exactly characterize the convergence rate of a nonconvex algorithm from a random initialization, providing sharp upper and lower bounds on both the optimization and statistical errors. Typical guarantees from a random initialization~\cite[cf.][]{chen2019gradient} show a two-stage convergence behavior, and the analysis near a random initialization is typically based only on upper bounds (see, e.g., our own previous work~\cite{chandrasekher2021sharp}).  Most closely related to our setting are the results of~\cite{zhong2015efficient}, who consider a sample-split version of alternating minimization.  They show---in a noiseless setting---that under a local initialization, after $\mathcal{O}\bigl(\log(1/\varepsilon)\bigr)$ iterations, AM achieves error $\varepsilon$.  We improve upon this guarantee along two axes.  First, our results show that the algorithm converges from a random initialization, obviating the need for a two-step procedure.  Second, we sharpen the iteration complexity from a local initialization to $\Theta\Bigl(\log_{\frac{\oversamp}{1+\sigma^{2}}}(1/\varepsilon)\Bigr)$. In more detail, when $\oversamp/(1+\sigma^{2}) = \widetilde{\mathcal{O}}(1)$, our local convergence results show that the guarantees of~\cite{zhong2015efficient} are sharp; moreover, as $\oversamp$ increases, our bounds show that the iteration complexity also decreases, a phenomenon that is not captured by results of~\cite{zhong2015efficient}. Besides this paper, we also mention the recent work of~\cite{chen2021convex} on the blind deconvolution problem, which provides local convergence guarantees of gradient descent on a ridge-regularized version of the loss~\eqref{eq:global-loss}, without a sample splitting assumption.  The work~\cite{chen2021convex} also operates in an interesting setting in which one of the design vectors is randomly sampled from the Fourier basis and the other is Gaussian.  While the results are not explicitly comparable (as the algorithms and losses are different), it would be interesting to study whether similar results to Theorem~\ref{thm:global-conv-linear} hold under Fourier design.

\begin{figure}[!hbtp]
	\centering
	\begin{subfigure}[b]{0.48\textwidth}
		\centering
		\includegraphics[scale=0.48]{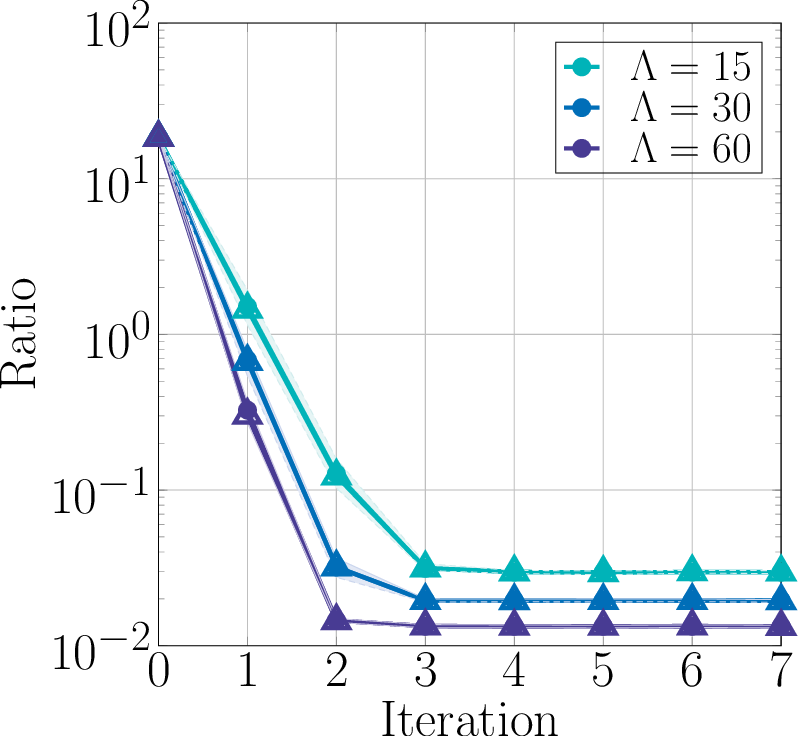} 
		\caption{$\sigma = 0.1, d = 400$.}    
		\label{subfig:linear-a}
	\end{subfigure}
	\hfill
	\begin{subfigure}[b]{0.48\textwidth}  
		\centering 
		\includegraphics[scale=0.48]{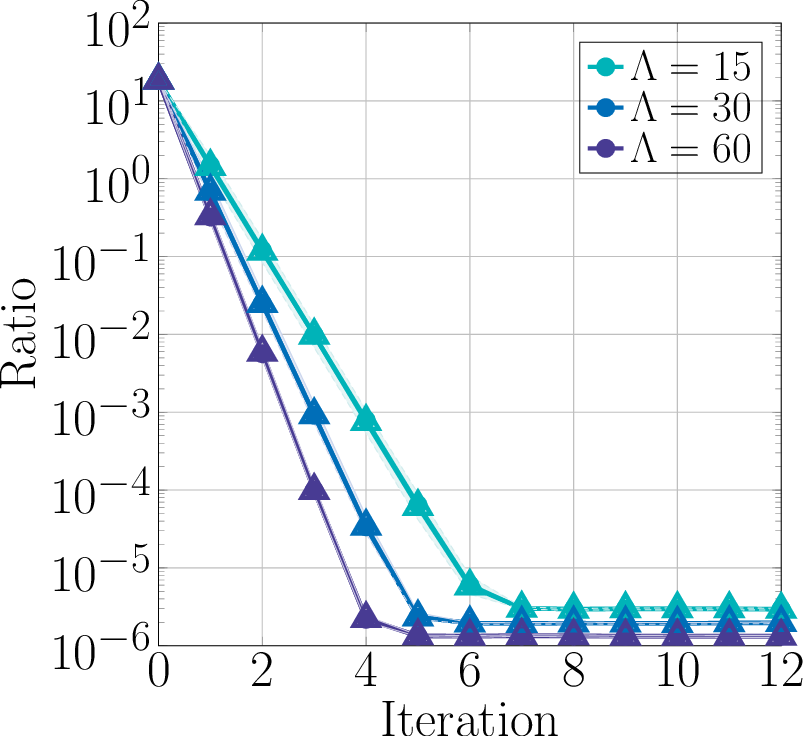} 
		\caption{$\sigma = 10^{-5},d=400$.}
		\label{subfig:linear-b}
	\end{subfigure}
	\caption{High noise (panel (a)) and low noise (panel (b)) behavior of AM for the linear observation model~\eqref{eq:model} with $\psi(w) = w$.  Each experiment consists of $50$ independent trials of AM, started from a random initialization with dimension $d = 400$ and run to convergence.
		Hollow triangular marks denote the median over the $50$ independent trials and filled-in circular marks (barely visible) denote the deterministic one-step predictions.  Shaded envelopes denote the interquartile range over the $50$ independent trials.} 
	\label{fig:linear}
\end{figure}

Finally, it is instructive to consider what guarantees are known (for other algorithms) under deterministic assumptions on the data.  On the one hand, we note that the $\ell_2$--RIP does not hold for our measurement operator~\cite[Claim 4.2]{zhong2015efficient}, whereby we are operating in a setting in which there could exist spurious local minima~\citep{bhojanapalli2016global} in the loss~\eqref{eq:global-loss} even when $\psi = \mathsf{id}$. If instead of minimizing the loss~\eqref{eq:global-loss}, one chooses to minimize a non-smooth, $\ell_1$ variant, then it is known that an $\ell_1$--RIP is satisfied and guarantees sharp growth, whereby---locally---methods such as the prox-linear method and subgradient descent enjoy quadratic (resp. linear) convergence~\citep{charisopoulos2021composite}. As mentioned before, these types of landscape-based results are not directly comparable with the probabilistic analysis that leads to Theorem~\ref{thm:global-conv-linear}; while they are deterministic and can handle a wide range of measurement ensembles, it is typically difficult to establish sharp convergence rates (in the sense of Definition~\ref{def:linear-convergence}) using only these properties.

Let us make a brief comment on the proof technique. We begin by applying the one-step updates from Theorem~\ref{thm:one-step} to reduce the complexity from studying a high-dimensional, random iteration to studying a two-dimensional, deterministic recursion.  
We note that while the deterministic recursion is---in general---straightforward to analyze, the fluctuations around the deterministic one-step predictions complicate matters.  This is especially so from a random initialization in which case the fluctuations can be at nearly the same scale as the predictions.  Nonetheless, by carefully accounting for the growth of the parallel component and perpendicular component at each iteration, we show that the convergence properties suggested by the deterministic one-step updates~\eqref{eq:idmaps} do indeed hold.

\subsection{One-bit observation model}
We turn now to our global convergence guarantees with one-bit measurements.

\begin{theorem}\label{thm:global-conv-nonlinear}
	Suppose Assumption~\ref{assptn:distribution} holds. Consider the nonlinear observation model~\eqref{eq:model} with $\psi(x) = \sign(x)$ and the AM algorithm~\eqref{eq:AM} run for $T$ iterations. There exists a tuple of universal, positive constants $(c,d_0,C_0,C)$ such that the following statement holds with probability at least $1 - 2Tn^{-10}$: \\
	If one of Assumptions~\ref{asm:random-linear} or~\ref{asm:local-linear} holds with constant $C_0$ and $d\geq d_{0}$, then AM enjoys $(\rho_1,\rho_2,\varepsilon_1,\varepsilon_2)$--linear convergence in the sense of Definition~\ref{def:linear-convergence} with
	\[
		c \cdot  \Big( \frac{1+\sigma^{2}}{C(\oversamp)} \Big)^{2} \leq \rho_1 \leq \rho_{2} \leq C \cdot \Big( \frac{1+\sigma^{2}}{C(\oversamp)} \Big)^{2} \quad \text{and} \quad c \cdot   \frac{1+\sigma^{2}}{C(\oversamp)}  \leq \varepsilon_1 \leq \varepsilon_{2} \leq C \cdot \frac{1+\sigma^{2}}{C(\oversamp)} 
	\]
	Consequently, starting at an initialization satisfying Assumption~\ref{asm:random-linear} and running the algorithm for 
	\begin{align} \label{eq:nonlin-consq}
		\tau = \Theta\Bigl(\log_{\frac{\oversamp}{1+\sigma^{2}}}\Bigl(\frac{\perpcompX_{0}}{\parcompX_{0}}\Bigr)\Bigr) \qquad \text{ iterations, we obtain } \qquad
		\frac{\perpcompX_{\tau}^{2}}{\parcompX_{\tau}^{2}} \lesssim \frac{1 + \sigma^{2}}{C(\oversamp)}.
	\end{align}
\end{theorem}
As in Theorem~\ref{thm:global-conv-linear}, the consequence~\eqref{eq:nonlin-consq} can be stated in more detail as follows: After running the AM algorithm for $\tau =  \Theta\Big(\log_{\frac{\oversamp}{1+\sigma^{2}}}(d)\Big)$ from a random initialization ($\bcoefX_{0}$ is chosen uniformly at random from the unit ball $\mathbb{B}_{2}(1)$) or $\tau = \Theta(1)$ iterations from a local initialization (Assumption~\ref{asm:local-linear}), we reach the error floor. \revision{As a consequence, the total sample complexity for reaching estimation error $(1+\sigma^2)d/n$ is $\mathcal{O}\big(n \log_{\frac{n}{d(1+\sigma^2)}}(d) \big)$, where we additionally require that $n\gtrsim (1+\sigma^2)d\log(d)$.}

\begin{figure}[ht!]
	\centering
	\begin{subfigure}[b]{0.48\textwidth}
		\centering
		\includegraphics[scale=0.48]{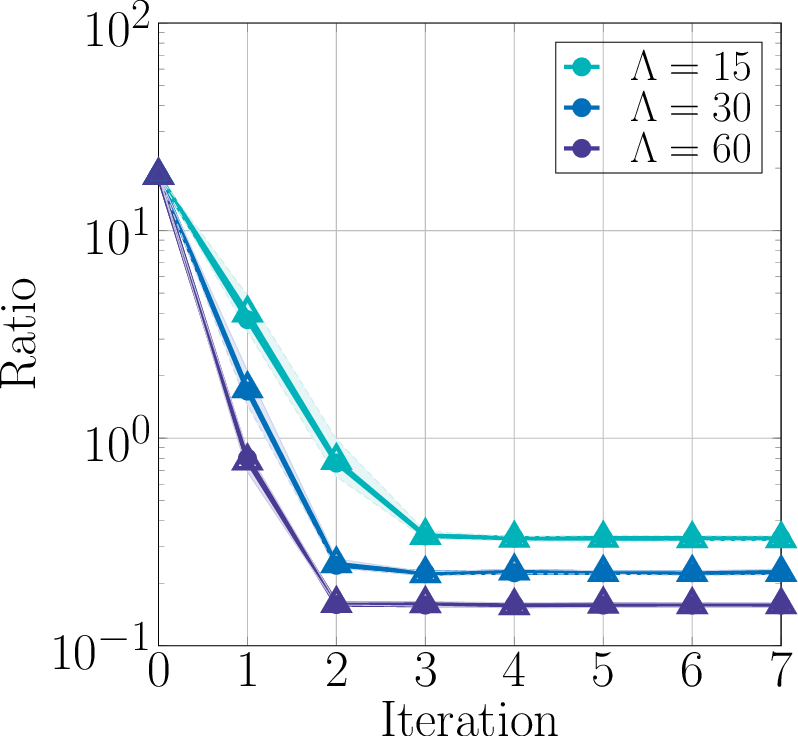} 
		\caption{$\sigma = 0.1, d = 400$.}    
		\label{subfig:bit-a}
	\end{subfigure}
	\hfill
	\begin{subfigure}[b]{0.48\textwidth}  
		\centering 
		\includegraphics[scale=0.48]{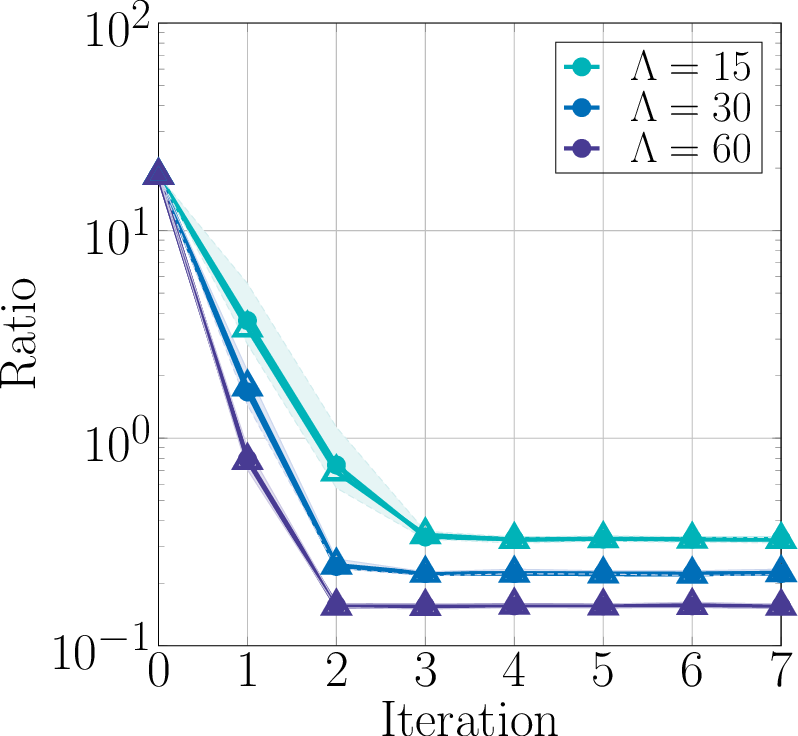}
		\caption{$\sigma = 10^{-5},d=400$.}
		\label{subfig:bit-b}
	\end{subfigure}
	\caption{High noise (panel (a)) and low noise (panel (b)) behavior of AM for the linear observation model~\eqref{eq:model} with $\psi(w) = \sign(w)$.  Each experiment consists of $50$ independent trials of AM, started from a random initialization with dimension $d = 400$ and run to convergence.
		Hollow triangular marks denote the median over the $50$ independent trials and filled-in circular marks (barely visible) denote the deterministic one-step predictions.  Shaded envelopes denote the interquartile range over the $50$ independent trials.} 
	\label{fig:bit}
\end{figure}
Some comments on specific aspects of the theorem are in order.  First, note that the AM algorithm~\eqref{eq:AM} is tailored to minimize the negative log-likelihood of the linear observation model~\eqref{eq:model} with $\psi(w) = w$.  Nonetheless, when the noise level $\sigma$ is constant, Theorem~\ref{thm:global-conv-nonlinear} demonstrates that there is no loss in running the misspecified AM algorithm as the eventual error floor is on the order $\sigma^2 d/n$, which is the typical statistical rate of the problem. This general phenomenon has been observed before in the literature on misspecified linear models; for instance,~\cite{plan2016generalized} demonstrate that the misspecified Lasso with one-bit measurements attains the same error $\sigma^2 d/n$.  Our work is complementary, providing two additional insights.  First, and in contrast to the one-shot least-squares problem, our objective remains nonconvex even upon ignoring the nonlinearity. In spite of this, we show that AM converges to a similar neighborhood of the ground truth, but must now be run for logarithmically many iterations from a random initialization instead of just for one iteration.  In the low-noise $\sigma \downarrow 0$ regime, the eventual error attained by the algorithm is likely statistically suboptimal. See~\citet{pananjady2021single} for instances of nonlinearities in single-index models where the low-noise nature of the problem can be exploited to reduce the statistical error.

Second, our proof comes with a lower bound, demonstrating that even in low noise problems, the statistical error remains bottlenecked at the rate $d/n$.  In Figure~\ref{fig:bit}, we demonstrate the convergence behavior in both high noise (Figure~\ref{subfig:bit-a}) and low noise (Figure~\ref{subfig:bit-b}) problems. Unlike the linear case, the eventual error floor is of the same order in both these examples.

Next, under the local initialization Assumption~\ref{asm:local-linear}, we note that the AM iteration converges in exactly one step, regardless of the noise level.  This is because under Assumption~\ref{asm:local-linear}, the ratio $\perpcompX_0^2/\parcompX_0^2 \lesssim C(\oversamp)/(1 + \sigma^2)$ and the algorithm converges linearly with rate $\rho = (1+\sigma^2)^2/C(\oversamp)^2$.  By contrast, when run on the linear model in the low noise regime, the AM algorithm will take several steps to converge.  We illustrate this difference---along with the general mode of convergence---through a schematic diagram in Figure~\ref{fig:conv-schematic}. See the caption of this figure for a detailed discussion of this distinction.
\begin{figure}[!ht]
	\centering
	\begin{subfigure}[b]{0.3\textwidth}
		\centering
		\includegraphics[scale=0.55]{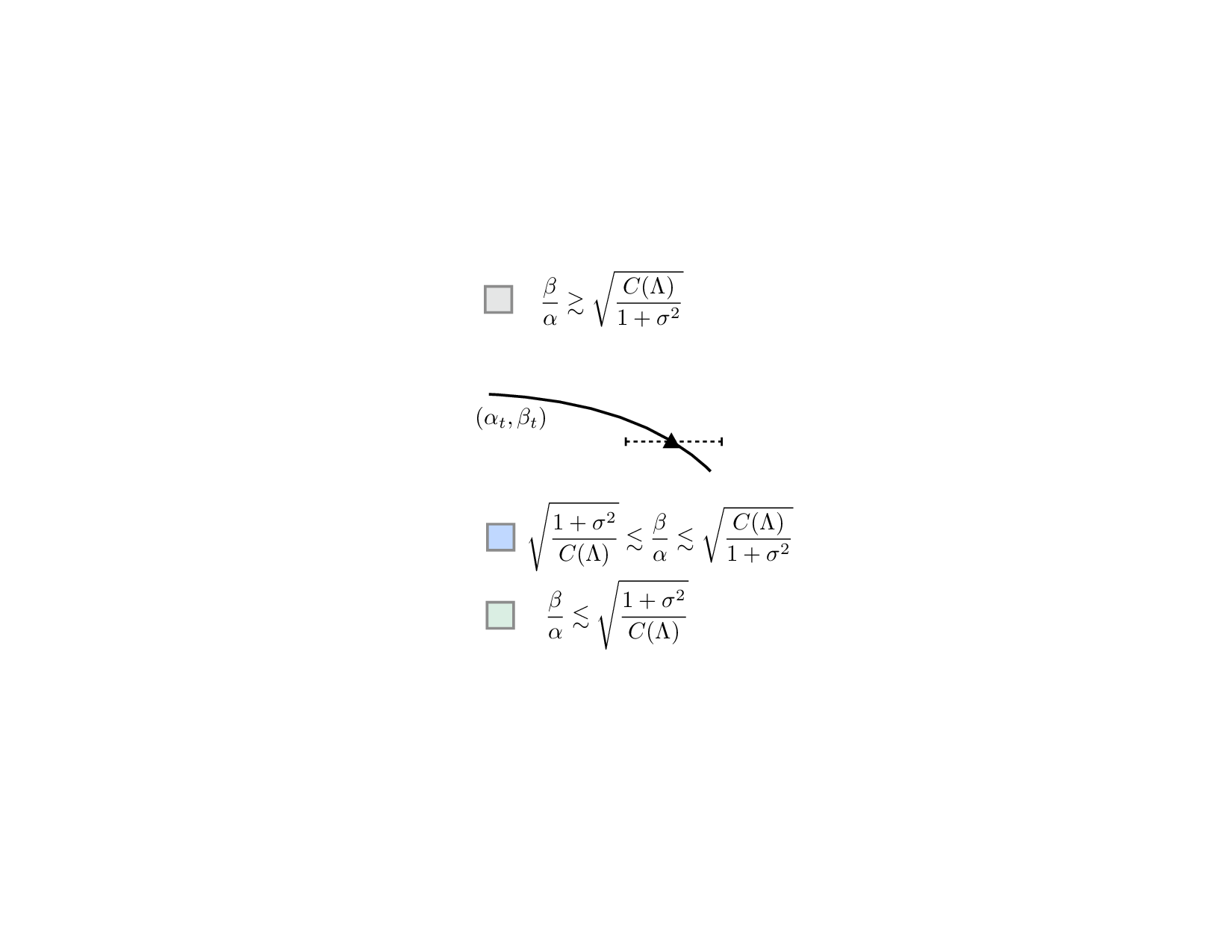} 
	\end{subfigure}
	\hfill
	\begin{subfigure}[b]{0.34\textwidth}  
		\centering 
		\includegraphics[scale=0.69]{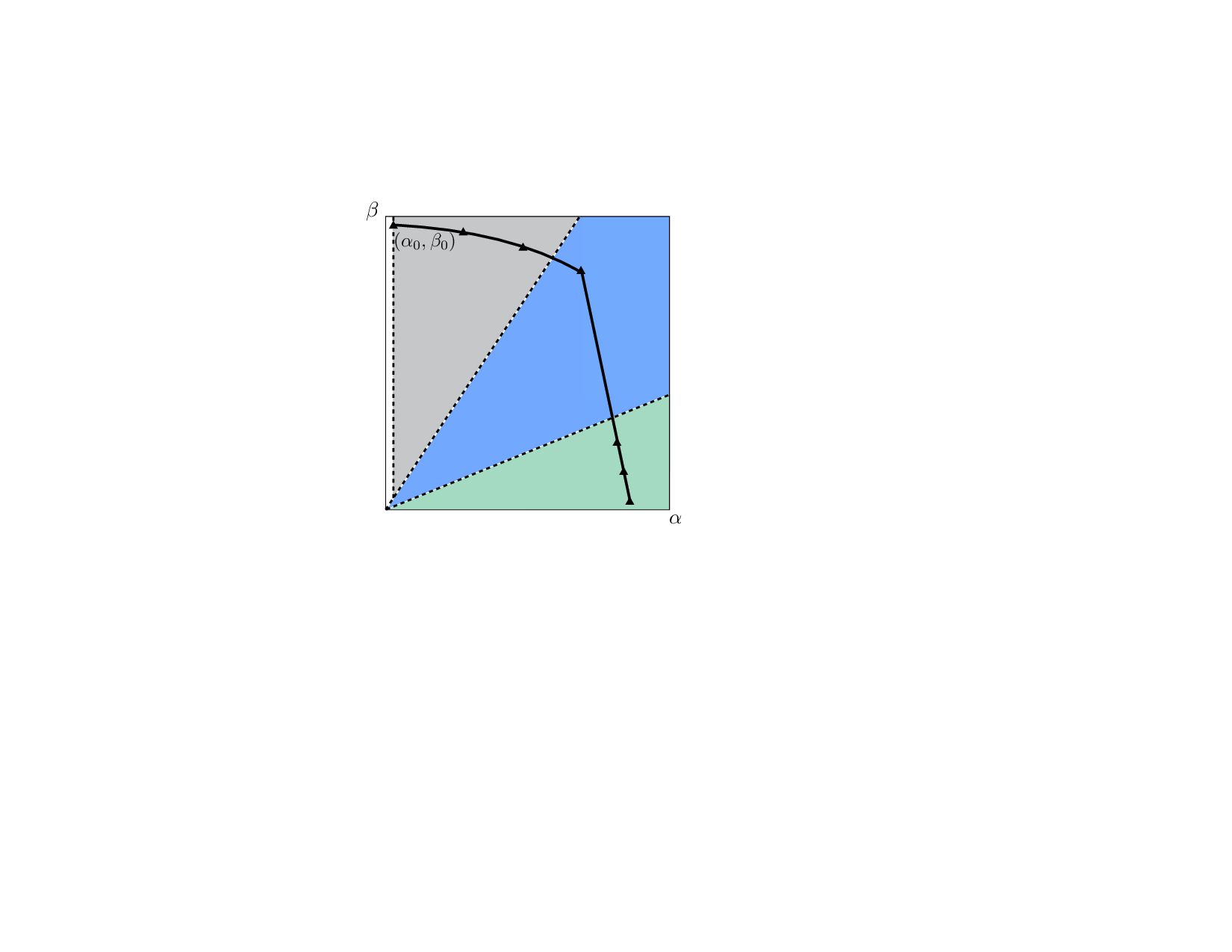}
		\caption{Linear model.}
	\end{subfigure}
	\hfill
	\begin{subfigure}[b]{0.34\textwidth}  
		\centering 
		\includegraphics[scale=0.69]{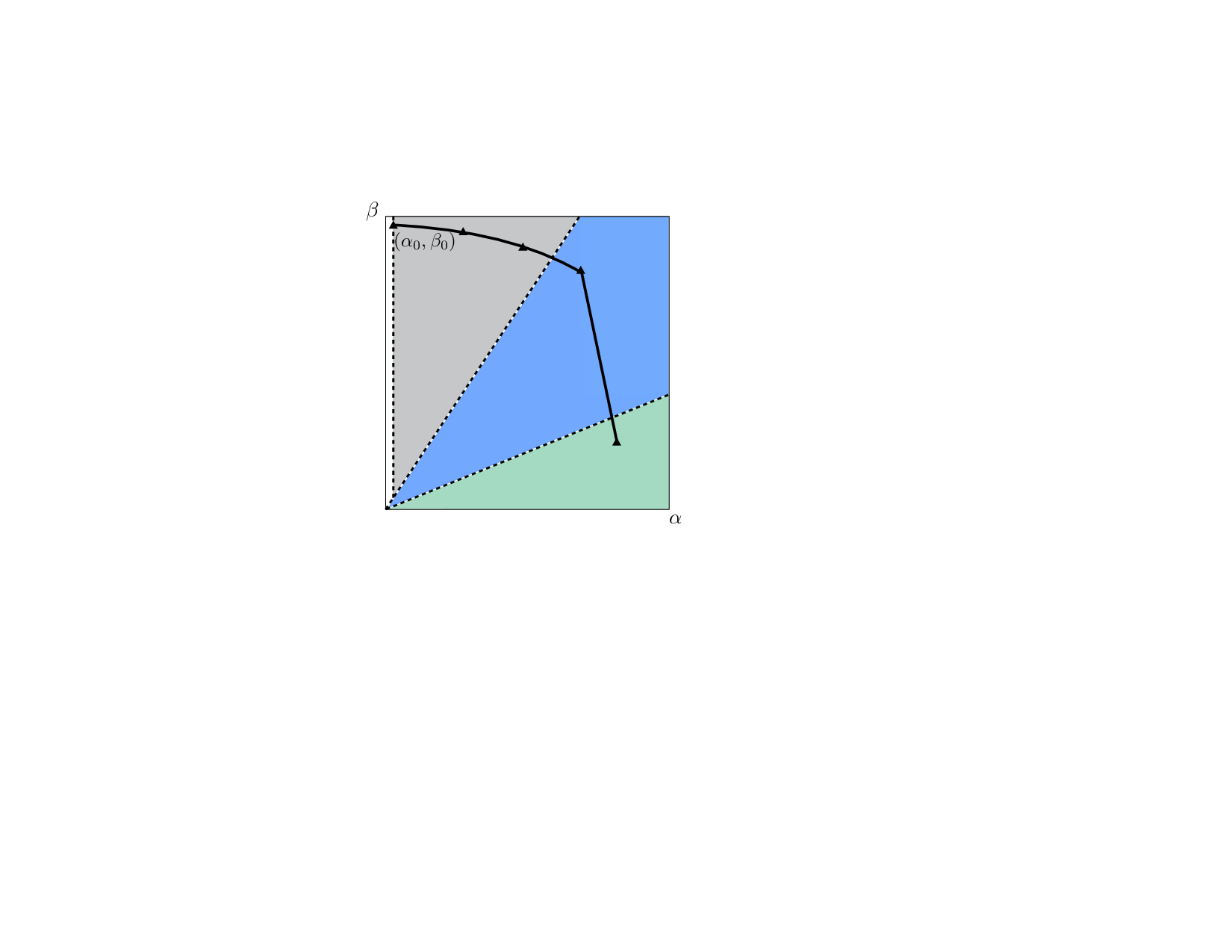}
		\caption{Nonlinear model.}
	\end{subfigure}
	\caption{\textbf{Schematic of Theorems~\ref{thm:global-conv-linear} and~\ref{thm:global-conv-nonlinear}.} The first stage (shaded gray) is the random initialization phase (under Assumption~\ref{asm:random-linear}).  After $\log_{\oversamp}(d)$ iterations, the algorithm reaches the shaded blue region, indicating the local phase (Assumption~\ref{asm:local-linear}).  Panel (b) shows convergence in the nonlinear model, where one step propels the iterates into the shaded green region, and the iterates reach the final error floor.  By contrast, panel (a) demonstrates convergence in the linear model, where one step propels the iterates into the shaded green region, but the algorithm continues to converge linearly.}
	\label{fig:conv-schematic}
\end{figure}

Finally, we note that while we consider the specific one-bit observation model, our technique is broadly applicable to other nonlinearities.  In particular, the one step guarantees provided by Theorem~\ref{thm:one-step} hold under only Assumption~\ref{assptn:Y-psi}.  Moreover, the proof of Theorem~\ref{thm:global-conv-nonlinear} relies on regularity properties of the maps~\eqref{eq:bitmaps}, which can be straightforwardly established for other choices of the nonlinearity $\psi$.

\section{Proof of Theorem~\ref{thm:one-step}: One-step predictions} \label{sec:proof-one-step}
This section is dedicated to the proof of the one-step updates, and is organized as follows.  We begin by outlining the proof strategy and establishing some preliminary notions.  Then we use these to prove Theorem~\ref{thm:one-step}(a) in Section~\ref{sec:proof-parallel} and Theorem~\ref{thm:one-step}(b) in Section~\ref{sec:proof-orthogonal}.  

Now, consider one step of the AM algorithm and recall that the iterates admit the closed form characterization~\eqref{eq:equiv-step}
\begin{align*}
	\bcoefX_{t+1} = \bigl(\bX^{\top} \bW_{t+1}^2 \bX \bigr)^{-1} \bX^{\top} \bW_{t+1} \by, \quad \text{ where } \quad \bW_{t+1} = \diag(\bZ \bcoefZ_{t+1}).
\end{align*}
Since the Gaussian distribution is rotationally invariant, we assume without loss of generality that $\bcoefX_{\star} = \boldsymbol{e}_{1}$, whence $\parcompX_{t+1} = \coefX_{t+1, 1}$ and $\perpcompX_{t+1} = \| \bcoefX_{t+1, \backslash 1} \|_2$. Here we use the notation $\bcoefX_{t+1} = [\coefX_{t+1, 1} \; \mid \; \bcoefX_{t+1, \backslash 1}]$.  It thus suffices to understand $\bcoefX_{t+1}$ on a coordinate-by-coordinate basis.  Towards this goal, we consider leaving the $k$-th column out, for $k \in \{1, 2, \dots, d\}$ and introduce the notation $\Xcol{k} \in \mathbb{R}^n$ to denote the $k$-th column of the data matrix $\bX$ and $\bX_{\backslash k} \in \mathbb{R}^{n \times (d - 1)}$ to denote the data matrix without the $k$-th column. We use $\bx_{j\backslash k} \in \mathbb{R}^{d-1}$ to denote the $j$-th column of $\bX_{\backslash k}^{\top} \in \mathbb{R}^{(d-1)\times n}$. We then define the projection matrices $\bP_{\backslash k}$ and $\bS_{\backslash k}$ as
\begin{align}
	\label{eq:def-loo-proj}
	\bP_{\backslash k} = \bW_{t+1} \bX_{\backslash k} \cdot \bigl( \bX_{\backslash k}^{\top} \bW_{t+1}^2 \bX_{\backslash k} \bigr)^{-1} \cdot \bX_{\backslash k}^{\top} \bW_{t+1} \quad \text{ and } \quad \bS_{\backslash k} = \bI - \bP_{\backslash k},
\end{align}
where we note that $\bP_{\backslash k}$ denotes the orthogonal projector onto the subspace spanned by the collection of vectors $\{\bW_{t+1} \Xcol{j}\}_{j \neq k}$ and $\bS_{\backslash k}$ the projector onto the orthogonal subspace.  With this notation in hand, we claim the following per-coordinate characterization of the iterate $\bcoefX_{t+1}$:
\begin{align} \label{eq:k-coord}
	\coefX_{t+1, k} = \frac{\langle \Xcol{k}, \bW_{t+1} \bS_{\backslash k} \by \rangle}{\langle \Xcol{k}, \bW_{t+1} \bS_{\backslash k} \bW_{t+1} \Xcol{k} \rangle}.
\end{align}
Claim~\eqref{eq:k-coord} is proved in Section~\ref{app:per-coord}.
We now turn to the proof of Theorem~\ref{thm:one-step}(a).

\subsection{Parallel component: Proof of Theorem~\ref{thm:one-step}(a)} \label{sec:proof-parallel}
We begin by stating two lemmas.  The first demonstrates that the denominator concentrates around a fixed quantity (which is zero for $k \neq 1$), with fluctuations on the order $\widetilde{\order}(n^{-1/2})$.  We provide the proof of Lemma~\ref{lem:k-coord-denom} in Section~\ref{sec:proof-lem-k-coord-denom}.
\begin{lemma} \label{lem:k-coord-denom}
	Let $X,G,V$ and $Y$ be random variables defined in equation~\eqref{definition-X-G-V-Y} and let $\tau = 1/C(\oversamp)$. Under the assumptions of Theorem~\ref{thm:one-step}, there exists a positive constant $C$ such that for each $1 \leq k \leq d$, with probability at least $1 - n^{-25}$, we have
	\begin{align*}
		\frac{1}{n}\Bigl \lvert  \langle \Xcol{k}, \bW_{t+1} \bS_{\backslash k} \bW_{t+1} \Xcol{k} \rangle - n(\parcompZ_{t+1}^2 + \perpcompZ_{t+1}^2) \cdot \EE\Bigl\{ \frac{G^{2}}{1+\tau G^{2}} \Bigr\}  \Bigr \rvert \leq C (\parcompZ_{t+1}^2 + \perpcompZ_{t+1}^2)\sqrt{\frac{\log{n}}{n}}.
	\end{align*}
\end{lemma}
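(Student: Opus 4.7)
The plan exploits the sample-splitting structure: the column $\Xcol{k}$ is independent of the pair $(\bW_{t+1},\bX_{\backslash k})$, and hence of the matrix $\bM_{\mathrm{quad}} := \bW_{t+1}\bS_{\backslash k}\bW_{t+1}$. Writing $\eta^{2} := \parcompZ_{t+1}^{2}+\perpcompZ_{t+1}^{2}$ and $\bW_{t+1} = \eta \bG$ with $\bG = \diag(G_{1},\ldots,G_{n})$, $G_{i} \overset{\mathsf{i.i.d.}}{\sim} \mathsf{N}(0,1)$, I would condition on $\mathcal{F} := \sigma(\bG,\bX_{\backslash k})$ and apply the Hanson--Wright inequality to the (conditionally) Gaussian quadratic form $\langle \Xcol{k}, \bM_{\mathrm{quad}} \Xcol{k}\rangle$. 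This yields, with probability at least $1 - n^{-25}$,
\begin{align*}
\bigl \lvert \langle \Xcol{k}, \bM_{\mathrm{quad}} \Xcol{k}\rangle - \trace(\bM_{\mathrm{quad}}) \bigr \rvert \lesssim \|\bM_{\mathrm{quad}}\|_{\mathrm{F}}\sqrt{\log n}.
\end{align*}

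To control the Frobenius norm I would use the PSD inequality $\bM_{\mathrm{quad}} \preceq \bW_{t+1}^{2}$---which follows from $\bS_{\backslash k}$ being an orthogonal projector---together with Weyl's theorem on sorted eigenvalues, which yields $\|\bM_{\mathrm{quad}}\|_{\mathrm{F}}^{2} \leq \trace(\bW_{t+1}^{4}) = \eta^{4}\sum_{i}G_{i}^{4}$. Sub-exponential concentration of $\sum_{i}G_{i}^{4}$ around $3n$ then gives $\|\bM_{\mathrm{quad}}\|_{\mathrm{F}} \leq C\eta^{2}\sqrt{n}$ with the stated probability, so the Hanson--Wright fluctuation is of order $\eta^{2}\sqrt{n\log n}$, which, after dividing by $n$, is exactly the target rate $\eta^{2}\sqrt{\log n/n}$.

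The main task---and, in my view, the main obstacle---is to show that the conditional mean $\trace(\bM_{\mathrm{quad}}) = \eta^{2}\sum_{i=1}^{n}G_{i}^{2}(\bS_{\backslash k})_{ii}$ is close to $n\eta^{2}\EE\{G^{2}/(1+\tau G^{2})\}$ within the same order of fluctuation. I would carry this out via a second, \emph{row-wise} leave-one-out step. Letting $\bM := \bX^{\top}\bW_{t+1}^{2}\bX = \eta^{2}\bX^{\top}\bG^{2}\bX$, $\bx_{i}$ denote the $i$-th row of $\bX$, and $\bM^{(i)} := \bM - \eta^{2}G_{i}^{2}\bx_{i}\bx_{i}^{\top}$, the Sherman--Morrison identity yields the leverage-score representation
\begin{align*}
(\bS)_{ii} = \frac{1}{1 + \eta^{2}G_{i}^{2}\,\bx_{i}^{\top}(\bM^{(i)})^{-1}\bx_{i}}, \qquad \text{where } \bS := \bI - \bW_{t+1}\bX \bM^{-1}\bX^{\top}\bW_{t+1}.
\end{align*}
Since $\bx_{i}$ is independent of $\bM^{(i)}$, Hanson--Wright gives $\bx_{i}^{\top}(\bM^{(i)})^{-1}\bx_{i} \approx \trace((\bM^{(i)})^{-1})$, while the non-asymptotic trace-inverse concentration from Appendix~\ref{sec:non-asymptotic-rmt} gives $\trace((\bM^{(i)})^{-1}) \approx \tau/\eta^{2}$; together these yield $(\bS)_{ii} \approx 1/(1+\tau G_{i}^{2})$. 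A rank-one perturbation estimate passes this approximation from $(\bS)_{ii}$ to $(\bS_{\backslash k})_{ii}$---the rank-one discrepancy contributes only $\order(\eta^{2}\log n)$ to the trace and is absorbed. Finally, Bernstein's inequality for the i.i.d.\ sum $\sum_{i}G_{i}^{2}/(1+\tau G_{i}^{2})$ around its mean $n\EE\{G^{2}/(1+\tau G^{2})\}$---together with the identity $\EE\{G^{2}/(1+\tau G^{2})\} = d/(n\tau)$ that follows from the definition of $\tau$---closes the argument. The delicate aspect will be tracking error terms through the nested leave-one-out expansions so as to preserve the optimal $\ordertil(n^{-1/2})$ fluctuation rate throughout; as noted in the paper's technical overview, this tracking can alternatively be accomplished via Stein's method of exchangeable pairs combined with a careful truncation.
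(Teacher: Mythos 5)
Your proposal follows essentially the same route as the paper's proof: a conditional Hanson--Wright step for the quadratic form around its trace (with the Frobenius norm controlled via idempotence of the projector and the fourth moments of $G_i$), followed by a Sherman--Morrison leverage-score representation of the diagonal entries of the projector and the trace-inverse concentration of Appendix~\ref{sec:non-asymptotic-rmt} to replace each $\bx_i^\top\bSig_i^{-1}\bx_i$ by $\tau$, and a Bernstein bound for the resulting i.i.d.\ sum $\sum_i G_i^2/(1+\tau G_i^2)$. The only departures are cosmetic: you perform the leave-one-sample-out within the full design and then correct for the rank-one discrepancy between $\bS$ and $\bS_{\backslash k}$, whereas the paper works directly inside $\bX_{\backslash k}$, and your final remark about Stein's method is misplaced---the paper uses exchangeable pairs only for the orthogonal component (Lemma~\ref{lem:orthogonal-concentration}), not for this lemma, which requires no such machinery.
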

While Lemma~\ref{lem:k-coord-denom} controls the denominator on the RHS of Eq.~\eqref{eq:k-coord} for all $k \in [d]$, the following lemma demonstrates that the numerator in Eq.~\eqref{eq:k-coord} also concentrates around a deterministic quantity with fluctuations on the order $\widetilde{O}(n^{-1/2})$.  We provide the proof of Lemma~\ref{lem:k-coord-num} in Section~\ref{sec:proof-lem-k-coord-num}.
\begin{lemma} \label{lem:k-coord-num}
	Let the random variables $X,G,V$ and $Y$ be as in equation~\eqref{definition-X-G-V-Y} and let $\tau = 1/C(\oversamp)$. Under the assumptions of Theorem~\ref{thm:one-step}, there exist a pair of universal, positive constants $(C_1, C_2)$ which depend only on $C_{\psi}$ and a universal, positive constant $C'$ such that the following hold for all $n \geq C'$.
	\begin{itemize}
		\item[(a)] With probability at least $1 - n^{-25}$,
		\[
		\frac{1}{n} \Bigl \lvert \langle \Xcol{1}, \bW_{t+1} \bS_{\backslash 1} \by \rangle- n \big(\parcompZ_{t+1}^{2} + \perpcompZ_{t+1}^{2}\big)^{1/2} \cdot \EE\Bigl\{ \frac{G X Y}{1+\tau G^{2}} \Bigr\}  \Bigr \rvert \leq  C_1(\parsigma)\sqrt{\frac{(\parcompZ_{t+1}^2 + \perpcompZ_{t+1}^2)\cdot \log{n}}{n}} .
		\]
		\item[(b)] If $k \in \{2, 3, \dots, d\}$, then with probability at least $1 - n^{-25}$,
		\[	\frac{1}{n} \bigl \lvert \langle \Xcol{k}, \bW_{t+1} \bS_{\backslash k} \by \rangle \bigr \rvert \leq C_2(\parsigma)\sqrt{\frac{(\parcompZ_{t+1}^2 + \perpcompZ_{t+1}^2)}{n}} \cdot \log{n}.
		\]
	\end{itemize}
\end{lemma}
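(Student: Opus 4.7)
The plan is to handle parts (a) and (b) separately, in each case exploiting the independence of $\Xcol{k}$ from (most of) the remaining randomness.

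For part (b), the column $\Xcol{k}$ is independent of $\bX_{\backslash k}$, of $\bZ$ (and therefore of $\bW_{t+1}$ and $\bS_{\backslash k}$), of $\bm{\epsilon}$, and crucially of $\by$---the latter because the reduction $\bcoefX_\star = \boldsymbol{e}_1$ makes the observations depend on $\bX$ only through $\Xcol{1}$. Conditioning on $(\bX_{\backslash k}, \bZ, \bm\epsilon)$ therefore makes $\langle \Xcol{k}, \bW_{t+1}\bS_{\backslash k}\by\rangle$ a centered Gaussian with variance $\|\bW_{t+1}\bS_{\backslash k}\by\|_2^2 \leq \|\bW_{t+1}\|_{\mathrm{op}}^2\|\by\|_2^2$. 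A Gaussian maximal inequality bounds $\|\bW_{t+1}\|_{\mathrm{op}}^2 \lesssim (\parcompZ_{t+1}^2+\perpcompZ_{t+1}^2)\log n$, and sub-exponential concentration (using Assumption~\ref{assptn:Y-psi} to control the signal part of $\by$) yields $\|\by\|_2^2 \lesssim n(1+\sigma^2)$, both on an event of probability at least $1 - n^{-25}$; a Gaussian tail at level $\sqrt{\log n}$ then completes part (b).

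For part (a), I would decompose $\by = \bm f(\Xcol{1}) + \bm\epsilon$ with $\bm f(\Xcol{1})_i := \psi(X_{i,1}\langle \bz_i, \bcoefZ_\star\rangle)$. The noise contribution $\langle \Xcol{1}, \bW_{t+1}\bS_{\backslash 1}\bm\epsilon\rangle$ is handled exactly as in part (b), so the main work is to control the signal part $A := \langle \Xcol{1}, \bW_{t+1}\bS_{\backslash 1}\bm f(\Xcol{1})\rangle$. By rotational invariance of the law of $\bZ$, I would reparametrize rows via i.i.d.\ standard normals $(G_i, V_i)$ so that $W_i = \|\bcoefZ_{t+1}\|_2 G_i$ and $s_i := \langle \bz_i, \bcoefZ_\star\rangle = (\parcompZ_{t+1}/\|\bcoefZ_{t+1}\|_2)G_i + (\perpcompZ_{t+1}/\|\bcoefZ_{t+1}\|_2)V_i$; matching~\eqref{definition-X-G-V-Y}, the target reduces to $n\|\bcoefZ_{t+1}\|_2\,\EE\bigl[GX\psi(Xs)/(1+\tau G^2)\bigr]$. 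The central step is a row-wise leave-one-out identity for the diagonal of $\bS_{\backslash 1}$: writing $\bX_{\backslash 1, i}$ for the $i$-th row of $\bX_{\backslash 1}$ viewed as a column and $\bX_{\backslash 1, \backslash i}$ for the matrix with column $1$ and row $i$ both removed, Sherman--Morrison yields
\[
(\bS_{\backslash 1})_{ii} = \bigl(1 + W_i^2\, \bX_{\backslash 1, i}^\top (\bX_{\backslash 1, \backslash i}^\top \bW_{\backslash i}^2 \bX_{\backslash 1, \backslash i})^{-1} \bX_{\backslash 1, i}\bigr)^{-1}.
\]
Combining Hanson--Wright concentration of the inner quadratic form around its trace with the non-asymptotic trace-inverse tail bound of Appendix~\ref{sec:non-asymptotic-rmt}---which gives $\trace\bigl((\bX_{\backslash 1, \backslash i}^\top \bW_{\backslash i}^2 \bX_{\backslash 1, \backslash i})^{-1}\bigr) \approx \tau/\|\bcoefZ_{t+1}\|_2^2$ with exponential tails---delivers $(\bS_{\backslash 1})_{ii}\approx 1/(1+\tau G_i^2)$ uniformly in $i$ after a union bound.

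Expanding $A = \sum_i X_{i,1}W_i(\bS_{\backslash 1})_{ii}\psi(X_{i,1}s_i) + \sum_{i\neq j} X_{i,1}W_i(\bS_{\backslash 1})_{ij}\psi(X_{j,1}s_j)$ and conditioning on $(\bZ, \bX_{\backslash 1})$ so that $\Xcol{1}$ is a fresh $\NORMAL(0,\bI_n)$, the off-diagonal sum has vanishing conditional mean (by independence and mean-zero of the $X_{i,1}$'s), and its conditional variance is bounded by $\|\bS_{\backslash 1}\|_{\mathrm{F}}^2 \|\bW_{t+1}\|_{\mathrm{op}}^2$ times a high-probability bound on $\max_i \psi(X_{i,1}s_i)^2$, giving fluctuations of order $\sqrt{n(\parcompZ_{t+1}^2+\perpcompZ_{t+1}^2)\log n}$ via a Hanson--Wright/Gaussian tail. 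The diagonal sum, after substituting $(\bS_{\backslash 1})_{ii} \approx 1/(1+\tau G_i^2)$, becomes an i.i.d.\ sub-exponential sum (using Assumption~\ref{assptn:Y-psi}) whose mean equals exactly the target $n\|\bcoefZ_{t+1}\|_2\,\EE[GXY/(1+\tau G^2)]$ and whose deviation is controlled by Bernstein's inequality at the same order. Dividing by $n$ yields the bound claimed in part (a).

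The main technical obstacle is propagating the per-row $\widetilde O(n^{-1/2})$ leave-one-out approximation errors into the weighted sum $\sum_i X_{i,1}W_i[(\bS_{\backslash 1})_{ii} - (1+\tau G_i^2)^{-1}]\psi(X_{i,1}s_i)$ without degrading the final $n^{-1/2}$ rate; summing $n$ errors of this size is borderline, so the trace-inverse concentration of Appendix~\ref{sec:non-asymptotic-rmt} must be deployed with exponential---rather than merely polynomial---tails to permit a clean union bound over $i$, and $\|\bW_{t+1}\|_{\mathrm{op}}$ together with the minimum eigenvalue of $\bX_{\backslash 1, \backslash i}^\top \bW_{\backslash i}^2 \bX_{\backslash 1, \backslash i}$ must be controlled uniformly. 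A secondary subtlety is the linear case of Assumption~\ref{assptn:Y-psi}, where $\psi$ is unbounded and both the off-diagonal variance estimate and the diagonal Bernstein step must absorb the linear growth of $\psi(X_{i,1}s_i)$ via a truncation argument.
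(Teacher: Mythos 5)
Your proposal is essentially the same as the paper's proof: for (b) you use independence of $\Xcol{k}$ from $\bW_{t+1}\bS_{\backslash k}\by$ together with operator-norm bounds, exactly as the paper does, and for (a) both arguments rest on the Sherman--Morrison identity $(\bS_{\backslash 1})_{ii} = 1/(1+G_i^2\tau_i)$, the uniform trace-inverse concentration $\max_i|\tau_i-\tau|\lesssim\sqrt{\log n/n}$, Bernstein for the resulting diagonal sum, and a Hanson--Wright-type variance bound for the remaining fluctuations. The only organizational difference is that the paper conditions on $(\bG,\bX_{\backslash 1},\bV)$ and splits the quadratic form into ``form minus its conditional mean'' plus ``conditional mean minus target'' (the conditional mean automatically kills the off-diagonal), whereas you split $\by$ into signal plus noise and the signal into diagonal plus off-diagonal by hand; these are the same computation.
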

Taking these lemmas as given, we turn to the proof of part (a) of the theorem.  Recall that by rotational invariance of the Gaussian distribution, we may assume that $\parcompX_{t+1} = \coefX_{t+1, 1}$.  Consequently, by definition of the deterministic update~\eqref{eq-equivalence}, we have
\begin{align*}
	\bigl \lvert \coefX_{t+1, 1} - \parcompdetX_{t+1} \bigr \rvert &= \biggl \lvert \frac{ \frac{1}{n}\langle \Xcol{1}, \bW_{t+1} \bS_{\backslash 1} \by \rangle}{ \frac{1}{n}\langle \Xcol{1}, \bW_{t+1} \bS_{\backslash 1} \bW_{t+1} \Xcol{1} \rangle } - \frac{\sqrt{\parcompZ_{t+1}^{2} + \perpcompZ_{t+1}^{2}}}{\parcompZ_{t+1}^{2} + \perpcompZ_{t+1}^{2}} \cdot \frac{\EE\bigl\{ \frac{GX  Y}{1+\tau G^{2}} \bigr\}}{ \EE\bigl\{ \frac{G^{2}}{1+\tau G^{2}} \bigr\} }  \biggr \rvert \\
	&\leq \frac{1}{ \frac{1}{n}\langle \Xcol{1}, \bW_{t+1} \bS_{\backslash 1} \bW_{t+1} \Xcol{1} \rangle }\cdot \bigl[A + \parcompdetX_{t+1} B \bigr],
\end{align*}
where
\begin{align*}
	A &= \Bigl \lvert \frac{1}{n} \langle \Xcol{1}, \bW_{t+1} \bS_{\backslash 1} \by \rangle - \sqrt{\parcompZ_{t+1}^{2} + \perpcompZ_{t+1}^{2}} \cdot \EE\Bigl\{ \frac{GX Y}{1+\tau G^{2}} \Bigr\} \Bigr \rvert \qquad \text{ and } \\
	B &=  \Bigl \lvert \frac{1}{n} \langle \Xcol{1}, \bW_{t+1} \bS_{\backslash 1} \bW_{t+1} \Xcol{1} \rangle - (\parcompZ_{t+1}^{2} + \perpcompZ_{t+1}^{2}) \cdot  \EE\Bigl\{ \frac{G^{2}}{1+\tau G^{2}} \Bigr\}  \Bigr \rvert.
\end{align*}
Applying Lemma~\ref{lem:k-coord-num}(a) to bound term $A$ and Lemma~\ref{lem:k-coord-denom} to bound term $B$ yields the pair of inequalities 
\[
A \leq C_1(\parsigma)\sqrt{\frac{(\parcompZ_{t+1}^2 + \perpcompZ_{t+1}^2)\log{n}}{n}}  \text{ and } B \leq C_{1}(\parcompZ_{t+1}^2 + \perpcompZ_{t+1}^2)\sqrt{\frac{\log{n}}{n}}, \text{ with probability } \geq 1 - n^{-25}.
\]
Applying Lemma~\ref{lem:k-coord-denom} once more yields the lower bound 
\[
\langle \Xcol{1}, \bW_{t+1} \bS_{\backslash 1} \bW_{t+1} \Xcol{1} \rangle  \geq c (\parcompZ_{t+1}^2 + \perpcompZ_{t+1}^2) n, \quad \text{ with probability } 1 - n^{-25},
\]
for a positive constant $c$. Finally, we use the estimate $\parcompdetX_{t+1} \lesssim (\parcompZ_{t+1}^2 + \perpcompZ_{t+1}^2)^{-1/2}$ (see Lemma~\ref{lemma:upper-bound-para-det}) and assemble the pieces to complete the proof. \qed

It remains to prove Lemmas~\ref{lem:k-coord-denom} and~\ref{lem:k-coord-num}.  For the proofs of both, it is useful to introduce the change of variables 
\begin{align} \label{eq:change-of-var-W}
	\bW_{t+1} = \sqrt{\parcompZ_{t+1}^2 + \perpcompZ_{t+1}^2} \bG, \quad \text{ where } \quad \bG = \diag(G_1, G_2, \dots, G_n),
\end{align}   
and $(G_i)_{1 \leq i \leq n} \overset{\mathsf{i.i.d.}}{\sim} \mathsf{N}(0, 1)$.  

\begin{remark} \label{rem:weaken-tails}
	\revision{
			As alluded to in Remark~\ref{rem:poly-growth}, we note that the proof can be extended to accommodate polynomially growing link function $\psi$, i.e., $|\psi(x)| \leq C_{\psi} |x|^{D}$ for all $x\in \mathbb{R}$.
			In particular, by employing a general Hanson--Wright inequality~\citep[Corollary 1.4]{gotze2021concentration}, one can show that with probability at least $1-n^{-25}$, 
			\begin{align*}
					\frac{1}{n} \Bigl \lvert \langle \Xcol{1}, \bW_{t+1} \bS_{\backslash 1} \by \rangle- n \big(\parcompZ_{t+1}^{2} + \perpcompZ_{t+1}^{2}\big)^{1/2} \cdot \EE\Bigl\{ \frac{G X Y}{1+\tau G^{2}} \Bigr\}  \Bigr \rvert \leq  C_1(\parsigma)\sqrt{\frac{(\parcompZ_{t+1}^2 + \perpcompZ_{t+1}^2)}{n}} \cdot \log^{4D}(n),
			\end{align*}
			where $C_{1}$ is a constant depends only on $C_{\psi}$ and $D$. Moreover, Lemma~\ref{lem:k-coord-denom} and Lemma~\ref{lem:k-coord-num}(b) still hold. Then by following the same argument above, one can prove inequality~\eqref{eq:par-deviation-general-psi}.
	}
\end{remark}

\subsubsection{Proof of Lemma~\ref{lem:k-coord-denom}} \label{sec:proof-lem-k-coord-denom}
Recalling the change of variables~\eqref{eq:change-of-var-W}, we first re-scale, writing
\[
\langle \Xcol{k}, \bW_{t+1} \bSk{k} \bW_{t+1} \Xcol{k}\rangle = (\parcompZ_{t+1}^2 + \perpcompZ_{t+1}^2) \cdot \langle \Xcol{k}, \bG \bSk{k} \bG \Xcol{k} \rangle.
\]
Also recall that $\tau = C(\Lambda)^{-1}$.
We next decompose
\begin{align} \label{ineq:decomp-k-coord}
	\Bigl \lvert \frac{1}{n}\bigl \langle \Xcol{k}, \bG \bS_{\backslash k} \bG \Xcol{k} \bigr \rangle - \EE \Bigl\{ \frac{G^2}{1 + \tau G^2} \Bigr\} \Bigr \rvert \leq T_1 + T_2,
\end{align}
where 
\begin{align*}
	T_1 &= \Bigl \lvert \frac{1}{n}\bigl \langle \Xcol{k}, \bG \bS_{\backslash k} \bG \Xcol{k} \bigr \rangle - \frac{1}{n} \trace(\bG \bS_{\backslash k} \bG) \Bigr \rvert, \quad \text{ and } \quad T_2 &=  \Bigl \lvert \frac{1}{n} \trace(\bG \bS_{\backslash k} \bG) -  \EE \Bigl\{ \frac{G^2}{1 + \tau G^2} \Bigr\} \Bigr \rvert.
\end{align*}
We claim that
\begin{align} \label{ineq:T1T2-bound-kcoord-denom}
	T_1 \vee T_2 \leq C\sqrt{\log{n}/n}, \qquad \text{ with probability } \geq 1 - n^{-25},
\end{align}
which we prove momentarily.
Substituting this bound into the RHS of the inequality~\eqref{ineq:decomp-k-coord} and applying the rescaling~\eqref{eq:change-of-var-W} yields the desired result.  It remains to bound the terms $T_1$ and $T_2$.  

\paragraph{Bounding $T_1$~\eqref{ineq:T1T2-bound-kcoord-denom}.}
We begin by claiming the following pair of inequalities: 
\begin{align}\label{ineq:GSG-F-GSG-op}
	\| \bG \bS_{\backslash k} \bG\|_F^2 \leq C n, \quad \text{ and } \quad \| \bG \bS_{\backslash k} \bG\|_{\op} \leq C \log{n}, \quad \text{ with probability } \geq 1 - n^{-25}.
\end{align}
Applying these in conjunction with the Hanson--Wright inequality, we obtain
\[
\Prob \Bigl\{ \frac{1}{n} \bigl \lvert \bigl \langle \bX^{(k)}, \bG \bS_{\backslash k} \bG\bX^{(k)} \bigr \rangle - \frac{1}{n}\trace(\bG\bS_{\backslash k} \bG) \bigr \rvert \geq t \Bigr\} \leq 2 \exp\Bigl\{ -c \min \Bigl(nt^2, \frac{nt}{\log{n}}\Bigr)\Bigr\} + \frac{1}{n^{25}},
\]
It remains to prove the inequality~\eqref{ineq:GSG-F-GSG-op}.

\bigskip
\noindent\underline{Bounding $\| \bG \bS_{\backslash k} \bG\|_F^2$~\eqref{ineq:GSG-F-GSG-op}.}
Note that
\begin{align*}
	\| \bG \bS_{\backslash k} \bG\|_F^2 \overset{\1}{=} \bigl\| (\bS_{\backslash k} \bG)^{\top} (\bS_{\backslash k} \bG )\bigr\|_{F}^2 &= \mathsf{tr}\Bigl( \bigl[(\bS_{\backslash k} \bG)^{\top} (\bS_{\backslash k} \bG)\bigr]^2\Bigr) = \sum_{i=1}^{n} \sigma_{i}^4 ( \bS_{\backslash k} \bG ),
\end{align*}
where in step $\1$ we have used idempotence of the projection matrix $\bS_{\backslash k}$.  
Applying the Courant--Fischer theorem in conjunction with the fact that $\sigma_{\max}(\bS_{\backslash k}) = 1$ yields the bound
\[
\sum_{i=1}^{n} \sigma_{i}^4 ( \bS_{\backslash k} \bG ) \leq \sum_{i=1}^{n} \sigma_{\max}^4(\bS_{\backslash k}) \cdot \sigma_{i}^4 (\bG ) \leq \sum_{i=1}^{n} G_i^4,
\]
Now, by assumption, $n \geq C'$, and $G_i^4$ has bounded moments of a constant order. Applying~\citet[Ex. 2.20]{wainwright2019high}, we obtain that $\sum_{i=1}^{n} G_i^4 \leq Cn$ with probability at least $1 - n^{-25}$.  Combining the pieces then yields
\begin{align*}
	\| \bG \bS_{\backslash k} \bG\|_F^2 \leq C n \qquad \text{ with probability } \geq 1 - n^{-25}.
\end{align*}

\noindent\underline{Bounding $\| \bG \bS_{\backslash k} \bG\|_{\op}$~\eqref{ineq:GSG-F-GSG-op}.}  We have 
\[
\| \bG \bS_{\backslash k} \bG\|_{\op} \leq \| \bG \|_{\op}^2 = \bigl(\max\bigl\{\lvert G_1 \rvert, \dots , \lvert G_n \rvert \bigr\}\bigr)^2 \leq C\log{n},
\]
where the final inequality holds with probability $\geq 1 - n^{-25}$.

\paragraph{Bounding $T_2$~\eqref{ineq:T1T2-bound-kcoord-denom}.}
We expand 
\[
\trace(\bG \bS_{\backslash k} \bG) = \sum_{i=1}^{n} G_i^2 - G_i^2  \cdot \bP_{\backslash k} (i, i),
\]
where we recall the projection matrix $\bP_{\backslash k}$~\eqref{eq:def-loo-proj} and use the notation $\bP_{\backslash k}(i, i)$ to denote the entry $i$-th diagonal entry of $\bP_{\backslash k}$. Recall that $\bx_{j\backslash k}$ is the $j$-th column of $\bX_{\backslash k}^{\top}$. In order to compactly represent these entries, we introduce the notation
\[
\bSig = \frac{1}{n}\bX_{\backslash k}^{\top} \bG^2 \bX_{\backslash k}, \quad \bSig_i = \frac{1}{n}\sum_{j \neq i} G_j^2 \bx_{j \backslash k} \bx_{j \backslash k}^{\top}, \quad \text{ and } \quad \tau_i = \frac{1}{n}\langle \bx_{i \backslash k}, \bSig_{i}^{-1} \bx_{i \backslash k}\rangle, 
\]
whence straightforward computation yields
\begin{align} \label{eq:P-entries}
	\bP_{\backslash k}(i, i) =\frac{1}{n}G_i^2 \langle \bx_{i \backslash k}, \bSig^{-1} \bx_{i \backslash k} \rangle \overset{\1}{=} \frac{G_i^2 \tau_i}{1 + G_i^2 \tau_i}.
\end{align}
Here in step $\1$ we have used the Sherman--Morrison formula so that
\[
	\bSig^{-1} = \Big(\bSig_{i} + \frac{1}{n} G_{i}^{2} \bx_{i\backslash k} \bx_{i \backslash k}^{\top}\Big)^{-1} = \bSig_{i}^{-1} - \frac{1}{n} G_{i}^{2} \cdot\frac{\bSig_{i}^{-1}  \bx_{i\backslash k} \bx_{i \backslash k}^{\top} \bSig_{i}^{-1} }{1+G_{i}^{2} \tau_{i}}.
\] 
Taking stock, we obtain the identity
\[
\trace(\bG \bS_{\backslash k} \bG) = \sum_{i=1}^{n}\frac{G_i^2}{1 +G_i^2 \tau_i},
\]
and we bound $T_2$ in turn by 
\begin{align}\label{ineq:T2-decomp}
	T_2 \leq \underbrace{\Bigl \lvert \frac{1}{n}\sum_{i=1}^{n}\frac{G_i^2}{1 + G_i^2 \tau_i} - \frac{1}{n}\sum_{i=1}^{n}\frac{G_i^2}{1 + G_i^2 \tau} \Bigr \rvert}_{A} + \underbrace{\Bigl \lvert \frac{1}{n}\sum_{i=1}^{n}\frac{G_i^2}{1 +G_i^2 \tau} - \EE \Bigl\{ \frac{G^2}{1 + \tau G^2}\Bigr\} \Bigr \rvert}_{B}.
\end{align}

\bigskip
\noindent\underline{Bounding term $A$~\eqref{ineq:T2-decomp}.}
Re-arranging yields
\[
A = \frac{1}{n}\Bigl \lvert \sum_{i=1}^{n} \frac{G_i^4 \cdot (\tau- \tau_i)}{(1 + G_i^2 \tau_i)(1 + G_i^2 \tau)} \Bigr \rvert \leq \frac{1}{n} \sum_{i=1}^{n}G_i^4\cdot \lvert \tau_i - \tau \lvert \leq \frac{1}{n} \max_{i \in [n]} \bigl\{\lvert \tau_i - \tau \rvert\bigr\} \cdot \sum_{i=1}^{n} G_i^4.
\]
We first handle the maximum deviation of $\tau_i$ from $\tau$. Applying the triangle inequality yields
	\begin{align}\label{ineq1-max-dev-tau}
		\big \lvert \bx_{i,\setminus k}^{\top} \bSig_{i}^{-1} \bx_{i, \setminus k} - \tau \big \rvert \leq 
		\big \lvert \bx_{i,\setminus k}^{\top} \bSig_{i}^{-1} \bx_{i, \setminus k} - \trace(\bSig_{i}^{-1}) \big \rvert + 
		\big \lvert \trace(\bSig_{i}^{-1}) - \tau \big \rvert.
	\end{align}
To bound the first term in the above decomposition, we apply Lemma~\ref{lem:general-trace-concentration} and obtain that there exists a universal, positive constant $C$ such that 
	\begin{align}\label{ineq2-max-dev-tau}
		\Pr \Big\{ \lvert \bx_{i,\setminus k}^{\top} \bSig_{i}^{-1} \bx_{i, \setminus k} - \trace(\bSig_{i}^{-1}) \big \rvert \geq C \sqrt{\log(n)/n}  \Big\} \leq n^{-30}.
	\end{align}
To bound the second term, note that $\tau = C(\oversamp)^{-1}$ by definition. Consequently, by applying Lemma~\ref{lem:tau-concentration}, we obtain that there exists another universal, positive constant $C$ such that 
	\begin{align}\label{ineq3-max-dev-tau}
		\Pr \Big\{ \lvert \trace(\bSig_{i}^{-1}) - \tau \big \rvert \geq C \sqrt{\log(n)/n}  \Big\} \leq n^{-30}.
	\end{align}
Substituting inequality~\eqref{ineq2-max-dev-tau} and inequality~\eqref{ineq3-max-dev-tau} into inequality~\eqref{ineq1-max-dev-tau} yields
	\[
		\big \lvert \bx_{i,\setminus k}^{\top} \bSig_{i}^{-1} \bx_{i, \setminus k} - \tau \big \rvert \leq C\sqrt{\frac{\log(n)}{n}}, \quad \text{with probability} \geq 1 - 2n^{-30}.
	\] 
Applying the union bound then yields
\begin{align} \label{eq:tau-stuff}
	\max_{i \in [n]} \; \lvert \tau_i - \tau \rvert = \max_{i \in [n]} \; \bigl\lvert \bx_{i \backslash k}^{\top} \bSig_{i}^{-1} \bx_{i\backslash k} - \tau \bigr\rvert \leq C\sqrt{\frac{\log{n}}{n}}, \qquad \text{ with probability } \geq 1 - n^{-25}.
\end{align}
Once again applying~\citet[Ex. 2.20]{wainwright2019high} yields $\sum_{i=1}^{n} G_i^4 \leq Cn$ with probability at least $1 - n^{-25}$, whence we obtain the bound
\begin{align} \label{eq:A-bound}
	A \leq C\sqrt{\frac{\log{n}}{n}}, \qquad \text{ with probability } \geq 1 - n^{-25}.
\end{align}

\bigskip
\noindent\underline{Bounding $B$~\eqref{ineq:T2-decomp}.}
Note that
\[
\Bigl \| \frac{G^2}{1 + G^2 \tau} \Bigr \|_{\psi_1} \leq \| G^2 \|_{\psi_1} \leq C.
\]
Consequently, an application of Bernstein's inequality yields the bound
\begin{align} \label{eq:B-bound}
	B\leq C \sqrt{\frac{\log{n}}{n}} \qquad \text{ with probability } \geq 1 - n^{-25}.
\end{align}

\medskip

Combining bounds~\eqref{eq:A-bound} and~\eqref{eq:B-bound} yields the claimed result.
\qed

\subsubsection{Proof of Lemma~\ref{lem:k-coord-num}} \label{sec:proof-lem-k-coord-num}
Recalling the change of variables~\eqref{eq:change-of-var-W}, we write the projection matrices $\bP_{\backslash k}$ and $\bS_{\backslash k}$ as
\begin{align}
	\label{eq:def-loo-proj-normalized}
	\bP_{\backslash k} = \bG \bX_{\backslash k} \cdot \bigl( \bX_{\backslash k}^{\top} \bG^2 \bX_{\backslash k} \bigr)^{-1} \cdot \bX_{\backslash k}^{\top} \bG \quad \text{ and } \quad \bS_{\backslash k} = \bI - \bP_{\backslash k}.
\end{align}
Also recall from equation~\eqref{definition-X-G-V-Y} that
\[
X,G,V \overset{\mathsf{i.i.d.}}{\sim} \mathcal{N}(0,1)\;\;\text{and}\;\;  Y = \psi\left( \frac{\parcompZ_{t+1}}{\sqrt{\parcompZ_{t+1}^{2} + \perpcompZ_{t+1}^{2}}} \cdot G\cdot X + \frac{\perpcompZ_{t+1}}{\sqrt{\parcompZ_{t+1}^{2} + \perpcompZ_{t+1}^{2}}} \cdot V\cdot X \right).
\]
\paragraph{Proof of Lemma~\ref{lem:k-coord-num}(a):}
It suffices to analyze the quadratic form $\langle \Xcol{1}, \bG \bS_{\backslash 1} \by \rangle$.  
Write the vector $\by$ as
\[
\by = \psi\bigl((\bZ \nu_{\star}) \odot \Xcol{1}\bigr) + \beps \overset{(d)}{=} \psi\Bigl(\frac{\parcompZ_{t+1}}{\sqrt{\parcompZ_{t+1}^2 + \perpcompZ_{t+1}^2}} \bG \Xcol{1} +  \frac{\perpcompZ_{t+1}}{\sqrt{\parcompZ_{t+1}^2 + \perpcompZ_{t+1}^2}} \bV \Xcol{1}\Bigr) + \beps,
\]
where $\bV = \diag(V_1, V_2, \dots, V_n)$ is a diagonal matrix consisting of i.i.d. standard Gaussian random variables independent of everything else,
and the distributional equivalence comes from decomposing $\bcoefZ_{\star}$ as $\bcoefZ_{\star} = \bP_{\bcoefZ_{t+1}} \bcoefZ_{\star} + \bP_{\bcoefZ_{t+1}}^{\perp} \bcoefZ_{\star}$.  
We now make two complementary claims:
\small
\begin{subequations}
	\begin{align}
		&\Prob_{\Xcol{1}, \beps} \left\{  \Big\vert \langle \Xcol{1}, \bG \bS_{\backslash 1} \by \rangle -  \EE[\langle \Xcol{1}, \bG \bS_{\backslash 1} \by \rangle \;|\; \bG, \bX_{\backslash 1}, \bV] \Big \vert \gtrsim  (\parsigma)\Big(\sqrt{\log n \cdot \trace(\bG^2)} + \log(n) \| \bG \|_{\op}  \Big) \right\} \lesssim n^{-25}, \label{eq:first-prob} \\
		&\Prob_{\bG, \bX_{\backslash 1}, \bV} \left\{ \Big\vert  \EE[\langle \Xcol{1}, \bG \bS_{\backslash 1} \by \rangle \;|\; \bG, \bX_{\backslash 1}, \bV] - n \cdot \EE\bigg\{ \frac{GXY}{1+\tau G^{2}}\bigg\} \Big\vert \gtrsim \sqrt{n \log n} \right\} \lesssim n^{-25}. \label{eq:second-prob}
	\end{align}
\end{subequations}
\normalsize

We prove these claims momentarily, but let us first use them to prove the desired result.
Hoeffding's inequality yields that with probability greater than $1 - n^{-25}$, we have
\begin{align*}
	\| \bG \|_{\op} \lesssim \sqrt{\log n} \qquad \text{ and } \trace(\bG^2) \lesssim n.
\end{align*}
Using this result in conjunction with the law of total probability yields
\begin{align*}
	\Prob \left\{  \Big\vert \langle \Xcol{1}, \bG \bS_{\backslash 1} \by \rangle -  n \cdot \EE\bigg\{ \frac{GXY}{1+\tau G^{2}}\bigg\} \Big\vert \gtrsim (\parsigma) \sqrt{n\log(n)}  \right\} &\lesssim n^{-10}.
\end{align*}
The result follows upon changing variables back to $\bW_{t+1} = \sqrt{\parcompZ_{t+1}^2 + \perpcompZ_{t+1}^2} \bG$.

\medskip

\noindent \underline{Proof of claim~\eqref{eq:first-prob}:}
Note that the matrix $\bG\bS_{\backslash 1}$ is measurable with respect to the triplet $(\bG, \bX_{\backslash 1}, \bV)$, and that
\begin{align*}
	\| \bG\bS_{\backslash 1} \|_{\op} \leq \| \bG \|_{\op} \quad \text{ and } \| \bG \bS_{\backslash 1} \|_F^2 = \sum_{i=1}^{n} \sigma_i^2\bigl(\bG^2\bS_{\backslash 1}\bigr) \leq \sum_{i=1}^{n} \sigma_i^2 \bigl(\bG^2\bigr) = \trace (\bG^2).
\end{align*}
Letting $\by' = \by - \boldsymbol{\epsilon}$, note the decomposition
\begin{align*}
\langle \Xcol{1}, \bG\bS_{\backslash 1} \by \rangle &= \underbrace{ \langle \Xcol{1}, \bG\bS_{\backslash 1} \boldsymbol{\epsilon} \rangle }_{T_{1}} + \underbrace{ \langle \Xcol{1}, \bG\bS_{\backslash 1} \by' \rangle }_{T_{2}}
\end{align*}
Letting $\boldsymbol{\epsilon}' = \boldsymbol{\epsilon}/\sigma$, we obtain that
\[
	T_{1} = \frac{\sigma}{2} \cdot \Bigl[\langle \Xcol{1} + \bepsilon' , \bG\bS_{\backslash 1} (\Xcol{1} + \bepsilon') \rangle - \langle \Xcol{1}, \bG\bS_{\backslash 1} \Xcol{1} \rangle - \langle \bepsilon', \bG\bS_{\backslash 1} \bepsilon' \rangle \Bigr]
\]
Towards bounding $T_{1}$, note that entries of $\bepsilon'$ and $\Xcol{1}$ are standard normal random variables. Applying the Hanson--Wright inequality to each term on the RHS (treating the matrix $\bG\bS_{\backslash 1}$ as deterministic) and noting
$\EE[ T_{1} | \bG, \bX_{\backslash 1}, \bV ] = 0$ (since $\Xcol{1}$ and $\bepsilon$ are independent) yields
\[
	\Pr\Big\{ \big | T_{1} \big| \geq C \cdot \sigma \cdot \Big(\sqrt{\log n \cdot \trace(\bG^2)} + \log n \cdot \| \bG \|_{\op}  \Big) \Big\} \leq n^{-30}.
\]
Towards bounding $T_{2}$, we obtain
\[
	T_{2} = \frac{1}{2} \cdot \Bigl[\langle \Xcol{1} + \by' , \bG\bS_{\backslash 1} (\Xcol{1} + \by') \rangle - \langle \Xcol{1}, \bG\bS_{\backslash 1} \Xcol{1} \rangle - \langle \by', \bG\bS_{\backslash 1} \by' \rangle \Bigr].
\]
Continuing, note that entries of $\by'$ and $\Xcol{1}$ are zero mean Gaussian random variables whose variances are bounded by some universal constant $C$. Applying the Hanson--Wright inequality to each term on the RHS (treating the matrix $\bG\bS_{\backslash 1}$ as deterministic) then yields 
\[
	\Pr\Big\{ \big | T_{2} - \EE \big[ \langle \Xcol{1}, \bG\bS_{\backslash 1} \by' \rangle | \bG, \bX_{\backslash 1}, \bV \big] \big| \geq C  \Big(\sqrt{\log n \cdot \trace(\bG^2)} + \log n \cdot \| \bG \|_{\op}  \Big) \Big\} \leq n^{-30}.
\]
Putting the bounds on $T_{1}$ and $T_{2}$ together yields the desired result.

\noindent \underline{Proof of claim~\eqref{eq:second-prob}:}
Note that
\begin{align}\label{eq:expansion-cond-expectation}
	\EE \bigl\{\langle \Xcol{1}, \bG\bS_{\backslash 1} \by \rangle \mid \bG, \bX_{\backslash 1}, \bV \bigr\} = \sum_{i=1}^{n} \underbrace{\EE\bigl\{X_{i1} y_i \mid G_{i},V_{i} \bigr\}}_{=:\xi_i} G_i \bS_{\backslash k}(i, i),
\end{align}
where $\bS_{\backslash k}(i, i)$ denotes the $(i, i)$-th entry of the matrix $\bS_{\backslash k}$. Recall that as in the proof of Lemma~\ref{lem:k-coord-denom} (see Eq.~\eqref{eq:P-entries}) we have the representation
\[
\bigl(\bSk{k}\bigr)_{ii} = \frac{1}{1 +G_i^2 \tau_i}, 
\]
where $\tau_i = \frac{1}{n}\langle \bx_{i \backslash k}, \bSig_i^{-1} \bx_{i \backslash k} \rangle$ and $\bSig_i = \frac{1}{n}\sum_{j \neq i} G_j^2 \bx_{j \backslash k} \bx_{j \backslash k}^{\top}$.  Substituting this into the expansion~\eqref{eq:expansion-cond-expectation} and re-arranging yields the equivalent relation
\[
\EE \bigl\{\langle \Xcol{1}, \bG\bS_{\backslash 1} \by \rangle \mid \bG, \bX_{\backslash 1}, \bV \bigr\} = \sum_{i=1}^{n} \frac{\xi_i G_i}{1 + G_i^2 \tau} + \sum_{i=1}^{n}\frac{\xi_i G_i^3 (\tau - \tau_i)}{(1 + G_i^2 \tau)(1 + G_i^2 \tau_{i})}.
\]
Note that this is the random variable we wish to control, where the remaining randomness resides in the triplet $(\bG, \bX_{\backslash 1}, \bV)$. By Assumption~\ref{assptn:Y-psi}, we have $\| \xi_i \|_{\psi_2} \leq C_{\psi}$, whence $\| (\xi_i G_i)/(1 + G_i^2 \tau)\|_{\psi_1} \leq \| \xi_i G_i \|_{\psi_1} \leq C'_{\psi}$.  Thus, by Bernstein's inequality
\begin{align*}
	\Bigl \lvert \frac{1}{n} \sum_{i=1}^{n} \frac{\xi_iG_i}{1 + G_i^2 \tau} - \EE \Bigl\{\frac{\xi_iG_i}{1 + G_i^2 \tau}\Bigr\}\Bigr \rvert  \lesssim  \sqrt{\frac{\log{n}}{n}} \cdot, \quad \text{ with probability } \geq 1 - n^{-25}.
\end{align*}
Moreover, 
\begin{align*}
	\sum_{i=1}^{n}\frac{\xi_iG_i^3 (\tau - \tau_i)}{(1 + G_i^2 \tau)(1 + G_i^2 \tau_{i})} &\leq \max_{i \in [n]} \; \lvert \tau_i - \tau \rvert \cdot  \sum_{i=1}^{n}\xi_i G_i^3.
\end{align*}
Note (as in the proof of Lemma~\ref{lem:k-coord-denom}, see Eq.~\eqref{eq:tau-stuff}) that with probability at least $1 - n^{-25}$, 
\[
\max_{i \in [n]} \; \lvert \tau_i - \tau \rvert \lesssim  \sqrt{\frac{\log{n}}{n}}, \quad \text{ with probability } \geq 1 - n^{-25}.
\]
Next, since $\xi_i G_i^3$ has bounded moments of constant order, we may apply~\citet[Ex. 2.20]{wainwright2019high} to obtain $\sum_{i=1}^{n} \xi_i G_i^3 \leq Cn$. Finally, a straightforward calculation yields that
\[
\EE \Bigl\{\frac{\xi_iG_i}{1 + G_i^2 \tau}\Bigr\} = \EE \Bigl\{\frac{GXY}{1 + G^2 \tau}\Bigr\}.
\]
Combining the pieces, we see that with probability at least $1 - C n^{-25}$,
\begin{align}\label{ineq:second-term}
	\Bigl \lvert \frac{1}{n}\EE  \bigl\{\langle \Xcol{1}, \bG \bS_{\backslash 1} \by \rangle \mid \bG, \bX_{\backslash 1}, \bV \bigr\} - \EE \Bigl\{\frac{GXY}{1 + G^2 \tau}\Bigr\} \Bigr \rvert \leq C \sqrt{\frac{\log{n}}{n}},
\end{align}
as claimed.
This completes the proof of the claims and hence part (a) of the lemma. \qed

\paragraph{Proof of Lemma~\ref{lem:k-coord-num}(b):} 
Recall the change of variables~\eqref{eq:change-of-var-W} and note the bound
\[
\| \bG \bSk{k} \by \|_2^2 \leq C(1+\sigma^{2})n \log{n}, \quad \text{ with probability } \geq 1 - n^{-25},
\]
where we have used sub-multiplicativity of the operator norm, a standard bound on maxima of Gaussian random variables to bound $\| \bG \|_{\op}$, and~\citet[Ex. 2.20]{wainwright2019high} to upper bound $\| \by \|_2^2$.  Since the random vectors $\Xcol{k}$ and $\bG \bSk{k} \by$ are independent, we condition on the inequality of the above display and apply Hoeffding's inequality to obtain the bound
\[
\frac{1}{n} \bigl \lvert \langle \Xcol{k}, \bG \bSk{k} \by \rangle \bigr \rvert \leq \frac{C (\parsigma) \log{n}}{\sqrt{n}}, \quad \text{ with probability } \geq 1 - n^{-25}.
\]
The result is obtained upon using the change of variables $\sqrt{\parcompZ_{t+1}^2 + \perpcompZ_{t+1}^2} \bG = \bW_{t+1}$ once more.  \qed

\subsection{Orthogonal component: Proof of Theorem~\ref{thm:one-step}(b)} \label{sec:proof-orthogonal}
Recall the change of variables~\eqref{eq:change-of-var-W} and write
\[
\bP = \bG \bX \cdot \bigl( \bX^{\top} \bG^2 \bX \bigr)^{-1} \cdot \bX^{\top} \bG \quad \text{ and } \quad \bS = \bI - \bP.
\]
Also define the following quantities obtained by leaving the $i$-th \emph{sample} out:
\begin{align*}
	\by^{(-i)} &:= [y_1 \; |\; \dots \; | \; y_{i-1} \; | \; y_{i + 1} \; | \; \dots \; | \; y_n] \\
	\bX^{(-i)} &:= [\bx_1 \; |\; \dots \; | \; \bx_{i-1} \; | \; \bx_{i + 1} \; | \; \dots \; | \; \bx_n]^{\top}, \text{ and } \\
	\bG^{(-i)} &:= \diag(G_1, \dots, G_{i -1}, G_{i+1}, \dots, G_n).
\end{align*}
Using these, define the estimators
\begin{align} \label{eq:def-mu-i-mu-bar}
	\bmuG^{(i)} := \argmin_{\bcoefX \in \mathbb{R}^{d}}\; \bigl \| \by^{(-i)} - \bG^{(-i)} \bX^{(-i)} \bcoefX \bigr \|_2^2, \quad \text{ and } \quad  \bmuG := \argmin_{\bcoefX \in \mathbb{R}^{d}}\; \bigl \| \by - \bG \bX \bcoefX \bigr \|_2^2.
\end{align}
Note that $\bmuG$ is a scaled version of the true estimate $\bcoefX_{t + 1}$.

We are now ready to state two key lemmas.  The first demonstrates that the sum of squares of the numerator~\eqref{eq:k-coord} concentrates, for all $k \neq 1$, around a deterministic quantity with fluctuations on the order $\widetilde{O}(\sqrt{d}/n)$.  We provide the proof of Lemma~\ref{lem:orthogonal-concentration} in Section~\ref{sec:proof-lem-orthogonal-concentration}.
\begin{lemma}
	\label{lem:orthogonal-concentration}
	Under the assumptions of Theorem~\ref{thm:one-step}, 
	there exists a universal, positive constant $C_1$ such that with probability at least $1 - n^{-15}$,
	\begin{align*}
		&\Bigl \lvert \frac{1}{n^2}  \sum_{k=2}^{d}  \bigl\langle \Xcol{k}, \bG \bS_{\backslash k} \by \bigr\rangle^2 - \frac{d-1}{n^2} \EE \bigl\{\| \bG \bS \by \|_2^2\bigr\}\Bigr \rvert \leq  \frac{C_1(1 + \sigma^2)  \log^8(n) }{\sqrt{n \oversamp}}.
	\end{align*}
\end{lemma}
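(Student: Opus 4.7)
The plan is to split the sum into a conditionally mean-zero fluctuation term and a ``bias'' term that concentrates around $(d-1)\EE\{\|\bG\bS\by\|_2^2\}$. After the rescaling $\bW_{t+1}=\sqrt{\parcompZ_{t+1}^2+\perpcompZ_{t+1}^2}\,\bG$ from~\eqref{eq:change-of-var-W}, the key observation I would exploit is that for each $k\in\{2,\ldots,d\}$, the column $\Xcol{k}$ is independent of $\mathcal{F}_k:=\sigma(\bG,\bV,\beps,\bX_{\backslash k})$, while $\bu_k:=\bG\bSk{k}\by$ is $\mathcal{F}_k$-measurable. Hence $\EE[\langle\Xcol{k},\bu_k\rangle^2\mid\mathcal{F}_k]=\|\bu_k\|_2^2$, which motivates the decomposition
\[
\sum_{k=2}^{d}\langle\Xcol{k},\bG\bSk{k}\by\rangle^2
= \underbrace{\sum_{k=2}^{d}\bigl[\langle\Xcol{k},\bu_k\rangle^2-\|\bu_k\|_2^2\bigr]}_{=:E_1}
+ \underbrace{\sum_{k=2}^{d}\|\bG\bSk{k}\by\|_2^2}_{=:E_2}.
\]
All subsequent estimates would be carried out on a high-probability event $\mathcal{E}$ on which $\|\bG\|_{\op}^2\lesssim\log n$, $\|\bX\|_{\op}^2\lesssim n$, $\|(\bX^\top\bG^2\bX)^{-1}\|_{\op}\lesssim 1/n$ (via the minimum-eigenvalue bound of Appendix~\ref{sec:bounds-minimum-eigenvalue}), and $|\bmuG_k|\lesssim\log(n)/\sqrt{n}$ for each $k\in\{2,\ldots,d\}$ (obtained by dividing Lemma~\ref{lem:k-coord-num}(b) by Lemma~\ref{lem:k-coord-denom}).

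For $E_2$ I would use the Frisch--Waugh--Lovell identity $\bG\bSk{k}\by=\bG\bS\by+\bmuG_k\,\bG\bSk{k}\bG\Xcol{k}$ (a direct consequence of~\eqref{eq:k-coord}) to expand $\|\bG\bSk{k}\by\|_2^2$ as $\|\bG\bS\by\|_2^2$ plus cross and square corrections proportional to $|\bmuG_k|$ and $\bmuG_k^2$. On $\mathcal{E}$, each such correction is polylogarithmic in $n$, giving $E_2=(d-1)\|\bG\bS\by\|_2^2+R$ with $|R|\lesssim d\,\mathrm{polylog}(n)$. A standard Lipschitz Gaussian concentration argument applied to $\|\bG\bS\by\|_2^2/n$---viewed as a function of the Gaussian tuple $(\bX,\bG,\bV,\beps)$ restricted to $\mathcal{E}$---then yields fluctuations of order $\sqrt{n}\,\mathrm{polylog}(n)$ around $\EE\{\|\bG\bS\by\|_2^2\}$, so that $\bigl|E_2-(d-1)\EE\{\|\bG\bS\by\|_2^2\}\bigr|/n^2$ is bounded by $\mathrm{polylog}(n)/\sqrt{n\oversamp}$, well within the target.

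The main obstacle is controlling $E_1$: its $d-1$ summands are not mutually independent since each $\bu_k$ depends on $\bX_{\backslash k}$, so Bernstein-type tools do not apply directly. I would invoke Stein's method of exchangeable pairs~\citep{chatterjee2005concentration}, constructing $(\bX,\bX')$ by selecting a row $\bx_{i^\ast}$ of $\bX$ uniformly at random and replacing it with a fresh independent Gaussian copy, while holding $(\bG,\bV,\beps)$ fixed. The one-row perturbation affects every projection $\bSk{k}$ through a Sherman--Morrison-type update (paralleling the calculations in Section~\ref{sec:proof-lem-k-coord-denom}), and I expect to pointwise bound the Stein kernel
\[
v(\bX) := \tfrac{1}{2}\EE\bigl[(E_1-E_1')^2 \,\big|\, \bX,\bG,\bV,\beps\bigr] \;\lesssim\; d\,n\,\mathrm{polylog}(n)
\]
on $\mathcal{E}$, whereupon Chatterjee's concentration inequality yields $\Pr\{|E_1|\geq t\}\leq 2\exp\bigl(-c t^2/(d n^2\,\mathrm{polylog}(n))\bigr)$, i.e.\ $|E_1|/n^2\lesssim\sqrt{d}\,\mathrm{polylog}(n)/n\asymp\mathrm{polylog}(n)/\sqrt{n\oversamp}$. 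Verifying the Stein-kernel bound is the hardest step: it requires tracking how the single-row update simultaneously perturbs all $d-1$ projection matrices and extracting the cancellations that give $\sqrt{d}$ rather than $d$ scaling, analogous in spirit to---but substantially more delicate than---the quadratic-form concentration in Section~\ref{sec:proof-lem-k-coord-denom}. Aggregating the contributions from $E_1$ and $E_2$ on $\mathcal{E}$ and absorbing $\Pr(\mathcal{E}^c)$ into the failure probability via a union bound then completes the proof; the final $\log^8(n)$ factor reflects the accumulated polylog blow-up from the truncations defining $\mathcal{E}$.
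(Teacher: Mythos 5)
Your decomposition into a conditionally centered fluctuation $E_1$ and a ``bias'' $E_2$ is a valid and genuinely different organization of the argument from the one in the paper, which instead applies exchangeable pairs directly to the truncated functional $f^{\downarrow}$ as a whole. However, there are two substantive gaps that would prevent your plan from going through as stated.

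First, the Gaussian Lipschitz concentration you invoke for $\|\bG\bS\by\|_2^2$ is not available for the sign model: with $\psi=\sign$, the map $(\bG,\bV,\Xcol{1})\mapsto\by$ is a vector of jump functions (entries jump across the hyperplanes $X_{i1}=0$), so $\|\bG\bS\by\|_2^2$ is not Lipschitz in the Gaussian tuple on any set, and restriction to a high-probability event $\mathcal{E}$ does not create a Lipschitz constant where none exists. The paper sidesteps this entirely by only ever exchanging the \emph{columns} $\Xcol{2},\dots,\Xcol{d}$: since $\by$ depends on $\bG,\bV,\beps,\Xcol{1}$ but \emph{not} on $\Xcol{k}$ for $k\geq2$, the column-replacement chain never perturbs $\by$, and all the ``badness'' of $\psi$ is quarantined into the fixed vector $\by$ whose only required property is conditional sub-Gaussianity (used once, inside $\mathcal{S}_4$, via Hanson--Wright). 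Your row-replacement chain for $E_1$ has the same problem in miniature: swapping a row $\bx_{i^{\ast}}$ changes $X_{i^{\ast}1}$, hence $y_{i^{\ast}}$ jumps discontinuously, and the proposal does not acknowledge or control this contribution to the Stein kernel.

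Second, even setting aside the nonlinearity, the core technical content of the lemma -- the almost-sure bound $|\fdown(\bX)-\fdown(\bX')|\lesssim\log^{15/2}(n)/n$ on the truncation set -- is precisely what your proposal labels ``the hardest step'' and leaves unverified. This bound is where the $\sqrt{d}$ scaling comes from (via the coupon-collector argument plus Chatterjee's theorem), and achieving it requires the layered rank-one-update expansions and the four truncation sets $\mathcal{S}_1,\dots,\mathcal{S}_4$ of the paper; without it, the claimed kernel estimate $v(\bX)\lesssim dn\,\mathrm{polylog}(n)$ is an assertion, not a proof. A minor additional issue: your cross-term bound $|R|\lesssim d\,\mathrm{polylog}(n)$ in the FWL expansion of $E_2$ is not what Cauchy--Schwarz gives you (each cross term is of order $\sqrt{n}\,\mathrm{polylog}(n)$, not $\mathrm{polylog}(n)$), though the larger bound $d\sqrt{n}\,\mathrm{polylog}(n)$ is actually still sufficient after dividing by $n^2$, so this particular slip is not fatal. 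Finally, note the paper also needs the separate Lemma~\ref{lem:expectation-leave-out-first-col} to pass from $\EE\{\|\bG\bS_{\backslash k}\by\|_2^2\}$ to $\EE\{\|\bG\bS\by\|_2^2\}$; in your decomposition this comparison is hidden inside controlling the mean of $E_2$, and would still need to be carried out.
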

Note that in the above expression, $\| \bG \bS\by \|_2^2$ can be interpreted as a weighted sum of squared residuals since, by definition, $\bS \by = \by - \bP \by = \by - \bG \bX \bmuG$. 

The next lemma shows that the norm of the predictor concentrates around its expectation with fluctuations on the order $n^{-1/2}$.  We provide its proof in Section~\ref{sec:proof-lem-norm-concentration}.
\begin{lemma}
	\label{lem:norm-concentration}
	There exists a universal, positive constant $C_1$ such that the following holds
	\[
	\bigl \lvert \| \bmuG^{(i)} \|_2^2 - \EE \| \bmuG \|_2^2 \bigr \rvert \leq C_1\frac{(1 + \sigma^2)\log^{3}{n}}{\sqrt{n}}, \qquad \text{ with probability } \qquad \geq 1 - n^{-15}.
	\]
\end{lemma}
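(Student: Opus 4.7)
The plan is to decompose via the triangle inequality,
\begin{align*}
\bigl| \|\bmuG^{(i)}\|_2^2 - \EE\|\bmuG\|_2^2 \bigr| \;\leq\; \underbrace{\bigl| \|\bmuG^{(i)}\|_2^2 - \|\bmuG\|_2^2 \bigr|}_{=:\,A} \;+\; \underbrace{\bigl| \|\bmuG\|_2^2 - \EE \|\bmuG\|_2^2 \bigr|}_{=:\,B},
\end{align*}
and bound each piece by $\widetilde{\order}(n^{-1/2})$ on a common high-probability event $\mathcal{E}$ that controls (i) $\sigma_{\min}(\bX^\top \bG^2 \bX) \gtrsim n$ via Appendix~\ref{sec:bounds-minimum-eigenvalue}, (ii) the Gaussian maxima $\max_i |G_i|, |V_i|, |\epsilon_i| \lesssim \sqrt{\log n}$ and $\max_i \|\bx_i\|_2 \lesssim \sqrt{d}$, and (iii) the a priori norm bound $\|\bmuG\|_2 \lesssim 1$, all with $\Prob(\mathcal{E}^c) \leq n^{-20}$.

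For the stability term $A$, I would apply Sherman--Morrison to relate $\bX^\top \bG^2 \bX$ with its leave-one-out counterpart, yielding the explicit identity
\begin{align*}
\bmuG - \bmuG^{(i)} \;=\; \frac{\bigl(\bX^{(-i)\top} \bG^{(-i)2} \bX^{(-i)}\bigr)^{-1} G_i \bx_i \,\bigl(y_i - G_i \langle \bx_i, \bmuG^{(i)} \rangle\bigr)}{1 + G_i^2 \bigl\langle \bx_i, \bigl(\bX^{(-i)\top} \bG^{(-i)2} \bX^{(-i)}\bigr)^{-1} \bx_i\bigr\rangle}.
\end{align*}
On $\mathcal{E}$, the operator-norm bound $\|(\bX^{(-i)\top} \bG^{(-i)2} \bX^{(-i)})^{-1}\|_{\op} \lesssim 1/n$, combined with $\|G_i \bx_i\|_2 \lesssim \sqrt{d \log n}$ and a Gaussian-tail bound on the residual $|y_i - G_i \langle \bx_i, \bmuG^{(i)} \rangle| \lesssim \sqrt{\log n}$, gives $\|\bmuG - \bmuG^{(i)}\|_2 \lesssim \log n/\sqrt{n}$. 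Factoring $a^2 - b^2 = (a-b)(a+b)$ and using $\|\bmuG + \bmuG^{(i)}\|_2 \lesssim 1$ then yields $A \lesssim \log n/\sqrt{n}$.

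For the concentration term $B$, I would apply Stein's method of exchangeable pairs following~\citet{chatterjee2005concentration}. The natural construction is to choose an index $I \sim \mathsf{Unif}([n])$ and replace $(\bx_I, G_I, V_I, \epsilon_I)$ with an independent fresh copy, obtaining an exchangeable pair $(W, W') := (\|\bmuG\|_2^2, \|\bmuG'\|_2^2)$. Two applications of the leave-one-out identity above---via the intermediate estimator $\bmuG^{(I)}$---yield $|W - W'| \lesssim \log n/\sqrt{n}$ on $\mathcal{E}$. The Stein variance proxy $f(W) = \tfrac{1}{2}\EE[(W - W')^2 \mid W]$ is then at most $\order(\log^2 n / n)$ after truncation to $\mathcal{E}$, and Chatterjee's concentration inequality, applied to the truncated random variable $\widetilde W := W \cdot \mathbbm{1}_{\mathcal{E}}$ and supplemented with a crude moment bound on $W \cdot \mathbbm{1}_{\mathcal{E}^c}$, yields $B \lesssim \log^3 n/\sqrt{n}$ with the claimed tail probability. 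The hard part will be engineering the truncation so that Chatterjee's framework applies at the desired polylogarithmic scale: the per-sample fluctuation $|W - W'|$ must be controlled \emph{deterministically} on $\mathcal{E}$, which requires the exponential-tail control of $\trace\bigl((\bX^\top \bG^2 \bX)^{-1}\bigr)$ developed in Appendix~\ref{sec:non-asymptotic-rmt} together with the sharp minimum-eigenvalue bound of Appendix~\ref{sec:bounds-minimum-eigenvalue}; moreover, verifying the linear-regression condition $\EE[W' - W \mid W] = -\lambda (W - \EE W) + \text{(small residual)}$ underlying Chatterjee's inequality demands careful handling of the planted signal contribution in $\by$.
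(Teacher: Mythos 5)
Your proposal follows essentially the same route as the paper: row-wise exchangeable pairs, a truncation set built from bounds on the residuals $R_i$, the quantities $\|\bSig_i^{-1}\bx_i\|_2^2$, and $\langle \bmuG^{(i)}, \bSig_i^{-1}\bx_i\rangle$, and the leave-one-out identity $\bmuG - \bmuG^{(i)} = -R_i \bSig_i^{-1}\bx_i$ to control the per-sample fluctuation. Your explicit splitting $A + B$ (stability plus concentration) is actually a slightly cleaner presentation than the paper's, which goes directly after $f(\bX) = \|\bmuG\|_2^2$ with exchangeable pairs and leaves the transfer from $\|\bmuG\|_2^2$ to $\|\bmuG^{(i)}\|_2^2$ implicit in the same tools; the two are equivalent. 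Your polylog bookkeeping is a touch optimistic ($\|\bmuG-\bmuG^{(i)}\|_2 \lesssim \log^2 n/\sqrt{n}$ rather than $\log n/\sqrt{n}$, and $\|\bmuG\|_2 \lesssim \sqrt{\log n}$ rather than $\lesssim 1$), but this is immaterial to the $\widetilde{\order}(n^{-1/2})$ conclusion. The trace-inverse concentration from Appendix~\ref{sec:non-asymptotic-rmt} is more than you need here; the minimum-eigenvalue control of Lemma~\ref{lem:eigenvalues-G} suffices.

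One conceptual point you should correct: the ``linear-regression condition'' $\EE[W'-W\mid W] = -\lambda(W-\EE W) + \text{residual}$ is \emph{not} something you need to verify. The version of Chatterjee's exchangeable-pairs concentration being invoked (his Theorem~3.3, together with the Markov-chain construction of Eq.~\eqref{eq:def-F}) builds the antisymmetric function $F^{\downarrow}(\bX,\bX')$ as a telescoping sum along a coupled Markov chain, which by construction satisfies $\EE[F^{\downarrow}(\bX,\bX')\mid\bX] = f^{\downarrow}(\bX) - \EE f^{\downarrow}(\bX)$ without any approximate-Stein hypothesis on the pair. The only quantity that needs bounding is the variance proxy $v^{\downarrow}(\bX) = \tfrac{1}{2}\EE[\,|(f^{\downarrow}(\bX)-f^{\downarrow}(\bX'))F^{\downarrow}(\bX,\bX')|\mid\bX\,]$, for which the deterministic per-step bound $|f^{\downarrow}(\bX)-f^{\downarrow}(\bX')|$ on the truncation set, together with a coupon-collector bound on the coupling time, is exactly what is required. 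The planted signal therefore does not create the difficulty you anticipate in this step; it only enters through the (sub-Gaussian) residual bounds that define the truncation set.
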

With these lemmas in hand, we proceed to the proof, which consists of three steps.  First, we show that---up to fluctuations on the order $n^{-1/2}$---it suffices to ignore the denominator and understand the sum of squares of the numerator~\eqref{eq:k-coord}, whence we apply Lemma~\ref{lem:orthogonal-concentration}.  Second, we execute a leave one sample out argument to compute $\EE\{ \| \bG \bS \by \|_2^2\}$.  Finally, we apply Lemma~\ref{lem:norm-concentration} in conjunction with the previous step to compute a deterministic approximation for the orthogonal component, around which the empirical concentrates with fluctuations on the order $n^{-1/2}$.  

\paragraph{Step 1: Reducing to studying the numerator.}
Applying the characterization~\eqref{eq:k-coord} in conjunction with the rotational invariance of the Gaussian distribution, we obtain
\begin{align}\label{eq:perpcomp-step1}
	\perpcompX_{t+1}^2 = \| \bcoefX_{t+1, \backslash 1} \|_2^2 = \sum_{k \neq 1} \frac{\langle \Xcol{k}, \bW_{t+1} \cdot \bS_{\backslash k} \by \rangle^2}{\langle \Xcol{k}, \bW_{t+1} \bS_{\backslash k} \bW_{t+1} \Xcol{k} \rangle^2} = \frac{1}{\parcompZ_{t+1}^2 + \perpcompZ_{t+1}^2}\sum_{k\neq 1} \frac{A_k^2}{B_k^2},
\end{align}
where in the last equality we have set
\[
A_k = \langle \Xcol{k}, \bG \bS_{\backslash k} \by \rangle \quad \text{ and } \quad B_k = \langle \Xcol{k}, \bG \bS_{\backslash k} \bG \Xcol{k} \rangle.
\]
Going forward, we will write $\perpcompbarX_{t+1}^2 = (\parcompZ_{t+1}^2 + \perpcompZ_{t+1}^2) \cdot  \perpcompX_{t+1}^2$.
By Lemma~\ref{lem:k-coord-denom}, \sloppy\mbox{$B_k/n = \EE\{G^2/(1 + \tau G^2)\} + \widetilde{\order}(n^{-1/2})$}, whence we decompose
\begin{align}\label{ineq:perpcomp-step2}
	\sum_{k\neq 1} \frac{A_k^2}{B_k^2}- \frac{A_k^2}{n^2 \EE\{ \frac{G^2}{1 + \tau G^2}\}^2} &= \frac{1}{n^2}\sum_{k \neq 1} A_k^2 \cdot \biggl(\frac{ \EE\{ \frac{G^2}{1 + \tau G^2}\}^2 - B_k^2/n^2}{ \EE\{ \frac{G^2}{1 + \tau G^2}\}^2 B_k^2/n^2}\biggr) \nonumber\\
	&= \frac{1}{n^2}\sum_{k \neq 1} A_k^2 \cdot \biggl(\frac{( \EE\{ \frac{G^2}{1 + \tau G^2}\} - B_k/n)( \EE\{ \frac{G^2}{1 + \tau G^2}\} + B_k/n)}{ \EE\{ \frac{G^2}{1 + \tau G^2}\}^2 B_k^2/n^2}\biggr)\nonumber\\
	&\overset{\1}{\leq} \frac{C (1 + \sigma^2) \log^{5/2}{n}}{\sqrt{n}},
\end{align}
where step $\1$ follows with probability at least $1 - n^{-15}$ upon applying Lemmas~\ref{lem:k-coord-denom} and~\ref{lem:k-coord-num}(b) in conjunction with the union bound and the sandwich relation  
\sloppy\mbox{$c \leq \EE\{G^2/(1 + \tau G^2)\} \leq 1$} (where the lower bound holds since by assumption $\oversamp \geq C_0$).  Combining inequalities~\eqref{eq:perpcomp-step1} and~\eqref{ineq:perpcomp-step2} yields
\[
\Bigl \lvert \perpcompbarX_{t+1}^2 -  \EE\Bigl\{ \frac{G^2}{1 + \tau G^2}\Bigr\}^{-2} \cdot \frac{1}{n^2}\sum_{k \neq 1} A_k^2 \Bigr \rvert \leq \frac{C (1 + \sigma^2) \log^{5/2}{n}}{\sqrt{n}}.
\]
Moreover, applying the triangle inequality yields
\begin{align*}
	\Bigl \lvert \frac{1}{n^2}\sum_{k \neq 1} A_k^2 - \frac{d}{n^2}\EE\bigl\{ \| \bG\bS \by \|_2^2 \bigr\} \Bigr \rvert &\leq \Bigl \lvert \frac{1}{n^2}\sum_{k \neq 1} A_k^2 - \frac{d-1}{n^2}\EE\bigl\{ \| \bG\bS \by \|_2^2 \bigr\} \Bigr \rvert + \frac{1}{n^{2}} \EE\bigl\{ \| \bG\bS \by \|_2^2 \bigr\} \\
	&\leq \frac{C (1 + \sigma^2)\log^{8}{n}}{\sqrt{n}} + \frac{C(1+\sigma^2) \log(n)}{n} \leq \frac{2C (1 + \sigma^2)\log^{8}{n}}{\sqrt{n}},
\end{align*}
where we have applied Lemma~\ref{lem:orthogonal-concentration} and used $\| \bG\bS \by \|_2 \leq \|\bG\|_{\op} \cdot \|\by\|_{2}$. Combining the pieces together and noting $c \leq \EE\{G^2/(1 + \tau G^2)\}$ yields
\begin{align}\label{ineq:deviation-numerator-step1}
	\Bigl \lvert \perpcompbarX_{t+1}^2 - \EE\Bigl\{ \frac{G^2}{1 + \tau G^2}\Bigr\}^{-2} \cdot \frac{d}{n^2}\EE\bigl\{ \| \bG\bS \by \|_2^2 \bigr\} \Bigr \rvert \leq \frac{C (1 + \sigma^2)\log^{8}{n}}{\sqrt{n}}.
\end{align}

\paragraph{Step 2: Computing $\EE\{ \| GSy \|_2^2 \}$.} We execute a leave-one-sample-out argument. Mirroring the notation used by~\cite{el2013robust}, define the weighted residuals 
\begin{align} \label{eq:def-residual}
	R_{i} = G_i^2 \cdot \langle \bx_{i}, \bmuG \rangle - G_i y_i \qquad \text{ and } \qquad r_{i,(i)} = G_i^2 \cdot \langle \bx_{i }, \bmuG^{(i)} \rangle -G_i y_i,
\end{align}
where we recall $\bmuG^{(i)}$ and $\bmuG$ from Eq.~\eqref{eq:def-mu-i-mu-bar}.
Note that $R_{i}$ is the weighted residual with respect to the predictor $\bmuG$, whereas $r_{i, (i)}$ is the weighted residual with respect to the predictor $\bmuG^{(i)}$.  These two notions are related by a simple formula 
\begin{align}
	\label{eq:loo-residual-tool}
	R_{i} = \frac{r_{i, (i)}}{1 + G_i^2 \langle \bx_{i}, \bSig_{i}^{-1} \bx_{i} \rangle},
\end{align}
where we have additionally used the notation $\bSig_{i} = \sum_{j \neq i} G_j^2 \bx_{j} \bx_{j}^{\top}$.  We take this relation for granted for the time being, providing its proof in Section~\ref{sec:proof-loo-tools}.  Continuing, note that $(\bG \bS \by)_{i} = -R_i$, whence we apply the relation~\eqref{eq:loo-residual-tool} to obtain
\[
\frac{d}{n^2}\EE \bigl\{ \| \bG \bS \by \|_2^2 \bigr\} = \frac{1}{\Lambda} \cdot \frac{1}{n}\sum_{i=1}^{n} \EE \Bigl\{ \frac{r_{i, (i)}^2}{(1 + G_i^2\bx_{i}^{\top} \bSig_{i}^{-1} \bx_{i})^2} \Bigr\}.
\]
Next, we replace the quadratic form $\bx_{i}^{\top} \bSig_{i}^{-1} \bx_i$ by a deterministic approximation. A straightforward calculation yields that
\begin{align*}
	(1+G_{i}^{2}\bx_{i}^{\top} \bSig_{i}^{-1} \bx_{i})^{-2} - (1+G_{i}^{2}\tau)^{-2} &= \frac{G_{i}^{2}(\tau - \bx_{i}^{\top} \bSig_{i}^{-1} \bx_{i}) \cdot ((1+G_{i}^{2}\tau)^{-1} + (1+G_{i}^{2}\bx_{i}^{\top} \bSig_{i}^{-1} \bx_{i})^{-1}) }{(1+G_{i}^{2}\tau)(1+G_{i}^{2}\bx_{i}^{\top} \bSig_{i}^{-1} \bx_{i})} \\&\leq 2G_{i}^{2}(\tau - \bx_{i}^{\top} \bSig_{i}^{-1} \bx_{i}).
\end{align*} 
Consequently, we obtain the bound
\begin{align*}
	\Bigl \lvert \EE \Bigl\{ \frac{r_{i, (i)}^2}{(1 + G_i^2\bx_{i}^{\top} \bSig_{i}^{-1} \bx_{i})^2} \Bigr\} - \EE \Bigl\{ \frac{r_{i, (i)}^2}{(1 + G_i^2\tau)^2} \Bigr\}\Bigr \rvert &\leq 2\EE\bigl\{ r_{i, (i)}^2 G_i^2  \cdot \lvert \tau - \bx_i^{\top} \bSig_{i}^{-1} \bx_i \rvert \bigr\}\\
	&\overset{\1}{\leq} 2\sqrt{\EE\bigl\{ (\tau - \bx_i^{\top} \bSig_{i}^{-1} \bx_i) ^2\bigr\}} \sqrt{\EE\bigl\{ r_{i, (i)}^4 G_i^4\bigr\}},
\end{align*}
where step $\1$ follows from the Cauchy--Schwarz inequality. Applying Lemmas~\ref{lem:general-trace-concentration} and~\ref{lem:tau-concentration} yields the bound
\[
\EE\bigl\{ (\tau - \bx_i^{\top} \bSig_{i}^{-1} \bx_i) ^2\bigr\} = \int_{0}^{\infty} \Prob\bigl\{ (\tau - \bx_i^{\top} \bSig_{i}^{-1} \bx_i) ^2 \geq t \bigr\} \mathrm{d}t \leq \frac{C\log{n}}{n},
\]
which bounds the first term in the RHS of the previous display.
To bound the second term, we apply the numeric inequality $(A + B)^{4} \leq 2^{3}\cdot (A^4 + B^{4})$ to obtain the estimate
\begin{align*}
	\EE\bigl\{ r_{i, (i)}^4 G_i^2 \bigr\} \leq 8 \EE\{ G_i^{10} \langle \bx_i, \bmuG^{(i)}\rangle^{4} + G_i^{6} y_i^4\} \leq C(1 + \sigma^2)^2.  
\end{align*}
To prove the final inequality, note that $\langle \bx_i, \bmuG^{(i)} \rangle \overset{(d)}{=} \| \bmuG^{(i)} \|_2 Z$, where $Z \sim \mathsf{N}(0, 1)$. Then apply the Cauchy--Schwarz inequality in conjunction with 
\begin{align*}
	\| \bmuG^{(i)} \|_2^{8} = \big\| \big( ( \bX^{(-i)} )^{\top} (\bG^{(-i)})^{2} \bX^{(-i)} \big)^{-1} ( \bX^{(-i)} )^{\top} \bG^{(-i)} \by^{(-i)} \big\|_{2}^{8} \overset{\1}{\lesssim} \frac{1}{n} \|\bG^{(-i)} \by^{(-i)}\|_{2}^{4} \lesssim (1 + \sigma^2)^4,
\end{align*}
where in step $\1$ we apply Lemma~\ref{lem:eigenvalues-G} so that $\big\| \big( ( \bX^{(-i)} )^{\top} (\bG^{(-i)})^{2} \bX^{(-i)} \big)^{-1} \big\|_{\op} \lesssim n^{-1}$ and $\|\bX^{(-i)}\|_{\op} \lesssim \sqrt{n}$.
Now, expand the squared residual $r_{i, (i)}^2$ as
\[
r_{i, (i)}^2 = G_i^2 y_i^2 - 2G_i^3 y_i X_{i1} \bmuG^{(i)}(1) - 2G_i^3 y_i \sum_{j \neq 1}X_{ij} \bmuG^{(i)}(j)  + G_i^4 \langle \bx_{i}, \bmuG^{(i)} \rangle^2,
\]
and note that the third term is zero-mean.  By Lemma~\ref{lem:norm-concentration}, we have \sloppy\mbox{$\lvert \EE \langle \bx_i, \bmuG^{(i)} \rangle^2 - \EE \| \bmuG \|_2^2 \rvert = \widetilde{\order}((1 + \sigma^2)n^{-1/2})$}.  Thus, letting $\parcompdetbarX = (\parcompZ_{t+1}^2 + \perpcompZ_{t+1}^2)^{1/2} \parcompdetX_{t+1}$ and applying Theorem~\ref{thm:one-step}(a), we obtain
\[
\Bigl \lvert \EE \Bigl\{ \frac{r_{i, (i)}^2}{(1 + W_i^2\tau)^2} \Bigr\} - \EE \Bigl\{ \frac{G_i^2 y_i^2 - 2G_i^3 y_i X_{i1} \parcompdetbarX + G_i^4  \EE \| \bmuG \|_2^2}{(1 + G_i^2\tau)^2} \Bigr\} \biggr \rvert \leq \frac{C(1 + \sigma^2)\log^{3}{n}}{\sqrt{n}}.
\]
Putting together the pieces yields the bound
\begin{align} \label{ineq:deviation-expectation-step2}
	\biggl \lvert \frac{d}{n^2}\EE \bigl\{ \| \bG \bS \by \|_2^2 \bigr\}  - \frac{1}{\oversamp} \cdot \EE \Bigl\{ \frac{G_i^2 y_i^2 - 2G_i^3 y_i X_{i1} \parcompdetbarX_{t+1} + G_i^4  \EE \| \bmuG \|_2^2}{(1 + G_i^2\tau)^2} \Bigr\} \biggr \rvert \leq \frac{C(1 + \sigma^2)\log^{3}{n}}{\sqrt{n}}.
\end{align}

\paragraph{Step 3: Solving for $(\perpcompdetX_{t+1})^2$.}  
Combining the inequalities~\eqref{ineq:deviation-numerator-step1} and~\eqref{ineq:deviation-expectation-step2} yields 
\begin{align}\label{ineq:orthogonal-step3-ineq1}
	\biggl \lvert \perpcompbarX_{t+1}^2 - \EE\Bigl\{ \frac{G^2}{1 + \tau G^2}\Bigr\}^{-2} \cdot \frac{1}{\oversamp} \cdot \EE \Bigl\{ \frac{G_i^2 y_i^2 - 2G_i^3 y_i X_{i1} \parcompdetbarX_{t+1} + G_i^4  \EE \| \bmuG \|_2^2}{(1 + G_i^2\tau)^2} \Bigr\}\biggr \rvert  \leq \frac{C(1 + \sigma^2)\log^{8}{n}}{\sqrt{n}}.
\end{align}
Define the deterministic proxy $\breve{\perpcompX}_{t+1}$ as
\[
\breve{\perpcompX}_{t+1}^2 := \EE \| \bmuG \|_2^2 - (\parcompdetbarX_{t+1})^2,
\]
and note that applying Theorem~\ref{thm:one-step}(a) in conjunction with Lemma~\ref{lem:norm-concentration} yields
\begin{align}\label{ineq:deterministic-orthogonal}
	\Bigl \lvert \perpcompbarX_{t+1}^2 - \breve{\perpcompX}_{t+1}^2 \Bigr \rvert \leq \frac{C(1 + \sigma^2) \log^{3}{n}}{\sqrt{n}}, \qquad \text{ with probability } \geq 1 - n^{-10}.
\end{align}
We emphasize that in the above display, $\perpcompbarX_{t+1}^2$ is a random variable, whereas $\breve{\perpcompX}_{t+1}^2$ is deterministic.  Subsequently, we substitute inequality~\eqref{ineq:deterministic-orthogonal} into inequality~\eqref{ineq:orthogonal-step3-ineq1} to obtain
\begin{align} \label{ineq:T1-T2-bound-orthogonal}
	\bigl \lvert T_1 \cdot \perpcompbarX_{t+1}^2 - T_2 \bigr \rvert \leq \frac{C(1 + \sigma^2)\log^{8}{n}}{\sqrt{n}},
\end{align}
where 
\begin{align*}
	T_1 &= 1 - \EE\Bigl\{ \frac{G^2}{1 + \tau G^2}\Bigr\}^{-2} \cdot \frac{1}{\oversamp} \EE \Bigl \{\frac{ G^4}{(1 + \tau G^2)^2} \Bigr\}, \qquad \text{ and }\\
	T_2 &= \EE\Bigl\{ \frac{G^2}{1 + \tau G^2}\Bigr\}^{-2} \cdot \frac{1}{\oversamp} \EE \Bigl \{ \frac{G^2 Y^2 - 2\parcompdetbarX_{t+1} G^3 X Y + G^4 (\parcompdetbarX_{t+1})^2}{(1 + \tau G^2)^2} \Bigr\}.
\end{align*}
Note that, since $\tau = 1/C(\oversamp)$ and $C(\oversamp) \asymp \oversamp \geq C$ (see Lemma~\ref{lemma:C(lambda)-lambda}), we obtain the lower bound $T_1 \geq c > 0$.  
Consequently, inequality~\eqref{ineq:T1-T2-bound-orthogonal} implies that
\begin{align}\label{ineq:T1-T2-bound-orthogonal-2}
	\bigl \lvert \perpcompbarX_{t+1}^2 - T_2/T_{1} \bigr \rvert \leq \frac{C(1 + \sigma^2)\log^{8}{n}}{\sqrt{n}}.
\end{align}
At this juncture, we recall the definition of $C(\oversamp)$ in equation~\eqref{definition-of-C} and $\tau = C(\oversamp)^{-1}$, which implies that
\[
\oversamp \cdot \EE\Bigl\{ \frac{G^2}{1 + \tau G^2}\Bigr\}^{2} - \EE \Bigl \{\frac{ G^4}{(1 + \tau G^2)^2} \Bigr\} = C(\oversamp) \cdot \EE \Bigl \{\frac{ G^2}{(1 + \tau G^2)^2} \Bigr\}.
\]
Using the equality in the display above, a straightforward calculation yields that 
\begin{align}\label{eq:equivalent-T1/T2}
	(\perpcompdetX_{t+1})^2 = (\parcompZ_{t+1}^2 + \perpcompZ_{t+1}^2)^{-1} \cdot T_2/T_1.
\end{align}
Combining the inequality~\eqref{ineq:T1-T2-bound-orthogonal-2} with the identity~\eqref{eq:equivalent-T1/T2} yields the desired result. \qed

\subsubsection{Proof of Lemma~\ref{lem:orthogonal-concentration}} \label{sec:proof-lem-orthogonal-concentration}
We begin by defining some notation for quantities when predictors are left out. For $k \neq \ell \neq m$, define:
\begin{align} \label{eq:u-notation}
	\bu_k = \bS_{\backslash k} \bG \Xcol{k}, \quad \bu_{k, \ell} = \bS_{\backslash k, \ell} \bG \Xcol{\ell} \quad \text{ and } \quad   \bu_{k, \ell, m} = \bS_{\backslash k, \ell, m} \bG \Xcol{m}. 
\end{align}
Note that $\bu_{k, \ell, m}$ and $\bu_{k, \ell}$ are still in boldface, to distinguish them from any particular coordinate of $\bu$.
The following set of rank one update formulae, whose proofs we provide in Section~\ref{sec:proof-loo-tools}, are useful:
\begin{align}
	\label{eq:recursive-S}
	\bS = \bS_{\backslash k} - \frac{1}{\| \bu_k \|_2^2} \bu_k &\bu_k^{\top}, \qquad \bS_{\backslash k} = \bS_{\backslash k, \ell} - \frac{1}{\| \bu_{k, \ell} \|_2^2} \bu_{k, \ell} \bu_{k, \ell}^{\top}, \qquad \text{ and }\nonumber\\
	&  \bS_{\backslash k, \ell} = \bS_{\backslash k, \ell, m} - \frac{1}{\| \bu_{k, \ell, m} \|_2^2} \bu_{k, \ell, m} \bu_{k, \ell, m}^{\top}.
\end{align}
Note that the matrix $\bS$ is a projection matrix onto the null space of the matrix $\bG \bX$, which is of dimension $n - d$.  The rank one update splits this projection into a projection onto the null space of the matrix $\bG \bX_{\backslash k}$---which is of dimension $n - d + 1$---and the subspace spanned by the vector $\bu_k$.  In the sequel, we will additionally use the shorthand 
\begin{align} \label{eq:v-notation}
	\bv_k = \bG \bS_{\backslash k} \by \qquad \text{ and } \qquad \bv = \bG \bS \by.
\end{align}
Another convenient abstraction is to view $\bu_k$, $\bu_{k, \ell}$, $\bS$, $\bS_k$ $\bv$, $\bv_k$ as \emph{functions} mapping the tuple $(\Xcol{1}, \Xcol{2}, \ldots, \Xcol{d}; \bG)$ to its respective space.  For instance, letting $\mathcal{D}^n$ denote the space of $n \times n$ diagonal matrices for convenience, we define the function $\bu_{k}$ as 
\begin{align*}
	\bu_{k}: \underbrace{\mathbb{R}^{n} \times \mathbb{R}^n \times \cdots \times \mathbb{R}^n}_{d \text{ times}} \times \mathcal{D}^n &\rightarrow \mathbb{R}^{n}\\
	(\Xcol{1}, \Xcol{2}, \ldots, \Xcol{d}; \bG) &\mapsto \bS_{\backslash k} \bG \Xcol{k}.
\end{align*}
Equipped with this viewpoint, define the function  $f: \underbrace{\mathbb{R}^{n} \times \mathbb{R}^n \times \cdots \times \mathbb{R}^n}_{d \text{ times}} \times \mathcal{D}^n \rightarrow \mathbb{R}$ as 
\begin{align} \label{eq:def-f}
	f\bigl(\bp_1, \bp_2, \dots, \bp_d; \bM \bigr) &:= 
	\frac{1}{n^2}\cdot \sum_{k=2}^{d}  \bigl\langle \bp_k, \bv_k(\bp_1, \ldots, \bp_d; \bM) \bigr \rangle^2,
\end{align}
where each $\bp_i \in \real^n$ and $\bM \in \mathcal{D}^n$. When the context is clear, we will often abuse notation and omit the arguments of (for instance) the function $\bu_k$ and write $\bu_k(\bp_1, \bp_2, \dots, \bp_d; \bM) = \bu_k$.  Using this notation, our goal is to prove that $f(\Xcol{1}, \ldots, \Xcol{d}; \bG)$ concentrates within $\ordertil(n^{-1/2})$ around the deterministic quantity $\frac{d - 1}{n^2} \EE\{ \| \bG \bS \by \|_2^2 \}$.  In order to do this, we will employ Warnke's typical bounded differences inequality~\citep[Theorem 2]{warnke2016method}, which requires two preliminary ingredients: (i) the existence of a high probability regularity set $\mathcal{S}$ and (ii) the construction of a truncated function $\fdown$ which preserves bounded differences on the regularity set.  

\paragraph{Step 1: Defining the regularity set $\mathcal{S}$.} As mentioned above, we first define a regularity set $\mathcal{S} \subseteq \underbrace{\mathbb{R}^{n} \times \mathbb{R}^n \times \cdots \times \mathbb{R}^n}_{d \text{ times}} \times \mathcal{D}^n$ in the following way.  Let
\begin{subequations}\label{eq:events-E}
	\begin{align}\label{eq:def-E1}
		\mathcal{S}_1 &= \Bigl\{(\bp_1, \ldots, \bp_d; \bM): c_1n \leq \| \bu_{\ell_1} \|_2^2 \leq C_1n, \quad c_1 n \leq \| \bu_{\ell_1, \ell_2}\|_2^2 \leq C_1n, \notag \\
		&\quad \qquad \text{ and } \quad c_1n \leq \| \bu_{\ell_1, \ell_2, \ell_3}  \|_2^2 \leq C_1 n, \qquad \text{ for all } \quad \ell_1 \neq \ell_2 \neq \ell_3 \subseteq [d]^{3}\Bigr\}.
	\end{align}
	Similarly viewing the quantities $\bu_{k, \ell}$ and $\bu_{k, \ell}$ as functions, let
	\begin{align} \label{eq:def-E2}
		\mathcal{S}_2 = \Bigl \{(\bp_1, \ldots, \bp_d; \bM): \langle \bu_{\ell_1}, \bu_{\ell_2} \rangle \vee \langle \bu_{\ell_1, \ell_2}, \bu_{\ell_2, \ell_1}& \rangle  \vee  \langle \bu_{\ell_1, \ell_2, \ell_3}, \bu_{\ell_3, \ell_1, \ell_2} \rangle \leq C_2\sqrt{n}\log^{3/2}{n}, \nonumber\\
		&\;\; \text{ for all } \;\; \ell_1 \neq \ell_2 \neq \ell_3 \subseteq [d]^{3}\Bigr\}.
	\end{align}
	Next, similarly viewing $\by$ as a function, let 
	\begin{align}\label{eq:def-E3}
		\mathcal{S}_3 &= \Bigl\{(\bp_1, \ldots, \bp_d; \bM): \langle \by, \bu_{\ell_1} \rangle \vee \langle \by, \bu_{\ell_1, \ell_2} \rangle \vee \langle \by, \bu_{\ell_1, \ell_2, \ell_3} \rangle \leq C(1 + \sigma)\sqrt{n}\log^{3/2}{n}, \qquad \text{ and } \nonumber\\
		& \langle \by, \bu_{1} \rangle \vee \langle \by, \bu_{\ell_1, 1} \rangle \vee \langle \by, \bu_{\ell_1, \ell_2, 1} \rangle \leq C(1 + \sigma)n\log^{3/2}{n}, \;  \text{ for all } \; 1 \neq \ell_1 \neq \ell_2 \neq \ell_3 \subseteq [d]^{3} \Bigr\}.
	\end{align}
	The final such set allows us to bound quadratic forms appearing in the proof:
	\begin{align}\label{eq:def-E4}
		\mathcal{S}_4 = \Bigl\{ (\bp_1, \ldots, \bp_d; \bM): \bp_1^{\top} \bM \sum_{k \neq \{1, \ell\}} \bu_{1, \ell, k}\bu_{1, \ell,k}^{\top} \bM \bp_\ell \vee \by^{\top} &\sum_{k \neq \{1, \ell\}} \bu_{1, \ell, k} \bu_{1, \ell,k}^{\top} \bM \bp_{\ell} \leq (1 + \sigma)n^{3/2}\log^{7/2}(n) \nonumber \\
		& \text{ for all } \ell \in \{2, 3, \dots, d\} \Bigr\}.
	\end{align}
	Finally, define $\mathcal{S}$ as the intersection of the above sets
	\begin{align} \label{eq:S-defn}
		\mathcal{S} := \mathcal{S}_1 \cap \mathcal{S}_2 \cap \mathcal{S}_3 \cap \mathcal{S}_4.
	\end{align}
\end{subequations}
Next, define the Hamming metric $\rho: \mathbb{R}^{n \times d} \times \mathbb{R}^{n \times d} \rightarrow \{0, 1, 2, \dots, d\}$ such that if $\bP, \bP'$ differ in at most $k \leq d$ columns, then $\rho(\bP, \bP') = k$.  The next lemma demonstrates a key stability property of the regularity set $\mathcal{S}$.  We provide its proof in Section~\ref{sec:proof-lem-lower-truncation}.
\begin{lemma}
	\label{lem:lower-truncation}
	Let $\bG\in \mathcal{D}^n$, $\bX$ and $\bX'$ be such that $(\bX, \bG), (\bX', \bG) \in \mathcal{S}$ and $\rho(\bX, \bX') \leq 2$ and consider the function $f$~\eqref{eq:def-f}.  There exists a universal positive constant $C_0$ such that the following holds
	\[
	\bigl \lvert f(\bX) - f(\bX') \bigr \rvert \leq C_0(1 + \sigma^2)\frac{\log^{15/2}(n)}{n}. 
	\]
\end{lemma}
In words, Lemma~\ref{lem:lower-truncation} shows that if two collections of $d$ vectors $\bX$ and $\bX'$ differ in at most two columns, then 
the function evaluations $f(\bX)$ and $f(\bX')$ are extremely close.  

\paragraph{Step 2: Truncation.} In order to exploit Lemma~\ref{lem:lower-truncation}, we define the function $\fdown: \mathbb{R}^{n \times d} \times \mathcal{D}^n$ as 
\begin{align}\label{eq:fdown}
	f^{\downarrow}(\bP; \bM) = \inf_{\bP': (\bP', \bM ) \in \mathcal{S}}\; \Bigl\{ f(\bP'; \bM) + 2C_0(1 + \sigma^2)\frac{\log^{15/2}(n)}{n} \cdot \rho(\bP, \bP') + D \cdot \mathbbm{1}\{\rho(\bP, \bP') > 1\}\Bigr\},
\end{align}
where $C_0$ is as in Lemma~\ref{lem:lower-truncation} and $D$ verifies the inequality \sloppy\mbox{$D \geq \sup_{(\bP, \bM) \in \mathcal{S}}\; f(\bP; \bM)$}.  By construction, $\fdown$ enjoys the following properties.
\begin{lemma}
	\label{lem:properties-fdown}
	Consider the functions $f$~\eqref{eq:def-f} and $\fdown$~\eqref{eq:fdown}.  The following hold.
	\begin{itemize}
		\item[(a)] If $(\bP, \bM) \in \mathcal{S}$, then $\fdown(\bP; \bM) = f(\bP; \bM)$.
		\item[(b)] If $(\bP, \bM) \in \mathcal{S}$ and $\rho(\bP, \bP') \leq 1$, then there exists a universal, positive constant $C$ such that 
		\[
		\lvert \fdown(\bP; \bM) - \fdown(\bP'; \bM) \rvert \leq C(1 + \sigma^2) \frac{\log^{15/2}(n)}{n}.
		\]
	\end{itemize}
\end{lemma}
We provide the proof of Lemma~\ref{lem:properties-fdown} at the end of the subsection.
Finally, since our goal is to understand $f(\Xcol{1}, \ldots, \Xcol{d}; \bG)$, we will first show that the random variable $\ind{ (\bp_1, \ldots, \bp_d; \bM) \in \mathcal{S} }$ is equal to $1$ with high probability when $\bp_i \overset{\mathsf{i.i.d.}}{\sim} \mathsf{N}(0, \bI_n)$ and $\diag(\bM) \sim \mathsf{N}(0, \bI_n)$. This lemma, whose proof we provide in Section~\ref{sec:proof-lem-truncation-probability}, is presented below.
\begin{lemma}
	\label{lem:truncation-probability}
	Suppose the assumptions of Lemma~\ref{lem:orthogonal-concentration} hold, and let $\mathcal{S}$ be defined according to Eq.~\eqref{eq:S-defn}. Then
	\[
	\Prob\{ (\Xcol{1}, \ldots, \Xcol{d}; \bG) \in \mathcal{S} \} \geq 1 - n^{-20}.
	\]
	Consequently, for all $t \leq n^{17}$, we have
	\[
	\Prob\bigl\{ \lvert f(\bX; \bG) - \EE f(\bX; \bG) \rvert \geq t \bigr\} \leq \Prob\bigl\{ \lvert f^{\downarrow}(\bX; \bG) - \EE f^{\downarrow}(\bX; \bG) \rvert \geq t/2 \bigr\} + C n^{-15}. 
	\]
\end{lemma}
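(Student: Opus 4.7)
The lemma has two parts. The first is a quantitative high-probability statement that the intersection event $\mathcal{S} = \mathcal{S}_1 \cap \mathcal{S}_2 \cap \mathcal{S}_3 \cap \mathcal{S}_4$ holds, and the second is a standard truncation-comparison inequality. My approach to the first part is to union-bound over the four sub-events, and within each sub-event over the $O(d^3)$ index tuples appearing in its definition. Since $d \leq n/C_0$ under the standing oversampling assumption, it will suffice to prove a per-tuple tail of order $n^{-25}$ in order to conclude an overall failure probability of at most $n^{-20}$ with room to spare.

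\textbf{Controlling the four subevents.} The event $\mathcal{S}_1$ follows immediately from Lemma~\ref{lem:k-coord-denom}: since $\bu_{\ell_1} = \bS_{\backslash \ell_1}\bG\bX^{(\ell_1)}$ and (after the change of variables $\bW_{t+1} = \sqrt{\parcompZ_{t+1}^2 + \perpcompZ_{t+1}^2}\bG$) its squared norm is exactly the denominator quantity of that lemma, both upper and lower bounds of constant order hold with probability $\geq 1 - n^{-25}$ per tuple; analogous applications with further columns removed handle $\bu_{\ell_1,\ell_2}$ and $\bu_{\ell_1,\ell_2,\ell_3}$. The event $\mathcal{S}_2$ is a cross-column inner-product bound: conditioning on $\bG$ and all columns outside $\{\ell_1,\ell_2,\ell_3\}$ freezes the projector $\bS_{\backslash \ell_1,\ell_2}$, leaving an inner product that is a bilinear form in the fresh Gaussians $\bX^{(\ell_1)}, \bX^{(\ell_2)}$; an application of Hanson--Wright (mirroring Lemma~\ref{lem:k-coord-num}(b)) gives the $\sqrt{n}\log^{3/2}n$ tail. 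For $\mathcal{S}_3$, the first row has $\by$ inner-producted with $\bu$'s whose defining projector \emph{removes} column $1$, so $\bX^{(1)}$ is independent of $\bu_{\ell_1}$ (etc.) and the argument of Lemma~\ref{lem:k-coord-num}(b) yields the $\sqrt{n}\log^{3/2}n$ rate; the second row involves $\bu_1$, $\bu_{\ell_1,1}$, $\bu_{\ell_1,\ell_2,1}$, whose projectors do not remove column $1$, so $\bX^{(1)}$ interacts with the signal component and the slower $n\log^{3/2}n$ rate of Lemma~\ref{lem:k-coord-num}(a) is the right scale. The event $\mathcal{S}_4$ requires a separate quadratic-form argument: I would first bound the operator norm of $\sum_{k\neq \{1,\ell\}} \bu_{1,\ell,k}\bu_{1,\ell,k}^\top$ via a trace bound obtained from $\mathcal{S}_1$ together with the operator-norm control on $\bX$ and $\bG$, and then conditionally apply Hanson--Wright / a Gaussian tail to the outer bilinear form in $(\bX^{(1)},\bX^{(\ell)})$ after once more conditioning on the columns outside $\{1,\ell\}$. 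The main obstacle will be $\mathcal{S}_4$, as the dependencies among $\bX^{(1)}$, $\bX^{(\ell)}$ and the projector must be decoupled carefully via the rank-one update formulas~\eqref{eq:recursive-S}, so that the conditioning at each step produces a clean Gaussian bilinear form.

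\textbf{Truncation-comparison inequality.} For the second part of the lemma, I decompose $f - \EE f = (\fdown - \EE\fdown) + (\fup - \EE \fup)$ and use a union bound to obtain
\[
\Prob\bigl\{\lvert f - \EE f\rvert \geq t\bigr\} \leq \Prob\bigl\{\lvert \fdown - \EE\fdown\rvert \geq t/2\bigr\} + \Prob\bigl\{\lvert \fup - \EE\fup\rvert \geq t/2\bigr\}.
\]
By construction $\fup$ is supported on $\{(\Xcol{1},\ldots,\Xcol{d};\bG) \notin \mathcal{S}\}$, so $\{\lvert \fup - \EE\fup\rvert \geq t/2\} \subseteq \{\fup \neq 0\} \cup \{\lvert \EE\fup\rvert \geq t/2\}$; the first event has probability at most $n^{-20}$ by part one, and I will control $\lvert \EE\fup\rvert$ by Cauchy--Schwarz: $\lvert \EE\fup\rvert \leq \sqrt{\EE f^2}\cdot\sqrt{\Prob(\mathcal{S}^c)}$. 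A crude polynomial moment bound on $f^2$---obtained from polynomial tails on $\|\bG\|_{\op},\|\bX\|_{\op},\|\by\|_2$ under Assumption~\ref{assptn:Y-psi}---yields $\EE f^2 \leq n^{C}$ for a constant $C$, whence $\lvert \EE\fup\rvert \leq n^{(C-20)/2} \leq Cn^{-15}$ for a sufficiently generous $n^{-20}$ margin. The constraint $t \leq n^{17}$ is there merely to accommodate this crude bound: for $t$ in this range (and in particular for the $t \asymp \log^8(n)/\sqrt{n}$ we ultimately apply the lemma with), the term $\lvert \EE \fup\rvert$ is absorbed into the $Cn^{-15}$ remainder, yielding the claimed comparison.
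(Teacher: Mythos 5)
Your overall architecture matches the paper's: union-bound over the four truncation sets via $O(d^3)$ per-tuple tails, then a standard triangle-inequality decomposition $f = \fdown + \fup$ with Cauchy--Schwarz to control $\EE\fup$. Your treatments of $\mathcal{S}_1$, $\mathcal{S}_3$, and the truncation-comparison inequality are fine, and your observation about the asymmetric first and second rows of $\mathcal{S}_3$ is exactly the right distinction.

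However, there are two genuine gaps. First, for $\mathcal{S}_2$: you claim that conditioning on $\bG$ and the columns outside $\{\ell_1,\ell_2,\ell_3\}$ freezes the relevant projector, leaving a clean bilinear form in $\Xcol{\ell_1},\Xcol{\ell_2}$. That is true for $\langle \bu_{\ell_1,\ell_2}, \bu_{\ell_2,\ell_1}\rangle$ (both projectors equal $\bS_{\backslash \ell_1,\ell_2}$ and are $\sigma(\bX_{\backslash \ell_1,\ell_2})$-measurable), but it is \emph{false} for the first quantity $\langle \bu_{\ell_1}, \bu_{\ell_2}\rangle$: there $\bS_{\backslash \ell_1}$ depends on $\Xcol{\ell_2}$ and $\bS_{\backslash \ell_2}$ depends on $\Xcol{\ell_1}$, so the middle matrix of the "bilinear form" is itself a function of the two fresh Gaussians. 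The paper's proof first invokes the rank-one update~\eqref{eq:recursive-S} to write $\bS_{\backslash \ell_1}$ in terms of $\bS_{\backslash \ell_1,\ell_2}$ plus a rank-one correction, thereby reducing each term to a genuine conditionally-bilinear form; without that step the conditional Hanson--Wright argument is not valid.

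Second, for $\mathcal{S}_4$: you propose to bound $\bigl\|\sum_{k\neq\{1,\ell\}}\bu_{1,\ell,k}\bu_{1,\ell,k}^\top\bigr\|_{\op}$ via a trace bound from $\mathcal{S}_1$ (plus operator-norm control on $\bX$, $\bG$). A trace bound from $\mathcal{S}_1$ gives $\trace \leq Cdn$, which is also the best you get from crude Cauchy--Schwarz since the projectors vary with $k$. But then the linear term in Hanson--Wright, $t/\|A\|_{\op}$, with $t\asymp n^{3/2}\log^{7/2}(n)$ and $\|A\|_{\op}\lesssim dn\log^2(n)$, is of order $\sqrt{n}\log^{3/2}(n)/d$, which vanishes once $d$ is polynomially close to $n$ (the regime allowed under Assumption~\ref{asm:local-linear}). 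The paper avoids this by bounding the \emph{Frobenius} norm, $\|\sum_k \bu_k\bu_k^\top\|_F^2 = \sum_k\|\bu_k\|_2^4 + \sum_{k_1\neq k_2}\langle\bu_{k_1},\bu_{k_2}\rangle^2$, using $\mathcal{S}_1$ for the diagonal and $\mathcal{S}_2$ for the off-diagonal, getting $\|\cdot\|_F \lesssim n^{3/2}\log^{3/2}(n)$ and hence $\|\cdot\|_{\op}$ of the same order uniformly in $d\leq n$. Both gaps are fixable by following the paper's decomposition more closely, but as written the argument does not close in the high-$d$ regime.
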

In words, Lemma~\ref{lem:truncation-probability} reduces the problem to understanding the fluctuations of the function $f^{\downarrow}$. We will use the typical bounded differences inequality to do so.  Equipped with these preliminaries, we turn to the proof of Lemma~\ref{lem:orthogonal-concentration}.

\paragraph{Step 3: Putting together the pieces.} We require one additional lemma, which shows that leaving any column $k \neq 1$ out does not have a large effect on the sum of expected squared residuals.   We provide its proof in Section~\ref{sec:proof-lem-expectation-leave-out-first-col}.
\begin{lemma}
	\label{lem:expectation-leave-out-first-col}
	Under the setting of Lemma~\ref{lem:orthogonal-concentration}, there exists a universal, positive constant $C$ such that the following holds for any $k \neq 1$:
	\[
	\Bigl \lvert \EE\Bigl\{ \| \bG \bS_{\backslash k} \by \|_2^2 \Bigr\} - \EE \Bigl\{ \| \bG \bS \by \|_2^2 \Bigr\} \Bigr \rvert \leq C (1 + \sigma)\sqrt{n} \log^{5/2}(n).
	\]
\end{lemma}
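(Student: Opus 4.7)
The plan is to expand the difference $\|\bG\bS\by\|_2^2 - \|\bG\bSk{k}\by\|_2^2$ exactly via the rank-one update formula, then control the resulting terms almost surely on the truncation event $\mathcal{S}$ defined in Eq.~\eqref{eq:S-defn}, and finally handle the low-probability off-event contribution by a crude fourth-moment bound. Specifically, applying the first identity in Eq.~\eqref{eq:recursive-S}, namely $\bS = \bSk{k} - \|\bu_k\|_2^{-2}\bu_k\bu_k^{\top}$, a direct computation yields
\[
\|\bG\bSk{k}\by\|_2^2 - \|\bG\bS\by\|_2^2 \; = \; \frac{2\,\langle \bu_k, \by\rangle\,\langle \bu_k, \bG^2\bSk{k}\by\rangle}{\|\bu_k\|_2^2} \; - \; \frac{\langle \bu_k, \by\rangle^2\,\langle \bu_k, \bG^2\bu_k\rangle}{\|\bu_k\|_2^4}.
\]

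On the event $\mathcal{S}$, the truncation sets $\mathcal{S}_1$ and $\mathcal{S}_3$ from Eqs.~\eqref{eq:def-E1} and~\eqref{eq:def-E3} yield $\|\bu_k\|_2^2 \asymp n$ and, crucially for $k \neq 1$, the inner product bound $|\langle \bu_k, \by\rangle| \lesssim \sqrt{n}\log^{3/2}(n)$. Combining these with the standard high-probability bounds $\opnorm{\bG} \lesssim \sqrt{\log n}$ and $\|\by\|_2 \lesssim \sqrt{n}$ (the latter coming from Assumption~\ref{assptn:Y-psi} together with sub-Gaussian tails on $\beps$), Cauchy--Schwarz gives $|\langle \bu_k, \bG^2\bSk{k}\by\rangle| \leq \|\bu_k\|_2\,\opnorm{\bG}^2\,\|\by\|_2 \lesssim n\log n$ and $\langle \bu_k, \bG^2\bu_k\rangle \leq \opnorm{\bG}^2\|\bu_k\|_2^2 \lesssim n\log n$. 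Substituting into the identity above, the first term is at most $\sqrt{n}\log^{5/2}(n)$ and the second at most $\log^4(n)$, so
\[
\bigl|\|\bG\bSk{k}\by\|_2^2 - \|\bG\bS\by\|_2^2\bigr|\cdot\mathbbm{1}_{\mathcal{S}} \; \lesssim \; \sqrt{n}\log^{5/2}(n) \quad\text{a.s.}
\]

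To deal with the off-event contribution, Lemma~\ref{lem:truncation-probability} (together with the standard tail estimates used in its proof) guarantees $\Prob(\mathcal{S}^{c}) \leq n^{-20}$. Applying Cauchy--Schwarz and a coarse fourth-moment bound of the form $\EE[\|\bG\bS\by\|_2^4] \leq \EE[\opnorm{\bG}^4\|\by\|_2^4] \lesssim n^2$ (and similarly for $\bSk{k}$), we conclude that $\EE[\mathbbm{1}_{\mathcal{S}^c}\cdot|\|\bG\bSk{k}\by\|_2^2 - \|\bG\bS\by\|_2^2|]$ is of polynomial order $n^{-c}$ for some large $c$, and in particular negligible compared with the $\sqrt{n}\log^{5/2}(n)$ contribution. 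Combining the on-event and off-event bounds via the triangle inequality then yields the claim.

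The main obstacle, and the key reason the statement excludes $k = 1$, is the sharp bound on $|\langle \bu_k, \by\rangle|$: since $\by$ depends on the columns of $\bX$ only through $\Xcol{1}$, for $k \neq 1$ the vector $\bu_k = \bSk{k}\bG\Xcol{k}$ inherits independence from $\by$ and concentrates at the $\sqrt{n}\log^{3/2}(n)$ scale built into $\mathcal{S}_3$. For $k = 1$, by contrast, the only available bound from $\mathcal{S}_3$ is of order $n\log^{3/2}(n)$, which would degrade the final estimate by a factor of $\sqrt{n}$ and produce a bound comparable to $n\log^{5/2}(n)$---too weak to close Step~2 of the proof of Theorem~\ref{thm:one-step}(b). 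Other than this independence consideration, the argument is a routine chain of Cauchy--Schwarz applications carried out on the good event.
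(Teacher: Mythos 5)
Your approach is essentially the same as the paper's --- both split the expectation over the truncation event $\mathcal{S}$ from Eq.~\eqref{eq:S-defn} and its complement, use the rank-one update $\bS = \bSk{k} - \|\bu_k\|_2^{-2}\bu_k\bu_k^{\top}$ to isolate $\langle\bu_k,\by\rangle$, bound that inner product by $\sqrt{n}\log^{3/2}(n)$ on $\mathcal{S}$ (valid only for $k\neq 1$, which you correctly identify as the crux), and dispatch the off-event with Cauchy--Schwarz against $\Prob(\mathcal{S}^c)\leq n^{-20}$. The only cosmetic difference is that you expand the square exactly while the paper factors a difference of squares; both are routine.

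There is, however, one imprecise step. You assert
\[
\bigl|\|\bG\bSk{k}\by\|_2^2 - \|\bG\bS\by\|_2^2\bigr|\cdot\mathbbm{1}_{\mathcal{S}} \; \lesssim \; \sqrt{n}\log^{5/2}(n) \quad\text{a.s.},
\]
but the ingredients $\opnorm{\bG}\lesssim\sqrt{\log n}$ and $\|\by\|_2\lesssim\sqrt{n}$ that feed this bound are \emph{not} built into the event $\mathcal{S}$; they are separate high-probability events. So the display is not actually an almost-sure bound on $\mathcal{S}$. The fix is easy, and the paper's version shows one way: use the $\mathcal{S}$-bounds pointwise only on the factor $\|\bu_k\|_2^{-1}|\bu_k^\top\by| \lesssim \log^{3/2}(n)$, and then bound the remaining factor $\opnorm{\bG}^2\|\by\|_2$ \emph{in expectation} via Cauchy--Schwarz, $\EE\{\opnorm{\bG}^2\|\by\|_2\}\lesssim\log(n)\sqrt{n}$. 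Alternatively, you could enlarge the good event to $\mathcal{S}\cap\{\opnorm{\bG}\leq C\sqrt{\log n}\}\cap\{\|\by\|_2\leq C\sqrt{n}\}$, but then you must re-verify that the complement of this larger event still has probability $\lesssim n^{-20}$ for the off-event step. As written, the a.s. claim does not hold and the argument has a small hole; with either fix it is correct.
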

Equipped with this lemma, we complete the proof of Lemma~\ref{lem:orthogonal-concentration}.  First, we apply~\citet[Theorem 2]{warnke2016method}---taking the parameter $\gamma_k$ small enough---in conjunction with Lemma~\ref{lem:properties-fdown} and the definition of $\mathcal{S}$ to obtain the inequality
\begin{align} \label{ineq:fdown-deviation}
	\Prob\bigl\{ \lvert f^{\downarrow} - \EE f^{\downarrow} \rvert \geq t/2 \bigr\} \leq 2\exp\Bigl\{-c\frac{t^2n^2}{d(1 + \sigma^2)^2\log^{16}(n)}\Bigr\}.
\end{align}
Consequently, by Lemma~\ref{lem:truncation-probability}, 
\[
\Prob\bigl\{ \lvert f - \EE f \rvert \geq t \bigr\} \leq 2\exp\Bigl\{-c\frac{t^2n^2}{d(1 + \sigma^2)^2\log^{16}(n)}\Bigr\} + Cn^{-15}.
\]
The result then follows by setting
\[
t = C(1 + \sigma^2)\frac{\sqrt{d}\log^{8}(n)}{n},
\]
and applying Lemma~\ref{lem:expectation-leave-out-first-col}.
\qed

\paragraph{Proof of Lemma~\ref{lem:properties-fdown}:} We prove each part in turn.  

\medskip
\noindent \underline{Proof of part (a)}:
We first note the trivial inequality that if $(\bP, \bM) \in \mathcal{S}$, then $\fdown(\bP; \bM) \leq f(\bP; \bM)$ by definition.  Towards showing the reverse inequality, first note that it suffices to consider the constrained infimum over all $\bP'$ with $\rho(\bP, \bP') \leq 1$.  Moreover, note that by Lemma~\ref{lem:lower-truncation}, if $(\bP, \bM), (\bP', \bM) \in \mathcal{S}$ and $\rho(\bP, \bP') \leq 2$, then $\lvert f(\bP') - f(\bP) \rvert \leq C_0  \frac{\log^{15/2}(n)}{n}$.  Consequently, if $\rho(\bP, \bP') \leq 1$, $f(\bP) \leq f(\bP') + 2C_0  \frac{\log^{15/2}(n)}{n} \cdot \rho(\bP, \bP')$.  Taking the infimum of the right hand side over $\bP' \in \mathcal{S}$ with $\rho(\bP, \bP') \leq 2$ yields the desired inequality $f(\bP) \leq \fdown(\bP)$.  

\medskip
\noindent \underline{Proof of part (b):} Note that since $(\bP, \bM) \in \mathcal{S}$, and $\rho(\bP, \bP') \leq 1$, there exists at least one $\widetilde{\bP}$ with $\rho(\widetilde{\bP}, \bP')$ such that $(\widetilde{\bP}, \bM) \in \mathcal{S}$.  Moreover, by the triangle inequality, $\rho(\widetilde{\bP}, \bP) \leq 2$.  Thus, by Lemma~\ref{lem:lower-truncation}, 
\[
\lvert \fdown(\widetilde{\bP}; \bM) - \fdown(\bP; \bM) \rvert \leq C_0 \frac{\log^{15/2}{n}}{n}.
\]
Combining the elements yields the result. \qed

\subsubsection{Proof of Lemma~\ref{lem:norm-concentration}} \label{sec:proof-lem-norm-concentration}
We follow a similar idea as in the proof of Lemma~\ref{lem:orthogonal-concentration}; the main difference is that here we define our functions with arguments as the rows instead of the columns.  That is, we define
\begin{subequations}\label{eq:exchangeable-rows}
	\begin{align}
		\bX &= [\bx_1 \; | \; \dots \; | \; \bx_n]^{\top},
	\end{align}
	with $(\bx_i)_{1 \leq i \leq n} \overset{\mathsf{i.i.d.}}{\sim} \mathsf{N}(0, \bI_d)$.  We then define our metric $\rho: \mathbb{R}^{n \times d} \times \mathbb{R}^{n \times d} \rightarrow \mathbb{N}$ such that $\rho(\bX, \bX')$ counts the number of \emph{rows} in which $\bX$ and $\bX'$ differ.  
\end{subequations} 

Next, viewing the random variables $\bX, R_i, \bmuG^{(i)}$ as functions mapping from $\underbrace{\mathbb{R}^{d} \times \dots \times \mathbb{R}^d}_{n \text{ times}} \times \mathcal{D}^n$ to their respective spaces, we define the set 
\begin{align*}
	\mathcal{S} = \biggl\{ (\bp_1, \bp_2, \dots \bp_n, \bM) \in &\mathbb{R}^d \times \dots \times \mathbb{R}^d \times \mathcal{D}^n: \text{ for all } i \in [n], \quad \lvert R_i(\bp_1, \bp_2, \dots \bp_n, \bM))\rvert \leq C_1(1 + \sigma) \log^{3/2}{n},\\
	&\| \bSig_{i}(\bp_1, \bp_2, \dots \bp_n, \bM)^{-1} \bx_i(\bp_1, \bp_2, \dots \bp_n, \bM) \|_2^2 \leq \frac{C_1 \log{n}}{n},\\ &\text{ and } \; \langle \bcoefX_{t+1}^{(i)}(\bp_1, \bp_2, \dots \bp_n, \bM), \bSig_{i}^{-1} \bx_i(\bp_1, \bp_2, \dots \bp_n, \bM) \rangle \leq \frac{C_1(1 + \sigma)  \log{n}}{n} \biggr\},
\end{align*}
where $\bSig_{i}(\bp_1, \bp_2, \dots \bp_n, \bM) = \sum_{j \neq i} M_{j}^2 \bp_{j} \bp_{j}^{\top}$.  We claim the following bound (deferring the proof to the end of the section) \begin{align}\label{ineq:truncation-norm-concentration}
	\Prob\{ (\bx_1, \bx_2, \dots, \bx_n; \bG) \in \mathcal{S} \} \geq 1 - n^{-15}.
\end{align}
With this in hand, we define the function $f: \mathbb{R}^{n \times d} \times \mathcal{D}^n \rightarrow \mathbb{R}$ as 
\[
f(\bP; \bM) = \| \bcoefX_{t+1}(\bP; \bM) \|_2^2.
\]
We note that if $(\bX, \bG) \in \mathcal{S}$, $(\bX', \bG) \in \mathcal{S}$ and $\rho(\bX, \bX') \leq 1$, then
\begin{align}\label{ineq:f-bound-rows}
	\bigl \lvert f(\bX) - f(\bX') \bigr \rvert \leq \frac{C(1 + \sigma^2) \log^{5}{n}}{n}.
\end{align}
To see this, note that $\bmuG = \bmuG^{(i)} - R_i \bSig_{i}^{-1} \bx_i$,
and similarly for the predictor formed using the data $\bX'$.  Thus, 
\begin{align*}
	\bigl \lvert f(\bX) - f(\bX') \bigr \rvert &= \bigl \lvert R_I^2 \| \bSig_{I}^{-1} \bx_I \|_2^2 - R_I'^2 \| \bSig_{I}^{-1} \bx_I' \|_2^2 - 2 R_I \langle \bx_I, \bSig_{I}^{-1} \bmuG^{(I)} \rangle + 2 R_I' \langle \bx_I', \bSig_I^{-1} \bmuG^{(I)} \rangle \bigr \rvert,
\end{align*}
where we emphasize that $I \in [n]$ denotes the row index in which $\bX$ and $\bX'$ differ.  The inequality~\eqref{ineq:f-bound-rows} then follows from the properties of the set $\mathcal{S}$.  Defining $\fdown$ as in the proof of Lemma~\ref{lem:orthogonal-concentration} and following identical steps yields the result.  We omit the details for brevity.  It remains to prove the inequality~\eqref{ineq:truncation-norm-concentration}.

\paragraph{Proof of the inequality~\eqref{ineq:truncation-norm-concentration}.}
We tackle each in turn, beginning with the residual $R_i$.  

\medskip
\noindent \underline{Bounding the residual $R_i$:}
First, apply the leave one sample out update~\eqref{eq:loo-residual-tool} to see that
\[
R_i = \frac{r_{i, (i)}}{1 +G_i^2 \langle \bx_i, \bSig_{i}^{-1} \bx_i \rangle} = \frac{G_i^2 \langle \bx_i, \bmuG^{(i)}\rangle - G_i y_i}{1 + G_i^2 \langle \bx_i, \bSig_i^{-1} \bx_i \rangle} \leq \bigl \lvert G_i^2 \langle \bx_i, \bmuG^{(i)}\rangle - G_i y_i \bigr \rvert,
\]
where the final inequality follows since $\bSig_i^{-1}$ is PSD so that $G_i^2 \langle \bx_i, \bSig_i^{-1} \bx_i \rangle \geq 0$.
Now, by definition~\eqref{eq:def-mu-i-mu-bar}
\[
\bmuG^{(i)} = \bSig_{i}^{-1} \bX_{(-i)}^{\top} \bG_{(-i)} \by_{(-i)}.
\]
We next invoke Lemma~\ref{lem:eigenvalues-G}(b) to obtain the operator norm bound $\| \bSig_{i}^{-1} \|_{\mathsf{op}} \leq C/n$ with probability at least $1 - e^{-cn}$.  In addition, we apply~\citet[Theorem 6.1]{wainwright2019high} to upper bound the operator norm $\| \bX_{-i} \|_{\mathsf{op}} \leq C\sqrt{n}$ with probability at least $1 - e^{-cn}$.  Moreover, we note that with probability at least $1 - n^{-15}$, $\| \bG\|_{\op} \leq C\sqrt{\log{n}}$.  
Finally, using the high probability operator norm bounds $\| \bX \|_{\mathsf{op}} \leq C\sqrt{n}$ and $\| \bZ \|_{\mathsf{op}} \leq C \sqrt{n}$ in conjunction with the definition of the responses $\by$, we note that $\| \by \|_2 \leq C(1 + \sigma)\sqrt{n}$.  Putting these pieces together, we obtain the high probability bound $\| \bmuG^{(i)} \|_2 \leq (1 + \sigma)\sqrt{\log{n}}$.  Additionally, the coefficient vector $\bmuG^{(i)}$ is independent of the $i$th sample, whence we apply Hoeffding's inequality to obtain 
\[
\max_{i \in [n]} \;\langle \bx_i, \bmuG^{(i)} \rangle \leq C(1 + \sigma) \log{n}, \qquad \text{ with probability at least } \qquad 1 - n^{-15}.
\]
A similar argument implies that $y_i \leq (1 + \sigma)\sqrt{\log{n}}$ with probability at least $1 - n^{-15}$.  Combining the elements, we obtain the bound
\[
R_i \leq C (1 + \sigma)\log^{3/2}{n}, \qquad \text{ with probability at least } \qquad 1 - n^{-15}.
\]

\medskip
\noindent \underline{Bounding $\| \bSig_{i}^{-1} \bx_i \|_2^2$.}
Note that
\[
\| \bSig_{i}^{-1} \bx_i \|_2^2 \leq \| \bSig_{i}^{-1} \|_{\mathsf{op}}^2 \cdot \| \bx_i \|_2^2 \overset{\1}{\leq} \frac{C}{n^2} \cdot \| \bx_i \|_2^2 \leq \frac{C\log{n}}{n},
\]
where step $\1$ follows with probability at least $1 - e^{-cn}$ by Lemma~\ref{lem:eigenvalues-G} and the final step follows with probability at least $1 - n^{-15}$ upon applying Hoeffding's inequality.

\medskip
\noindent \underline{Bounding $\langle \bmuG^{(i)}, \bSig_i^{-1} \bx_i \rangle$.}
Note that $\bx_i$ is independent of both $\bSig_{i}^{-1}$ as well as $\bmuG^{(i)}$.  We thus apply Hoeffding's inequality to obtain the bound
\[
\max_{i \in [n]} \; \langle \bx_i, \bSig_{i}^{-1} \bmuG^{(i)} \rangle \leq C \| \bSig_{i}^{-1} \bmuG^{(i)} \|_2 \sqrt{\log{n}}, \qquad \text{ with probability at least } 1 - n^{-15}.
\]
Once more invoking Lemma~\ref{lem:eigenvalues-G} and recalling the norm bound on the coefficients $\bmuG^{(i)}$ yields the desired result. \qed
\section{Proof of Theorems~\ref{thm:global-conv-linear} and~\ref{thm:global-conv-nonlinear}: Global convergence guarantees}
This section is organized as follows.  First, we outline the common proof strategy.  Then, in Section~\ref{sec:proof-conv-linear}, we employ this strategy to prove Theorem~\ref{thm:global-conv-linear} and in Section~\ref{sec:proof-conv-nonlinear}, we prove Theorem~\ref{thm:global-conv-nonlinear}.  

We begin with some preliminaries.  First, define the filtrations $\mathcal{F}_t$ and $\widetilde{\mathcal{F}}_t$ as
\[
\mathcal{F}_t = \sigma\bigl(\{\parcompX_{s}, \perpcompX_{s}\}_{1 \leq s \leq t}, \{\parcompZ_{s}, \perpcompZ_{s}\}_{1 \leq s \leq t}\bigr) \quad \text{ and } \quad \widetilde{\mathcal{F}}_t = \sigma\bigl(\{\parcompX_{s}, \perpcompX_{s}\}_{1 \leq s \leq t-1}, \{\parcompZ_{s}, \perpcompZ_{s}\}_{1 \leq s \leq t}\bigr),
\]
and define the events $\mathcal{A}_t \in \mathcal{\widetilde{F}}_{t}$ and $\mathcal{B}_t \in \mathcal{F}_t$ as
\begin{align}\label{good-event-A-B}
	\begin{split}
		\mathcal{A}_t &= \Bigl \{ |\parcompZ_{t} - \parcompZ_{t}^{\mathsf{det}} | \leq \frac{C_{1}(\parsigma)}{(\parcompX_{t-1}^{2} + \perpcompX_{t-1}^{2})^{1/2}} \cdot \pardevn, \;\; |\perpcompZ_{t}^{2} - (\perpcompZ_{t}^{\mathsf{det}})^{2} | \leq \frac{C_{1}(1+\sigma^{2})}{\parcompX_{t-1}^{2} + \perpcompX_{t-1}^{2}} \cdot \frac{\perplogn}{\sqrt{n}} \Bigr \}, \quad \text{ and }\\
		\mathcal{B}_t &= \Bigl \{ |\parcompX_{t} - \parcompX_{t}^{\mathsf{det}} | \leq \frac{C_{1}(\parsigma)}{(\parcompZ_{t}^{2} + \perpcompZ_{t}^{2})^{1/2}} \cdot \pardevn, \;\; |\perpcompX_{t}^{2} - (\perpcompX_{t}^{\mathsf{det}})^{2} | \leq \frac{C_{1}(1+\sigma^{2})}{\parcompZ_{t}^{2} + \perpcompZ_{t}^{2}} \cdot \frac{\perplogn}{\sqrt{n}} 
		\Bigr \}.
	\end{split}
\end{align}
Applying Theorem~\ref{thm:one-step} in conjunction with the union bound yields
\begin{align}\label{eq:good-event}
	\Pr\Bigl \{ \bigcap_{t=1}^{T} \mathcal{A}_t \cap \mathcal{B}_t \Bigr\} \geq 1 - 2Tn^{-10}.
\end{align}
Henceforth, we work on this event.  Since we are interested in the ratio $\perpcompX^2/\parcompX^2$, we define the update functions $\ratiomapid: \mathbb{R} \rightarrow \mathbb{R}$ and $\ratiomapbit: \mathbb{R} \rightarrow \mathbb{R}$ as
\begin{subequations}
	\label{eq:ratiomaps}
	\begin{align}
		\ratiomapid(x) &= \frac{1 + \sigma^2}{C(\oversamp)} x + \frac{\sigma^2}{C(\oversamp)}, \quad \text{ and } \\
		\ratiomapbit(x) &= \frac{\pi^{2}}{4}\frac{1+\sigma^{2}}{C(\oversamp) \oversamp^2} \EE \Bigl\{ \frac{\lvert W \rvert \phi(\frac{\lvert W\rvert}{\sqrt{x}})}{C(\oversamp) + W^2}\Bigr\}^{-2} + \frac{1}{C(\oversamp)}  \frac{ C_{3}(\oversamp)}{C_{2}(\oversamp)} -
		\frac{2\oversamp^{-1}}{C(\oversamp)C_{2}(\oversamp)}  \frac{ \EE\Bigl\{ \frac{|W|^{3} \cdot \phi\left(\frac{|W|}{\sqrt{x}}\right) }{(C(\oversamp) + W^{2})^{2}} \Bigr\} }{  \EE\Bigl\{ \frac{|W| \cdot \phi\left(\frac{|W|}{\sqrt{x}}\right)}{C(\oversamp) + W^{2}} \Bigr\} },
	\end{align}
\end{subequations}
where we define $\phi(x) := \int_{0}^{x}e^{-t^2/2}\mathrm{d}t$.  Recalling the updates $\parcompdetX_{t+1}$ and $\perpcompdetX_{t+1}$ from Examples~\ref{ex:lin} and~\ref{ex:nonlin}, straightforward calculation yields the identities
\begin{align}\label{eq:iterated-h}
	\frac{(\perpcompdetX_{t+1})^2}{(\parcompdetX_{t+1})^2} = h_{\psi} \circ h_{\psi} \Bigl(\frac{\perpcompX_{t}^2}{\parcompX_{t}^2}\Bigr), \quad \text{ for } \psi \in \{\mathsf{id}, \mathsf{sgn}\}.
\end{align}
Finally, we define the shorthand
\[\rho = \frac{C(\oversamp)}{1 + \sigma^2}\]
for convenience.

With this notation in hand, we now state two key technical lemmas.  The first shows that---from a random initialization---the parallel component increases geometrically with a rate $C(\oversamp)/(1 + \sigma^2)$, while the perpendicular component remains bounded.   We provide the proof of the following lemma in Section~\ref{sec:proof-lem-initialization}.
\begin{lemma}\label{lem:initialization}
	(a) Suppose the following bounds hold
	\[
	\parcompX_{t} \in \Bigl[\frac{1}{50\sqrt{d}}, 1\Bigr], \quad \perpcompX_{t}^{2} \in [0.5, 2], \quad \text{ and } \quad \frac{\perpcompX_{t}^2}{\parcompX_{t}^2} \geq 20 \frac{C(\oversamp)}{1 + \sigma^2}.
	\]
	Then for both linear and one-bit measurements, there exists a pair of universal, positive constants $(C_0, C_1)$ such that if $C (1+\sigma^{2}) \log^{2}(d) \leq \oversamp \leq \sqrt{d}$, then on the event $\mathcal{A}_{t+1} \cap \mathcal{B}_{t+1}$, we have
	\begin{subequations} \label{eq:strange-dream}
		\begin{align} 
			\frac{\rho}{60} \cdot \parcompX_{t} \leq \parcompX_{t+1} &\leq 5\rho \cdot \parcompX_{t} \quad \text{and} \label{eq:strange-dream-a} \\
			\perpcompX_{t}^{2} - 100\rho \parcompX_{t}^{2}-\frac{C_{1}\perplogd \sqrt{\oversamp}}{\sqrt{d}} \leq \perpcompX_{t+1}^{2} &\leq \perpcompX_{t}^{2} + 40\rho\parcompX_{t}^{2} + \frac{C_{1}\perplogd \sqrt{\oversamp} }{\sqrt{d}}. \label{eq:strange-dream-b}
		\end{align}
		(b) Consequently, if Assumption~\ref{asm:random-linear} holds and we define $T_{\star} := \min\{ t: \alpha_{t + 1} > 1/(30\sqrt{\rho})\}$, then for all $0 \leq t \leq T_{\star}$, we have that Equations~\eqref{eq:strange-dream} hold, and also that 
		\begin{align}
			0.5 \leq \perpcompX_{t}^{2} \leq 1.4. \label{eq:strange-dream-c}
		\end{align}
	\end{subequations}
\end{lemma}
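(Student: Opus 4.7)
The plan is to combine Theorem~\ref{thm:one-step}, applied twice within each outer iteration of~\eqref{eq:AM} (once for the intermediate $\bcoefZ_{t+1}$--update and once for the subsequent $\bcoefX_{t+1}$--update), with a direct analysis of the composition of the deterministic maps from equations~\eqref{eq:idmaps} and~\eqref{eq:bitmaps} in the regime where the perpendicular component dominates. On the event $\mathcal{A}_{t+1} \cap \mathcal{B}_{t+1}$ and using $\perpcompX_t^{2} \in [0.5, 2]$, the empirical iterates differ from the deterministic predictions by fluctuations of order $\widetilde{\order}(n^{-1/2})$; the main bookkeeping challenge will be to ensure these fluctuations are absorbed into the stated bounds after two successive updates.

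For part (a), I would first compute the intermediate deterministic state. Substituting $\parcompX_t^2 + \perpcompX_t^2 = \perpcompX_t^2(1+\order(\rho^{-1}))$ into $\parmapid, \perpmapid$ yields $\parcompdetZ_{t+1} = \parcompX_t\perpcompX_t^{-2}(1+\order(\rho^{-1}))$ and $(\perpcompdetZ_{t+1})^2 = (1+\sigma^2)(C(\oversamp)\perpcompX_t^2)^{-1}(1+\order(\rho^{-1}))$; this in turn shows $(\perpcompdetZ_{t+1})^2/(\parcompdetZ_{t+1})^2 \geq 20$, so the intermediate state remains perpendicular-dominated and the same maps can be re-applied with arguments $(\parcompdetZ_{t+1}, \perpcompdetZ_{t+1})$ to obtain $\parcompdetX_{t+1} = \rho\parcompX_t(1+\order(\rho^{-1}))$ and $(\perpcompdetX_{t+1})^2 = \perpcompX_t^2 + \order(\rho\parcompX_t^2)$. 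The one-bit case proceeds identically after a Taylor expansion $\phi(u) = u + \order(u^3)$ valid when $\parcompX_t/\perpcompX_t$ is small, yielding $\parmapbit(\parcompX_t,\perpcompX_t) \approx (2/\pi)\parcompX_t\perpcompX_t^{-2}$ and a leading-order $\perpmapbit$ matching $\perpmapid$; composing gives a multiplicative constant $(2/\pi)^2 \approx 0.4$ in the parallel update that sits comfortably in $[1/60, 5]$. Layering in the fluctuations from Theorem~\ref{thm:one-step} via the events $\mathcal{A}_{t+1}, \mathcal{B}_{t+1}$ and using $\oversamp \geq C_0 \log d$ ensures that they are dominated by $\rho\parcompX_t/60$ in the parallel bound and by the $C_1 \perplogd \sqrt{\oversamp/d}$ additive term in the perpendicular bound.

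Part (b) follows by induction over $0 \leq t \leq T_\star$, with the base case handled by Assumption~\ref{asm:random-linear}. By the definition of $T_\star$, the upper bound $\parcompX_t \leq 1/(30\sqrt{\rho}) \leq 1$ is automatic. For the lower bound, the multiplicative lower bound in~\eqref{eq:strange-dream-a} combined with $\parcompX_0 \geq 1/(50\sqrt{d})$ preserves $\parcompX_t \geq 1/(50\sqrt{d})$ whenever $\rho \geq 60$, which is ensured by $\oversamp \geq C_0\log d$ for $C_0$ chosen appropriately. For the perpendicular component, geometric growth $\parcompX_s \leq (5\rho)^{s-(t-1)}\parcompX_{t-1}$ telescopes the upper bound~\eqref{eq:strange-dream-b} to give $\sum_{s=0}^{t-1} \rho \parcompX_s^2 \leq 2\rho \parcompX_{t-1}^2 \leq 2/900$, while the cumulative additive noise over $T_\star = \order(\log_{\rho}\sqrt{d})$ iterations is $o(1)$ under $\oversamp \leq \sqrt{d}$; both together keep $\perpcompX_t^2 \in [0.5, 1.4]$. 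The main obstacle is verifying that the ratio hypothesis $\perpcompX_t^2 / \parcompX_t^2 \geq 20\rho$ required by part (a) persists along the induction; this follows because $\parcompX_t^2 \leq 1/(900\rho)$ up to $T_\star$ and $\perpcompX_t^2 \geq 0.5$, forcing the ratio to remain at least $450\rho$ throughout.
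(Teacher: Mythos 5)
Your proposal follows essentially the same two-stage architecture as the paper's own proof: first sandwich the deterministic intermediate state $(\parcompdetZ_{t+1},\perpcompdetZ_{t+1})$, then compose the deterministic maps, and finally absorb the $\ordertil(n^{-1/2})$ fluctuations using the events $\mathcal{A}_{t+1},\mathcal{B}_{t+1}$, with part~(b) by induction and a geometric telescoping of the $\parcompX_s^2$ contributions. Two points are worth flagging.

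First, a direction error in the geometric sum in part~(b): you write $\parcompX_s \leq (5\rho)^{s-(t-1)}\parcompX_{t-1}$ for $s < t-1$, which is equivalent to $\parcompX_{t-1}\geq(5\rho)^{t-1-s}\parcompX_s$. But the sandwich relation in~\eqref{eq:strange-dream-a} only gives $\parcompX_{t-1}\geq(\rho/60)^{t-1-s}\parcompX_s$ (from the \emph{lower} multiplicative rate), not $(5\rho)^{t-1-s}$. Your claimed inequality is strictly stronger than what the lemma supplies and does not follow. The fix is mechanical---use $\parcompX_s \leq (60/\rho)^{t-1-s}\parcompX_{t-1}$, which still makes the sum $\sum_{s}\parcompX_s^2 \leq 2\parcompX_{t-1}^2$ as soon as $\rho$ is moderately large---but as written the telescoping step is invalid.

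Second, for the one-bit model you appeal to a Taylor expansion $\phi(u)=u+\order(u^3)$ in the regime where $\parcompX_t/\perpcompX_t$ is small. The paper instead uses the one-sided bounds $\phi(u)\leq u$ together with the structural estimates on $h_1,h_2$ in Lemma~\ref{lemma1-proof-f-h-main-properties} and the lower bound on $\parmapbit$ in Lemma~\ref{f-h-properties-nonlinear}(b); this avoids having to justify uniform control of the Taylor remainder across the composition. Your route is workable, but you would need to check that the cubic remainder of $\phi$ stays subordinate to the $C_1\perplogd\sqrt{\oversamp}/\sqrt{d}$ additive term through both half-steps; the paper's explicit inequalities make that bookkeeping somewhat cleaner. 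Also note that the requirement you verify---that the intermediate $\bcoefZ$-ratio is $\gtrsim 20$---is stronger than what is actually needed: the paper only requires $\perpcompZ_{t+1}/\parcompZ_{t+1}\geq 1$ (so that $\parcompZ_{t+1}/\perpcompZ_{t+1}\leq 1$ and Lemma~\ref{f-h-properties-nonlinear}(b) applies), which follows from the weaker estimate $\widetilde{r}_{t+1}\geq\frac{\pi}{6}\rho^{-1/2}r_t$. Otherwise your outline matches the paper's argument step for step.
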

The next lemma controls the deviations of the ratio $\perpcompX_{t+1}^2/\parcompX_{t+1}^2$ around the RHS of~\eqref{eq:iterated-h} when the iterates are in the intermediate region (see Figure~\ref{fig:conv-schematic}). We prove the lemma in Section~\ref{sec:proof-lemma-good-region}.
\begin{lemma}\label{lemma2-good-region}
	Consider $\psi \in \{\mathsf{id}, \mathsf{sgn}\}$ and let $h_{\psi}$ be as in the equation~\eqref{eq:ratiomaps}.  Suppose that $\perpcompX_{t}/\parcompX_{t} \leq 50\sqrt{\rho}$, where $\rho = C(\oversamp)/(1 + \sigma^2)$.  There exists a pair of universal positive constants $(C, C')$ such that if $C(1+\sigma^{2}) \leq \oversamp \leq \sqrt{d}$, then on the event $\mathcal{A}_{t+1} \cap \mathcal{B}_{t+1}$, we have
	\[
	\left| \frac{\perpcompX_{t+1}^{2}}{\parcompX_{t+1}^{2}} - \ratiomappsi \circ \ratiomappsi\left(  \frac{\perpcompX_{t}^{2}}{\parcompX_{t}^{2}} \right)\right| \leq C'\frac{\perplogn(1+\sigma^{2})}{\sqrt{n}}.
	\]
\end{lemma}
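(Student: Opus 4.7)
The plan is to decompose the target error into two pieces corresponding to the two half-steps of AM, and to bound each piece using the concentration estimates from the events $\mathcal{A}_{t+1}$ and $\mathcal{B}_{t+1}$ combined with regularity of the map $\ratiomappsi$. Introduce the intermediate ratio $r^{(1)} := \ratiomappsi(\perpcompZ_{t+1}^{2}/\parcompZ_{t+1}^{2})$, which by a direct computation from~\eqref{eq-equivalence} equals $(\perpcompdetX_{t+1})^{2}/(\parcompdetX_{t+1})^{2}$: the ratio produced by the one-step deterministic prediction built from the \emph{empirical} $Z$ iterate. The triangle inequality gives
\[
\Bigl| \frac{\perpcompX_{t+1}^{2}}{\parcompX_{t+1}^{2}} - \ratiomappsi \circ \ratiomappsi\Bigl(\frac{\perpcompX_{t}^{2}}{\parcompX_{t}^{2}}\Bigr) \Bigr| \;\leq\; \underbrace{\Bigl|\frac{\perpcompX_{t+1}^{2}}{\parcompX_{t+1}^{2}} - r^{(1)}\Bigr|}_{T_{1}} \;+\; \underbrace{\Bigl| r^{(1)} - \ratiomappsi \circ \ratiomappsi\Bigl(\frac{\perpcompX_{t}^{2}}{\parcompX_{t}^{2}}\Bigr) \Bigr|}_{T_{2}}.
\]

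For $T_{1}$, I would use the elementary identity
\[
\frac{a^{2}}{b^{2}} - \frac{c^{2}}{d^{2}} = \frac{a^{2}-c^{2}}{b^{2}} + \frac{c^{2}}{d^{2}}\cdot \frac{d^{2}-b^{2}}{b^{2}}
\]
to separate the error into a numerator and a denominator deviation, then plug in the bounds from event $\mathcal{B}_{t+1}$ on $|\perpcompX_{t+1}^{2} - (\perpcompdetX_{t+1})^{2}|$ and $|\parcompX_{t+1} - \parcompdetX_{t+1}|$. The needed lower bounds on $\parcompX_{t+1}^{2}$, $(\parcompdetX_{t+1})^{2}$, and on $\parcompZ_{t+1}^{2} + \perpcompZ_{t+1}^{2}$ (since the $\mathcal{B}_{t+1}$-estimates scale inversely in this quantity) follow by propagating the hypothesis $\perpcompX_{t}/\parcompX_{t} \leq 50\sqrt{\rho}$ through one half-step using the explicit deterministic formulas in Examples~\ref{ex:lin} and~\ref{ex:nonlin}, in the same spirit as the arguments underlying Lemma~\ref{lem:initialization}. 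This yields $T_{1} \lesssim \perplogn/\sqrt{n}$.

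For $T_{2}$, I would invoke the identity~\eqref{eq:iterated-h} to rewrite $\ratiomappsi \circ \ratiomappsi(\perpcompX_{t}^{2}/\parcompX_{t}^{2}) = \ratiomappsi((\perpcompdetZ_{t+1})^{2}/(\parcompdetZ_{t+1})^{2})$, so that $T_{2} \leq L \cdot |\perpcompZ_{t+1}^{2}/\parcompZ_{t+1}^{2} - (\perpcompdetZ_{t+1})^{2}/(\parcompdetZ_{t+1})^{2}|$, where $L$ is a Lipschitz constant for $\ratiomappsi$ on the relevant range of arguments. For $\psi=\mathsf{id}$, the expression~\eqref{eq:ratiomaps} gives $L = (1+\sigma^{2})/C(\oversamp) \leq 1$ immediately; for $\psi=\mathsf{sgn}$, $L$ can be bounded by differentiating the expectations in~\eqref{eq:ratiomaps} and estimating the resulting ratios of Gaussian integrals involving $\phi$. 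Once the Lipschitz reduction is in place, the residual ratio-difference is handled by the same $a^{2}/b^{2}$ decomposition, now using event $\mathcal{A}_{t+1}$ together with the lower bound on $\parcompX_{t}^{2} + \perpcompX_{t}^{2}$ furnished by the hypothesis and by the fact that we have entered the good region (so that $\parcompX_{t}^{2}+\perpcompX_{t}^{2}$ is of constant order). Together this gives $T_{2} \lesssim \perplogn/\sqrt{n}$.

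The main obstacle is establishing the Lipschitz bound for $\ratiomapbit$ uniformly over the range of inputs produced by the intermediate regime, since~\eqref{eq:ratiomaps} expresses $\ratiomapbit$ through nontrivial ratios of Gaussian integrals in the argument $x$. A secondary calculational nuisance is verifying that $\parcompZ_{t+1}^{2}+\perpcompZ_{t+1}^{2}$ does not degenerate in the one-bit case: one must trace through~\eqref{eq:bitmaps} and show it stays bounded below on the good region. Neither is conceptually deep, but each occupies the bulk of the technical work.
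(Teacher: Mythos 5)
Your decomposition is precisely the one the paper uses: pivoting at $\ratiomappsi(\perpcompZ_{t+1}^{2}/\parcompZ_{t+1}^{2})$, controlling $T_{1}$ via event $\mathcal{B}_{t+1}$ and the ratio algebra, and reducing $T_{2}$ by Lipschitz continuity of $\ratiomappsi$. So the route is right. However, there is a genuine gap in how you close $T_{2}$.

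The issue is quantitative. When you run the ``same $a^{2}/b^{2}$ decomposition'' on the residual $\bigl|\perpcompZ_{t+1}^{2}/\parcompZ_{t+1}^{2} - (\perpcompdetZ_{t+1})^{2}/(\parcompdetZ_{t+1})^{2}\bigr|$ using event $\mathcal{A}_{t+1}$, you divide by $\parcompZ_{t+1}^{2}$. Under the hypothesis $\perpcompX_{t}/\parcompX_{t} \leq 50\sqrt{\rho} \lesssim \sqrt{\oversamp}$, the explicit formula $\parcompdetZ_{t+1} = \parcompX_{t}/(\parcompX_{t}^{2}+\perpcompX_{t}^{2})$ gives
\[
\parcompZ_{t+1}^{2} \gtrsim \frac{1}{\oversamp \cdot (\parcompX_{t}^{2}+\perpcompX_{t}^{2})},
\]
and no better, so the event-$\mathcal{A}_{t+1}$ deviation $\tfrac{1}{\parcompX_{t}^{2}+\perpcompX_{t}^{2}}\cdot\tfrac{\perplogn}{\sqrt{n}}$, once divided by $\parcompZ_{t+1}^{2}$, inflates to $\oversamp\cdot\perplogn/\sqrt{n}$. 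This is exactly what the paper's Lemma~\ref{help-lemma2-GD-linear} records: the first half-step ratio deviation is $\lesssim \bigl(1\vee\perpcompX_{t}^{2}/\parcompX_{t}^{2}\bigr)\cdot\perplogn/\sqrt{n} \lesssim \oversamp\,\perplogn/\sqrt{n}$. Consequently, $L \leq 1$ is not strong enough to conclude $T_{2}\lesssim\perplogn/\sqrt{n}$; you need the full strength $L \lesssim 1/\oversamp$, which is what makes the argument work. For $\psi = \mathsf{id}$ you already wrote $L=(1+\sigma^{2})/C(\oversamp)\asymp 1/\oversamp$, but by weakening this to $L\leq 1$ you discard exactly the factor you need. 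For $\psi=\sign$, the corresponding fact is Lemma~\ref{f-h-properties-nonlinear}(a), $\lvert\ratiomapbit'(x)\rvert\lesssim(1+x^{3/2})/C(\oversamp)$, which on the post-half-step range (where $\perpcompZ_{t+1}^{2}/\parcompZ_{t+1}^{2}$ and $\ratiomapbit(\perpcompX_{t}^{2}/\parcompX_{t}^{2})$ are both $O(1)$) again gives $L\lesssim 1/\oversamp$; your sketch for the $\sign$ case leaves the magnitude of $L$ entirely unexamined, so the $\oversamp$ cancellation is missing there too.

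A secondary point: the lemma hypothesis does not grant that $\parcompX_{t}^{2}+\perpcompX_{t}^{2}$ is of constant order, and that assumption is not needed. The paper's intermediate Lemma~\ref{help-lemma2-GD-linear} is stated scale-invariantly, with the $(\parcompX^{2}+\perpcompX^{2})$ factors in the $\mathcal{A}_{t+1}$/$\mathcal{B}_{t+1}$ bounds cancelling against the inverse factors arising from the lower bound on $\parcompX'$. If you want your argument to apply to all iterates $t>T_{\star}$ (where the scale of the iterate is not pinned down), you should phrase the $a^{2}/b^{2}$ step so that these factors cancel rather than invoking a constant-order assumption.
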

Equipped with these lemmas, we are now ready to prove the two theorems. 

\subsection{Convergence in the linear model: Proof of Theorem~\ref{thm:global-conv-linear}}\label{sec:proof-conv-linear}
We execute the proof 
in two stages: (i) from random initialization to a local initialization, under Assumption~\ref{asm:random-linear} and (ii) local refinement under Assumption~\ref{asm:local-linear}. 
We split the analysis into the random initialization region in which $t \leq T_{\star}$ and the local convergence region in which $t > T_{\star}$.  
\paragraph{Stage 1: Random initialization, $t \leq T_\star$.}
Take $d_{0}$ large enough that $0.25C_{1} \geq \log^{8}(d_{0}) /d_{0}^{1/4}$. Then for $d\geq d_{0}$, combining Lemma~\ref{lem:initialization}(b)---in particular, Eqs.~\eqref{eq:strange-dream-b} and~\eqref{eq:strange-dream-c}---with the assumption $\oversamp\leq \sqrt{d}$ yields the lower bound $0.5\perpcompX_{t}^2 \geq C_1\perplogd\sqrt{\oversamp}/\sqrt{d}$, whence we deduce the sandwich relation
\begin{align*}
	0.5\perpcompX_{t}^{2} - 100\rho \parcompX_{t}^{2} \leq \perpcompX_{t+1}^{2} &\leq 1.5\perpcompX_{t}^{2} + 40\rho\parcompX_{t}^{2}.
\end{align*}
We now upper bound the squared ratio as
\begin{align}
	\label{ineq:ub-squared-ratio-lin}
	\frac{\perpcompX_{t+1}^{2}}{\parcompX_{t+1}^{2}} \overset{\1}{\leq} \frac{1.5\perpcompX_{t}^{2} + 40\rho\parcompX_{t}^{2}}{(\rho/60)^{2} \cdot \parcompX_{t}^{2}} \leq \frac{90^{2}}{\rho^{2}}\cdot \frac{\perpcompX_{t}^{2}}{\parcompX_{t}^{2}} + \frac{40\cdot 60^{2}}{\rho} \leq \frac{100^{2}}{\rho^{2}}\cdot \frac{\perpcompX_{t}^{2}}{\parcompX_{t}^{2}},
\end{align}
where step $\1$ follows by applying Lemma~\ref{lem:initialization}(b) and the final inequality follows from the bound $\frac{\perpcompX_{t}^{2}}{\parcompX_{t}^{2}} \geq 0.8\cdot30^{2}\cdot\rho$.  We similarly lower bound the squared ratio as 
\begin{align}
	\label{ineq:lb-squared-ratio-lin}
	\frac{\perpcompX_{t+1}^{2}}{\parcompX_{t+1}^{2}} \geq \frac{0.5\perpcompX_{t}^{2} - 100\rho \parcompX_{t}^{2}}{(5\rho)^{2} \cdot \parcompX_{t}^{2}} = \frac{1}{50\rho^{2}} \cdot \frac{\perpcompX_{t}^{2}}{\parcompX_{t}^{2}} - \frac{4}{\rho} \geq  \frac{1}{100\rho^{2}} \cdot \frac{\perpcompX_{t}^{2}}{\parcompX_{t}^{2}} + \frac{1}{\rho},
\end{align}
where the first inequality follows from Lemma~\ref{lem:initialization}(b) and the final inequality follows from the bound $\frac{\perpcompX_{t}^{2}}{\parcompX_{t}^{2}} \geq 0.8\cdot30^{2}\cdot\rho$.  Putting together inequalities~\eqref{ineq:ub-squared-ratio-lin} and~\eqref{ineq:lb-squared-ratio-lin} yields
\begin{align}\label{ineq:case1-lin}
	\frac{\sigma^{2}}{C(\oversamp)} + \frac{1}{100\rho^{2}} \cdot \frac{\perpcompX_{t}^{2}}{\parcompX_{t}^{2}} \leq \frac{\perpcompX_{t+1}^{2}}{\parcompX_{t+1}^{2}} \leq \frac{100^{2}}{\rho^{2}}\cdot \frac{\perpcompX_{t}^{2}}{\parcompX_{t}^{2}}, \quad \text{ as long as } t \leq T_{\star}.
\end{align}

\paragraph{Stage 2: Local refinement, $t > T_\star$.}
We first claim that 
\begin{align}\label{claim2-proof-thm-linear}
	\frac{\perpcompX_{t}}{\parcompX_{t}} \leq 50\sqrt{\rho}, \quad \text{ as long as } \quad t > T_\star.
\end{align}
Assuming that inequality~\eqref{claim2-proof-thm-linear} holds, we apply Lemma~\ref{lemma2-good-region}---specializing to the linear observation model $\psi = \mathsf{id}$---whence we obtain the inequality 
\begin{align}\label{ineq-id-ratio-error-sharp-bounds}
\Bigl \lvert  \frac{\perpcompX_{t+1}^{2}}{\parcompX_{t+1}^{2}} - \Bigl[\rho^{-2} \cdot \frac{\perpcompX_{t}^{2}}{\parcompX_{t}^{2}} + \frac{\sigma^{2}}{C(\oversamp)} \cdot \bigl(1+\rho^{-1}\bigr) \Bigr] \Bigr\rvert \leq \frac{C'\perplogn(1+\sigma^{2})}{\sqrt{n}}.
\end{align}
Using the assumption that $\sigma^{2}/C(\oversamp) \geq 10C'\perplogn/\sqrt{n}$, $\oversamp\leq \sqrt{d}$, and unrolling inequality~\eqref{ineq-id-ratio-error-sharp-bounds} above yields
\begin{align}\label{ineq:case2-lin}
	\rho^{-2} \cdot \frac{\perpcompX_{t}^{2}}{\parcompX_{t}^{2}} + \frac{\sigma^{2}}{10C(\oversamp)} \leq \frac{\perpcompX_{t+1}^{2}}{\parcompX_{t+1}^{2}} \leq \rho^{-2} \cdot \frac{\perpcompX_{t}^{2}}{\parcompX_{t}^{2}} + \frac{5\sigma^{2}}{C(\oversamp)}.
\end{align}
Finally, putting the inequalities~\eqref{ineq:case1-lin} and~\eqref{ineq:case2-lin} together yields the desired result.  It remains to prove the inequality~\eqref{claim2-proof-thm-linear}. \\

\bigskip
\noindent \underline{Proof of the inequality~\eqref{claim2-proof-thm-linear}.}
We proceed by induction.  For the base case $t = T_\star + 1$, we apply Lemma~\ref{lem:initialization}(b)---in particular, inequality~\eqref{eq:strange-dream-b} with $t = T_\star$---to obtain 
\begin{align*}
	\perpcompX_{T_{\star}+1}^{2} \leq \perpcompX_{T_{\star}}^{2} + 40\rho\parcompX_{T_{\star}}^{2} +\frac{C_{1}\perplogd\sqrt{\oversamp}}{\sqrt{d}} \leq 1.4 + \frac{40}{900} + \frac{C_{1}\perplogd\sqrt{\oversamp}}{\sqrt{d}} \leq 1.6.
\end{align*}
Thus, by definition of $T_\star$, we have $\perpcompX_{T_\star + 1}/\parcompX_{T_\star + 1} \leq 1.6 \cdot 30 \sqrt{\rho} \leq 50\sqrt{\rho}$.  Proceeding to the inductive step, suppose that the claim~\eqref{claim2-proof-thm-linear} holds for some $k > T_\star + 1$.  By the induction hypothesis, $\perpcompX_{k}/\parcompX_{k} \leq 50\sqrt{\rho}$, and we apply Lemma~\ref{lemma2-good-region} to obtain 
\[
\frac{\perpcompX_{k+1}^{2}}{\parcompX_{k+1}^{2}} \leq \frac{1}{\rho^{2}} \cdot \frac{\perpcompX_{k}^{2}}{\parcompX_{k}^{2}} + \frac{2\sigma^{2}}{C(\oversamp)} + \frac{C'\perplogn(1+\sigma^{2})}{\sqrt{n}} \leq (50\sqrt{\rho})^{2},
\]
thereby proving that the induction hypothesis is true at iteration $k + 1$.

\paragraph{Putting together the pieces to prove consequence:} Using inequalities~\eqref{ineq:case1-lin} and~\eqref{ineq:case2-lin}, we obtain that
\[
\frac{1}{100\rho^{2}}\cdot \frac{\perpcompX_{t}^{2}}{\parcompX_{t}^{2}} + \frac{\sigma^{2}}{10C(\oversamp)} \leq \frac{\perpcompX_{t+1}^{2}}{\parcompX_{t+1}^{2}} \leq \frac{100^{2}}{\rho^{2}}\cdot \frac{\perpcompX_{t}^{2}}{\parcompX_{t}^{2}} + \frac{5\sigma^{2}}{C(\oversamp)},\qquad \forall\; 0 \leq t\leq T.
\] 
Applying the above inequality recursively yields that
\[
\bigg(\frac{1}{100\rho^{2}} \bigg)^{t}\cdot \frac{\perpcompX_{0}^{2}}{\parcompX_{0}^{2}} + \frac{\sigma^{2}}{10C(\oversamp)} \leq \frac{\perpcompX_{t}^{2}}{\parcompX_{t}^{2}} \leq \bigg(\frac{100^{2}}{\rho^{2}}\bigg)^{t} \cdot \frac{\perpcompX_{0}^{2}}{\parcompX_{0}^{2}} + \frac{10\sigma^{2}}{C(\oversamp)},\qquad \forall\; 1 \leq t\leq T.
\]
Consequently, for $t \geq \log_{\rho/100}(\frac{\perpcompX_{0}}{\parcompX_{0}\sigma})+1$, we have that $\frac{\perpcompX_{t}^{2}}{\parcompX_{t}^{2}} \lesssim \frac{\sigma^{2}}{C(\oversamp)}$. And for $t \leq  \log_{10\rho}(\frac{\perpcompX_{0}}{\parcompX_{0}\sigma})-1$, we have that $\frac{\perpcompX_{t}^{2}}{\parcompX_{t}^{2}} \geq \sigma^{2} \cdot (10\rho)^{2}$. Since $\log_{\rho/100}(\frac{\perpcompX_{0}}{\parcompX_{0}\sigma}) \asymp \log_{10\rho}(\frac{\perpcompX_{0}}{\parcompX_{0}\sigma}) \asymp \log_{\frac{\oversamp}{1+\sigma^{2}}}(\frac{\perpcompX_{0}}{\parcompX_{0}\sigma})$, we conclude that it takes $\tau = \Theta( \log_{\frac{\oversamp}{1+\sigma^{2}}}(\frac{\perpcompX_{0}}{\parcompX_{0}\sigma}) )$ iterations to get $\frac{\perpcompX_{t}^{2}}{\parcompX_{t}^{2}} \lesssim \frac{\sigma^{2}}{C(\oversamp)}$.
\qed

\paragraph{Proof of the case $\sigma \geq 0$:} Note that using the assumption $\oversamp \leq \sqrt{d}$, unrolling inequality~\eqref{ineq-id-ratio-error-sharp-bounds} above yields
\[
	\frac{\perpcompX_{t+1}^{2}}{\parcompX_{t+1}^{2}} \leq \rho^{-2} \cdot \frac{\perpcompX_{t}^{2}}{\parcompX_{t}^{2}} + \frac{5\sigma^{2}}{C(\oversamp)} + \frac{C\log^{8}(n)}{\sqrt{n}}.
\]
Combining the inequality in the display above with inequality~\eqref{ineq:case1-lin} yields for all $t\geq 0$
\[
\frac{\perpcompX_{t+1}^{2}}{\parcompX_{t+1}^{2}} \leq \frac{100^{2}}{\rho^{2}} \cdot \frac{\perpcompX_{t}^{2}}{\parcompX_{t}^{2}} + \frac{5\sigma^{2}}{C(\oversamp)} + \frac{C\log^{8}(n)}{\sqrt{n}}.
\]
This proves inequality~\eqref{linear-convergence-id-low-noise}.

\subsection{Convergence in the non-linear model: Proof of Theorem~\ref{thm:global-conv-nonlinear}}\label{sec:proof-conv-nonlinear}
We pursue a nearly identical strategy to the proof of Theorem~\ref{thm:global-conv-linear}, splitting the analysis into two stages (i) from random initialization and (ii) local refinement. Here, however, the local refinement stage consists of two substages.  We require the following technical lemma that we prove in Section~\ref{sec:proof-lem-h-sgn-properties}.
\begin{lemma}\label{lem:h-sgn-properties}
	Consider the function $\ratiomapbit$~\eqref{eq:ratiomaps}.  The following hold:
	\begin{itemize}
		\item[(a)] If $x \geq 100$, then 
		\[
		\frac{\pi^{2}}{8} \cdot \frac{1+\sigma^{2}}{C(\oversamp)} \cdot x + \frac{20(1+\sigma^{2})}{C(\oversamp)} \leq \ratiomapbit(x) \leq \frac{\pi^{2}}{2} \cdot \frac{1+\sigma^{2}}{C(\oversamp)} \cdot x + \frac{20(1+\sigma^{2})}{C(\oversamp)}.
		\]
		\item[(b)] If $0 \leq x < 100$, then 
		\[
		\ratiomapbit(x) \leq 50 \pi^{2} \cdot \frac{1+\sigma^{2}}{C(\oversamp)}.
		\]
		\item[(c)] There exists a universal constant $c>0$ such that $\ratiomapbit(x) \geq \frac{c(1+\sigma^{2})}{C(\oversamp)}$ for all $x > 0$.
	\end{itemize}
\end{lemma}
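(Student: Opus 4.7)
The plan is to decompose $\ratiomapbit(x) = T_1(x) + T_2 - T_3(x)$ term-by-term from~\eqref{eq:ratiomaps}, where
\[
T_1(x) = \frac{\pi^2(1+\sigma^2)}{4\, C(\oversamp) \oversamp^2 B(x)^2}, \qquad T_2 = \frac{1}{C(\oversamp)}\cdot \frac{C_3(\oversamp)}{C_2(\oversamp)}, \qquad T_3(x) = \frac{2\, A(x)}{\oversamp\, C(\oversamp) C_2(\oversamp) B(x)},
\]
with $B(x) := \EE\{|W|\phi(|W|/\sqrt{x})/(C(\oversamp)+W^2)\}$ and $A(x) := \EE\{|W|^3 \phi(|W|/\sqrt{x})/(C(\oversamp)+W^2)^2\}$. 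The first structural observation is \emph{monotonicity}: since $\phi$ is increasing and $|W|/\sqrt{x}$ is decreasing in $x$, $B(x)$ is strictly decreasing and hence $T_1(x)$ is strictly increasing. This will reduce part~(b) to a single evaluation at $x = 100$.

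The workhorse estimate is a sandwich bound on $B(x)$ using the pointwise Taylor inequality $y - y^3/6 \leq \phi(y) \leq y$. Combined with the defining identity $\EE\{W^2/(C+W^2)\} = 1/\oversamp$ and the moment bound $\EE\{W^4/(C+W^2)\} \leq 3/C$, this yields $B(x) = (1/(\sqrt{x}\oversamp))(1 + O(\oversamp/(Cx)))$, which by Lemma~\ref{lemma:C(lambda)-lambda} (giving $\oversamp/C \asymp 1$) is $(1/(\sqrt{x}\oversamp))(1 + O(1/x))$. For $x \geq 100$, this places $T_1(x)$ safely inside the window $[\pi^2/8,\, \pi^2/2]\cdot(1+\sigma^2)x/C$. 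For $T_2$ and $T_3$, I would use the algebraic identity $C_3 = 1/\oversamp - C\cdot C_2$ (derived from $W^4/(C+W^2)^2 = W^2/(C+W^2) - CW^2/(C+W^2)^2$) to rewrite $T_2 = 1/(\oversamp C C_2) - 1 = O(1/C)$, and then use $\phi \leq \sqrt{\pi/2}$ on $A(x)$ together with the Taylor lower bound on $B(x)$ to show $T_3(x) = O(1/C)$ uniformly in $x$. Together these residuals are absorbed into the $20(1+\sigma^2)/C$ slack of part~(a).

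Part~(b) is immediate from the monotonicity: for $x < 100$, $T_1(x) \leq T_1(100)$, which by part~(a) is at most $(\pi^2/2)(1+\sigma^2)\cdot 100/C$, and $|T_2 - T_3|$ contributes a further $O((1+\sigma^2)/C)$ that fits inside the $50\pi^2$ constant. For part~(c), I would argue in two regimes. For $x \geq 1$, the Taylor sandwich still yields $T_1(x) \geq (\pi^2/4)(1+\sigma^2)/C \cdot (1 - o(1))$. For the saturated regime $x \leq 1$ (where $\phi \approx \sqrt{\pi/2}$), a Cauchy--Schwarz estimate $\EE\{|W|/(C+W^2)\}^2 \leq \EE\{1/(C+W^2)\}\cdot\EE\{W^2/(C+W^2)\} = (1 - 1/\oversamp)/(C\oversamp)$ gives $B(x)^{-2} \geq (2/\pi) C\oversamp$, hence $T_1(x) \geq (\pi/2)(1+\sigma^2)/\oversamp \asymp (1+\sigma^2)/C$. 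The final ingredient is a \emph{near-cancellation} between $T_2$ and $T_3$: substituting the Taylor expansion into $A(x)/B(x)$ yields $T_3(x) \approx 2 T_2$ up to $O(1/(Cx))$ corrections, so $T_2 - T_3 \approx -T_2 = -C_3/(CC_2) = O(1/C)$, which is dwarfed by the leading $T_1$ contribution.

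The main obstacle is part~(c) in the small-noise regime $\sigma \to 0$: because $T_2$ and $T_3$ lack the $(1+\sigma^2)$ prefactor of $T_1$, a crude triangle-inequality bound $|T_2 - T_3| \leq C'/C$ is insufficient to deliver a clean $c(1+\sigma^2)/C$ floor. Overcoming this requires sharply tracking the identity $T_3 \approx 2T_2$, so that the actual residual $T_2 - T_3 \approx -T_2$ carries a universal constant strictly smaller than $\pi^2/4$ (the leading constant in $T_1$ at $\sigma = 0$). I expect this to reduce to a few lines of algebraic bookkeeping once the identity $C_3 = 1/\oversamp - CC_2$ is in hand, and do not anticipate deeper conceptual difficulty.
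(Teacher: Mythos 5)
Your decomposition of $\ratiomapbit$ into $T_1 + T_2 - T_3$ is exactly the paper's decomposition in disguise: the paper writes the same quantities via auxiliary functions $h_1, h_2$ with $\oversamp B(x) = h_1(1/\sqrt{x})$ and $A(x)/C_3(\oversamp) = h_2(1/\sqrt{x})$. For parts (a) and (b), your plan --- a Taylor sandwich $y - y^3/6 \leq \phi(y) \leq y$ together with $\oversamp \asymp C(\oversamp)$ to pin $B(x)$ within $1 + O(1/x)$ of $1/(\sqrt{x}\,\oversamp)$ for $x \geq 100$, monotonicity of $T_1$ for $x < 100$, and the bound $C_3(\oversamp)/C_2(\oversamp) \leq 20$ --- matches what the paper does via Lemma~\ref{lemma1-proof-f-h-main-properties}'s bounds on $h_1, h_2$; the constants come out with comparable slack, and these parts are fine.

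Part (c), however, contains a genuine gap, and it is precisely the one you flagged but underestimated. Your plan hinges on showing $T_2 - T_3 \approx -T_2$ with a universal constant ``strictly smaller than $\pi^2/4$.'' But $T_2 = C_3(\oversamp)/(C(\oversamp) C_2(\oversamp))$, and as $\oversamp \to \infty$, $C_3/C_2 \to \EE W^4/\EE W^2 = 3 > \pi^2/4 \approx 2.47$. So the constant is \emph{not} smaller than $\pi^2/4$: at $\sigma = 0$ and $x = 1$, your lower bound $T_1 \geq \pi^2 x/(4C)$ gives $T_1 \geq 2.47/C$, while $T_2 - T_3 \approx -T_2 \approx -3/C$, so the sum is negative. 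The actual $\ratiomapbit(1)$ is positive, but this is because (i) the true $T_1$ at $x = 1$ is roughly twice your bound (the inequality $\phi(y) \leq y$ is far from tight there), and (ii) $T_3 \approx 2T_2$ is only accurate in the large-$x$ regime where your Taylor expansion converges; at $x \approx 1$ the correction terms are $O(1)$, and for $x \lesssim 1$ your Taylor \emph{lower} bound on $B(x)$ is actually negative (the $W^4$ term dominates), so you have no control on $T_3$ at all. Your fallback Cauchy--Schwarz bound for $x \leq 1$ gives $T_1 \gtrsim (1+\sigma^2)/C$ but says nothing about $T_3$ in that regime, which can be a large multiple of $1/C$.

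The paper's argument for (c) avoids all of this with one trick: multiply $\ratiomapbit(x)$ by $T^2$, where $T := (2/\pi)h_1(1/\sqrt{x})$. Then $T_1 T^2 = (1+\sigma^2)/C(\oversamp)$ exactly, while $(T_2 - T_3)T^2 = \frac{1}{C(\oversamp) C_2(\oversamp)}\bigl(C_3(\oversamp) T^2 - \frac{4T}{\pi}\EE\{W^3\phi(|W|/\sqrt{x})/(C(\oversamp)+W^2)^2\}\bigr)$, a concave-up quadratic in $T$ whose global minimum is $-\frac{4}{\pi^2 C_3(\oversamp)}\EE\{\cdot\}^2$. Cauchy--Schwarz with the factorization $|W|^3/(C(\oversamp)+W^2)^2 = \frac{|W|}{C(\oversamp)+W^2}\cdot\frac{W^2}{C(\oversamp)+W^2}$, plus $\phi \leq \sqrt{\pi/2}$, gives $\EE\{\cdot\}^2 \leq \frac{\pi}{2}C_2(\oversamp)C_3(\oversamp)$, so the negative piece is at most $\frac{2}{\pi C(\oversamp)}$. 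Since $2/\pi < 1 \leq 1+\sigma^2$, the result $\ratiomapbit(x)T^2 \geq (1-2/\pi)(1+\sigma^2)/C(\oversamp)$ holds uniformly over $x > 0$, and $T \lesssim 1$ closes the argument. Making the residual a minimizable quadratic in $T$ --- with the constant $2/\pi$ emerging cleanly from Cauchy--Schwarz --- is the structural insight your case-analysis plan is missing, and without it I do not see how to close part (c).
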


Recall the iteration $T_\star$ defined in Lemma~\ref{lem:initialization}.

\paragraph{Stage 1: Random initialization, $t \leq T_\star$.} Proceeding the identical proof of inequalities~\eqref{ineq:ub-squared-ratio-lin} and~\eqref{ineq:lb-squared-ratio-lin}, we obtain that
\begin{align}\label{ineq:random-init-nonlinear-ub}
	\frac{1}{100\rho^{2}} \cdot \frac{\perpcompX_{t}^{2}}{\parcompX_{t}^{2}} + \frac{1}{\rho} \leq \frac{\perpcompX_{t+1}^{2}}{\parcompX_{t+1}^{2}} \leq \frac{90^{2}}{\rho^{2}}\cdot \frac{\perpcompX_{t}^{2}}{\parcompX_{t}^{2}} + \frac{40\cdot 60^{2}}{\rho},
\end{align}
which immediately proves Theorem~\ref{thm:global-conv-nonlinear} for $t \leq T_{\star}$.  

\paragraph{Stage 2: Local refinement, $t > T_\star$.} Unlike the proof of Theorem~\ref{thm:global-conv-linear}, this stage splits into two further sub-stages.  Our first claim---whose proof is identical to that of the inequality~\eqref{claim2-proof-thm-linear}---is that
\begin{align} \label{claim2-proof-thm-nonlinear}
	\frac{\perpcompX_{t}}{\parcompX_{t}} \leq 50\sqrt{\rho}, \qquad \text{ as long as } \qquad t > T_\star.
\end{align}
Lemma~\ref{lemma2-good-region} then yields
\begin{align}
	\label{ineq:local-region-nonlinear}
	\Bigl \lvert \frac{\perpcompX_{t+1}^2}{\parcompX_{t+1}^2} - \ratiomapbit \circ \ratiomapbit \Bigl(\frac{\perpcompX_{t}}{\parcompX_{t}}\Bigr) \Bigr \rvert \leq \frac{C_1 \perplogn(1+\sigma^{2})}{\sqrt{n}} = o_{d}\Big( \frac{1+\sigma^{2}}{C(\oversamp)}\Big),
\end{align}
where the last step follows from $ \oversamp \leq \sqrt{d}$ and $C(\oversamp) \asymp \oversamp = n/d$. We split the rest of the proof into three cases.

\bigskip
\noindent\underline{Case 1: $t > T_\star$, $\perpcompX_{t}/\parcompX_{t} \geq 10$, and $\ratiomapbit(\perpcompX_t^2/\parcompX_t^2) > 100$.}  
We apply Lemma~\ref{lem:h-sgn-properties}(a) to obtain the sandwich relation
\begin{align*}
	\frac{\pi^{4}}{64} \cdot \frac{1}{\rho^{2}} \cdot \frac{\perpcompX_{t}^{2}}{\parcompX_{t}^{2}} + \frac{20}{\rho} \leq \ratiomapbit \circ \ratiomapbit \bigg(\frac{\perpcompX_{t}^{2}}{\parcompX_{t}^{2}}\bigg) \leq \frac{\pi^{4}}{4} \cdot \frac{1}{\rho^{2}} \cdot \frac{\perpcompX_{t}^{2}}{\parcompX_{t}^{2}} + \frac{30}{\rho}.
\end{align*}
Combining with inequality~\eqref{ineq:local-region-nonlinear} and the assumption $\oversamp \leq \sqrt{d}$, we obtain that
\begin{align*}
	\frac{\pi^{4}}{64} \cdot \frac{1}{\rho^{2}} \cdot \frac{\perpcompX_{t}^{2}}{\parcompX_{t}^{2}} + \frac{10}{\rho} \leq \frac{\perpcompX_{t+1}^{2}}{\parcompX_{t+1}^{2}} \leq \frac{\pi^{4}}{4} \cdot \frac{1}{\rho^{2}} \cdot \frac{\perpcompX_{t}^{2}}{\parcompX_{t}^{2}} + \frac{35}{\rho},
\end{align*}
which completes the proof for this case.  

\medskip
\noindent\underline{Case 2: $t > T_\star$, $\perpcompX_{t}/\parcompX_{t} \geq 10$, and $\ratiomapbit(\perpcompX_t^2/\parcompX_t^2) \leq 100$.} Applying Lemma~\ref{lem:h-sgn-properties}(b) in conjunction with the inequality~\eqref{ineq:local-region-nonlinear} and the assumption $\oversamp \leq \sqrt{d}$ yields the desired upper bound
\[
\frac{\perpcompX_{t+1}^{2}}{\parcompX_{t+1}^{2}} \leq \frac{55\pi^{2}}{\rho}.
\]
Similarly, applying Lemma~\ref{lem:h-sgn-properties}(c) yields that there exists a universal constant $c$ such that
\[
\frac{\perpcompX_{t+1}^{2}}{\parcompX_{t+1}^{2}} \geq c \cdot \frac{1}{\rho} \geq \frac{c}{2} \cdot \frac{1}{50^{2}} \cdot \frac{1}{\rho^{2}} \cdot \frac{\perpcompX_{t}^{2}}{\parcompX_{t}^{2}} + \frac{c}{2} \cdot \frac{1}{\rho},
\]
where in the last step we use $\perpcompX_{t}/\parcompX_{t} \leq 50\sqrt{\rho}$. Putting the two pieces together concludes the proof for this case.

\medskip
\noindent\underline{Case 3: $t > T_\star$ and $\perpcompX_{t}/\parcompX_{t} \leq 10$.}
Following the exact same logic as in Case 2, we obtain the same result in Case 2.
\medskip

\paragraph{Putting together the pieces to prove consequence:} Putting the result of Stage 1 and Stage 2 together, we obtain that there exists a universal constant $c$ such that
\[
\frac{c}{500} \cdot \frac{1}{\rho^{2}} \cdot \frac{\perpcompX_{t}^{2}}{\parcompX_{t}^{2}} + \frac{c}{2} \cdot \frac{1}{\rho} \leq \frac{\perpcompX_{t+1}^{2}}{\parcompX_{t+1}^{2}} \leq \frac{90^{2}}{\rho^{2}} \cdot \frac{\perpcompX_{t}^{2}}{\parcompX_{t}^{2}} + \frac{40\cdot 60^{2}}{\rho},\qquad \forall\; 0 \leq t\leq T.
\]
Applying the above inequality recursively yields that
\[
\bigg(\frac{c}{500\rho^{2}}\bigg)^{t}  \cdot \frac{\perpcompX_{0}^{2}}{\parcompX_{0}^{2}} + \frac{c}{2} \cdot \frac{1}{\rho} \leq \frac{\perpcompX_{t}^{2}}{\parcompX_{t}^{2}} \leq \bigg( \frac{90^{2}}{\rho^{2}} \bigg)^{t} \cdot \frac{\perpcompX_{0}^{2}}{\parcompX_{0}^{2}} + \frac{80\cdot 60^{2}}{\rho} ,\qquad \forall\; 1 \leq t\leq T.
\]
Consequently, for $t \geq \log_{\rho/90}(\perpcompX_{0}/\parcompX_{0}) + 1$, we obtain that $\frac{\perpcompX_{t}^{2}}{\parcompX_{t}^{2}} \lesssim 1/\rho = \frac{1+\sigma^{2}}{C(\oversamp)}$. And for $t \leq \log_{(500/c)^{1/2}\rho}(\perpcompX_{0}/\parcompX_{0})$, we obtain that $\frac{\perpcompX_{t}^{2}}{\parcompX_{t}^{2}} \geq 1$. Since $\log_{(500/c)^{1/2}\rho}(\perpcompX_{0}/\parcompX_{0}) \asymp \log_{\rho/90}(\perpcompX_{0}/\parcompX_{0}) \asymp \log_{\frac{\oversamp}{1+\sigma^{2}}}(\perpcompX_{0}/\parcompX_{0})$, we conclude that it takes $\tau = \Theta(\log_{\frac{\oversamp}{1+\sigma^{2}}}(\perpcompX_{0}/\parcompX_{0}))$ iterations to get $\perpcompX_{\tau}^{2}/\parcompX_{\tau}^{2} \lesssim \frac{1+\sigma^{2}}{C(\oversamp)}$.
\qed

\section{Discussion}
We analyzed the alternating minimization algorithm for generalized rank one matrix estimation with Gaussian measurements, demonstrating sharp linear convergence from a random initialization.  The bedrock of our analysis is a two-dimensional deterministic one-step update which provides a good approximation for natural error metrics---such as the angular estimation error---of the high-dimensional iterates.  Crucially, we showed that our deterministic one-step approximations are accurate up to fluctuations of the order $\widetilde{\order}(n^{-1/2})$, thus enabling our analysis through the entire trajectory. \revision{We note that our techniques for proving non-asymptotic concentration bounds around the deterministic one-step predictions can be applied to the models and algorithms in~\citet{chandrasekher2021sharp}, as the problems considered in that paper have polynomially growing link functions, and the algorithms are either alternating minimization or subgradient descent. The techniques introduced in this paper would improve the deviation rates from $n^{-1/4}$~\citep[Theorem 1]{chandrasekher2021sharp} to $n^{-1/2}$.} Let us conclude with a few intriguing open questions and extensions.  

First, our analysis crucially required a sample-splitting assumption.  This assumption can be interpreted as a large-batch assumption in which $N$ samples are split into batches of size $d \oversamp$, with $\oversamp > 1$.  As noted in the discussion following Theorem~\ref{thm:global-conv-linear}, such an assumption imposes a $\log{d}$ overhead in sample complexity.  It would be interesting to understand whether the alternating minimization algorithm can be analyzed without this sample-splitting assumption---thereby using all of the data at each iteration---using either a further leave-one-out mechanism~\citep{chen2019gradient} or using tools derived in the context of approximate message passing~\citep{berthier2020state}. \revision{We note that as shown in Figure~\ref{figure-no-sample-split}, the deterministic one-step predictions in Eq.~\eqref{eq-equivalence} are no longer exact when the algorithm is run without sample-splitting. The eventual statistical error, however, is exactly predicted, which is a phenomenon worth exploring in future work.}

\begin{figure}[!h]
	\centering
	\begin{subfigure}[b]{0.48\textwidth}
		\centering
		\includegraphics[scale=0.5]{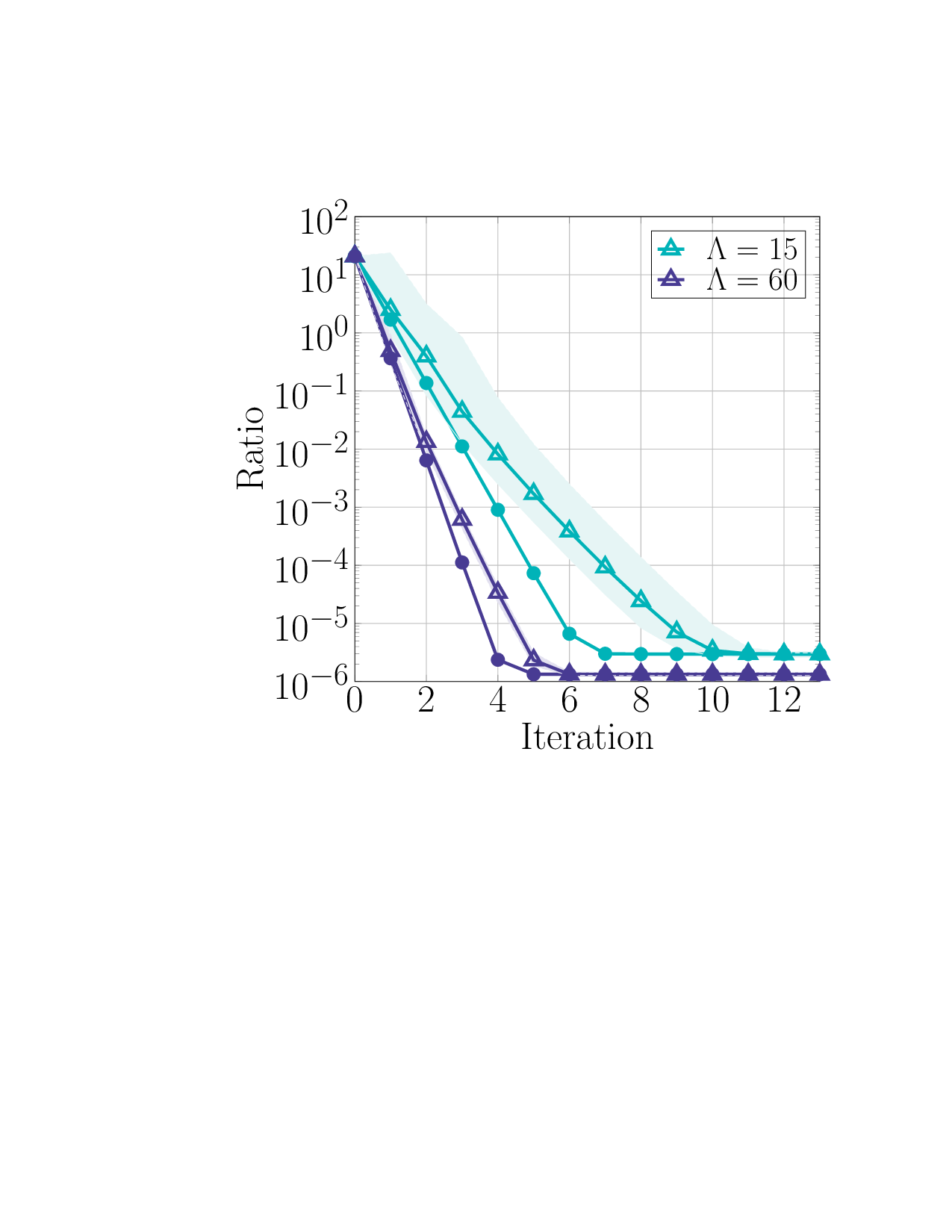}
		\caption{$\psi(w) = w$, $d = 400$, $\sigma = 10^{-5}$.} 
	\end{subfigure}
	\caption{Comparisons of the empirical behavior of the AM algorithm without sample-splitting and the deterministic one-step predictions with sample-splitting. Hollow triangular marks denote the median of the ratio over the 30 independent trials and filled-in circular marks denote the deterministic one-step predictions. Shaded envelopes denote the range over the 30 independent trials.} 
	\label{figure-no-sample-split}
\end{figure}

Second, much of our analysis was enabled by the Gaussian assumption on the sensing vectors $\bx_i$ and $\bz_i$.  While this is a very specific assumption, it has been shown in many cases that predictions such as those in Section~\ref{sec:one-step-main} enjoy a universality property~\citep[to name a few]{chatterjee2006generalization,bayati2015universality,montanari2022universality}.  Do similar guarantees hold for the iterative algorithm considered here? Moreover, are the fast convergence rates derived in Section~\ref{sec:convergence-main} robust to a larger class of random ensembles?

Third, we believe that the leave-one-out method developed here (which is in turn based on that of~\citet{el2013robust}) is broadly applicable to other models and algorithms, going beyond rank one estimation and alternating minimization. A candidate algorithm to analyze is a related alternating iteration developed in the context of semi-parametric single-index models to improve upon the estimation performance in near-noiseless problems~\citep{pananjady2021single}.
More broadly, an interesting direction is to provide a mechanism by fine-grained comparisons between different algorithms that can be carried out for any statistical model. Can the deterministic one-step updates of Section~\ref{sec:one-step-main} be used to guide the choice of practical algorithms for a given data analysis task?

Fourth, in a more speculative direction, Theorems~\ref{thm:global-conv-linear} and~\ref{thm:global-conv-nonlinear}---taken together with the deterministic one-step updates---open the door to a type of algorithmic model selection in the rank one matrix recovery problem.  In particular, although the convergence rates of AM (in the sense of optimization error) are of the same order for both the identity and one-bit models, the precise constants defining the rate of convergence differ between the two models. We illustrate this phenomenon in Figure~\ref{fig:linear-bit}. While we do not capture these constants exactly in Theorems~\ref{thm:global-conv-linear} and~\ref{thm:global-conv-nonlinear}, our deterministic one-step predictions of Theorem~\ref{thm:one-step} do capture the precise constants.  Thus, when the statistician receives data from the generalized rank one model but does not know the non-linearity $\psi$, a possible strategy for model selection consists of \mbox{(i.)} running the AM algorithm to convergence, \mbox{(ii.)} computing the empirical convergence rate and \mbox{(iii.)} selecting the model by comparing the empirical convergence rate to the convergence rate predicted by the deterministic one-step updates. We leave a detailed investigation of such a procedure to future work.
\begin{figure}[ht!]
	\centering
	\includegraphics[scale=0.5]{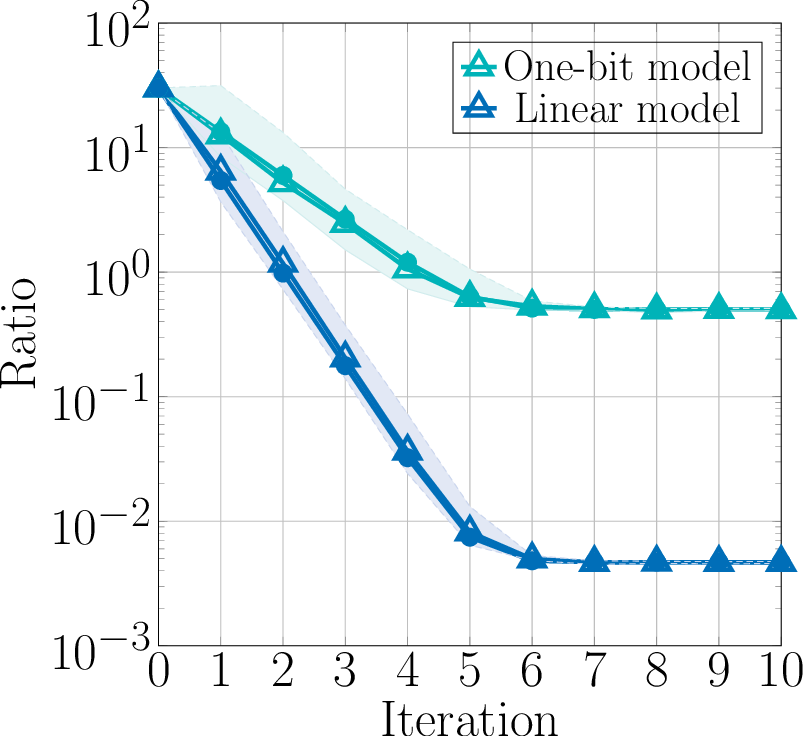}
	\caption{Comparison of empirical behavior of AM for linear observation model~\eqref{eq:model} with $\psi(w) = w$ and for the one-bit observation model~\eqref{eq:model} with $\psi(w) = \mathsf{sign}(w)$.  Note that the convergence rates---while equivalent in order---differ in the exact constants.  Moreover, the deterministic one-step predictions (circular marks) are able to distinguish the rates up to these constants, opening the door to algorithmic model selection.}
	\label{fig:linear-bit}
\end{figure}

Finally, we believe that the general low-rank case can still in principle be analyzed via state evolution updates that can be derived in a similar spirit to those showcased here, but it is important to track a larger number of states, i.e., $\mathcal{O}(k^2)$ state variables if the underlying matrix is rank-$k$. In addition, if the rank is greater than $1$, then one needs to carefully handle identifiability issues when defining the state evolution. 

Let us illustrate using the rank-2 case for simplicity and consider the ground truth matrix being
\[
	\boldsymbol{M}_{\star} = \bcoefX_{\star}^{1} (\bcoefZ_{\star}^{1})^{\top} + \bcoefX_{\star}^{2} (\bcoefZ_{\star}^{2})^{\top}  = \begin{bmatrix} \bcoefX_{\star}^{1} \;\vert\; \bcoefX_{\star}^{2} \end{bmatrix} 
	\begin{bmatrix} \bcoefZ_{\star}^{1} \; \vert \; \bcoefZ_{\star}^{2} \end{bmatrix}^{\top}.
\]
Note that in this case, the matrix $\begin{bmatrix} \bcoefX_{\star}^{1} \; \vert \; \bcoefX_{\star}^{2} \end{bmatrix}$ is not identifiable since $\begin{bmatrix} \bcoefX_{\star}^{1} \; \vert \;  \bcoefX_{\star}^{2} \end{bmatrix} \boldsymbol{U}$ can also be a true left singular matrix for any unitary matrix $\boldsymbol{U} \in \mathbb{R}^{2\times 2}$. 

Nevertheless, if there is no nonlinearity, then the matrix estimate $\boldsymbol{M}_{\star}$ is identifiable given a large enough sample size. To provide a sketch of how one might analyze the AM algorithm for this case, suppose we have singular vector estimates $\{\bcoefX_{t}^{\ell}, \bcoefZ_{t}^{\ell}\}_{\ell = 1,2}$ at the $t$-th iteration. Now define for each $\ell,k \in\{1,2\}$ a new collection of state variables
\begin{align*}
		\parcompX_{t}^{\ell,k} &= \langle \bcoefX_{t}^{\ell}, \bcoefX_{\star}^{k} \rangle,\quad \perpcompX_{t}^{\ell} = \big\| \bP_{\mathsf{span}\{\bcoefX_{\star}^{1}, \bcoefX_{\star}^2\}}^{\perp} \bcoefX_{t}^{\ell} \big\|_{2}, \quad \eta_{t} = \big\| \bP_{\mathsf{span}\{\bcoefX_{\star}^{1}, \bcoefX_{\star}^2\}}^{\perp} (\bcoefX_{t}^{1} + \bcoefX_{t}^{2}) \big\|_{2},\\
		\parcompZ_{t}^{\ell,k} &= \langle \bcoefZ_{t}^{\ell}, \bcoefZ_{\star}^{k} \rangle,\quad\; \perpcompZ_{t}^{\ell} = \big\| \bP_{\mathsf{span}\{\bcoefZ_{\star}^{1}, \bcoefZ_{\star}^2\}}^{\perp} \bcoefZ_{t}^{\ell} \big\|_{2},\quad \;\; \widetilde{\eta}_{t} = \big\| \bP_{\mathsf{span}\{\bcoefZ_{\star}^{1}, \bcoefZ_{\star}^2\}}^{\perp} (\bcoefZ_{t}^{1} + \bcoefZ_{t}^{2}) \big\|_{2}.
\end{align*}
The Frobenius norm distance $\| \boldsymbol{M}_* - \boldsymbol{M}_t \|_F^2$ can be expressed purely using the state variables defined in the display above. Moreover, using the techniques of this paper\footnote{To see why, note that since the distribution of sensing vectors is unitarily invariant, we can assume $\bcoefX_{\star}^{k} = \boldsymbol{e}_{k}$, the $k$-th standard basis vector. Then each parallel component of the state corresponds to an entry, e.g., $\parcompX_{t}^{\ell,k} = \bcoefX_{t}^{\ell}(k)$, while each perpendicular component corresponds to a sum of squares of entries, e.g., $(\perpcompX_{t}^{1})^2 = \sum_{k=3}^{d} (\bcoefX_{t}^{1}(k))^2$. Since each step of the AM algorithm solves a linear least squares problem, one can use the leave-one-out technique to obtain a closed-form expression for each entry of the next iterate $\bcoefX_{t+1}^{\ell}(k)$ (see, e.g., Eq.~\eqref{eq:k-coord}). From these quantities, the deterministic predictions of the state in the next iteration $(\parcompX_{t+1}^{\ell,k},\perpcompX_{t+1}^{\ell},\eta_{t+1})_{k, \ell = 1,2}$ can be derived.}, the state variables $(\parcompX_{t+1}^{\ell,k},\perpcompX_{t+1}^{\ell},\eta_{t+1})_{k, \ell = 1,2}$ at the next iteration can be predicted from the above state variables at iteration $t$, so that $\| \boldsymbol{M}_* - \boldsymbol{M}_{t+1} \|_F^2$ can be expressed in terms of $\| \boldsymbol{M}_* - \boldsymbol{M}_t \|_F^2$. Having said that, the analysis of the algorithm through the state evolution update would be significantly more complex than in the rank-1 case considered in this paper, and presents an important future direction.

\section*{Acknowledgments}
We thank the anonymous reviewers for their insightful comments, which improved the scope and presentation of the paper.

\section*{Funding}
KAC was at Stanford University when part of this work was performed, where he was supported in part by a National Science Foundation Graduate Research Fellowship and the Sony Stanford Graduate Fellowship. ML and AP were supported in part by the National Science Foundation through grants CCF-2107455 and DMS-2210734, and by research awards from Adobe, Amazon, and Mathworks.

\bibliographystyle{abbrvnat}
\bibliography{refs-blind-deconvolution,refs-intro}

\normalsize

\appendix
\section{Auxiliary proofs for the one-step updates} \label{sec:deviation-aux}
This section is organized as follows: In Section~\ref{app:per-coord}, we provide the proof of Claim~\eqref{eq:k-coord}; in Section~\ref{sec:proof-orthogonal-concentration-deferred-lemmas}, we provide the proofs of supplementary lemmas for the concentration of the orthogonal component; in Section~\ref{sec:proof-loo-tools}, we provide the derivations of the leave one out tools used in the proof of the orthogonal component in Section~\ref{sec:proof-orthogonal}; and finally in Section~\ref{sec:proof-det-equivalence}, we prove the equivalent representations of the deterministic updates $\parcompdetX_{t+1}$ and $\perpcompdetX_{t+1}$.

\subsection{Proof of Claim~\eqref{eq:k-coord}} \label{app:per-coord}

Recall that the update $\bcoefX_{t+1}$~\eqref{eq:AM} is defined as the minimizer of a least-squares cost.  Thus, it satisfies the KKT condition
\[
\bX^{\top} \bW_{t+1}^2 \bX\bcoefX_{t+1} - \bX^{\top} \bW_{t+1} \by = 0.
\]
Note that $\bX \bcoefX_{t+1} = \Xcol{k} \coefX_{t+1, k} + \Xloco{k} \bcoefX_{t+1, \backslash k}$, so that the above display can be written as
\begin{align} \label{eq:aladdin-system}
\coefX_{t+1, k} \cdot \bX^{\top} \bW_{t+1}^2 \Xcol{k} + \bX^{\top} \bW_{t+1}^2 \Xloco{k}\bcoefX_{t+1, \backslash k} - \bX^{\top} \bW_{t+1} \by = 0.
\end{align}
Eq.~\eqref{eq:aladdin-system} is a $d$-dimensional linear system in $d$ variables. Separating the $k$-th equation from the remaining $(d-1)$ equations, we have
\begin{subequations}
	\begin{align}
		\coefX_{t+1, k} \cdot \langle \Xcol{k}, \bW_{t+1}^2 \Xcol{k} \rangle - \langle \Xcol{k}, \bW_{t+1} \by \rangle + \langle \Xcol{k}, \bW_{t+1}^2 \Xloco{k} \bcoefX_{t+1, \backslash k} \rangle &= 0 \label{eq:KKT-1dim}\\
		\coefX_{t+1, k} \cdot \Xloco{k}^{\top} \bW_{t+1}^2 \Xcol{k} + \Xloco{k}^{\top} \bW_{t+1}^2 \Xloco{k}\bcoefX_{t+1, \backslash k} - \Xloco{k}^{\top} \bW_{t+1} \by &= 0.\label{eq:KKT-rest}
	\end{align}
\end{subequations}
Multiplying $\bigl( \Xloco{k}^{\top} \bW_{t+1}^2 \Xloco{k}\bigr)^{-1}$ on both sides of equation~\eqref{eq:KKT-rest} and re-arranging terms yields
\begin{align} \label{eq:theta-rest-1}
	\bcoefX_{t+1, \backslash k} = \bigl( \Xloco{k}^{\top} \bW_{t+1}^2 \Xloco{k}\bigr)^{-1} \Xloco{k}^{\top} \bW_{t+1} \by - \coefX_{t+1, k} \cdot \bigl( \Xloco{k}^{\top} \bW_{t+1}^2 \Xloco{k}\bigr)^{-1} \Xloco{k}^{\top} \bW_{t+1}^2 \Xcol{k}.
\end{align}
Substituting the characterization of $\bcoefX_{t+1, \backslash k}$~\eqref{eq:theta-rest-1} into the one-dimensional equation~\eqref{eq:KKT-1dim} and re-arranging terms yields the desired result. \qed

\subsection{Deferred proofs for the orthogonal component} \label{sec:proof-orthogonal-concentration-deferred-lemmas}
This section is dedicated to the proofs of Lemma~\ref{lem:truncation-probability}, provided in Section~\ref{sec:proof-lem-truncation-probability}; Lemma~\ref{lem:lower-truncation}, provided in Section~\ref{sec:proof-lem-lower-truncation}; and Lemma~\ref{lem:expectation-leave-out-first-col}, provided in Section~\ref{sec:proof-lem-expectation-leave-out-first-col}.

\subsubsection{Proof of Lemma~\ref{lem:truncation-probability}} \label{sec:proof-lem-truncation-probability}  
We bound the probability of each $(\Xcol{1}, \Xcol{2}, \dots, \Xcol{d}; \bG) \in \mathcal{S}_i$ in turn, and then complete the proof by applying the union bound.

We first turn bound
\[
	\mathbb{P}\{(\Xcol{1}, \Xcol{2}, \dots, \Xcol{d}; \bG) \in \mathcal{S}_1\}.
\]
We provide the proof for a given index $\ell$, noting that an identical argument yields the same guarantee for $\bu_{\ell_1, \ell_2}$ and $\bu_{\ell_1, \ell_2, \ell_3}$.  Thus, we exploit idempotence of the projection matrix $\bS_{\ell}$ to expand
\[
\| \bu_{\ell} \|_2^2 = \bigl \langle \Xcol{\ell}, \bG \bS_{\backslash \ell} \bG \Xcol{\ell} \bigr \rangle.  
\]
Normalizing by the sum of squares $\parcompZ_{t+1}^2 + \perpcompZ_{t+1}^2$ and applying Lemma~\ref{lem:k-coord-denom} followed by a union bound yields the result.\\\\ 

We then turn to bound 
\[
	\Prob\{ (\Xcol{1}, \Xcol{2}, \dots, \Xcol{d}; \bG) \in \mathcal{S}_2 \}.
\]
We again provide the proof for the single-index version $\langle \bu_{\ell_1}, \bu_{\ell_2} \rangle$, noting that the multi-index statements are shown in an identical manner.  Using the definition of $\bu_{\ell}$~\eqref{eq:u-notation} and applying the rank one update~\eqref{eq:recursive-S}, we write
\begin{align*}
	\langle \bu_{\ell_1}, \bu_{\ell_2} \rangle &= (\Xcol{\ell_1})^{\top} \bG\bS_{\backslash \ell_1} \bS_{\backslash \ell_2} \bG \Xcol{\ell_2} \\
	&= (\Xcol{\ell_1})^{\top} \bG\bS_{\backslash \ell_1, \ell_2} \bS_{\backslash \ell_2} \bG \Xcol{\ell_2} - \frac{1}{\| \bu_{\ell_1, \ell_2}\|_2^2} (\Xcol{\ell_1})^{\top} \bG \bu_{\backslash \ell_1, \ell_2} \bu_{\backslash \ell_1, \ell_2}^{\top}\bS_{\backslash \ell_2} \bG \Xcol{\ell_2}
\end{align*}
Expanding once more, we write the second term as the product of 
\[
\frac{1}{\| \bu_{\ell_1, \ell_2}\|_2^2}, \qquad (\Xcol{\ell_1})^{\top} \bG\bS_{\backslash \ell_1, \ell_2}  \bG \Xcol{\ell_2}, \qquad \text{ and } \qquad (\Xcol{\ell_2})^{\top} \bG\bS_{\backslash \ell_1, \ell_2} \bS_{\backslash \ell_2} \bG \Xcol{\ell_2}
\]
Applying Hoeffding's inequality in conjunction with the operator norm bound $\| \bG\|_{\op} \leq C\sqrt{\log{n}}$---which holds with probability at least $n^{-20}$---yields
\[
(\Xcol{\ell_1})^{\top} \bG\bS_{\backslash \ell_1} \bS_{\backslash \ell_1, \ell_2} \bG \Xcol{\ell_2} \leq C\bigl \| \bG \bS_{\backslash \ell_2} \bS_{\backslash \ell_1, \ell_2} \bG \Xcol{\ell_1} \bigr \|_2 \sqrt{\log{n}} \leq C \sqrt{n} \log^{3/2}{n},
\]
where in the final inequality we used the fact that the norm of a standard Gaussian vector is bounded as $\sqrt{n}$ (with probability $1 - n^{-25}$) and each inequality holds with probability at least $1 - n^{-25}$.  An identical argument yields an identical bound on the quantity $(\Xcol{\ell_1})^{\top} \bG\bS_{\backslash \ell_1, \ell_2}  \bG \Xcol{\ell_2}$.  Finally, note that the inclusion $(\Xcol{1}, \Xcol{2}, \dots, \Xcol{d}; \bG) \in \mathcal{S}_1$ holds with probability at least $1- n^{-20}$, whence $\| \bu_{\ell_1, \ell_2} \|_2^2 \geq c_1 n$ and by a similar argument to the previous step, $(\Xcol{\ell_2})^{\top} \bG\bS_{\backslash \ell_1, \ell_2} \bS_{\backslash \ell_2} \bG \Xcol{\ell_2} \leq C_1n$.  Applying the union bound and putting the pieces together yields the result. \\\\ 

We then turn to bound
\[
	\Prob\{ (\Xcol{1}, \Xcol{2}, \dots, \Xcol{d}; \bG) \in \mathcal{S}_3\}.
\]
We use an identical argument to the bound on $\Prob\{ (\Xcol{1}, \Xcol{2}, \dots, \Xcol{d}; \bG) \in \mathcal{S}_2\}$, using the fact that the conditional distribution $\by \mid \bG, \bV$ is sub-Gaussian, (by Assumption~\ref{assptn:Y-psi}).  For brevity, we omit the details.\\\\  

We finally turn to bound 
\[
	\Prob\{ (\Xcol{1}, \Xcol{2}, \dots, \Xcol{d}; \bG) \in \mathcal{S}_4\}.
\]
We restrict ourselves to proving the bound on the second term as the first follows from identical steps.  First, note that for any symmetric matrix $\bA \in \mathbb{R}^{n\times n}$ and vectors $\bu, \bv \in \mathbb{R}^n$,
\begin{align} \label{eq:E4-decomp}
	\bigl \langle \bu, \bA \bv \bigr \rangle = \frac{1}{2} \cdot \Bigl[ \bigl \langle \bu + \bv, \bA (\bu + \bv) \bigr \rangle - \bigl \langle \bu, \bA \bu \bigr \rangle - \bigl \langle \bv, \bA\bv \bigr\rangle \Bigr].
\end{align}
Note that identifying $\bu = \by -\beps - \EE \by, \bv = \Xcol{\ell}$, and $\bA = \bG\sum_{k \neq \{1, \ell\}} \bu_{1, \ell, k} \bu_{1, \ell, k}^{\top}\bG$, each of the three terms on the RHS is a quadratic form in zero-mean sub-Gaussian random vectors (conditionally on the matrices $\bG, \bV$).  We remark that removing the noise component in $\bu$ is without loss of generality as $\beps$ is independent of all other randomness in the problem.  The noise dependence is easily recovered from the ensuing bounds.  Each term is bounded in the same way, so without loss of generality, we restrict ourselves to studying the quadratic form $\langle \widebar{\by}, \bG\sum_{k \neq \{1, \ell\}} \bu_{1, \ell, k} \bu_{1, \ell, k}^{\top} \bG\widebar{\by}\rangle$, where $\widebar{\by} = \by - \beps - \EE \by$.  
Proceeding, we apply the Hanson--Wright inequality (noting that for either choice of the function $\psi$, the vector $\widebar{\by}$ is at most $(\| \bG \|_{\op} + \| \bV \|_{\op})$ sub-Gaussian conditionally on $\bG, \bV$) to obtain
\begin{align} \label{ineq:HW-bound-A-4}
	&\Prob\biggl\{\Bigl \lvert \by^{\top} \bG\Bigl[\sum_{k \neq \{1, \ell\}} \bu_{1, \ell, k}\bu_{1, \ell, k}^{\top}\Bigr]\bG\by - \EE\Bigl\{\by^{\top} \bG\Bigl[\sum_{k \neq \{1, \ell\}} \bu_{1, \ell, k}\bu_{1, \ell, k}^{\top}\Bigr]\bG\by \Bigm \vert \bG, \bV \Bigr\} \Bigr \rvert \geq t \Bigm \lvert \bG, \bV \biggr\} \nonumber\\
	&\qquad  \qquad\leq 2\exp\biggl\{ - c\min\biggl(\frac{t^2 \cdot (\| \bG \|_{\op} + \| \bV \|_{\op})^{-4}}{\bigl \|\bG\sum_{k \neq \{1, \ell\}} \bu_{1, \ell, k}\bu_{1,\ell, k}^{\top}\bG\bigr \|_{F}^2}, \frac{t \cdot (\| \bG \|_{\op} + \| \bV \|_{\op})^{-2}}{\bigl \|\bG\sum_{k \neq \{1, \ell\}} \bu_{1, \ell, k}\bu_{1, \ell, k}^{\top} \bG\bigr \|_{\mathsf{op}}}\biggr)\biggr\}.
\end{align}
We now bound the Frobenius and operator norms of the random matrix $\sum_{kk \neq \{1, \ell\}} \bu_{1, \ell, k}\bu_{1, ,\ell, k}^{\top}$ in turn.  To this end, we expand
\begin{align*}
	\Bigl\| \sum_{k \neq \{1, \ell\}} \bu_{1, \ell, k}\bu_{1, \ell, k}^{\top}\Bigr \|_F^2 = \sum_{k \neq \{1, \ell\}} \| \bu_{1, \ell,k} \|_2^4 + \sum_{(k_1, k_2): k_1 \neq k_2 \neq \{1, \ell\}} \langle \bu_{1, \ell, k_1}, \bu_{1, \ell, k_2} \rangle^2.
\end{align*}
We note that on the sets $\mathcal{S}_1$ and $\mathcal{S}_2$, the following hold
\[
\| \bu_{1, \ell, k}\|_2^2 \leq Cn \qquad \text{ and } \qquad \langle \bu_{1, \ell, k_1}, \bu_{1, \ell, k_2} \rangle \leq C\sqrt{n}\log^{3/2}{n}.
\]
Consequently, 
\begin{align*}
	\Bigl\| \sum_{k \neq \{1, \ell\}} \bu_{1, \ell, k}\bu_{1, \ell, k}^{\top}\Bigr \|_F^2 \leq Cn^3\log^{3}{n} \qquad \text{ with probability } \qquad \geq 1 - n^{-20}.
\end{align*}
We note that since $\bG$ is a diagonal matrix with entries bounded as $C\sqrt{\log{n}}$ with probability at least $1 - n^{-10}$, this immediately implies the bound 
\begin{align*}
	\Bigl\| \bG \sum_{k \neq \{1, \ell\}} \bu_{1, \ell, k}\bu_{1, \ell, k}^{\top} \bG\Bigr \|_F^2 \leq Cn^3\log^{5}{n} \qquad \text{ with probability } \qquad \geq 1 - n^{-20}.
\end{align*}
The operator norm bound follows with the same probability as 
\[
\Bigl\| \bG \sum_{k \neq \{1, \ell\}} \bu_{1, \ell, k}\bu_{1, \ell, k}^{\top} \bG\Bigr \|_{\mathsf{op}} \leq \Bigl\| \bG\sum_{k \neq \{1, \ell\}} \bu_{1, \ell, k}\bu_{1, \ell, k}^{\top}\bG\Bigr \|_F \leq C  n^{3/2} \log^{5/2}(n).
\]
Note as well that with probability at least $1 - n^{-20}$, $\| \bG \|_{\op} + \| \bV \|_{\op} \leq C \sqrt{\log{n}}$.  Substituting these bounds into the inequality~\eqref{ineq:HW-bound-A-4}, we deduce the inequality
\[
\Bigl \lvert \by^{\top} \bG\Bigl[\sum_{k \neq \{1, \ell\}} \bu_{1, \ell, k}\bu_{1, \ell, k}^{\top}\Bigr]\bG\by - \EE\Bigl\{\by^{\top} \bG\Bigl[\sum_{k \neq \{1, \ell\}} \bu_{1, \ell, k}\bu_{1, \ell, k}^{\top}\Bigr]\bG\by \Bigm \vert \bG, \bV \Bigr\} \Bigr \rvert \leq C(1 + \sigma) n^{3/2}\log^{7/2}(n),
\]
with probability at least $1 -n^{-20}$.  The proof is complete upon noting that for every realization of $\bG, \bV$, with $\bu = \by - \beps - \EE \by, \bv = \Xcol{\ell}$, and $\bA = \bG\sum_{k \neq \{1, \ell\}} \bu_{1, \ell, k} \bu_{1, \ell, k}^{\top}\bG$, the LHS of the decomposition~\eqref{eq:E4-decomp} is zero-mean, conditionally on $\bG, \bV$. 

\paragraph{Proving the consequence.} We first reduce a tail bound on $f$ to proving a tail bound the truncated functions $\fdown$, noting
\begin{align}\label{ineq:f-deviation}
	\Prob\bigl\{ \lvert f - \EE f \rvert \geq t \bigr\} &\leq \Prob\bigl\{ \lvert f - \EE f \rvert \geq t, (\bX, \bG) \in \mathcal{S} \bigr\} + \Prob\bigl\{(\bX, \bG) \notin \mathcal{S}\bigr\} \nonumber\\
	&= \Prob\bigl\{ \lvert \fdown - \EE f \rvert \geq t, (\bX, \bG) \in \mathcal{S} \bigr\} + \Prob\bigl\{(\bX, \bG) \notin \mathcal{S}\bigr\},
\end{align}
where the equality follows since by Lemma~\ref{lem:properties-fdown}, $\fdown$ and $f$ agree on the set $\mathcal{S}$.  Next, we decompose $\EE f$ as 
\begin{align*}
	\EE f = \EE f \mathbbm{1}\{(\bX, \bG) \in \mathcal{S}\} + \EE f \mathbbm{1}\{(\bX, \bG) \notin \mathcal{S}\} \leq \EE \fdown + \EE (\lvert f \rvert + \lvert \fdown \rvert) \mathbbm{1}\{(\bX, \bG) \notin \mathcal{S}\}. 
\end{align*}
Towards bounding the second term, we claim the inequality
\begin{align}\label{ineq:ub-f-squared}
	\EE (\fdown)^2 \leq \EE f^2 \leq C d^2 n,
\end{align}
postponing its proof to the end.  Applying the above inequality in conjunction with the Cauchy--Schwarz inequality, we obtain the bound
\[
\bigl \lvert \EE f - \EE \fdown \bigr \rvert \leq Cd^2n \cdot \Prob\bigl\{(\bX, \bG) \notin \mathcal{S}\bigr\} \leq Cn^{-17}.  
\]
Consequently, substituting $t \neq n^{17}$ and $ \Prob\bigl\{(\bX, \bG) \notin \mathcal{S}\bigr\} \leq n^{-20}$ into the RHS of Eq.~\eqref{ineq:f-deviation} yields the result.  It remains to prove inequality~\eqref{ineq:ub-f-squared}.

\paragraph{Proof of the upper bound~\eqref{ineq:ub-f-squared}.} First, expand $f$ according to its definition 
\[
\EE f^2 = \frac{1}{n^4} \sum_{k, j \geq 2} \EE \langle \Xcol{k}, \bv_k \rangle^2 \cdot \langle \Xcol{j}, \bv_j \rangle^2 \leq \frac{1}{n^4} \sum_{k, j \geq 2} \EE \Bigl\{\| \Xcol{k} \|_2^2 \cdot \| \bv_k \|_2^2 \cdot \| \Xcol{j} \|_2^2 \cdot \| \bv_j \|_2^2 \Bigr\},  
\]
where the inequality follows upon applying the Cauchy--Schwarz inequality to each inner product.  We next invoke sub-multiplicativity of the operator norm in conjunction with the fact that $\| \bS_{\backslash k} \|_{\mathsf{op}} = 1$---since the matrix $\bS_{\backslash k}$ is a projection matrix---to obtain the upper bound $\| \bv_k \|_2^2 \leq \|\bG \|_{\mathsf{op}}^2 \cdot \| \by \|_2^2$.  Substituting this bound into the RHS of the display above and noting the independence of the random variables $\Xcol{k} ,  \Xcol{j} , \bG$, and $\by$, we obtain the bound
\[
\EE f^2 \leq \frac{d^2}{n^4} \underbrace{\EE\bigl\{ \| \Xcol{2} \|_2^2\}^2}_{=n^2} \cdot \underbrace{\EE\bigl\{ \| \bG \|_{\mathsf{op}}^4 \bigr \}}_{\lesssim n} \cdot \underbrace{\EE \bigl\{ \| \by \|_2^4\bigr\}}_{\lesssim n^2} \leq Cd^2n,
\]
where to obtain the final inequality, we note that the random vector $\by$ consists of i.i.d. entries which have moments of all orders and the matrix $\bG$ is diagonal and whose entries are i.i.d. Gaussian random variables. 

\qed

\subsubsection{Proof of Lemma~\ref{lem:lower-truncation}} \label{sec:proof-lem-lower-truncation}
For brevity, we provide the proof assuming $\rho(\bX, \bX') = 1$, noting that the case when $\rho(\bX, \bX') = 2$ follows in an entirely parallel fashion.  We begin by expanding the difference
\begin{align*}
	\fdown(\bX) - \fdown(\bX') &= \frac{1}{n^2}\sum_{k = 2}^{d} \bigl\langle \Xcol{k}, \bv_k\bigr\rangle^2 - \bigl \langle (\Xcol{k})', \bv_k' \bigr\rangle^2 = A  + B,
\end{align*}
where $I$ denotes the index which is changed between $\bX$ and $\bX'$, and have defined terms $A$ and $B$ as
\begin{subequations}
	\begin{align}
		A &:= \frac{1}{n^2}\bigl \langle \bv_I, \Xcol{I} - \widebar{\bX}^{(I)} \bigr \rangle \cdot \bigl \langle \bv_I, \Xcol{I} + \widebar{\bX}^{(I)} \bigr \rangle \quad \text{ and } \quad \label{eq:term-A-orthogonal}\\
		B &:= \frac{1}{n^2}\sum_{k \neq \{1, I\}} \bigl \langle \Xcol{k}, \bv_k - \bv_k' \bigr \rangle \cdot \bigl \langle \Xcol{k}, \bv_k + \bv_k' \bigr \rangle. \label{eq:term-B-orthogonal}
	\end{align}
\end{subequations}
We claim that both terms $A$ and $B$ are bounded as $(1 + \sigma^2)\log^{15/2}{n}/n$.  

\paragraph{Bounding term $A$~\eqref{eq:term-A-orthogonal}}
We bound the first term in the expansion as the others follow similarly, writing
\[
\frac{1}{n^2} \langle \bv_I, \Xcol{I} \rangle^2 = \frac{1}{n^2} \langle \by, \bu_{I} \rangle^2 \leq \frac{(1 + \sigma^2)\log^{3}{n}}{n},
\]
where the first step follows by definition of $\bu_I, \bv_I$~\eqref{eq:u-notation} and the final inequality follows on event $\mathcal{E}$.  We turn now to the bound on term $B$.  

\paragraph{Bounding term $B$~\eqref{eq:term-B-orthogonal}}
First, we apply the rank one update~\eqref{eq:recursive-S} to obtain the characterization
\[
\bv_k - \bv_k' = \frac{1}{\| \bu_{k, I} \|_2^2} \bG \bu_{k, I}\bu_{k,I}^{\top} \by -  \frac{1}{\| \bu_{k, I}' \|_2^2} \bG \bu_{k, I}'\bu_{k,I}'^{\top} \by,
\]
whence by symmetry, term $B$ is bounded as 
\begin{align}\label{ineq:ineq1-termB-orthogonal}
	B \leq \frac{4}{n^2}\sum_{k \neq \{1, I\}} \bigl \langle \Xcol{k}, \bv_k \bigr \rangle \cdot \bigl \langle \Xcol{k}, \frac{1}{\| \bu_{k, I} \|_2^2} \bG \bu_{k, I} \bu_{k, I}^{\top} \by \bigr \rangle.
\end{align}
Note that 
\[
\bigl \langle \Xcol{k}, \bv_k \bigr \rangle = \langle \by, \bu_k \rangle, \quad \text{ and } \quad \bigl \langle \Xcol{k}, \bG \bu_{k, I}, \bu_{k, I}^{\top} \by \bigr \rangle = \langle \bu_{I, k}, \bu_{k, I} \rangle \cdot \langle \bu_{k, I}, \by \rangle,
\]
where to obtain the second relation we have exploited the idempotence of the projection matrix $\bS_{k, I}$.  Expanding $\bu_k$ as $\bu_k = \bu_{I, k} - \| \bu_{k, I} \|_2^{-2} \langle \bu_{k, I}, \bu_{I, k} \rangle \bu_{k, I}$ and substituting the two relations in the display above into the RHS of the inequality~\eqref{ineq:ineq1-termB-orthogonal} yields the inequality 
\[
B \leq \frac{4}{n^2} \underbrace{\by^{\top} \sum_{k \neq \{1, I\}} \frac{1}{\| \bu_{k, I} \|_2^2} \bu_{I, k} \bu_{I,k}^{\top} \bu_{k, I} \bu_{k, I}^{\top} \by}_{:= T_1} + \frac{4}{n^2} \underbrace{\by^{\top} \sum_{k \neq \{1, I\}} \frac{1}{\| \bu_{k, I} \|_2^4} \langle \bu_{k, I}, \bu_{I, k} \rangle \bu_{k, I} \bu_{I, k}^{\top} \bu_{k, I} \bu_{k, I}^{\top} \by}_{:= T_2}.
\]
We bound each of these in turn. 

\bigskip 
\noindent\underline{Bounding $T_1$} Note that each summand is PSD and positive.
Thus, since by definition of $\fdown$, we are working on the set $\mathcal{S}$, we deduce the following two bounds
\[
\| \bu_{k, I} \|_2^2 \geq cn \qquad \text{ and } \qquad \by^{\top} \bu_{k, I} \leq C (1 + \sigma)\log^{3/2}{n}\sqrt{n}.
\]
Thus
\[
T_1 \leq \frac{C(1 + \sigma)\log^{3/2}{n}}{n^{5/2}} \by^{\top} \sum_{k \neq \{1, I\}} \bu_{I, k} \bu_{I,k}^{\top} \bG \Xcol{I}.
\]
Applying the rank one update~\eqref{eq:recursive-S} once more to obtain $\bu_{I, k} = \bu_{1, I, k} - \| \bu_{I, k , 1} \|_2^{-2} \langle \bu_{1, I, k}, \bu_{I, k, 1} \rangle \bu_{I, k, 1}$ and expanding the RHS of the above display yields
\begin{align}\label{ineq:T1-ub-term-A-orth}
	T_1 \leq \frac{C(1 + \sigma)\log^{3/2}{n}}{n^{5/2}} \cdot \biggl[\by^{\top} \sum_{k \neq \{1, I\}} &\bu_{1, I, k} \bu_{1, I,k}^{\top} \bG \Xcol{I} +  \by^{\top} \sum_{k \neq \{1, I\}} \frac{\langle \bu_{1, I, k}, \bu_{I, k, 1} \rangle}{\| \bu_{I, k , 1} \|_2^{2}}\bu_{I, k, 1}\bu_{1, I,k}^{\top} \bG \Xcol{I} \nonumber\\
	&+ \by^{\top} \sum_{k \neq \{1, I\}}\frac{\langle \bu_{1, I, k}, \bu_{I, k, 1} \rangle^2}{\| \bu_{I, k , 1} \|_2^{4}}\bu_{I, k, 1} \bu_{I, k,1}^{\top} \bG\Xcol{I}\biggr].
\end{align}
On the set $\mathcal{S}$, the first term in~\eqref{ineq:T1-ub-term-A-orth} is upper bounded as $C\log^{4}{n}/n$.  Turning to the second term, and noting that on event $\mathcal{E}$, $\| \bu_{I, k, 1} \|_2^{-2} \by^{\top} \bu_{I, k, 1} \leq C ( 1 + \sigma)\log^{3/2}{n}$ as well as $(\Xcol{1})^{\top} \bG \sum_{k \neq \{1, I\}} \bu_{1, I, k}\bu_{1, I,k}^{\top} \bG \Xcol{I} \leq Cn^{3/2}\log^{7/2}{n}$, so that
\[
\frac{C (1 + \sigma)\log^{3/2}{n}}{n^{5/2}} \by^{\top} \sum_{k \neq \{1, I\}} \frac{\langle \bu_{1, I, k}, \bu_{I, k, 1} \rangle}{\| \bu_{I, k , 1} \|_2^{2}}\bu_{I, k, 1}\bu_{1, I,k}^{\top} \bG \Xcol{I} \leq \frac{C(1 + \sigma^2)\log^{13/2}{n}}{n}.
\]
Finally, to bound the third term on the RHS of inequality~\eqref{ineq:T1-ub-term-A-orth}, we note that the following inequalities hold on the set $\mathcal{S}$
\[
\langle \bu_{1, I, k}, \bu_{I, k, 1} \rangle \leq Cn^{1/2}\log^{3/2}{n} , \quad \| \bu_{I, k, 1} \|_2^2 \geq cn, \quad \text{ and } \quad  \by^{\top} \bu_{I, k, 1} \leq C(1 + \sigma)n\log^{3/2}{n},
\]
so that since $\bu_{I, k , 1}^{\top} \bG \Xcol{I} = \langle \bu_{I, k, 1}, \bu_{1, k, I}\rangle$, 
\[
\frac{C(1 + \sigma)\log^{3/2}{n}}{n^{5/2}} \by^{\top} \sum_{k \neq \{1, I\}}\frac{\langle \bu_{1, I, k}, \bu_{I, k, 1} \rangle^2}{\| \bu_{I, k , 1} \|_2^{4}}\bu_{I, k, 1} \bu_{I, k,1}^{\top} \bG\Xcol{I} \leq \frac{C(1 + \sigma^2)d\log^{15/2}{n}}{n^2}.
\]
Putting the pieces together yields $T_1 \leq C(1 + \sigma^2)\log^{15/2}{n}/n$ as desired.  

\bigskip
\noindent \underline{Bounding $T_2$} We more compactly write
\[
T_2 =  \frac{4}{n^2}\sum_{k \neq \{1, I\}} \frac{1}{\| \bu_{k, I} \|_2^4} \langle \bu_{k, I}, \bu_{I, k} \rangle^2  \langle \bu_{k, I}, \by\rangle^2
\]
Note that on the set $\mathcal{S}$, the following hold
\[
\| \bu_{k, I} \|_2^2 \geq cn, \qquad \langle \bu_{k, I}, \bu_{I, k} \rangle \leq C\sqrt{n}\log^{3/2}{n}, \qquad \text{ and } \qquad \langle \bu_{k, I}, \by \rangle \leq C(1 + \sigma)\sqrt{n}\log^{3/2}{n}.
\]
Applying the triangle inequality and substituting these bounds into the RHS of $T_2$ yields \sloppy\mbox{$T_2 \leq C(1  + \sigma^2)\log^{6}{n}/n$} as desired.  \qed

\subsubsection{Proof of Lemma~\ref{lem:expectation-leave-out-first-col}} \label{sec:proof-lem-expectation-leave-out-first-col}
We begin by decomposing the quantity in question as
\[
\EE\Bigl\{ \| \bG \bS_{\backslash k} \by \|_2^2 \Bigr\} = \EE\Bigl\{ \| \bG \bS \by \|_2^2 \Bigr\} + \EE\Bigl\{ \| \bG \bS_{\backslash k} \by \|_2^2  - \| \bG \bS \by \|_2^2\Bigr\},
\]
and subsequently factoring the difference of squares and applying the (reverse) triangle inequality to obtain the inequality 
\[
\Bigl \lvert \EE\Bigl\{ \| \bG \bS_{\backslash k} \by \|_2^2 \Bigr\} - \EE\Bigl\{ \| \bG \bS \by \|_2^2 \Bigr\} \Bigr \rvert \leq  \EE \Bigl\{ \| \bG (\bS - \bS_{\backslash k}) \by \|_2 \cdot \bigl(\| \bG \bS \by \|_2  + \| \bG \bS_{\backslash k} \by \|_2\bigr)  \Bigl\}.
\]
Focusing our attention on the RHS, we apply the rank one update~\eqref{eq:recursive-S} to obtain the equivalent expression 
\begin{align*}
	\EE \Bigl\{ \| \bG  (\bS - \bS_{\backslash k}) \by \|_2 \cdot\bigl(\| \bG \bS \by \|_2  + \| \bG \bS_{\backslash k} \by \|_2\bigr)  \Bigl\} &= \EE \Bigl\{ \| \bu_k \|_2^{-2} \cdot \| \bG \bu_k \bu_k^{\top} \by \|_2 \cdot  \bigl(\| \bG \bS \by \|_2  + \| \bG \bS_{\backslash k} \by \|_2\bigr) \Bigr\}\\
	&\leq 2\EE\Bigl\{ \| \bu_k \|_2^{-1} \cdot \lvert \bu_k^{\top} \by \rvert \cdot \| \bG \|_{\mathsf{op}}^2 \cdot \| \by \|_2 \Bigr\},
\end{align*}
where in the inequality we have used the fact that $\bS$ and $\bS_{\backslash k}$ are projection matrices whence have operator norms bounded above by one.  
We now further decompose this upper bound into its restrictions to the two elements of the partition formed by the truncation set $\mathcal{S}$~\eqref{eq:events-E}.  That is, with $\mathcal{E} = \bigl\{(\Xcol{1}, \Xcol{2}, \dots, \Xcol{d}; \bG) \in \mathcal{S} \bigr\}$,
\begin{align*}
	\EE\Bigl\{ \| \bu_k \|_2^{-1} \cdot \lvert \bu_k^{\top} \by \rvert \cdot \| \bG \|_{\mathsf{op}}^2 \cdot \| \by \|_2 \Bigr\} &= \underbrace{\EE\Bigl\{ \| \bu_k \|_2^{-1} \cdot \lvert \bu_k^{\top} \by \rvert \cdot \| \bG \|_{\mathsf{op}}^2 \cdot \| \by \|_2  \cdot \mathbbm{1}\{ \mathcal{E}\} \Bigr\}}_{A} \\
	&\qquad \qquad+ \underbrace{\EE\Bigl\{ \| \bu_k \|_2^{-1} \cdot \lvert \bu_k^{\top} \by \rvert \cdot \| \bG \|_{\mathsf{op}}^2 \cdot \| \by \|_2 \cdot \mathbbm{1}\{\mathcal{E}^{c} \} \Bigr\}}_{B}.
\end{align*}
The remainder of the proof consists of bounding the terms $A$ and $B$.  We begin with term $A$.  Note that each of the terms in the product is controlled by the defining inequalities of the events which make up the truncation event $\mathcal{E}$.  Note as well that $\bG$ is a diagonal matrix with independent Gaussian entries whence we obtain the bound $\EE \| \bG \|_{\mathsf{op}}^2 \leq C\log{n}$ and $\by$ consists of independent entries with bounded moments of all orders whence we obtain the bound $\EE \| \by \|_2^2 \leq C(1 + \sigma^2)n$.  Thus, combining this with the definition of event $\mathcal{E}$ and applying Cauchy--Schwarz yields the ultimate bound
\[
A \leq C (1 + \sigma)  \sqrt{n} \log^{5/2}(n).
\]
We now turn our attention to term $B$. We first apply the Cauchy--Schwarz inequality to the inner product $\bu_k^{\top} \by$ and subsequently to the expectation to obtain the inequality
\[
B \leq \EE \Bigl\{ \| \bG\|_{\mathsf{op}}^2 \| \by \|_2^2 \mathbbm{1}\{\mathcal{E}^{c}\}\Bigr\} \leq \Bigl(\EE \| \bG \|_{\mathsf{op}}^4 \cdot \EE \| \by \|_2^4\Bigr)^{1/4} \cdot \sqrt{\Prob\{\mathcal{E}^{c}\}} \leq \frac{1}{n},
\]
where in the final inequality we have used $\EE \| \by \|_2^4 \leq C(1 + \sigma^2)^2 n^2$, $\EE \| \bG\|_{\op}^4 \leq C\log^2{n}$ and applied Lemma~\ref{lem:truncation-probability}.  
Putting the pieces together yields the inequality   
\[
\Bigl \lvert \EE\Bigl\{ \| \bG \bS_{\backslash k} \by \|_2^2 \Bigr\} - \EE\Bigl\{ \| \bG \bS \by \|_2^2 \Bigr\} \Bigr \rvert \leq C(1 + \sigma)\sqrt{n} \log^{5/2}(n),
\]
from which the conclusion follows immediately. \qed

\subsection{Establishing the relations~\eqref{eq:loo-residual-tool} and~\eqref{eq:recursive-S}} \label{sec:proof-loo-tools}
We establish each in turn.
\paragraph{Proof of the residual relation~\eqref{eq:loo-residual-tool}.}
Whereas the proof of the rank one update relied on the closed-form solution to the updates, the residual relation rests on exploiting the KKT conditions.  In particular, we note the vector relations
\begin{align*}
	\bx_{i} \cdot \underbrace{\bigl( \bG_{i}^2 \langle \bx_{i}, \bcoefX \rangle - W_ii y_i \bigr)}_{R_{i}} + \sum_{j \neq i} \bx_{j} \cdot \bigl( \bG_{jj}^2 \langle \bx_{j}, \bcoefX \rangle - W_jj y_j \bigr) &= 0, \qquad \text{ and } \\
	\sum_{j \neq i} \bx_{j} \cdot \bigl( \bG_{jj}^2 \langle \bx_{j}, \bcoefX^{(i)} \rangle - W_jj y_j \bigr) &= 0.
\end{align*}
Combining these equations and re-arranging yields
\[
R_{i} \bx_{i} = \bSig_{i}^{-1} \cdot \bigl(\bcoefX^{(i)} - \bcoefX\bigr).
\]
Finally, note that 
\[
r_{i, (i)} - R_{i} = \bG_{ii}^2 \langle \bx_{i}, \bcoefX^{(i)} - \bcoefX \rangle.
\]
Combining the two preceding displays and re-arranging yields the desired result. \qed

\paragraph{Proof of the rank one update~\eqref{eq:recursive-S}.}
This relation follows straightforwardly from block matrix inversion~\citep[see, e.g.,][Section A.5.5]{boyd2004convex}.  We begin by writing the projection matrix $\bP$ explicitly as 
\[
\bP  = \bG \bX\bigl(\bX^{\top} \bG^2 \bX \bigr)^{-1} \bX^{\top} \bG.
\]
Note that without loss of generality, we write $\bX= \begin{bmatrix} \Xcol{k} & \bX_{\backslash k} \end{bmatrix}$; that is, we apply permutation matrices to swap the columns of the data so that the entries of the $k$th column are in the first column.  Thus, in matrix notation, we write
\begin{align} \label{eq:matrix-notation-projection}
	\bP  = \underbrace{\begin{bmatrix} (\Xcol{k})^{\top} \bG  \\ \bX_{\backslash k}^{\top} \bG  \end{bmatrix}^{\top}}_{\bG \bX} \cdot  \underbrace{\begin{bmatrix} \langle \Xcol{k}, \bG^2 \Xcol{k} \rangle & (\Xcol{k})^{\top} \bG^2 \bX_{\backslash k} \\
			\bX_{\backslash k}^{\top} \bG^2 \Xcol{k} & \bX_{\backslash k}^{\top} \bG^2 \bX_{\backslash k} \end{bmatrix}^{-1}}_{\bigl(\bX^{\top} \bG^2 \bX \bigr)^{-1} } \cdot \underbrace{\begin{bmatrix} (\Xcol{k})^{\top} \bG  \\ \bX_{\backslash k}^{\top} \bG  \end{bmatrix}}_{\bX^{\top} \bG}.
\end{align}
Applying the block matrix inversion formula to the middle term yields
\[
(\bX^{\top} \bG^2 \bX)^{-1} = \frac{1}{\| \bu_k \|_2^2} \cdot \begin{bmatrix} 1 & - \bigl(\bSig_{\backslash k}^{-1} \bX_{\backslash k}^{\top} \bG^2 \Xcol{k}\bigr)^{\top}  \\
	-\bSig_{\backslash k}^{-1} \bX_{\backslash k}^{\top} \bG^2 \Xcol{k} & \| \bu_k \|_2^2 \bSig_{\backslash k}^{-1} + \bSig_{\backslash k}^{-1} \bX_{\backslash k}^{\top} \bG^2 \Xcol{k} (\Xcol{k})^{\top} \bG^2 \bX_{\backslash k} \bSig_{\backslash k}^{-1},
\end{bmatrix}
\]
where we have used the  shorthand $\bSig_{\backslash k}$ to denote the matrix $\bX_{\backslash k}^{\top} \bG^2 \bX_{\backslash k}$.  Substituting the result of the previous display into the RHS of equation~\eqref{eq:matrix-notation-projection} yields the relation
\[
\bP = \bP_{\backslash k} + \frac{1}{\| \bu_k \|_2^2} \bu_k \bu_k^{\top}.
\]
Adding the identity matrix to both sides and re-arranging yields the result. \qed

\subsection{Deterministic equivalences in Examples~\ref{ex:lin} and~\ref{ex:nonlin}}\label{sec:proof-det-equivalence}

In this section, we prove the equivalences claimed in Examples~\ref{ex:lin} and~\ref{ex:nonlin}. Recall the maps $F_{\psi}$ and $G_{\psi}$ introduced in those examples for the functions $\psi \in \{ \mathsf{id}, \sign \}$.
\begin{lemma}\label{det-equivalence}
Consider $\psi \in \{\mathsf{id},\mathsf{sgn}\}$ and recall the definition of $F_{\psi}$ and $G_{\psi}$~\eqref{eq:idmaps} and~\eqref{eq:bitmaps}. The following hold. 
\[
	\parcompX_{t+1}^{\mathsf{det}} = F_{\psi}\big( \parcompZ_{t+1},\perpcompZ_{t+1} \big) \quad \text{and} \quad (\perpcompX_{t+1}^{\mathsf{det}})^{2} = G_{\psi}\big( \parcompZ_{t+1},\perpcompZ_{t+1} \big).
\]
\end{lemma}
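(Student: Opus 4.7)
The plan is to evaluate the general expectations~\eqref{eq-equivalence} for each of the two choices $\psi \in \{\mathsf{id}, \mathsf{sgn}\}$ and then simplify using the defining identity $\EE\{G^{2}/(C(\oversamp)+G^{2})\} = 1/\oversamp$ for $C(\oversamp)$. Throughout I will write $S := \parcompZ_{t+1}^{2}+\perpcompZ_{t+1}^{2}$, $a := \parcompZ_{t+1}/\sqrt{S}$, $b := \perpcompZ_{t+1}/\sqrt{S}$ (so $a^{2}+b^{2}=1$), and use $\tau = 1/C(\oversamp)$, so that $G^{2}/(1+\tau G^{2}) = C(\oversamp)\,G^{2}/(C(\oversamp)+G^{2})$. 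The denominator $\EE\{G^{2}/(1+\tau G^{2})\}$ appearing in~\eqref{eq-equivalence} therefore equals $C(\oversamp)/\oversamp$, and this is the single identity that collapses $\tau$-dependent expectations into the maps~\eqref{eq:idmaps} and~\eqref{eq:bitmaps}.

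For $\psi=\mathsf{id}$ the random variable $Y = aGX + bVX$ is linear in $(G,V,X)$, so each expectation in~\eqref{eq-equivalence} splits into a sum of monomials in $G,V,X$ that can be evaluated by independence; odd-order terms in $V$ vanish, and after substituting $\EE\{X^{2}\}=\EE\{V^{2}\}=1$ the computation of $\parcompdetX_{t+1}$ immediately yields $a/\sqrt{S}\cdot 1 = \parcompZ_{t+1}/S$, matching $\parmapid$. Plugging this into $(\perpcompdetX_{t+1})^{2}$, the three contributions proportional to $\EE\{G^{4}/(1+\tau G^{2})^{2}\}$ cancel in pairs (the cross term carries a factor of $2$ that is exactly annihilated by the sum of the other two), leaving only $(b^{2}+\sigma^{2})/(S\cdot C(\oversamp))$. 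Expanding $b^{2}=\perpcompZ_{t+1}^{2}/S$ and reorganizing yields the two-term form $\perpmapid$.

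For $\psi=\mathsf{sgn}$ the key observation is the sign-product identity
\[
XY = X\cdot\sign(X(aG+bV)) = |X|\cdot \sign(aG+bV),
\]
which decouples $X$ from $(G,V)$ and reduces every $Y$-dependent expectation to $\EE|X^{k}|$ times an expectation over $(G,V)$. Conditioning on $G$, the formula $\EE_{V}\{\sign(aG+bV)\}=2\Phi(aG/b)-1=\sqrt{2/\pi}\,\phi(aG/b)$ (using $\phi(x)=\int_{0}^{x}e^{-t^{2}/2}\mathrm{d}t$) together with the parity identity $G\phi(aG/b) = |G|\phi(a|G|/b)$ converts the $(G,V)$-expectations into integrals of $|G|^{k}\phi(a|G|/b)/(C(\oversamp)+G^{2})^{j}$. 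Collecting prefactors ($\EE|X|=\sqrt{2/\pi}$, the $C(\oversamp)/\oversamp$ identity, and $a/b=\parcompZ_{t+1}/\perpcompZ_{t+1}$) gives $\parmapbit$ directly. Since $Y^{2}\equiv 1$ in this case, the first expectation in $(\perpcompdetX_{t+1})^{2}$ reduces to $\EE\{G^{2}/(1+\tau G^{2})^{2}\}$, which contributes the $(1+\sigma^{2})/(C(\oversamp)\,S)$ term, while the cross term becomes the expectation involving $|W|^{3}\phi(\cdot)$ in $\perpmapbit$, and the $(\parcompdetX_{t+1})^{2}$ term becomes the $C_{3}(\oversamp)/C_{2}(\oversamp)$ contribution after recognizing $\EE\{G^{4}/(1+\tau G^{2})^{2}\} = C(\oversamp)^{2}\,C_{3}(\oversamp)$ and $\EE\{G^{2}/(1+\tau G^{2})^{2}\}=C(\oversamp)^{2}\,C_{2}(\oversamp)$.

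The main obstacle is purely bookkeeping for the sign case: verifying that the three cross-terms in~\eqref{perp-equiv} combine with the correct signs and powers of $C(\oversamp)$, $C_{2}(\oversamp)$, and $C_{3}(\oversamp)$ to recover exactly the two rationalized expressions in~\eqref{eq:perpmapbit}. I would organize this by computing each of the three expectations separately, applying the substitutions above, and then collecting terms in $\parmapbit$ to expose the structure of $\perpmapbit$; no new probabilistic arguments are needed beyond the elementary Gaussian identities used above.
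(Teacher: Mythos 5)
Your proposal is correct and follows essentially the same route as the paper's proof: for $\psi=\mathsf{id}$, expand $Y$ and evaluate Gaussian moments term by term, noting that the three $\EE\{G^{4}/(1+\tau G^{2})^{2}\}$ contributions cancel with coefficients $1-2+1$; for $\psi=\mathsf{sgn}$, reduce to $\phi$-integrals using $\EE_{V}\{\sign(aG+bV)\}=\sqrt{2/\pi}\,\phi(aG/b)$, the parity identity for $|G|$, and the fixed-point equation and $C_{2},C_{3}$ identities to collect prefactors. The only stylistic difference is that you decouple $X$ up front via $XY=|X|\sign(aG+bV)$, whereas the paper conditions on $(G,X)$ first and uses $\EE\{X\sign(X)\}=\sqrt{2/\pi}$ afterward---these are equivalent and yield identical computations.
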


\begin{proof}
We prove each part in turn, beginning with the identity map $\psi(x) = x$.  In this case, 
\[
Y =  \frac{\parcompZ_{t+1}}{\sqrt{\parcompZ_{t+1}^{2} + \perpcompZ_{t+1}^{2}}} \cdot G X + \frac{\perpcompZ_{t+1}}{\sqrt{\parcompZ_{t+1}^{2} + \perpcompZ_{t+1}^{2}}} \cdot VX.
\]
Using this representation in conjunction with the fact that $G, X, V \overset{\mathsf{i.i.d.}}{\sim} \mathsf{N}(0, 1)$ yields
\[
\EE\left\{ \frac{GX  Y}{1+\tau G^{2}} \right\} = \frac{\parcompZ_{t+1}}{\sqrt{\parcompZ_{t+1}^{2} + \perpcompZ_{t+1}^{2}}} \cdot \EE\left\{ \frac{G^{2}}{1+\tau G^{2}} \right\}.
\]
Substituting the above into the RHS of Equation~\eqref{par-equiv} yields
\[
\parcompX_{t+1}^{\mathsf{det}} = \frac{\parcompZ_{t+1}}{\parcompZ_{t+1}^{2} + \perpcompZ_{t+1}^{2}} = \parmapid \big( \parcompZ_{t+1},\perpcompZ_{t+1} \big).
\]
We next consider $(\perpcompX_{t+1}^{\mathsf{det}})^{2}$, first considering three terms in the numerator
\begin{align*}
	T_1 &= \EE\left\{\frac{G^{2}Y^{2}}{(1+\tau G^{2})^{2}}\right\}, \qquad T_2 = \parcompX_{t+1}^{\mathsf{det}} \cdot (\parcompZ_{t+1}^{2} + \perpcompZ_{t+1}^{2} )^{1/2} \cdot \EE\Big\{ \frac{G^{3}XY}{(1+\tau G^{2})^{2}} \Big\}, \qquad \text{ and }\\
	&\qquad \qquad \qquad \qquad T_3 = (\parcompX_{t+1}^{\mathsf{det}})^{2} \cdot (\parcompZ_{t+1}^{2} + \perpcompZ_{t+1}^{2}) \cdot \EE\Big\{ \frac{G^{4}}{(1+\tau G^{2})^{2}} \Big\}.
\end{align*}
Substituting the expressions of $Y$ and $\parcompdetX_{t+1}$ yields
\begin{align*}
T_1 &= \frac{\parcompZ_{t+1}^{2}}{\parcompZ_{t+1}^{2} + \perpcompZ_{t+1}^{2}} \cdot \EE\Bigl\{\frac{G^{4}}{(1+\tau G^{2})^{2}}\Bigr\} + \frac{\perpcompZ_{t+1}^{2}}{\parcompZ_{t+1}^{2} + \perpcompZ_{t+1}^{2}} \cdot \EE\Bigl\{\frac{G^{2}}{(1+\tau  G^{2})^{2}}\Bigr\}\\
T_2 &= \frac{\parcompZ_{t+1}^{2}}{\parcompZ_{t+1}^{2} + \perpcompZ_{t+1}^{2}} \cdot \EE\Big\{\frac{G^{4}}{(1+\tau G^{2})^{2}}\Bigr\}, \qquad \text{ and } \qquad T_3 = \frac{\parcompZ_{t+1}^{2}}{\parcompZ_{t+1}^{2} + \perpcompZ_{t+1}^{2}} \cdot \EE\left\{\frac{G^{4}}{(1+\tau G^{2})^{2}}\right\}.
\end{align*}
Consequently, 
\begin{align*}
	(\perpcompX_{t+1}^{\mathsf{int}})^{2} &= \frac{1}{C(\oversamp)} \cdot \frac{\perpcompZ_{t+1}^{2}}{(\parcompZ_{t+1}^{2} + \perpcompZ_{t+1}^{2})^{2}} + \frac{\sigma^{2}}{C(\oversamp)} \cdot \frac{1}{\parcompZ_{t+1}^{2} + \perpcompZ_{t+1}^{2}} \\&=
	\frac{1+\sigma^{2}}{C(\oversamp)} \cdot \frac{\perpcompZ_{t+1}^{2}}{(\parcompZ_{t+1}^{2} + \perpcompZ_{t+1}^{2})^{2}} + \frac{\sigma^{2}}{C(\oversamp)} \cdot \frac{\parcompZ_{t+1}^{2}}{(\parcompZ_{t+1}^{2} + \perpcompZ_{t+1}^{2})^{2}} = \perpmapid\big( \parcompZ_{t+1},\perpcompZ_{t+1} \big).
\end{align*}
We next prove the case for $\psi(x) = \sign(x)$. By definition, we have that
\[
Y =  \sign\big( X \cdot (\parcompZ_{t+1} G + \perpcompZ_{t+1} V ) \big).
\]
We first consider $\parcompdetX_{t+1}$.  To this end, we compute the quantity $\EE\left\{ GXY/(1+\tau\cdot G^{2})\right\}$.
Conditioning on $G$ and $X$ yields
\begin{align*}
	\EE\left\{ \frac{GXY}{1+\tau G^{2}} \; \Big | \; G,X \right\} = \EE\biggl\{ \frac{GX\sign(X)}{1+\tau G^{2}} \cdot \biggl[ \Prob\Bigl\{V>\frac{-\parcompZ_{t+1}}{\perpcompZ_{t+1}} G\Bigr\} - 
	\Prob\Bigl\{V<\frac{-\parcompZ_{t+1}}{\perpcompZ_{t+1}}G\Bigr\} \biggr] \; \Big | \;  G,X \biggr\}.
\end{align*}
Using the notation $\phi(x) = \int_{0}^{x} e^{-\frac{t^{2}}{2}} \mathrm{d}t$, we note that $G, V \overset{\mathsf{i.i.d.}}{\sim} \mathsf{N}(0, 1)$, whence
\[
G \cdot   \biggl[ \Prob\Bigl\{V>\frac{-\parcompZ_{t+1}}{\perpcompZ_{t+1}} G\Bigr\} - 
\Prob\Bigl\{V<\frac{-\parcompZ_{t+1}}{\perpcompZ_{t+1}}G\Bigr\} \biggr] = |G| \cdot \sqrt{\frac{2}{\pi}} \cdot \phi\biggl(\frac{\parcompZ_{t+1}|G|}{\perpcompZ_{t+1}}\biggr).
\]
Consequently, 
\[
\EE\left\{ \frac{GXY}{1+\tau G^{2}} \; \Big | \; G,X \right\} =  \sqrt{\frac{2}{\pi}} \cdot \EE\bigg\{ \frac{X\mathsf{sgn}(X) |G|}{1+\tau G^{2}} \phi\bigg(\frac{\parcompZ_{t+1}|G|}{\perpcompZ_{t+1}}\bigg)  \; \Big| \; G,X \bigg\}.
\]
Taking expectation over $G,X$  and noting that $\EE\{X\mathsf{sign}(X)\} = \EE\{|X|\} = \sqrt{2/\pi}$, we obtain 
\[
\EE\left\{ \frac{GXY}{1+\tau G^{2}} \right\} = \frac{2}{\pi} \cdot \EE\Bigg\{ \frac{ |G|  }{1+\tau G^{2}} \cdot \phi\biggl(\frac{\parcompZ_{t+1}|G|}{\perpcompZ_{t+1}}\biggr) \Bigg\}.
\]
Now recalling that $\tau = 1/C(\oversamp)$, we obtain
\[
\EE\left\{ \frac{G^{2}}{1+\tau G^{2}} \right\} = C(\oversamp) \cdot \EE\left\{ \frac{G^{2}}{C(\oversamp)+ G^{2}} \right\} = \frac{C(\oversamp)}{\oversamp},
\]
where the final step follows from the fixed point equation~\eqref{definition-of-C}. Putting the pieces together yields that $\parcompX_{t+1}^{\mathsf{det}} = \parmapbit \big( \parcompZ_{t+1},\perpcompZ_{t+1} \big)$. 

We turn now to computing $(\perpcompX_{t+1}^{\mathsf{det}})^{2}$. For the numerator, similar calculations yield the pair of equivalent relations
\begin{align}\label{eq:GY-expectations}
	\EE\bigg\{ \frac{G^{2} Y^{2}}{(1+\tau  G^{2})^{2}}\bigg\} = \EE\bigg\{ \frac{G^{2}}{(1+\tau G^{2})^{2}}\bigg\}\qquad \text{ and } \qquad \EE\bigg\{ \frac{G^{3}XY}{(1+\tau G^{2})^{2}} \bigg\}
	= \frac{2}{\pi} \cdot \EE\bigg\{ \frac{|G^{3}| }{(1+\tau\cdot G^{2})^{2}} \cdot \phi\bigg( \frac{\parcompZ_{t+1}|G|}{\perpcompZ_{t+1}}\bigg)\bigg\}.
\end{align}
Applying the first of the two relations in the previous display yields 
\[
\frac{  \EE\Big\{ \frac{G^{2}\cdot Y^{2}}{(1+\tau  G^{2})^{2}}\Big\} +  \EE\Big\{ \frac{\sigma^{2}G^{2}}{(1+\tau  G^{2})^{2}} \Big\} }{ (\parcompZ_{t+1}^{2} + \perpcompZ_{t+1}^{2}) \cdot  C(\oversamp) \cdot \EE\left\{ \frac{G^{2}}{(1+\tau G^{2})^{2}} \right\} } = \frac{1+\sigma^{2}}{C(\oversamp)} \cdot \frac{1}{\parcompZ_{t+1}^{2}+\perpcompZ_{t+1}^{2}}.
\]
Recalling $C_2(\oversamp)$~\eqref{eq:C2-C3} and applying the second of the pair of equations~\eqref{eq:GY-expectations} yields
\[
\frac{ 2\parcompX_{t+1}^{\mathsf{det}} \EE\Big\{ \frac{G^{3}XY}{(1+\tau G^{2})^{2}} \Big\} }{ (\parcompZ_{t+1}^{2} + \perpcompZ_{t+1}^{2})^{1/2} \cdot  C(\oversamp) \cdot \EE\left\{ \frac{G^{2}}{(1+\tau  G^{2})^{2}} \right\} } = \frac{4}{\pi} \cdot \frac{\parmapbit \big(\parcompZ_{t+1},\perpcompZ_{t+1}\big)}{C(\oversamp)\cdot\big(\parcompZ_{t+1},\perpcompZ_{t+1}\big)^{1/2}} \cdot  \EE\bigg\{\frac{|G^{3}| }{C_{2}(\oversamp)(C(\oversamp)+ G^{2})^{2}} \cdot \phi\bigg( \frac{\parcompZ_{t+1}|G|}{\perpcompZ_{t+1}}\bigg)\bigg\}
\]
Finally, recalling $C_3(\oversamp)$, we obtain 
\[
\frac{ (\parcompX_{t+1}^{\mathsf{det}})^{2}  \EE\Big\{ \frac{G^{4}}{(1+\tau G^{2})^{2}} \Big\} }{  C(\oversamp) \cdot \EE\left\{ \frac{G^{2}}{(1+\tau G^{2})^{2}} \right\} } = \frac{\parmapbit \big(\parcompZ_{t+1},\perpcompZ_{t+1}\big)^{2}}{C(\oversamp)} \cdot \frac{ \EE\left\{ \frac{G^{4}}{(C(\oversamp)+ G^{2})^{2}} \right\} }{ \EE\left\{ \frac{G^{2}}{(C(\oversamp)+ G^{2})^{2}} \right\} } = \frac{\parmapbit \big(\parcompZ_{t+1},\perpcompZ_{t+1}\big)^{2}}{C(\oversamp)} \cdot \frac{C_{3}(\oversamp)}{C_{2}(\oversamp)}
\]
 Putting the pieces together yields that $(\perpcompX_{t+1}^{\mathsf{det}})^{2} = \perpmapbit\big(\parcompZ_{t+1},\perpcompZ_{t+1}\big)$. \end{proof}
\section{Auxiliary proofs for convergence results}\label{sec:aux-convergence}
This section is dedicated to the proofs of Lemma~\ref{lem:initialization}, which we provide in Sections~\ref{sec:proof-lem-initialization} and~\ref{sec:proof-lem-initialization-b}; Lemma~\ref{lemma2-good-region}, which we provide in Section~\ref{sec:proof-lemma-good-region};  and Lemma~\ref{lem:h-sgn-properties}, which we provide in Section~\ref{sec:proof-lem-h-sgn-properties} (alongside several other properties of the function $\ratiomapbit$).  Finally, in Section~\ref{sec:global-convergence-large-oversamp}, we provide global convergence guarantees in the large sample regime when $\oversamp \geq \sqrt{d}$.

\subsection{Proof of Lemma~\ref{lem:initialization}(a)} \label{sec:proof-lem-initialization}
We prove the lemma for the linear model and nonlinear model in turn.  For both cases, we first bound the deviation of $\parcompZ_{t+1}$ and $\perpcompZ_{t+1}$ from their deterministic counterparts and use these to then bound the deviation of $\parcompX_{t+1}$ and $\perpcompX_{t+1}$ from their deterministic counterparts.  Throughout, we will use the notation 
\begin{align}
	\label{eq:notation-lem-initialization}
	\rho = \frac{C(\oversamp)}{1+\sigma^{2}}, \quad r_{t} = \frac{\perpcompX_{t}}{\parcompX_{t}}, \quad \Delta_{1} = (\parsigma) \sqrt{\frac{\log(d)}{\oversamp \cdot d}}, \quad \text{ and } \quad\Delta_{2} = (\perpsigma)\frac{\perplogd}{\sqrt{\oversamp \cdot d}},
\end{align}

\subsubsection{Linear observation model}
Recall the functions $\parmapid$~\eqref{eq:parmapid} and $\perpmapid$~\eqref{eq:perpmapid} and note that by definition
\[
\parcompdetZ_{t+1} = \parmapid(\parcompX_{t}, \perpcompX_{t}) = \frac{\parcompX_{t}}{\parcompX_{t}^{2} + \perpcompX_{t}^{2}} \quad \text{ and } \quad (\perpcompdetZ_{t+1})^2 = \perpmapid(\parcompX_{t}, \perpcompX_{t}) = \frac{1+\sigma^{2}}{C(\oversamp)} \cdot \frac{\perpcompX_{t}^{2}}{(\parcompX_{t}^{2} + \perpcompX_{t}^{2})^{2}} + \frac{\sigma^{2}}{C(\oversamp)} \cdot  \frac{\parcompX_{t}^{2}}{(\parcompX_{t}^{2} + \perpcompX_{t}^{2})^{2}}
\]
Straightforward computation yields the pair of sandwich inequalities 
\begin{align}\label{ineq1-lemma1-initia-linear-proof}
	(1-1/r_{t}^{2}) \cdot \frac{\parcompX_{t}}{\perpcompX_{t}^{2}} \leq \parcompZ_{t+1}^{\mathsf{det}} \leq \frac{\parcompX_{t}}{\perpcompX_{t}^{2}} \quad \text{and} \quad 
	\rho^{-1} \cdot  \perpcompX_{t}^{-2} \cdot (1+1/r_{t}^{2})^{-2} \leq (\perpcompZ_{t+1}^{\mathsf{det}})^{2} \leq \rho^{-1} \cdot  \perpcompX_{t}^{-2}.
\end{align}
On event $\mathcal{A}_{t+1}$ and using $\parcompX_{t}^{2} + \perpcompX_{t}^{2} \geq 0.5$, we obtain that
\begin{align}\label{ineq2-lemma1-initia-linear-proof}
	\bigl\lvert \parcompZ_{t+1} - \parcompZ_{t+1}^{\mathsf{det}} \bigr\rvert \leq C_{1} \cdot \Delta_{1} \quad \text{and} \quad \bigl \lvert \perpcompZ_{t+1}^{2} - (\perpcompZ_{t+1}^{\mathsf{det}})^{2} \bigr\rvert \leq C_{1} \cdot \Delta_{2}.
\end{align}
We turn now to bound $\parcompX_{t+1}$ and $\perpcompX_{t+1}$.  Combining inequalities~\eqref{ineq1-lemma1-initia-linear-proof} and~\eqref{ineq2-lemma1-initia-linear-proof} with the numeric inequality $(a + b)^2 \leq 2a^2 + 2b^2$ yields the upper bound
\begin{subequations}
	\label{ineq3-lemma1-initia-linear-proof}
	\begin{align}\label{ineq:ineq3-initia-linear-ub}
		\parcompZ_{t+1}^{2} + \perpcompZ_{t+1}^{2} &\leq 2\parcompX_{t}^{2}\perpcompX_{t}^{-4}  + 2C_{1}^{2}\Delta_{1}^{2} + \rho^{-1} \cdot  \perpcompX_{t}^{-2}  + C_{1}\Delta_{2}\nonumber\\
		&= 
		\perpcompX_{t}^{-2} \cdot (\rho^{-1} + 2/r_{t}^{2}) + 2C_{1}^{2}\Delta_{1}^{2} + C_{1}\Delta_{2}.
	\end{align}
	We similarly obtain the lower bound
	\begin{align}\label{ineq:ineq3-initia-linear-lb}
		\parcompZ_{t+1}^{2} + \perpcompZ_{t+1}^{2} \geq \perpcompX_{t}^{-2} \cdot \rho^{-1} \cdot 1/(1+1/r_{t}^{2})^{2} - C_{1}\Delta_{2}.
	\end{align}
\end{subequations}
Again, by definition of the maps $\parmapid$ and $\perpmapid$, 
\begin{align}\label{eq1-lemma1-initia-linear-proof}
	\parcompX_{t+1}^{\mathsf{det}} = \frac{\parcompZ_{t+1}}{\parcompZ_{t+1}^{2} + \perpcompZ_{t+1}^{2}}\quad \text{and} \quad (\perpcompX_{t+1}^{\mathsf{det}})^{2} = \frac{1+\sigma^{2}}{C(\oversamp)} \cdot \frac{\perpcompZ_{t+1}^{2}}{(\parcompZ_{t+1}^{2} + \perpcompZ_{t+1}^{2})^{2}} + \frac{\sigma^{2}}{C(\oversamp)} \cdot  \frac{\parcompZ_{t+1}^{2}}{(\parcompZ_{t+1}^{2} + \perpcompZ_{t+1}^{2})^{2}}.
\end{align}
Combining the lower bound on $\parcompZ_{t+1}^{\mathsf{det}}$~\eqref{ineq1-lemma1-initia-linear-proof}, the deviation inequality~\eqref{ineq2-lemma1-initia-linear-proof} and the upper bound~\eqref{ineq:ineq3-initia-linear-ub} yields the lower bound 
\[
\parcompX_{t+1}^{\mathsf{det}} \geq \frac{\parcompX_{t}\perpcompX_{t}^{-2}(1-r_{t}^{-2}) - C_{1}\Delta_{1}}{\perpcompX_{t}^{-2} \cdot (\rho^{-1} + 2/r_{t}^{2}) + 2C_{1}^{2}\Delta_{1}^{2} + C_{1}\Delta_{2}} = \frac{\parcompX_{t}(1-r_{t}^{-2}) - C_{1}\Delta_{1}\perpcompX_{t}^{2}}{(\rho^{-1} + 2/r_{t}^{2}) + \perpcompX_{t}^{2} \cdot (2C_{1}^{2}\Delta_{1}^{2} + C_{1}\Delta_{2})}.
\]
Further, note that by assumption, $\parcompX_{t} \geq 1/(50\sqrt{d}), \perpcompX_{t}^2 \leq 2, r_t^2 \geq 20\rho$, $C_0(1+\sigma^{2}) \log(d) \leq \oversamp \leq \sqrt{d}$ and $\rho \asymp \oversamp/(1+\sigma^{2})$ yields the pair of inequalities
\[
\parcompX_{t} (1 - r_t^{-2}) - C_1 \Delta_1 \perpcompX_{t}^2 \overset{\1}{\geq} \parcompX_{t}/2 \quad \text{ and } \quad (\rho^{-1} + 2/r_{t}^{2}) + \perpcompX_{t}^{2} \cdot (2C_{1}^{2}\Delta_{1}^{2} + C_{1}^{2}\Delta_{2}) \overset{\2}{\leq} 2\rho^{-1},
\]
where in step $\1$ and step $\2$ we let $C_{0}$ be a large enough constant and use
\begin{align}\label{Detla1-2-bound}
	\Delta_1 \leq \frac{2}{\sqrt{C_{0}d}},\; \frac{\Delta_{1}^{2}}{\rho^{-1}} \asymp \frac{\log(d)}{d} = o_{d}(1)\;\text{ and }\; \frac{\Delta_{2}}{\rho^{-1}}  \asymp \frac{\log^{8}(d)\sqrt{\oversamp}}{\sqrt{d}} = o_{d}(1),
\end{align}
Combining the previous three displays yields the lower bound $\parcompdetX_{t+1} \geq \rho/4 \cdot \parcompX_{t}$.  Similarly, we combine the inequalities~\eqref{ineq1-lemma1-initia-linear-proof},~\eqref{ineq2-lemma1-initia-linear-proof} and~\eqref{ineq:ineq3-initia-linear-lb} yields the upper bound 
\[
\parcompX_{t+1}^{\mathsf{det}} \leq \frac{\parcompX_{t} \perpcompX_{t}^{-2} + C_{1} \Delta_{1}}{ \rho^{-1} \perpcompX_{t}^{-2} \cdot (1+r_{t}^{-2})^{-2} - C_{1}\Delta_{2}} = 
\frac{\parcompX_{t} + C_{1}\Delta_{1}\perpcompX_{t}^{2} }{ \rho^{-1} \cdot (1+r_{t}^{-2})^{-2} - C_{1}\Delta_{2}\perpcompX_{t}^{2}}.
\]
Again, combining the assumptions yields the pair of inequalities 
\[
\parcompX_{t} + C_{1}\Delta_{1}\perpcompX_{t}^{2} \leq 2 \parcompX_{t} \quad \text{ and } \quad \rho^{-1} \cdot (1+r_{t}^{-2})^{-2} - C_{1}\Delta_{2}\perpcompX_{t}^{2} \geq 0.5 \cdot \rho^{-1},
\]
whence we obtain the inequality $\parcompdetX_{t+1} \leq 4\rho \cdot \parcompX_{t}$.  Taking stock, we have shown the sandwich relation
\begin{align}\label{ineq4-lemma1-initia-linear-proof}
	\frac{\rho}{4} \cdot \parcompX_{t} \leq \parcompX_{t+1}^{\mathsf{det}} \leq 4\rho \cdot \parcompX_{t}.
\end{align}
Towards bounding $\perpcompdetX_{t+1}$, we introduce the parallel notation $\widetilde{r}_{t+1} = \perpcompZ_{t+1}/\parcompZ_{t+1}$.  Similarly to the second inequality~\eqref{ineq1-lemma1-initia-linear-proof}, we note the inequality 
\[
\rho^{-1} \cdot \perpcompZ_{t+1}^{-2} \cdot (1+\widetilde{r}_{t+1}^{-2})^{-2} \leq (\perpcompX_{t+1}^{\mathsf{det}})^{2} \leq \rho^{-1} \cdot \perpcompZ_{t+1}^{-2}.
\]
Combining the inequalities~\eqref{ineq1-lemma1-initia-linear-proof} and~\eqref{ineq2-lemma1-initia-linear-proof} with the above display and the numeric inequalities $(1 + a)^{-1} \geq 1 - a$ and $(1 + a)^{-2} \geq 1 - 2a$, for $a > 0$ yields the lower bound 
\begin{align}\label{ineq:linear-perpdet-lb}
	(\perpcompX_{t+1}^{\mathsf{det}})^{2} \geq \rho^{-1} \cdot \perpcompZ_{t+1}^{-2} \cdot (1+\widetilde{r}_{t+1}^{-2})^{-2} &\geq
	\perpcompX_{t}^{2} \cdot (1+C_{1}\Delta_{2} \cdot \rho \cdot \perpcompX_{t}^{2})^{-1} \cdot (1+\widetilde{r}_{t+1}^{-2})^{-2} \\&\geq
	\perpcompX_{t}^{2} \cdot (1-C_{1}\Delta_{2} \cdot \rho \cdot \perpcompX_{t}^{2}) \cdot (1-2\widetilde{r}_{t+1}^{-2}).
\end{align} 
Towards bounding $\widetilde{r}_{t+1}$, note the pair of inequalities 
\[
\perpcompZ_{t+1}^{2} \geq \rho^{-1}\perpcompX_{t}^{-2}(1+r_{t}^{-2})^{2} - C_{1}\Delta_{2} \geq 0.5 \rho^{-1} \perpcompX_{t}^{-2} \quad \text{ and } \quad \parcompZ_{t+1}^{2} \leq 2\parcompX_{t}^{2}\perpcompX_{t}^{-4}+2C_{1}^{2}\Delta_{1}^{2}\leq 3\parcompX_{t}^{2}\perpcompX_{t}^{-4},
\]
whence we deduce the lower bound $\widetilde{r}_{t+1}^{2} \geq (6\rho)^{-1}r_{t}^{2}$.  Putting the pieces together, we obtain the lower bound
\begin{align*}
	(\perpcompX_{t+1}^{\mathsf{det}})^{2} \geq \perpcompX_{t}^{2} \cdot (1-2C_{1}\Delta_{2} \cdot \rho) \cdot (1-12\rho r_{t}^{-2}) &\geq \perpcompX_{t}^{2} \cdot (1-2C_{1}\Delta_{2}\rho -12 \rho r_{t}^{-2})\\
	&\geq \perpcompX_{t}^{2} - 12\rho \parcompX_{t}^{2}-C_{1}'\frac{\perplogd \sqrt{\oversamp}}{\sqrt{d}},
\end{align*}
where the first inequality follows by combining the inequality~\eqref{ineq:linear-perpdet-lb} with the lower bound $\perpcompX_{t}^2 \geq 0.5$, and the last inequality follows for some universal constant $C_1'$ since $\rho \asymp \oversamp/(1+\sigma^{2})$.  Similarly, we upper bound $\perpcompdetX_{t+1}$ as
\begin{align*}
	(\perpcompX_{t+1}^{\mathsf{det}})^{2} \leq \rho^{-1} \cdot \perpcompZ_{t+1}^{-2} \leq \rho^{-1} \cdot (\rho^{-1}\perpcompX_{t}^{-2}(1+r_{t}^{-2})^{-2} - C_{1}\Delta_{2})^{-1} &=
	\frac{\perpcompX_{t}^{2} \cdot (1+r_{t}^{-2})^{2}}{1-C_{1}\rho \Delta_{2} \perpcompX_{t}^{2} (1+r_{t}^{-2})^{2}}.
\end{align*}
We further upper bound the RHS 
\begin{align*}
	\frac{\perpcompX_{t}^{2} \cdot (1+r_{t}^{-2})^{2}}{1-C_{1}\rho \Delta_{2} \perpcompX_{t}^{2} (1+r_{t}^{-2})^{2}} &\leq
	\perpcompX_{t}^{2} \cdot (1+r_{t}^{-2})^{2} \cdot \bigl(1+2C_{1}\rho \Delta_{2} \perpcompX_{t}^{2} (1+r_{t}^{-2})^{2}\bigr) \\
	&\leq \perpcompX_{t}^2 (1+r_{t}^{-2})^{2} \Bigl(1 + \frac{C_1' \perplogd \sqrt{\oversamp}}{\sqrt{d}}\Bigr),
\end{align*}
where the first inequality follows from the numeric inequality $(1-a)^{-1} \leq 1+2a$ for $0\leq a \leq 0.5$, with $a = C_{1}\rho \Delta_{2} \perpcompX_{t}^{2} (1+r_{t}^{-2})^{2} \leq 0.5$ and the final inequality follows by using the assumptions $r_t^2 \geq 20\rho$ and $\perpcompX_t^2 \leq 2$.  Now, since $r_t > 1$, $(1 + r_t^{-2})^2 \leq 1 + 3r_t^{-2}$, 
\[
(1+r_{t}^{-2})^{2} \Bigl(1 + \frac{C_1' \log^{8}(d) \sqrt{\oversamp}}{\sqrt{d}}\Bigr) \leq 1 + 3r_t^{-2}  + \frac{C \perplogd \sqrt{\oversamp}}{\sqrt{d}},
\]
whence we immediately obtain the upper bound
\[
(\perpcompX_{t+1}^{\mathsf{det}})^{2} \leq \perpcompX_{t}^{2} \cdot (1+3r_{t}^{-2}+\frac{C_{1}''\perplogd \sqrt{\oversamp}}{\sqrt{d}}) \leq \perpcompX_{t}^{2} + 3\parcompX_{t}^{2} + \frac{2C_{1}''\perplogd \sqrt{\oversamp}}{\sqrt{d}}.
\]
Summarizing, we have obtained the sandwich inequality
\begin{align}\label{ineq5-lemma1-initia-linear-proof}
	\perpcompX_{t}^{2} - 12\rho \parcompX_{t}^{2}-C_{1}'\frac{\perplogd \sqrt{\oversamp}}{\sqrt{d}} \leq (\perpcompX_{t+1}^{\mathsf{det}})^{2} \leq \perpcompX_{t}^{2} + 3\parcompX_{t}^{2} + \frac{2C_{1}''\perplogd \sqrt{\oversamp}}{\sqrt{d}}.
\end{align}
Next, we use the inequality
\[
\parcompZ_{t+1}^{2} + \perpcompZ_{t+1}^{2} \geq \perpcompX_{t}^{-2} \cdot \rho^{-1} \cdot (1+1/r_{t}^{2})^{-2} - C_{1}\Delta_{2} \geq \rho^{-1}/4,
\]
in conjunction with the definition of event $\mathcal{B}_{t+1}$ to obtain the pair of deviation bounds
\begin{align*}
	\bigl\lvert \parcompX_{t+1} - \parcompX_{t+1}^{\mathsf{det}} \bigr\rvert  \leq \frac{C_{1} \Delta_1}{\bigl( \parcompZ_{t+1}^{2} + \perpcompZ_{t+1}^{2} \bigr)^{1/2}} \lesssim \sqrt{\frac{\log(d)}{d}} \quad \text{ and }\quad \bigl\lvert \perpcompX_{t+1}^{2}  - (\perpcompX_{t+1}^{\mathsf{det}})^{2} \bigr\rvert \leq \frac{C_{1} \Delta_2}{\parcompZ_{t+1}^{2} + \perpcompZ_{t+1}^{2}}  \lesssim \frac{\perplogd \sqrt{\oversamp}}{\sqrt{d}}.
\end{align*}
Note that $\sqrt{\log(d)/d} = o_{d}(\rho \parcompX_{t})$ by the assumption $\oversamp \geq C(1+\sigma^{2})\log(d)$ and $\parcompX_{t} \gtrsim 1/\sqrt{d}$. Consequently, combining this with the inequalities~\eqref{ineq4-lemma1-initia-linear-proof} and~\eqref{ineq5-lemma1-initia-linear-proof} yields the desired result.  \qed

\subsubsection{One-bit observation model}
We first obtain a sandwich relation on $\parcompdetZ_{t+1}$.  Using the functions $\parmapbit$~\eqref{eq:parmapbit} in conjunction with the inequality $\phi(x) = \int_{0}^{x} e^{-t^2/2}\mathrm{d}t \leq x$ yields the upper bound
\begin{subequations}\label{ineq1-lemma1-init-nonlinear-proof}
\begin{align}
\parcompdetZ_{t+1} = \parmapbit(\parcompX_{t}, \perpcompX_{t}) \leq \frac{2}{\pi} \cdot \frac{1}{\sqrt{\parcompX_{t}^2 + \perpcompX_{t}^2}} \cdot \frac{\parcompX_{t}}{\perpcompX_{t}} \leq \frac{2}{\pi} \cdot \frac{\parcompX_{t}}{\perpcompX_{t}^2}.
\end{align}
Next, applying Lemma~\ref{f-h-properties-nonlinear}(b) in conjunction with the upper bound $\parcompX_{t} /\perpcompX_{t} \leq 1$ yields the lower bound 
\begin{align}
\parcompZ_{t+1}^{\mathsf{det}} \geq \frac{1}{\sqrt{2\pi}} \cdot \frac{\parcompX_{t}}{\perpcompX_{t}} \cdot \frac{1}{\sqrt{\parcompX_{t}^{2} + \perpcompX_{t}^{2}}} = \frac{1}{\sqrt{2\pi}} \cdot \frac{\parcompX_{t}}{\perpcompX_{t}^{2}} \cdot \frac{1}{(1 + r_{t}^{-2})^{1/2}}.
\end{align}
We next find upper and lower bounds on $(\perpcompdetZ_{t+1})^2 = \perpmapbit(\parcompX_{t}, \perpcompX_{t})$~\eqref{eq:perpmapbit}.  To this end, we note that 
\[
\frac{C_3(\oversamp)}{C_2(\oversamp)} \leq \EE\{W^{4}\} \bigg/ \EE\left\{ \frac{W^{2}}{(1 + W^{2})^{2}}\right\} \leq 20,
\]
where $W \sim \mathsf{N}(0, 1)$.  Straightforward computation thus yields the upper bound
\begin{align}
	(\perpcompZ_{t+1}^{\mathsf{det}})^{2} \leq \frac{1+\sigma^{2}}{C(\oversamp)} \cdot \frac{1}{\parcompX_{t}^{2}+\perpcompX_{t}^{2}} + \frac{20(\parcompZ_{t+1}^{\mathsf{det}})^{2}}{C(\oversamp)} &\leq 
	\frac{1+\sigma^{2}}{C(\oversamp)} \cdot \frac{1}{\parcompX_{t}^{2}+\perpcompX_{t}^{2}} + \frac{20}{C(\oversamp)} \cdot \frac{4}{\pi^{2}} \cdot \frac{1}{\parcompX_{t}^{2}+\perpcompX_{t}^{2}}\cdot \frac{\parcompX_{t}^{2}}{\perpcompX_{t}^{2}} \nonumber\\
	&\leq \frac{1+\sigma^{2}}{C(\oversamp)} \cdot \frac{1}{\parcompX_{t}^{2}+\perpcompX_{t}^{2}} \cdot (1+9r_{t}^{-2}),
\end{align}
where in the penultimate inequality we have also used the upper bound on $\parcompdetZ_{t+1}$.  We additionally obtain the lower bound
\begin{align}
	(\perpcompZ_{t+1}^{\mathsf{det}})^{2} \geq  \frac{1+\sigma^{2}}{C(\oversamp)} \cdot \frac{1}{\parcompX_{t}^{2}+\perpcompX_{t}^{2}} - \frac{4}{\pi} \cdot \frac{\parcompZ_{t+1}^{\mathsf{det}}}{C(\oversamp)\sqrt{\parcompX_{t}^{2} + \perpcompX_{t}^{2}}} \cdot 20 &\geq \frac{1+\sigma^{2}}{C(\oversamp)} \cdot \frac{1}{\parcompX_{t}^{2}+\perpcompX_{t}^{2}} - \frac{160}{\pi^{2}C(\oversamp)} \cdot \frac{1}{\parcompX_{t}^{2} + \perpcompX_{t}^{2}} \cdot \frac{\parcompX_{t}^{2}}{\perpcompX_{t}^{2}}\nonumber \\&\geq\frac{1+\sigma^{2}}{C(\oversamp)} \cdot \frac{1}{\parcompX_{t}^{2}+\perpcompX_{t}^{2}} \cdot (1-18r_{t}^{-2}),
\end{align}
\end{subequations}
where to obtain the first inequality, we have used 
\[
\EE\biggl\{ \frac{|W|^{3}  \phi(\frac{\parcompX_t}{\perpcompX_t} |W|)}{C_{2}(\oversamp) (C(\oversamp) + W^{2})^{2}} \biggr\} \leq \frac{\parcompX_{t}}{\perpcompX_{t}}\frac{C_3(\oversamp)}{C_2(\oversamp)} \leq 20,
\]
which holds since $\parcompX_{t}/\perpcompX_{t} \leq 1$.  We obtain upper and lower bounds on $\parcompZ_{t+1}$ and $\perpcompZ_{t+1}$ since, on $\mathcal{A}_{t+1}$, 
\begin{align}\label{ineq2-lemma1-init-nonlinear-proof}
	\bigl\lvert \parcompZ_{t+1} - \parcompZ_{t+1}^{\mathsf{det}}\bigr\rvert \leq C_{1}\Delta_{1} \quad \text{and} \quad \bigl \lvert  \perpcompZ_{t+1}^{2} - (\perpcompZ_{t+1}^{\mathsf{det}})^{2} \bigr \rvert \leq C_{1}\Delta_{2},
\end{align}
where we have additionally used the fact that $\parcompX_{t}^{2} + \perpcompX_{t}^{2} \geq 0.5$.  With the bounds on $\parcompZ_{t+1}$ and $\perpcompZ_{t+1}$ in hand, we turn to bounding $\parcompX_{t+1}$ and $\perpcompX_{t+1}$.  We first note the upper bound
\[
\parcompdetX_{t+1} = \parmapbit(\parcompZ_{t+1}, \perpcompZ_{t+1}) \leq \frac{2}{\pi} \cdot \frac{1}{\sqrt{\parcompZ_{t+1}^{2} + \perpcompZ_{t+1}^{2}}} \cdot \frac{\parcompZ_{t+1}}{\perpcompZ_{t+1}} \leq \frac{2}{\pi} \cdot \frac{\parcompZ_{t+1}}{\perpcompZ_{t+1}^{2}}.
\]
Using inequalities~\eqref{ineq1-lemma1-init-nonlinear-proof} and~\eqref{ineq2-lemma1-init-nonlinear-proof} in conjunction with the assumptions $\parcompX_{t} \geq 1/(50\sqrt{d})$, $0.5\leq \perpcompX_{t}^{2}\leq2$, $r_{t}^{2} \geq 50 \rho$, $\rho \asymp \oversamp/(1+\sigma^{2})$, and inequality~\eqref{Detla1-2-bound}, we obtain the pair of inequalities
\begin{align}\label{ineq3-lemma1-init-nonlinear-proof}
	\parcompZ_{t+1} &\leq \frac{2}{\pi} \cdot \frac{\parcompX_{t}}{\perpcompX_{t}^{2}} + C_{1}\cdot \Delta_{1} \leq \frac{3}{\pi} \cdot \frac{\parcompX_{t}}{\perpcompX_{t}^{2}} \quad \text{ and }\quad \nonumber\\
	\perpcompZ_{t+1}^{2} &\geq \rho^{-1} \cdot \perpcompX_{t}^{-2} \cdot (1+r_{t}^{-2})^{-1} \cdot (1-18r_{t}^{-2}) - C_{1}\Delta_{2} \geq 0.5\cdot \rho^{-1}\cdot \perpcompX_{t}^{-2}.
\end{align}
Putting the pieces together yields the upper bound 
\[
\parcompX_{t+1}^{\mathsf{det}} \leq \frac{2}{\pi} \cdot \frac{\frac{3}{\pi} \cdot \frac{\parcompX_{t}}{\perpcompX_{t}^{2}}}{0.5\cdot \rho^{-1}\cdot \perpcompX_{t}^{-2}} \leq 2\rho \cdot \parcompX_{t}.
\]
We next derive the lower bound of $\parcompX_{t+1}^{\mathsf{det}}$. Using the parallel notation $\widetilde{r}_{t+1} = \perpcompZ_{t+1}/\parcompZ_{t+1}$ and applying inequality~\eqref{ineq3-lemma1-init-nonlinear-proof} yields the lower bound
\[
\widetilde{r}_{t+1} \geq \frac{\left(0.5\cdot \rho^{-1}\cdot \perpcompX_{t}^{-2}\right)^{1/2}}{\frac{3}{\pi} \cdot \frac{\parcompX_{t}}{\perpcompX_{t}^{2}}} \geq \frac{\pi}{6} \cdot \rho^{-1/2} \cdot r_{t}.
\]
Since $r_{t}\geq \sqrt{20\rho}$ by assumption, we conclude that $\widetilde{r}_{t+1} \geq 1$.  Next, we apply Lemma~\ref{f-h-properties-nonlinear}(b) in conjunction with the fact that $\frac{\parcompZ_{t+1}}{\perpcompZ_{t+1}} = 1/\widetilde{r}_{t+1} \leq 1$ to obtain the lower bound
\begin{align}\label{ineq3.5-lemma1-init-nonlinear-proof}
	\parcompX_{t+1}^{\mathsf{det}} \geq \frac{1}{\sqrt{2\pi}}\cdot \frac{\parcompZ_{t+1}}{\perpcompZ_{t+1}} \cdot \frac{1}{(\parcompZ_{t+1}^{2} + \perpcompZ_{t+1}^{2})^{1/2}} = \frac{1}{\sqrt{2\pi}}\cdot \frac{\parcompZ_{t+1}}{\perpcompZ_{t+1}^{2}} \cdot \frac{1}{(1+\widetilde{r}_{t+1}^{-2})^{1/2}} \geq \frac{1}{2\sqrt{\pi}} \cdot \frac{\parcompZ_{t+1}}{\perpcompZ_{t+1}^{2}}.
\end{align}
Once more applying inequalities~\eqref{ineq1-lemma1-init-nonlinear-proof} and~\eqref{ineq2-lemma1-init-nonlinear-proof} in conjunction with the assumptions $\parcompX_{t} \geq 1/(50\sqrt{d})$, $0.5\leq \perpcompX_{t}^{2}\leq2$, $r_{t}^{2} \geq 50 \rho$, $\rho \asymp \oversamp/(1+\sigma^{2})$, and~\eqref{Detla1-2-bound} yields
\begin{align}\label{ineq4-lemma1-init-nonlinear-proof}
	\parcompZ_{t+1} &\geq \frac{1}{\sqrt{2\pi}} \cdot \frac{\parcompX_{t}}{\perpcompX_{t}^{2}} \cdot \frac{1}{(1 + r_{t}^{-2})^{1/2}} - C_{1}\Delta_{1} \geq \frac{1}{4\sqrt{\pi}} \cdot \frac{\parcompX_{t}}{\perpcompX_{t}^{2}}\quad \text{and} \nonumber\\
	\perpcompZ_{t+1}^{2} &\leq \rho^{-1} \cdot \perpcompX_{t}^{-2} \cdot (1+r_{t}^{-2})^{-1} \cdot (1+9r_{t}^{-2}) + C_{1}\Delta_{2} \leq2\rho^{-1} \cdot \perpcompX_{t}^{-2}.
\end{align}
Putting the pieces together yields the lower bound
\[
\parcompX_{t+1}^{\mathsf{det}} \geq \frac{1}{2\sqrt{\pi}} \cdot \frac{\parcompZ_{t+1}}{\perpcompZ_{t+1}^{2}} \geq \frac{1}{2\sqrt{\pi}} \cdot \frac{ \frac{1}{4\sqrt{\pi}} \cdot \frac{\parcompX_{t}}{\perpcompX_{t}^{2}} }{ 2\rho^{-1} \cdot \perpcompX_{t}^{-2} } \geq \frac{\rho}{16\pi} \cdot \parcompX_{t}. 
\]
Towards bounding $(\perpcompdetX_{t+1})^2 = \perpmapbit(\parcompZ_{t+1}, \perpcompZ_{t+1})$~\eqref{eq:perpmapbit}, we obtain the inequality 
\begin{align*}
	(\perpcompX_{t+1}^{\mathsf{det}})^{2} &\leq \frac{1+\sigma^{2}}{C(\oversamp)} \cdot \frac{1}{\parcompZ_{t+1}^{2}+\perpcompZ_{t+1}^{2}} \cdot (1+9\widetilde{r}_{t+1}^{-2}) \leq \rho^{-1} \cdot \perpcompZ_{t+1}^{-2}\cdot (1+9\widetilde{r}_{t+1}^{-2}).
\end{align*}
Further, we apply inequalities~\eqref{ineq1-lemma1-init-nonlinear-proof} and~\eqref{ineq2-lemma1-init-nonlinear-proof} to obtain the lower bound
\[
\perpcompZ_{t+1}^{2} \geq \rho^{-1} \cdot \perpcompX_{t}^{-2} \cdot (1+r_{t}^{-2})^{-1} \cdot (1-18r_{t}^{-2}) - C_{1}\Delta_{2}.
\]
Putting the two pieces together yields 
\begin{align*}
	(\perpcompX_{t+1}^{\mathsf{det}})^{2} \leq \frac{1}{\rho}\cdot \frac{ (1+9\widetilde{r}_{t+1}^{-2})}{\rho^{-1} \cdot \perpcompX_{t}^{-2} \cdot (1+r_{t}^{-2})^{-1} \cdot (1-18r_{t}^{-2}) - C_{1}\Delta_{2}} &=
	\frac{\perpcompX_{t}^{2}\cdot (1+r_{t}^{-2})\cdot (1-18r_{t}^{-2})^{-1} \cdot (1+9\widetilde{r}_{t+1}^{-2}) }{ 1-C_{1}\cdot \rho \cdot \perpcompX_{t}^{2}\cdot (1+r_{t}^{-2})\cdot (1-18r_{t}^{-2})^{-1} \cdot \Delta_{2}}.
\end{align*}
Further upper bounding the RHS by applying both $\1$ $\perpcompX_{t}^{2}\cdot (1+r_{t}^{-2})\cdot (1-18r_{t}^{-2})^{-1} \leq 10$ since $\perpcompX_{t}^{2} \leq 2$ and $r_{t}^{2} \geq 50\rho$, and $\2$ $(1-a)^{-1} \leq 1+2a$ for $0 \leq a\leq 0.5$ with $a = 10C_{1}\cdot \rho \cdot \Delta_{2} \asymp \perplogd \sqrt{\oversamp}/\sqrt{d}$ yields the bound 
\begin{align*}
	(\perpcompX_{t+1}^{\mathsf{det}})^{2}	\leq \perpcompX_{t}^{2}\cdot (1+r_{t}^{-2})\cdot (1-18r_{t}^{-2})^{-1} \cdot (1+9\widetilde{r}_{t+1}^{-2}) \cdot (1+20C_{1}\rho\Delta_{2}).
\end{align*}
Note that $\widetilde{r}_{t+1} \geq \frac{\pi}{6} \cdot \rho^{-1/2} \cdot r_{t}$, whence we deduce the inequality
\begin{align*}
	(1+r_{t}^{-2})\cdot (1-18r_{t}^{-2})^{-1} \cdot (1+9\widetilde{r}_{t+1}^{-2}) \leq (1+r_{t}^{-2})\cdot(1+36r_{t}^{-2})\cdot(1+36\rho r_{t}^{-2}) \leq 1+40\rho r_{t}^{-2}.
\end{align*}
Combining the previous two displays yields the upper bound
\[
(\perpcompX_{t+1}^{\mathsf{det}})^{2} \leq \perpcompX_{t}^{2} \cdot (1+40\rho r_{t}^{-2}) \cdot (1+20C_{1}\rho\Delta_{2}) \leq \perpcompX_{t}^{2} + 40\rho\parcompX_{t}^{2} + \frac{C_{1}'\perplogd \sqrt{\oversamp}}{\sqrt{d}}.
\]
Next we derive the lower bound of $(\perpcompX_{t+1}^{\mathsf{det}})^{2}$. We obtain that
\begin{align*}
	(\perpcompX_{t+1}^{\mathsf{det}})^{2} &\geq \frac{1+\sigma^{2}}{C(\oversamp)} \cdot \frac{1}{\parcompZ_{t+1}^{2}+\perpcompZ_{t+1}^{2}} - \frac{160}{\pi^{2}C(\oversamp)} \cdot \frac{1}{\parcompZ_{t+1}^{2} + \perpcompZ_{t+1}^{2}} \cdot \frac{\parcompZ_{t+1}^{2}}{\perpcompZ_{t+1}^{2}} \\&\geq \rho^{-1} \cdot \perpcompZ_{t+1}^{-2} \cdot (1+\widetilde{r}_{t+1}^{-2})^{-1} \cdot (1-18\widetilde{r}_{t+1}^{-2}),
\end{align*}
Applying inequalities~\eqref{ineq1-lemma1-init-nonlinear-proof} and~\eqref{ineq2-lemma1-init-nonlinear-proof} yields 
\[
\perpcompZ_{t+1}^{2} \leq \rho^{-1} \cdot \perpcompX_{t}^{-2} \cdot (1+r_{t}^{-2})^{-1} \cdot (1+9r_{t}^{-2}) + C_{1}\Delta_{2} \leq \rho^{-1} \cdot \perpcompX_{t}^{-2} \cdot (1+9r_{t}^{-2}) + C_{1}\Delta_{2} .
\]
Putting the two pieces together yields 
\begin{align*}
	(\perpcompX_{t+1}^{\mathsf{det}})^{2} &\geq \frac{1}{\rho} \cdot \frac{(1+\widetilde{r}_{t+1}^{-2})^{-1} \cdot (1-18\widetilde{r}_{t+1}^{-2})}{\rho^{-1} \cdot \perpcompX_{t}^{-2}  \cdot (1+9r_{t}^{-2}) + C_{1}\Delta_{2}} &= \frac{\perpcompX_{t}^{2}\cdot(1+9r_{t}^{-2})^{-1}\cdot(1+\widetilde{r}_{t+1}^{-2})^{-1} \cdot (1-18\widetilde{r}_{t+1}^{-2})}{1 + C_{1}\perpcompX_{t}^{2}(1+9r_{t}^{-2})^{-1}\rho\Delta_{2}}.,
\end{align*}
Using $\1$ $\perpcompX_{t}^{2}(1+9r_{t}^{-2})^{-1} \leq 10$ since $\perpcompX_{t}^{2} \leq 2$ and $r_{t}^{2}\geq 20\rho$, and $\2$ $(1+a)^{-1} \geq 1-a$ with $a = 10C_{1}\rho\Delta_{2}$ yields 
\begin{align*}
	(1 + C_{1}\perpcompX_{t}^{2}(1+9r_{t}^{-2})^{-1}\rho\Delta_{2})^{-1} \geq 1 - 10C_{1}\rho\Delta_{2}.
\end{align*}
Continuing, using $\widetilde{r}_{t+1} \geq \frac{\pi}{6} \cdot \rho^{-1/2} \cdot r_{t}$, we obtain that
\[
(1+9r_{t}^{-2})^{-1}\cdot(1+\widetilde{r}_{t+1}^{-2})^{-1} \cdot (1-18\widetilde{r}_{t+1}^{-2}) \geq (1-9r_{t}^{-2})\cdot(1-\widetilde{r}_{t+1}^{-2}) \cdot (1-18\widetilde{r}_{t+1}^{-2}) \geq 1-100\rho\cdot r_{t}^{-2}.
\]
Putting the pieces together, we obtain the lower bound
\[
(\perpcompX_{t+1}^{\mathsf{det}})^{2} \geq \perpcompX_{t}^{2}\cdot (1-100\rho\cdot r_{t}^{-2})\cdot (1 - 10C_{1}\rho\Delta_{2}) \geq \perpcompX_{t}^{2} - 100\rho \cdot \parcompX_{t}^{2} - \frac{C_{1}'\perplogd \sqrt{\oversamp}}{\sqrt{d}}.
\]
Taking stock, we have shown the pair of sandwich relations
\begin{align}\label{ineq:penultimate-nonlinear-random-init}
	\frac{\rho}{16\pi} \cdot \parcompX_{t} &\leq \parcompX_{t+1}^{\mathsf{det}} \leq 2\rho \cdot \parcompX_{t} \quad \text{ and } \nonumber\\
	\perpcompX_{t}^{2} - 100\rho \cdot \parcompX_{t}^{2} - \frac{C_{1}'\perplogd \sqrt{\oversamp}}{\sqrt{d}} &\leq (\perpcompX_{t+1}^{\mathsf{det}})^{2} \leq \perpcompX_{t}^{2} + 40\rho\parcompX_{t}^{2} + \frac{C_{1}'\perplogd \sqrt{\oversamp}}{\sqrt{d}}.
\end{align}
To conclude, we lower bound the sum of squares $\parcompZ_{t+1}^2 + \perpcompZ_{t+1}^2$ as 
\[
\parcompZ_{t+1}^{2}+\perpcompZ_{t+1}^{2} \geq \perpcompZ_{t+1}^{2} \geq \rho^{-1} \cdot \perpcompX_{t}^{-2} \cdot (1+r_{t}^{-2})^{-1} \cdot (1-18r_{t}^{-2}) - C_{1}\Delta_{2} \geq \rho^{-1}/4,
\]
where in the last step we used inequality~\eqref{Detla1-2-bound}.  Finally, by the above inequality, we note that on event $\mathcal{B}_{t+1}$, 
\begin{align*}
	\bigl\lvert \parcompX_{t+1} - \parcompX_{t+1}^{\mathsf{det}}\bigr\rvert \lesssim \sqrt{\frac{\log(d)}{d}} \quad \text{and} \quad \bigl\lvert \perpcompX_{t+1}^{2} - (\perpcompX_{t+1}^{\mathsf{det}})^{2} \bigr\rvert \lesssim \frac{\perplogd \sqrt{\oversamp}}{\sqrt{d}}.
\end{align*}
Note that $\sqrt{\log(d)/d} = o_{d}(\rho \parcompX_{t})$ by the assumption $\oversamp \geq C(1+\sigma^{2})\log(d)$ and $\parcompX_{t} \gtrsim 1/\sqrt{d}$. The conclusion follows immediately upon combining the above display with the pair of inequalities~\eqref{ineq:penultimate-nonlinear-random-init}. \qed

\subsection{Proof of Lemma~\ref{lem:initialization}(b)}\label{sec:proof-lem-initialization-b}
We proceed by induction on $0 \leq t \leq T_{\star}$.  

\medskip
\noindent \underline{Base case $t=0$:} Assumption~\ref{asm:random-linear} guarantees that
\[
	\frac{1}{50\sqrt{d}} \leq \parcompX_{0} \leq 1,\;0.8 \leq \perpcompX_{0}^{2} \leq 1.2 \;\; \text{and} \;\; \frac{\perpcompX_{0}^{2}}{\parcompX_{0}^{2}} \geq \frac{20C(\oversamp)}{1+\sigma^{2}}.
\]
Consequently, applying Lemma~\ref{lem:initialization}(a) yields that the inequalities~\eqref{eq:strange-dream} hold for $t=0$. 

\medskip
\noindent \underline{Induction step:} Suppose inequalities~\eqref{eq:strange-dream} hold for all $t \leq T_{\star} - 1$.  We need to prove that these inequalities also hold at iteration $t+1$. Note that $\parcompX_{t+1} \leq 1/(30\sqrt{\rho})$ by definition of $T_\star$.  By the induction hypothesis and using Eq.~\eqref{eq:strange-dream-a}, we have $\parcompX_{t+1} \geq \frac{\rho}{60} \parcompX_{t} \geq \left( \frac{\rho}{60} \right)^{t + 1} \parcompX_{0}$. Putting these two together yields $\left( \frac{\rho}{60} \right)^{t + 1} \parcompX_{0} \leq 1/(30\sqrt{\rho})$, from which we obtain that
\begin{align}\label{ineq:ub-iteration-linear}
	t +1 \leq \log_{\rho/60}\Bigl(\frac{1}{30\sqrt{\rho} \cdot \parcompX_{0}} \Bigr) \overset{\1}{\leq} \log_{\rho/60}\Bigl(\frac{5\sqrt{d}}{3\sqrt{\rho} } \Bigr) \leq \log_{\rho/60}(d),
\end{align}
where in step $\1$ we used the assumption $\parcompX_{0} \geq d^{-1/2}/50$.  Next, since $\parcompX_{t + 1} \geq (\rho/60)^{t + 1 - \tau} \parcompX_{\tau}$ for $ 0 \leq \tau \leq t + 1$, we obtain the upper bound
\begin{align}
	\label{ineq:ub-sum-square-parcomp-linear}
	\sum_{\tau=0}^{t + 1} \parcompX_{\tau}^{2} \leq \sum_{\tau=0}^{t + 1} (\rho/60)^{2\tau-2(t + 1)}\parcompX_{t + 1}^{2} \overset{\1}{\leq} \parcompX_{t + 1}^{2} \cdot \sum_{\tau'=0}^{t + 1}(\rho/60)^{-2\tau'} \overset{\2}{\leq} 2\parcompX_{t + 1}^{2} \overset{\3}{\leq} \frac{1}{450\rho},
\end{align}
where in step $\1$ we have made the change of variables $\tau' = t+ 1 - \tau$, and step $\2$ holds provided $\oversamp \geq C$ for some large enough positive constant $C$. Step $\3$ holds since by assumption $\parcompX_{t + 1} \leq 1/(30\sqrt{\rho})$. 
Combining the inequalities~\eqref{ineq:ub-iteration-linear} and~\eqref{ineq:ub-sum-square-parcomp-linear} with Eq.~\eqref{eq:strange-dream-c} of the induction hypothesis yields the pair of inequalities
\begin{align*}
	\perpcompX_{t+1}^{2} &\leq \perpcompX_{0}^{2} + 40\rho\sum_{\tau=0}^{t + 1} \parcompX_{\tau}^{2} + (t + 1) \cdot \frac{C_{1}\perplogd \sqrt{\oversamp}}{\sqrt{d}} \leq 1.2+\frac{40}{450} + \log_{\rho/60}(d)  \frac{C_{1}\perplogd \sqrt{\oversamp}}{\sqrt{d}} \leq 1.4 \qquad \text{and}\\
	\perpcompX_{t+1}^{2} &\geq \perpcompX_{0}^{2} - 100\rho\sum_{\tau=0}^{t} \parcompX_{\tau}^{2} - (t+1) \cdot \frac{C_{1}\perplogd \sqrt{\oversamp}}{\sqrt{d}} \geq 0.8 - \frac{100}{450} - \log_{\rho/60}(d)  \frac{C_{1}\perplogd \sqrt{\oversamp}}{\sqrt{d}} \geq 0.5,
\end{align*}
where we have used $\oversamp \leq \sqrt{d}$. We have thus established Eq.~\eqref{eq:strange-dream-b} of the induction step.  In order to establish Eqs.~\eqref{eq:strange-dream-a} and~\eqref{eq:strange-dream-c}, we note that $\perpcompX_{t+1}^2/\parcompX_{t+1}^2 \geq 0.5 \cdot 900 \rho \geq 20 C(\oversamp)/(1 + \sigma^2)$, where we use $\parcompX_{t+1} \leq 1/(30\sqrt{\rho})$.
Taking stock, the following inequalities hold
\[
\frac{1}{50\sqrt{d}} \leq \parcompX_{0} \leq \parcompX_{t+1} \leq \frac{1}{30\sqrt{\rho}} \leq 1, \quad 0.5 \leq \perpcompX_{t+1}^{2} \leq 2 \quad \text{ and }\quad \frac{\perpcompX_{t+1}^{2}}{\parcompX_{t+1}^{2}} \geq \frac{20C(\oversamp)}{1+\sigma^{2}}.
\]
Applying Lemma~\ref{lem:initialization}(a) then proves the remaining claims in the induction step.  \qed

\subsection{Proof of Lemma~\ref{lemma2-good-region}}\label{sec:proof-lemma-good-region}
We require the following lemma, deferring its proof to Section~\ref{sec:proof-help-lemma2-GD}.
\begin{lemma}\label{help-lemma2-GD-linear}
	Let $\parcompX > 0$ and $\perpcompX\geq 0 $ satisfy $\perpcompX/\parcompX \lesssim \sqrt{\oversamp/(1+\sigma^{2})}$.  Further, suppose that with $\psi \in \{ \mathsf{id}, \sign \}$ and $(\parcompdetX, \perpcompdetX) = \bigl(F_{\psi}(\parcompX, \perpcompX), G_{\psi}(\parcompX, \perpcompX)\bigr)$, $\parcompX', \perpcompX'$ satisfy
	\[
	\bigl \lvert \parcompX' - \parcompdetX \bigr \rvert \lesssim \frac{\parsigma}{\sqrt{\parcompX^2 + \perpcompX^2}} \pardevn, \qquad \text{ and } \qquad \bigl \lvert \perpcompX'^{2} - (\perpcompdetX)^{2} \bigr \rvert \lesssim \frac{1+\sigma^{2}}{\parcompX^2 + \perpcompX^2} \frac{\perplogn}{\sqrt{n}}. 
	\]	
	Then, there exists a pair of universal, positive constants $(c,C)$ such that for $\oversamp \geq C(1+\sigma^{2})$,  and $\log(n)/d^{1/2} \leq c$, the following holds.
	\[
	\biggl \lvert \Bigl(\frac{\perpcompX'}{\parcompX'}\Bigr)^2 - \ratiomappsi\Bigl(\frac{\perpcompX^2}{\parcompX^2}\Bigr)\biggl\lvert \lesssim \Bigl(1 \vee \frac{\perpcompX^2}{\parcompX^2}\Bigr)\cdot \frac{\perplogn(1+\sigma^{2})}{\sqrt{n}} + \Bigl(1 \vee \frac{\perpcompX}{\parcompX}\Bigr)\cdot(\parsigma) \pardevn.
	\]
\end{lemma}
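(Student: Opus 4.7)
\textbf{Proof proposal for Lemma~\ref{help-lemma2-GD-linear}.} The plan is to reduce the lemma to a direct perturbative expansion of the squared ratio around its deterministic counterpart, using the explicit formulas for $F_\psi$, $G_\psi$ from Lemma~\ref{det-equivalence}. Writing $a = (\parcompdetX)^2$ and $b = (\perpcompdetX)^2$, the first observation is that, by Eq.~\eqref{eq:alphabeta-FG-linear} and Eq.~\eqref{eq:alphabeta-FG-nonlinear}, the ratio $b/a$ equals precisely $\ratiomappsi(\perpcompX^2/\parcompX^2)$. For $\psi = \mathsf{id}$ this is immediate from dividing Eq.~\eqref{eq:perpmapid} by the square of Eq.~\eqref{eq:parmapid}; for $\psi = \mathsf{sgn}$ it follows by dividing Eq.~\eqref{eq:perpmapbit} by the square of Eq.~\eqref{eq:parmapbit} and identifying the resulting expression with the $\ratiomapbit$ in Eq.~\eqref{eq:ratiomaps}. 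Thus it suffices to control $(\perpcompX'/\parcompX')^2 - b/a$.

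Second, I would prove the elementary sandwich $a/2 \leq (\parcompX')^2 \leq 2a$. Using the hypothesis $|\parcompX' - \parcompdetX| \lesssim (\parcompX^2+\perpcompX^2)^{-1/2}\sqrt{\log(n)/n}$, this reduces to showing $\parcompdetX \gtrsim (\parcompX^2+\perpcompX^2)^{-1/2}\sqrt{\log(n)/n}$. For $\psi = \mathsf{id}$, $\parcompdetX = \parcompX/(\parcompX^2+\perpcompX^2)$, so the required inequality becomes $\parcompX/\sqrt{\parcompX^2+\perpcompX^2} \gtrsim \sqrt{\log(n)/n}$, which holds because $\perpcompX/\parcompX \lesssim \sqrt{\oversamp}$ gives $\parcompX/\sqrt{\parcompX^2+\perpcompX^2} \gtrsim \sqrt{d/n}$, and the assumption $\log n/d \leq c$ closes the gap. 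The $\psi = \mathsf{sgn}$ case is analogous after invoking the lower bound $\parmapbit(\parcompX,\perpcompX) \gtrsim \parcompX/(\perpcompX\sqrt{\parcompX^2+\perpcompX^2})$ from Lemma~\ref{f-h-properties-nonlinear}(b) used in Section~\ref{sec:proof-lem-initialization}.

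Third, I would use the decomposition
\begin{align*}
\Bigl(\frac{\perpcompX'}{\parcompX'}\Bigr)^2 - \frac{b}{a} = \frac{(\perpcompX')^2 - b}{(\parcompX')^2} + \frac{b}{a}\cdot\frac{a - (\parcompX')^2}{(\parcompX')^2}
\end{align*}
and bound each of the two terms using $(\parcompX')^2 \asymp a$ together with the given deviation bounds. The first term yields
\[
\frac{|(\perpcompX')^2 - b|}{(\parcompX')^2} \lesssim \frac{1}{a(\parcompX^2+\perpcompX^2)}\cdot\frac{\perplogn}{\sqrt n} \lesssim \Bigl(1 \vee \frac{\perpcompX^2}{\parcompX^2}\Bigr)\cdot\frac{\perplogn}{\sqrt n},
\]
where in the last step I use $a(\parcompX^2+\perpcompX^2) \asymp \parcompX^2/(\parcompX^2+\perpcompX^2)$ for $\psi = \mathsf{id}$ and the analogous computation for $\mathsf{sgn}$. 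For the second term, factoring $a - (\parcompX')^2 = (\parcompX'-\parcompdetX)(\parcompX'+\parcompdetX)$ and applying the hypothesis gives $|a - (\parcompX')^2|/a \lesssim (1+\perpcompX/\parcompX)\sqrt{\log(n)/n}$, while $b/a = \ratiomappsi(\perpcompX^2/\parcompX^2) \lesssim (1 + \perpcompX^2/\parcompX^2)/\oversamp$. Multiplying and using the hypothesis $\perpcompX/\parcompX \lesssim \sqrt{\oversamp}$ to absorb one factor of $\oversamp$ produces exactly $(1 \vee \perpcompX/\parcompX)\sqrt{\log(n)/n}$, matching the second term in the claimed bound.

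The main obstacle I anticipate is the $\psi = \mathsf{sgn}$ case, where the map $b/a$ is not an elementary function of $\perpcompX^2/\parcompX^2$; the verification that dividing Eq.~\eqref{eq:perpmapbit} by the square of Eq.~\eqref{eq:parmapbit} reproduces $\ratiomapbit$ requires careful simplification using the definitions of $C_2(\oversamp)$ and $C_3(\oversamp)$ from Eq.~\eqref{eq:C2-C3} and the fixed-point relation~\eqref{definition-of-C}. A secondary technical point is tracking the constants so that the bound $\oversamp \geq C$ is sufficient to absorb all multiplicative factors of the form $(1+\perpcompX^2/\parcompX^2)/\oversamp$ arising from $b/a$; the condition $\perpcompX/\parcompX \lesssim \sqrt{\oversamp}$ is precisely what makes this absorption work, which suggests the bound is tight in its dependence on the squared ratio.
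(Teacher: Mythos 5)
Your proof is correct and follows essentially the same route as the paper's: identify $(\perpcompdetX)^2/(\parcompdetX)^2 = \ratiomappsi(\perpcompX^2/\parcompX^2)$, establish $(\parcompX')^2 \asymp (\parcompdetX)^2$ via the hypotheses $\perpcompX/\parcompX \lesssim \sqrt{\oversamp}$ and $\log n/d \leq c$ (using Lemma~\ref{f-h-properties-nonlinear}(b) for the sign case), and then control each term in a triangle-inequality decomposition. Your algebraic split $\bigl(\tfrac{\perpcompX'}{\parcompX'}\bigr)^2 - \tfrac{b}{a} = \tfrac{(\perpcompX')^2 - b}{(\parcompX')^2} + \tfrac{b}{a}\cdot\tfrac{a - (\parcompX')^2}{(\parcompX')^2}$ differs cosmetically from the paper's (which pulls out $(\perpcompdetX/\parcompdetX)^2$ and expands $|(\parcompX')^2 - (\parcompdetX)^2|$ as $(\parcompX'-\parcompdetX)^2 + 2\parcompX'|\parcompX'-\parcompdetX|$), but the two are equivalent and yield the same bounds.
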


\subsubsection{Proof of Lemma~\ref{lemma2-good-region} for the linear observation model}
We begin by showing $\perpcompZ_{t+1}/\parcompZ_{t+1} \leq 51$.  Note that on the event $\mathcal{A}_{t+1}$, 
\[
\bigl \lvert \parcompZ_{t+1} - \parcompdetZ_{t+1} \bigr \rvert \lesssim \frac{1+\sigma}{\sqrt{\parcompX_{t}^2 + \perpcompX_{t}^2}} \cdot \pardevn \qquad \text{ and } \qquad\bigl \lvert (\perpcompZ_{t+1})^2 - (\perpcompdetZ_{t+1})^2 \bigr \rvert \lesssim \frac{1+\sigma^{2}}{\parcompX_{t}^2 + \perpcompX_{t}^2} \cdot \frac{ \perplogn}{\sqrt{n}}.  
\]
By assumption, $\perpcompX_t/\parcompX_t \leq 50 \sqrt{\rho} \lesssim \sqrt{\oversamp/(1+\sigma^{2})}$, whence we apply Lemma~\ref{help-lemma2-GD-linear} to obtain the bound 
\begin{align}\label{ineq1-lemma2-GD-linear}
	\begin{split}
		\biggl\lvert \Bigl( \frac{\perpcompZ_{t+1}}{\parcompZ_{t+1}} \Bigr)^{2} - \ratiomapid\Bigl( \frac{\perpcompX_{t}^{2}}{\parcompX_{t}^{2}}\Bigr) \biggr \rvert &\lesssim \Bigl(1 \vee \frac{\perpcompX^2}{\parcompX^2}\Bigr)\cdot(1+\sigma^{2}) \frac{\perplogn}{\sqrt{n}} + \Bigl(1 \vee \frac{\perpcompX}{\parcompX}\Bigr)\cdot (1+\sigma)\pardevn \\&\lesssim \oversamp \cdot \frac{\perplogn}{\sqrt{n}} + \sqrt{\oversamp} \pardevn.
	\end{split}
\end{align}
Consequently, using the assumption $\oversamp \leq \sqrt{d}$ and $\oversamp \geq C(1+\sigma^{2})$,
\begin{align*}
	\Bigl( \frac{\perpcompZ_{t+1}}{\parcompZ_{t+1}} \Bigr)^{2} &\leq \ratiomapid\left( \frac{\perpcompX_{t}^{2}}{\parcompX_{t}^{2}}\right) + \frac{C_{1}\perplogd \sqrt{\oversamp}}{\sqrt{d}} = \frac{1+\sigma^{2}}{C(\oversamp)} \cdot \frac{\perpcompX_{t}^{2}}{\parcompX_{t}^{2}} + \frac{\sigma^{2}}{C(\oversamp)} + \frac{C_{1}\perplogd}{d^{1/2}} \leq
	50^{2}+1.
\end{align*}
Equipped with this bound, we turn to bounding $\perpcompX_{t+1}/\parcompX_{t+1}$.  Proceeding in a parallel manner to above, we obtain the inequality
\begin{align}\label{ineq2-lemma2-GD-linear}
	\begin{split}
		\biggl\lvert \Bigl( \frac{\perpcompX_{t+1}}{\parcompX_{t+1}} \Bigr)^{2} - \ratiomapid\Bigl( \frac{\perpcompZ_{t+1}^{2}}{\parcompZ_{t+1}^{2}}\Bigr) \biggr \rvert &\lesssim \Bigl(1 \vee \frac{\perpcompZ_{t+1}^2}{\parcompZ_{t+1}^2}\Bigr)\cdot \frac{\perplogn(1+\sigma^{2})}{\sqrt{n}} + \Bigl(1 \vee \frac{\perpcompZ_{t+1}}{\parcompZ_{t+1}}\Bigr)\cdot (1+\sigma)\pardevn \\&\lesssim \frac{\perplogn (1+\sigma^{2}) }{\sqrt{n}}.
	\end{split}
\end{align}
We thus decompose and apply the triangle inequality to obtain
\begin{align*}
	\biggl\lvert \Bigl( \frac{\perpcompX_{t+1}}{\parcompX_{t+1}} \Bigr)^{2} - \ratiomapid \circ \ratiomapid\Bigl( \frac{\perpcompX_{t}^{2}}{\parcompX_{t}^{2}}\Bigr) \biggr\rvert &\leq
	\biggl\lvert \Bigl( \frac{\perpcompX_{t+1}}{\parcompX_{t+1}} \Bigr)^{2} - \ratiomapid\Bigl( \frac{\perpcompZ_{t+1}^{2}}{\parcompZ_{t+1}^{2}}\Bigr) \biggr\rvert + 
	\biggl\lvert \ratiomapid\Bigl( \frac{\perpcompZ_{t+1}^{2}}{\parcompZ_{t+1}^{2}}\Bigr) - \ratiomapid \circ \ratiomapid\Bigl( \frac{\perpcompX_{t}^{2}}{\parcompX_{t}^{2}}\Bigr) \biggr\rvert \\&\overset{\1}{=}
	\biggl\lvert \Bigl( \frac{\perpcompX_{t+1}}{\parcompX_{t+1}} \Bigr)^{2} - \ratiomapid\Bigl( \frac{\perpcompZ_{t+1}^{2}}{\parcompZ_{t+1}^{2}}\Bigr) \biggr\rvert + 
	\frac{1+\sigma^{2}}{C(\oversamp)} \cdot \biggl\lvert \frac{\perpcompZ_{t+1}^{2}}{\parcompZ_{t+1}^{2}} -  \ratiomapid\Bigl( \frac{\perpcompX_{t}^{2}}{\parcompX_{t}^{2}}\Bigr) \biggr\rvert \\&\overset{\2}{\lesssim}
	\bigg( \frac{\perplogn}{\sqrt{n}} + \pardevn \bigg)(1+\sigma^{2})+ \frac{1+\sigma^{2}}{\oversamp} \cdot \Bigl( \oversamp \cdot \frac{\perplogn}{\sqrt{n}} + \sqrt{\oversamp} \cdot  \pardevn \Bigr) \\&\lesssim
	\frac{\perplogn(1+\sigma^{2})}{\sqrt{n}},
\end{align*}
where in step $\1$ we use $\ratiomapid(x) = \frac{1+\sigma^{2}}{C(\oversamp)}\cdot x+\frac{\sigma^{2}}{C(\oversamp)}$, step $\2$ follows from inequalities~\eqref{ineq1-lemma2-GD-linear} and~\eqref{ineq2-lemma2-GD-linear}.  This completes the proof. \qed

\subsubsection{Proof of Lemma~\ref{lemma2-good-region} for the one-bit observation model}
We begin by estimating $\perpcompZ_{t+1}/\parcompZ_{t+1}$, noting that by applying Lemma~\ref{help-lemma2-GD-linear} inequality~\eqref{ineq1-lemma2-GD-linear} continues to hold, so that
\begin{align}\label{ineq1-lemma2-GD-nonlinear}
	\biggl\lvert \Bigl( \frac{\perpcompZ_{t+1}}{\parcompZ_{t+1}} \Bigr)^{2} - \ratiomapbit\Bigl( \frac{\perpcompX_{t}^{2}}{\parcompX_{t}^{2}}\Bigr) \biggr \rvert \lesssim \oversamp \cdot \frac{\perplogn}{\sqrt{n}} + \sqrt{\oversamp} \cdot \pardevn.
\end{align}
Applying Lemma~\ref{lem:h-sgn-properties} yields the upper bound 
\[
\ratiomapbit\left( \frac{\perpcompX_{t}^{2}}{\parcompX_{t}^{2}}\right) \leq \max\left\{ \frac{\pi^{2}}{2} \cdot \frac{1}{\rho} \cdot \frac{\perpcompX_{t}^{2}}{\parcompX_{t}^{2}} + \frac{20}{C(\oversamp)}, \frac{50\pi^{2}}{\rho} \right\} \leq C,
\]
where in the last step we use $\frac{\perpcompX_{t}}{\parcompX_{t}} \leq 50\sqrt{\rho}$.
Combining the previous two displays, we obtain the inequality
\begin{align*}
	\Bigl( \frac{\perpcompZ_{t+1}}{\parcompZ_{t+1}} \Bigr)^{2} &\leq \ratiomapbit\Bigl( \frac{\perpcompX_{t}^{2}}{\parcompX_{t}^{2}}\Bigr) + C_{1}\oversamp \cdot \frac{\perplogn}{\sqrt{n}} \leq C,
\end{align*}
where in the last step we used $\oversamp \leq \sqrt{d}$.
With this estimate of $\perpcompZ_{t+1}/\parcompZ_{t+1}$ in hand, we turn to bounding $\perpcompX_{t+1}/\parcompX_{t+1}$.  Proceeding in a parallel manner to above, we obtain the inequality (cf. inequality~\eqref{ineq2-lemma2-GD-linear})
\begin{align}\label{ineq2-lemma2-GD-nonlinear}
	\biggl\lvert \Bigl( \frac{\perpcompX_{t+1}}{\parcompX_{t+1}} \Bigr)^{2} - \ratiomapbit\Bigl( \frac{\perpcompZ_{t+1}^{2}}{\parcompZ_{t+1}^{2}}\Bigr) \biggr \rvert \lesssim \frac{\perplogn (1+\sigma^{2})}{\sqrt{n}}.
\end{align}
We next note the upper bound
\begin{align*}
	\biggl \lvert \ratiomapbit\Bigl(\frac{\perpcompZ_{t+1}^{2}}{\parcompZ_{t+1}^{2}}\Bigr) - \ratiomapbit \circ \ratiomapbit \Bigl(\frac{\perpcompX_{t}^{2}}{\parcompX_{t}^{2}}\Bigr) \biggr \rvert \leq \max_{0 \leq \lambda \leq 1}\; \biggl \lvert \ratiomapbit'\Bigl( \lambda \cdot \frac{\perpcompZ_{t+1}^{2}}{\parcompZ_{t+1}^{2}} + (1 - \lambda) \ratiomapbit\Bigl(\frac{\perpcompX_{t}^{2}}{\parcompX_{t}^{2}}\Bigr) \Bigr) \biggr \rvert \cdot \biggl \lvert \frac{\perpcompZ_{t+1}^{2}}{\parcompZ_{t+1}^{2}} - \ratiomapbit\Bigl(\frac{\perpcompX_{t}^{2}}{\parcompX_{t}^{2}}\Bigr) \biggr \rvert.
\end{align*}
Applying Lemma~\ref{f-h-properties-nonlinear}(a) in conjunction with the bounds $\perpcompZ_{t+1}^2/\parcompZ_{t+1}^2 \leq C$, $\ratiomapbit(\perpcompX_{t}^2/\parcompX_{t}^2) \leq C$, and $C(\oversamp) \asymp \oversamp$ yields the bound 
\[
\max_{0 \leq \lambda \leq 1}\; \biggl \lvert \ratiomapbit'\Bigl( \lambda \cdot \frac{\perpcompZ_{t+1}^{2}}{\parcompZ_{t+1}^{2}} + (1 - \lambda) \ratiomapbit\Bigl(\frac{\perpcompX_{t}^{2}}{\parcompX_{t}^{2}}\Bigr) \Bigr) \biggr \rvert \lesssim \frac{1+\sigma^{2}}{\oversamp},
\]
so that 
\begin{align}\label{ineq:final-local-nonlinear}
	\biggl \lvert \ratiomapbit\Bigl(\frac{\perpcompZ_{t+1}^{2}}{\parcompZ_{t+1}^{2}}\Bigr) - \ratiomapbit \circ \ratiomapbit \Bigl(\frac{\perpcompX_{t}^{2}}{\parcompX_{t}^{2}}\Bigr) \biggr \rvert \lesssim \frac{1+\sigma^{2}}{\oversamp} \cdot \biggl \lvert \frac{\perpcompZ_{t+1}^{2}}{\parcompZ_{t+1}^{2}} - \ratiomapbit\Bigl(\frac{\perpcompX_{t}^{2}}{\parcompX_{t}^{2}}\Bigr) \biggr \rvert.
\end{align}
To conclude, we decompose and apply the triangle inequality to obtain the inequality 
\begin{align*}
	\biggl\lvert \Bigl( \frac{\perpcompX_{t+1}}{\parcompX_{t+1}} \Bigr)^{2} - \ratiomapbit \circ \ratiomapbit\Bigl( \frac{\perpcompX_{t}^{2}}{\parcompX_{t}^{2}}\Bigr) \biggr\rvert &\leq
	\biggl\lvert \Bigl( \frac{\perpcompX_{t+1}}{\parcompX_{t+1}} \Bigr)^{2} - \ratiomapbit\Bigl( \frac{\perpcompZ_{t+1}^{2}}{\parcompZ_{t+1}^{2}}\Bigr) \biggr\rvert + 
	\biggl\lvert \ratiomapbit\Bigl( \frac{\perpcompZ_{t+1}^{2}}{\parcompZ_{t+1}^{2}}\Bigr) - \ratiomapbit \circ \ratiomapbit\Bigl( \frac{\perpcompX_{t}^{2}}{\parcompX_{t}^{2}}\Bigr) \biggr\rvert.
\end{align*}
We conclude by applying the inequalities~\eqref{ineq1-lemma2-GD-nonlinear},~\eqref{ineq2-lemma2-GD-nonlinear}, and~\eqref{ineq:final-local-nonlinear} to the RHS in the display above to obtain the inequality
\[
\biggl\lvert \Bigl( \frac{\perpcompX_{t+1}}{\parcompX_{t+1}} \Bigr)^{2} - \ratiomapbit \circ \ratiomapbit\Bigl( \frac{\perpcompX_{t}^{2}}{\parcompX_{t}^{2}}\Bigr) \biggr\rvert \lesssim
\frac{\perplogn(1+\sigma^{2})}{\sqrt{n}},
\]
which concludes the proof.\qed

\subsubsection{Proof of Lemma~\ref{help-lemma2-GD-linear}}\label{sec:proof-help-lemma2-GD}
We separate the two cases, $\psi(x) = x$ and $\psi(x) = \sign(x)$, proving each part in turn.  

\paragraph{Proof of Lemma~\ref{help-lemma2-GD-linear} with $\psi(x) = x$}
Applying the condition of the lemma and re-arranging yields
\begin{align*}
	\parcompX' \geq 
	\frac{\parcompX}{\parcompX^{2}+\perpcompX^{2}} - \frac{C_{1}(\parsigma)}{\sqrt{\parcompX^{2}+\perpcompX^{2}}} \cdot \pardevn &=
	\frac{1}{\sqrt{\parcompX^{2}+\perpcompX^{2}}}\cdot \biggl( \frac{1}{\sqrt{1+\perpcompX^{2}/\parcompX^{2}}} - C_{1}(1+\sigma)\pardevn \biggr).
\end{align*}
Continuing, using the assumption $\perpcompX/\parcompX \lesssim \sqrt{\oversamp/(1+\sigma^{2})}$, $\oversamp \geq C(1+\sigma^{2})$ and $\log(n)/d^{1/2} \leq c$, we obtain 
\begin{align*}
	\frac{1}{\sqrt{1 + \perpcompX^2/\parcompX^2}} \gtrsim \sqrt{\frac{1+\sigma^{2}}{\oversamp}} \text{ and }(1+\sigma)\pardevn \leq \sqrt{\frac{1+\sigma^{2}}{\oversamp}} \cdot c.
\end{align*} 
Putting the two pieces together and letting $c$ small enough yields
\[
	\parcompX' \geq \frac{1}{\sqrt{\parcompX^{2}+\perpcompX^{2}}}\cdot\frac{0.5}{\sqrt{1+\perpcompX^{2}/\parcompX^{2}}}.
\]
Decomposing and applying the triangle inequality, we obtain
\begin{align}\label{ineq1-help-lemma2-GD-linear}
	\biggl \lvert \Bigl(\frac{\perpcompX'}{\parcompX'} \Bigr)^{2} - \Bigl(\frac{\perpcompdetX}{\parcompdetX} \Bigr)^{2} \biggr \rvert &\leq \frac{\left| (\perpcompX')^{2} - (\perpcompdetX)^{2}\right|}{(\parcompX')^{2}} + 
	\biggl\lvert \Bigl( \frac{\perpcompdetX}{\parcompX'} \Bigr)^{2} -\Bigl( \frac{\perpcompdetX}{\parcompdetX}\Bigr)^{2} \biggr\rvert \nonumber\\
	&= \frac{\left| (\perpcompX')^{2} - (\perpcompdetX)^{2}\right|}{(\parcompX')^{2}} + \Bigl(\frac{\perpcompdetX}{\parcompdetX}\Bigr)^2 \cdot \frac{\bigl \lvert (\parcompX')^2 - (\parcompdetX)^2\bigr \rvert}{(\parcompX')^2} \nonumber\\
	&\leq \frac{\left| (\perpcompX')^{2} - (\perpcompdetX)^{2}\right|}{(\parcompX')^{2}} + \Bigl(\frac{\perpcompdetX}{\parcompdetX}\Bigr)^2 \cdot \biggl[\frac{\bigl (\parcompX' - \parcompdetX\bigr)^2}{(\parcompX')^2} + \frac{2\bigl \lvert \parcompX' - \parcompdetX\bigr \rvert}{\parcompX'}\biggr]
\end{align}
Note that 
\[
\Bigl( \frac{\perpcompdetX}{\parcompdetX} \Bigr)^2 = \ratiomapid\Bigl(\frac{\perpcompX^2}{\parcompX^2}\Bigr) = \frac{1 + \sigma^2}{C(\oversamp)} \frac{\perpcompX^2}{\parcompX^2} + \frac{\sigma^2}{C(\oversamp)} \lesssim  1,
\]
where the final inequality follows from the assumption $\perpcompX/\parcompX \lesssim \sqrt{\oversamp/(1+\sigma^{2})}$ and $C(\oversamp) \asymp \oversamp \geq (1+\sigma^{2})$.  Applying the assumption in conjunction with the lower bound on $\parcompX'$ yields the pair of inequalities
\[
\frac{\lvert (\perpcompX')^2 - (\perpcompdetX)^2 \rvert}{(\parcompX')^2} \lesssim \Bigl(1 \vee \frac{\perpcompX^2}{\parcompX^2} \Bigr) \cdot \frac{\perplogn(1+\sigma^{2})}{\sqrt{n}} \qquad \text{ and } \qquad \frac{\lvert \parcompX' - \parcompdetX \rvert}{\parcompX'} \lesssim \Bigl(1 \vee \frac{\perpcompX}{\parcompX} \Bigr)(1+\sigma) \pardevn.
\]
Putting the pieces together yields the inequality 
\[
\biggl \lvert \Bigl( \frac{\perpcompX'}{\parcompX'} \Bigr)^{2} -\Bigl( \frac{\perpcompdetX}{\parcompdetX}\Bigr)^{2} \biggr\rvert \lesssim \Bigl(1 \vee \frac{\perpcompX^2}{\parcompX^2} \Bigr) \cdot \frac{\perplogn(1+\sigma^{2})}{\sqrt{n}} +  \Bigl(1 \vee \frac{\perpcompX}{\parcompX} \Bigr)\cdot (1+\sigma)\pardevn.
\]
The result follows since $(\perpcompdetX)^2/(\parcompdetX)^2 = \ratiomapid(\perpcompX^2/\parcompX^2)$. \qed

\paragraph{Proof of Lemma~\ref{help-lemma2-GD-linear} with $\psi(x) = \sign(x)$}
The architecture of the proof is nearly identical to the previous paragraph, so we restrict ourselves to the differences.  First, applying part (b) of Lemma~\ref{f-h-properties-nonlinear}, we lower bound $\parcompX'$ as
\begin{align*}
	\parcompX' \geq \frac{1}{\pi \sqrt{2}} \cdot \frac{ (\parcompX/\perpcompX)\wedge 1}{\sqrt{\parcompX^2 + \perpcompX^2}} - \frac{C_1 (1+\sigma)}{\sqrt{\parcompX^2 + \perpcompX^2}} \pardevn \geq \frac{1}{\sqrt{\parcompX^2 + \perpcompX^2}} \cdot \frac{(\parcompX/\perpcompX) \wedge 1}{2\pi},
\end{align*}
where in the final inequality we have used the assumption $\perpcompX/\parcompX \lesssim \sqrt{\oversamp/(1+\sigma^{2})}$, $\oversamp \geq C(1+\sigma^{2})$ and $\log(n)/d^{1/2} \leq c$ and $c$ being a small enough constant, whence 
\[
	(\parcompX/\perpcompX)\wedge 1 \gtrsim \sqrt{\frac{1+\sigma^{2}}{\oversamp}} \text{ and }
	(1+\sigma)\pardevn \leq \sqrt{ \frac{1+\sigma^{2}}{\oversamp}} \cdot c \leq \frac{(\parcompX/\perpcompX)\wedge 1}{100C_{1}}.
\] 
We then note the inequality~\eqref{ineq1-help-lemma2-GD-linear}, which continues to hold, and apply Lemma~\ref{lem:h-sgn-properties} to obtain the bound 
\[
\Bigl(\frac{\perpcompdetX}{\parcompdetX}\Bigr)^{2} = \ratiomapbit\Bigl(\frac{\perpcompX^{2}}{\parcompX^{2}}\Bigr) \leq \max\left\{ \frac{\pi^{2}}{2} \cdot \frac{1}{\rho} \cdot \frac{\perpcompX^{2}}{\parcompX^{2}} + \frac{20}{C(\oversamp)}, \frac{50\pi^{2}}{\rho} \right\} \leq C,
\]
where $\rho = C(\oversamp)/(1+\sigma^{2})$ in the last step we use $\perpcompX/\parcompX \lesssim \sqrt{\oversamp/(1+\sigma^{2})} \asymp \sqrt{\rho}$.  The conclusion follows using identical steps to the proof of Lemma~\ref{help-lemma2-GD-linear}, with $\psi(x) = x$. \qed

\subsection{Proof of Lemma~\ref{lem:h-sgn-properties}}
This section is dedicated to various properties of the function $\ratiomapbit$.  We first state two lemmas before providing their proofs as well as the proof of Lemma~\ref{lem:h-sgn-properties}.  We first define the functions $h_1: \mathbb{R} \rightarrow \mathbb{R}$ and $h_2: \mathbb{R} \rightarrow \mathbb{R}$ as
\begin{align}\label{eq:def-h1-h2}
	h_1(x) = \oversamp \EE \biggl\{ \frac{\lvert W \rvert \phi(x \lvert W \rvert)}{C(\oversamp) + W^2}\biggr\} \qquad \text{ and } \qquad h_2(x) = C_3(\oversamp)^{-1}\EE\biggl\{\frac{\lvert W \rvert^3 \phi(x\lvert W \rvert)}{(C(\oversamp) + W^2)^2}\biggr\},
\end{align}
where $W \sim \mathsf{N}(0, 1)$, $\phi(x) = \int_{0}^{x} e^{-t^2/2}\mathrm{d}t$, and we recall $C_3(\oversamp)$~\eqref{eq:C2-C3}.  We will make use of the following technical lemma, whose proof we provide in Section~\ref{sec:proof-lemma1-proof-f-h-main-properties}.
\begin{lemma}\label{lemma1-proof-f-h-main-properties} 
	Consider the functions $h_1$ and $h_2$~\eqref{eq:def-h1-h2}.  The following hold. 
	\begin{itemize}
		\item[(a)] There is a universal positive constant $C$ such that for $x > 0$, 
		\[
		\frac{x}{(1 + x^2)^{1.5}} \leq h_1(x) \leq x \vee 5, \qquad \text{ and } \qquad h_2(x) \leq x \vee C.  
		\]
		\item[(b)] For $x > 0$, the derivatives satisfy
		\[
		\frac{1}{(1 + x^2)^{1.5}} \leq h_1'(x) \leq \frac{5}{(1 + x^2)^{1.5}} \qquad \text{ and } \qquad 0 \leq h_2'(x) \leq \frac{30}{(1 + x^2)^{2.5}}.
		\]
	\end{itemize}
\end{lemma}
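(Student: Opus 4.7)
The plan is to establish part (b) first via a change of variables in the underlying Gaussian integral, and then deduce part (a) by integrating the derivative bound (for the lower bounds) and by invoking $\phi(u) \leq u$ together with the defining equation~\eqref{definition-of-C} (for the upper bounds). The key technical step is the identification $h_1'(x) = \Lambda \, \EE \{W^2 e^{-x^2 W^2 / 2} / (C(\Lambda) + W^2) \}$ and $h_2'(x) = C_3(\Lambda)^{-1} \, \EE \{ W^4 e^{-x^2 W^2 / 2} / (C(\Lambda) + W^2)^2 \}$, obtained by differentiating $\phi(x|W|)$ under the expectation. The exponential $e^{-x^2 W^2 / 2}$ can then be absorbed into the density of $W$ by setting $V = W \sqrt{1+x^2}$, yielding
\begin{align*}
h_1'(x) &= \frac{\Lambda}{\sqrt{1+x^2}} \, \EE\Bigl\{ \frac{V^2}{(1+x^2) C(\Lambda) + V^2} \Bigr\}, \\
h_2'(x) &= \frac{1}{C_3(\Lambda) (1+x^2)^{5/2}} \, \EE\Bigl\{ \frac{V^4}{(C(\Lambda) + V^2 / (1+x^2))^2} \Bigr\},
\end{align*}
with $V \sim \mathsf{N}(0,1)$. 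These rescaled forms make the $(1+x^2)^{-k/2}$ scaling in part (b) manifest.

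For the lower bound on $h_1'$, I would use $(1+x^2) C(\Lambda) + V^2 \leq (1+x^2)(C(\Lambda) + V^2)$ together with the defining equation $\EE \{V^2 / (C(\Lambda) + V^2)\} = 1/\Lambda$ to obtain $h_1'(x) \geq (1+x^2)^{-3/2}$. For the upper bound, drop $V^2$ from the denominator to get $h_1'(x) \leq \Lambda / (C(\Lambda) (1+x^2)^{3/2})$, and use $\Lambda / C(\Lambda) \leq 10/3$ from Lemma~\ref{lemma:C(lambda)-lambda} to conclude. The lower bound $h_2'(x) \geq 0$ is immediate since every factor in the rescaled expression is non-negative.

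The main obstacle is the upper bound on $h_2'$, which requires two ingredients. First, dropping $V^2 / (1+x^2)$ from the denominator gives $\EE\{V^4 / (C(\Lambda) + V^2/(1+x^2))^2\} \leq 3/C(\Lambda)^2$. Second---and this is the delicate point---one must establish a matching lower bound $C_3(\Lambda) \gtrsim 1/C(\Lambda)^2$. I would prove this by restricting the expectation defining $C_3(\Lambda)$ to the event $\{|W| \leq 1\}$: on this event, $(C(\Lambda) + W^2)^2 \leq (C(\Lambda) + 1)^2$, yielding $C_3(\Lambda) \geq \EE\{W^4 \mathbbm{1}\{|W| \leq 1\}\} / (C(\Lambda) + 1)^2$, which for $C(\Lambda) \geq 1$ (guaranteed by Lemma~\ref{lemma:C(lambda)-lambda} provided $\Lambda$ exceeds a universal constant) gives $C_3(\Lambda) C(\Lambda)^2 \geq c$ for some universal positive $c$. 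Combining the two pieces delivers the bound $h_2'(x) \leq 30 / (1+x^2)^{5/2}$ after choosing constants.

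For part (a), the upper bounds follow effortlessly from the pointwise inequality $\phi(u) \leq u$ applied to $\phi(x|W|) \leq x |W|$, which combined with the defining equation for $C(\Lambda)$ (for $h_1$) and the definition of $C_3(\Lambda)$ (for $h_2$) yields both $h_1(x) \leq x$ and $h_2(x) \leq x$, implying the stated bounds $h_1(x) \leq x \vee 5$ and $h_2(x) \leq x \vee C$. The lower bound on $h_1$ is obtained by integrating the derivative bound established in part (b): since $h_1(0) = 0$,
\[
h_1(x) = \int_0^x h_1'(t) \, dt \geq \int_0^x \frac{dt}{(1+t^2)^{3/2}} = \frac{x}{\sqrt{1+x^2}} \geq \frac{x}{(1+x^2)^{3/2}},
\]
completing the proof.
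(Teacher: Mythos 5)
Your proposal mirrors the paper's proof almost step for step: same differentiation under the expectation, same Gaussian rescaling (the paper writes the rescaled expectations directly rather than introducing a new variable $V$, but it is the identical manipulation), same integration of the derivative bound for the lower bound in part (a), and the $\phi(u)\leq u$ observation for the upper bounds. The lower bounds on $h_1'$ and $h_2'$, and the upper bound on $h_1'$, are all essentially what appears in the paper, up to notational choices (the paper massages the ratio of expectations directly; you decouple via Lemma~\ref{lemma:C(lambda)-lambda}, which is fine).

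The one place your proof is materially weaker than the paper's is in lower-bounding $C_3(\oversamp)$ for the $h_2'$ upper bound. Your truncation to $\{|W|\leq 1\}$ gives $C_3(\oversamp) C(\oversamp)^2 \gtrsim \EE\{W^4 \ind{|W|\leq 1}\}/4 \approx 0.03$, and since you need $3/\bigl(C_3(\oversamp)C(\oversamp)^2\bigr)\leq 30$, i.e., $C_3(\oversamp)C(\oversamp)^2\geq 0.1$, this falls short; the constant you'd actually obtain is on the order of $100$, not $30$, so ``after choosing constants'' does not close the gap with this particular truncation. The fix is to skip the truncation entirely and use the same algebraic bound that you already used for $h_1'$: for $C(\oversamp)\geq 1$, $C(\oversamp)+W^2 \leq C(\oversamp)(1+W^2)$, hence
\[
C_3(\oversamp) = \EE\Bigl\{\frac{W^4}{(C(\oversamp)+W^2)^2}\Bigr\} \geq \frac{1}{C(\oversamp)^2}\EE\Bigl\{\frac{W^4}{(1+W^2)^2}\Bigr\},
\]
so $C_3(\oversamp)C(\oversamp)^2 \geq \EE\{W^4/(1+W^2)^2\}\approx 0.22$, and $3/0.22 < 30$. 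This is exactly the expectation that appears in the paper's denominator, so with this one substitution the two proofs coincide.
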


The next lemma bounds the derivative of $\ratiomapbit$ as well as lower bounds the parallel component of the update.  We provide its proof in Section~\ref{sec:proof-f-h-properties-nonlinear}.  
\begin{lemma}\label{f-h-properties-nonlinear}
	Consider the functions $\parmapbit$~\eqref{eq:parmapbit}, $\perpmapbit$~\eqref{eq:perpmapbit}, and $\ratiomapbit$~\eqref{eq:ratiomaps}.  The following hold.
	\begin{itemize}
		\item[(a)] There exists a universal positive constant $C_1$ such that
		\[
		\bigl \lvert \ratiomapbit'(x) \bigr \rvert \leq C_1 \cdot \frac{(1 + x^{3/2})(1+\sigma^{2})}{C(\oversamp)} \qquad \text{ as long as } \qquad x > 0.
		\]
		\item[(b)] The function $\parmapbit$ is lower bounded as 
		\[
		\parmapbit(\parcompX,\perpcompX) \geq \frac{1}{\sqrt{2}\pi} \cdot \frac{\min\{\frac{\parcompX}{\perpcompX},1\}}{(\parcompX^{2} + \perpcompX^{2})^{1/2}},\qquad \text{ as long as } \qquad \frac{\parcompX}{\perpcompX} \geq 0.
		\]
	\end{itemize}
\end{lemma}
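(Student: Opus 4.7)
For part (b), I would first note that with the shorthand $h_1(y) = \oversamp \EE\{|W|\phi(y|W|)/(C(\oversamp)+W^2)\}$ from Eq.~\eqref{eq:def-h1-h2}, the map $\parmapbit$ admits the compact form $\parmapbit(\parcompX, \perpcompX) = \frac{2 h_1(r)}{\pi (\parcompX^2+\perpcompX^2)^{1/2}}$ with $r = \parcompX/\perpcompX$. The stated inequality then reduces to $h_1(r) \geq \min(r, 1)/(2\sqrt{2})$, which I would verify in two regimes. For $r \leq 1$, the lower bound $h_1(r) \geq r/(1+r^2)^{3/2}$ from Lemma~\ref{lemma1-proof-f-h-main-properties}(a) together with $(1+r^2)^{3/2} \leq 2\sqrt{2}$ closes the case. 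For $r > 1$, monotonicity of $h_1$ (which $h_1$ inherits from the monotonicity of $y \mapsto \phi(y|W|)$ via dominated convergence) combined with the previous case applied at $r=1$ gives $h_1(r) \geq h_1(1) \geq 1/(2\sqrt{2})$.

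For part (a), the plan is to perform the change of variables $y = 1/\sqrt{x}$, which expresses $\ratiomapbit$ in the compact form
\[
\ratiomapbit(x) = \frac{\pi^2(1+\sigma^2)}{4 C(\oversamp) h_1(y)^2} + \frac{C_3(\oversamp)}{C(\oversamp) C_2(\oversamp)}\left(1 - \frac{2 h_2(y)}{h_1(y)}\right),
\]
and then to differentiate via the chain rule, noting $dy/dx = -y^3/2$. The first resulting term is proportional to $h_1'(y) \, y^3/h_1(y)^3$. Here I would combine the upper bound $h_1'(y) \leq 5/(1+y^2)^{3/2}$ from Lemma~\ref{lemma1-proof-f-h-main-properties}(b) with the \emph{integrated} lower bound $h_1(y) \geq y/(1+y^2)^{1/2}$---obtained by integrating $h_1'(t) \geq (1+t^2)^{-3/2}$ over $[0, y]$ using $h_1(0) = 0$---to deduce $h_1'(y) y^3 / h_1(y)^3 \leq 5$, so this contribution is at most a universal constant times $1/C(\oversamp)$.

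The main obstacle is the second term, involving $(h_2'(y) h_1(y) - h_2(y) h_1'(y))/h_1(y)^2$, because direct four-factor pointwise estimates produce a spurious $1/y$ blowup near zero that would violate the target estimate. I would handle this by splitting on whether $y \geq 1$ (equivalently $x \leq 1$) or $y \leq 1$ (equivalently $x \geq 1$). In the first regime, the bounds from Lemma~\ref{lemma1-proof-f-h-main-properties} already yield this ratio $\lesssim 1/y^3$, so multiplying by $y^3 = 1/x^{3/2}$ yields an $O(1/C(\oversamp))$ contribution. In the second regime, I would exploit that $h_1$ and $h_2$ are odd, smooth functions of $y$ with Taylor expansions $h_1(y), h_2(y) = y + O(y^3)$ near zero---verifiable directly from the integral representations via dominated convergence applied to $\phi(y|W|) = y|W| + O(y^3 |W|^3)$---so that $h_2(y)/h_1(y) = 1 + O(y^2)$ and hence $\tfrac{d}{dy}(h_2/h_1) = O(y)$. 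Multiplying by $1/x^{3/2} = y^3$ gives an $O(y^4) = O(1/x^2)$ contribution, which is absorbed into $(1+x^{3/2})/C(\oversamp)$. Piecing the two regimes together and using the uniform bound $C_3(\oversamp)/C_2(\oversamp) \leq 20$ (noted inside the proof of Lemma~\ref{lem:initialization}(a)) completes the argument.
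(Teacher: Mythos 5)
Your proof of part (b) matches the paper's exactly: write $\parmapbit = \frac{2}{\pi}(\parcompX^2+\perpcompX^2)^{-1/2} h_1(r)$, then apply Lemma~\ref{lemma1-proof-f-h-main-properties}(a) when $r\leq 1$ and monotonicity of $h_1$ when $r > 1$.

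Your proof of part (a) starts from the same compact representation of $\ratiomapbit$ in terms of $h_1, h_2$ and leans on the same Lemma~\ref{lemma1-proof-f-h-main-properties}, but the details differ from the paper's in two productive ways. First, for the $1/h_1(y)^2$ contribution you observe that integrating $h_1'(t)\geq (1+t^2)^{-3/2}$ exactly gives $h_1(y)\geq y/\sqrt{1+y^2}$, which is strictly stronger than the $y/(1+y^2)^{3/2}$ stated in Lemma~\ref{lemma1-proof-f-h-main-properties}(a). Paired with the upper bound on $h_1'$, this yields the \emph{uniform} estimate $h_1'(y)y^3/h_1(y)^3 \leq 5$ with no case split at all, whereas the paper splits at $x=100$ and, in the large-$x$ regime, pays an $x^{3/2}$ factor for the first term. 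Second, you split the $h_2/h_1$-contribution at $x=1$ and appeal to odd/smooth Taylor structure near $y=0$ rather than the paper's coarser pointwise estimates plus a $x=100$ split. Both routes reach the target, but yours tracks the first term more tightly.

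One small overcomplication worth flagging: the ``spurious $1/y$ blowup'' you worry about in the second term is not actually a problem once the Jacobian factor $y^3/2$ is in place, so the Taylor expansion machinery (and the implicit need to control the Taylor remainder of $h_2/h_1$ uniformly on $(0,1]$, which you do not spell out) is avoidable. The paper's own proof of Lemma~\ref{lemma1-proof-f-h-main-properties} records that $h_1'(y), h_2'(y) \leq 1$ (since each is decreasing in $y$ with value $1$ at the origin), and together with $h_1(0)=h_2(0)=0$ this gives $h_2(y)\leq y$. Combining $h_2'(y)\leq 1$, $h_2(y)\leq y$, $h_1'(y)\leq 5$, and $h_1(y)\geq y/\sqrt{2}$ for $y\leq 1$ yields
\[
\frac{y^3}{2}\cdot\frac{h_2'(y)h_1(y) + h_2(y)h_1'(y)}{h_1(y)^2} \leq \frac{y^3}{2}\cdot\frac{y + 5y}{y^2/2} = 6y^2 \leq 6,
\]
which already closes the regime $y\leq 1$ without any expansion. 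Your plan is correct but can be streamlined here.
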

\subsubsection{Proof of Lemma~\ref{lemma1-proof-f-h-main-properties}}\label{sec:proof-lemma1-proof-f-h-main-properties}
Taking derivatives yields
\[
h_1'(x) = \oversamp \EE\biggl\{\frac{W^2 e^{-x^2 W^2/2}}{C(\oversamp + W^2)}\biggr\}, \qquad \text{ and } \qquad h_2'(x) = C_3(\oversamp)^{-1}\EE \biggl\{\frac{W^4 e^{-x^2W^2/2}}{(C(\oversamp) + W^2)^2}\biggr\}.
\]
Note that since $e^{-x^2W^2/2} \leq 1$, both $h_1'(x) \leq 1$ and $h_2'(x) \leq 1$.  Towards lower bounding the derivatives, note that 
\begin{align*}
	\EE_{W\sim\mathcal{N}(0,1)}\left\{ \frac{W^{2} e^{-x^{2}W^{2}/2}}{C(\oversamp) + W^{2}} \right\} = \frac{ \EE_{W\sim\mathcal{N}(0,\frac{1}{1+x^{2}})}\left\{ \frac{W^{2}}{C(\oversamp) + W^{2}} \right\} }{\sqrt{1+x^{2}}}  = \frac{ \EE_{W\sim\mathcal{N}(0,1)}\left\{ \frac{W^{2}}{ C(\oversamp) + \frac{W^{2}}{1+x^{2}}} \right\} }{(1+x^{2})^{1.5}}.
\end{align*}
Similarly, 
\[
\EE\left\{ \frac{W^{4}  e^{-x^{2}W^{2}/2}}{(C(\oversamp) + W^{2})^{2}} \right\} = (1+x^{2})^{-2.5}\EE \left\{ \frac{W^{4}}{ (C(\oversamp) + W^{2}/(1+x^{2}) )^{2}} \right\},
\]
whence the derivatives admit the equivalent representations
\begin{align}\label{h1'-h2'}
	h_{1}'(x) = \frac{ \EE\bigl\{ \frac{W^{2}}{ C(\oversamp) + W^{2}/(1+x^{2}) } \bigr\} }{(1+x^{2})^{1.5} \cdot \EE\bigl\{ \frac{W^{2}}{C(\oversamp) + W^{2}}\bigr\} } \qquad \text{ and }\qquad h_{2}'(x) = \frac{ \EE\bigl\{ \frac{W^{4}}{ ( C(\oversamp) + W^{2}/(1+x^{2}))^{2} } \bigr\} }{(1+x^{2})^{2.5} \cdot \EE\bigl\{ \frac{W^{4}}{(C(\oversamp) + W^{2} )^{2}}\bigr\} }.
\end{align}
We thus deduce that $h_1'(x) \geq (1 + x^2)^{-1.5}$ for positive $x$.  Applying the upper and lower bounds on $h_1'(x)$ in conjunction with the fact that $h_1(0) = 0$, we obtain the sandwich relation
\[
\frac{x}{(1 + x^2)^{1.5}} \leq h_1(x) = \int_{0}^{x}h_1'(t) \mathrm{d}t \leq x.
\]
In a similar manner, since $h_2'(x) \leq 1$ and $h_2(0) = 0$, we deduce that $h_2(x) \leq x$ for positive $x$.  Now, using the inequality $\int_{0}^{x \lvert W \rvert} e^{-t^2/2} \mathrm{d}t \leq \sqrt{\pi/2}$ for positive $x$, we obtain the upper bounds
\begin{align*}
	h_1(x) &\leq \oversamp \sqrt{\pi/2} \EE\left\{ \frac{|W|}{C(\oversamp) + W^{2}} \right\} \leq \sqrt{\pi/2} \cdot \EE\{|W|\}\cdot \EE\left\{ \frac{W^{2}}{1 + W^{2}}\right\}^{-1} \leq 5, \qquad \text{ and }\\
	h_2(x) &\leq C_3(\oversamp)^{-1} \sqrt{\pi/2} \EE\left\{ \frac{|W|^{3}}{(C(\oversamp) + W^{2})^{2}} \right\}\leq \sqrt{\pi/2} \cdot \EE\{|W|^{3}\}\cdot \EE\left\{ \frac{W^{4}}{(1 + W^{2})^{2}}\right\}^{-1} \leq C. 
\end{align*}
Finally, we proceed from the relations~\eqref{h1'-h2'} to obtain the upper bounds
\begin{align*}
	h_1'(x) &\leq \frac{ \EE\left\{ W^{2} \right\} }{(1+x^{2})^{1.5} \cdot \EE\bigl\{ \frac{W^{2}}{1 + W^{2}}\bigr\} } \leq \frac{5}{(1+x^{2})^{1.5}} \qquad \text{ and }\\
	h_2'(x) &\leq \frac{ \EE\left\{ W^{4} \right\} }{(1+x^{2})^{2.5} \cdot \EE\bigl\{ \frac{W^{4}}{(1 + W^{2} )^{2}}\bigr\} } \leq \frac{30}{(1+x^{2})^{2.5}},
\end{align*}
which completes the proof. \qed

\subsubsection{Proof of Lemma~\ref{f-h-properties-nonlinear}}\label{sec:proof-f-h-properties-nonlinear}
We prove each part in turn, starting with part (a). 
\paragraph{Proof of Lemma~\ref{f-h-properties-nonlinear}(a)}
Using the functions $h_1$ and $h_2$~\eqref{eq:def-h1-h2}, we write
\begin{align}\label{eq1-proof-f-h-properties-nonlinear}
	\ratiomapbit(x) = \frac{\pi^{2}}{4} \cdot \frac{1+\sigma^{2}}{C(\oversamp)} \cdot \frac{1}{h_{1}(1/\sqrt{x})^{2}} + \frac{1}{C(\oversamp)} \cdot \frac{ C_3(\oversamp) }{ C_2(\oversamp) } \cdot \left( 1 - 2\cdot \frac{h_{2}(1/\sqrt{x})}{h_{1}(1/\sqrt{x})} \right).
\end{align}
Since 
\[
\frac{C_3(\oversamp)}{C_2(\oversamp)} \leq \frac{\EE\{W^{4}\} }{ \EE\left\{ \frac{W^{2}}{ (1+W^{2})^{2}} \right\} } \leq 20,
\]
Computing the derivative and applying the triangle inequality in conjunction with the inequality $C_3(\oversamp)/C_2(\oversamp)  \leq 20$ yields the upper bound
\begin{align}\label{ineq1-h'-onebit-property-proof}
	|\ratiomapbit'(x)| \leq &\frac{\pi^{2}}{4} \cdot \frac{1+\sigma^{2}}{C(\oversamp)} \cdot \frac{|h_{1}'(1/\sqrt{x}) \cdot x^{-1.5}|}{h_{1}(1/\sqrt{x})^{3}} \nonumber\\
	&\qquad \qquad \qquad + \frac{20}{C(\oversamp)} \cdot \Bigl( \frac{|h_{2}'(1/\sqrt{x}) \cdot x^{-1.5}|}{h_{1}(1/\sqrt{x})} +  \frac{h_{2}(1/\sqrt{x})|h_{1}'(1/\sqrt{x}) \cdot x^{-1.5}|}{h_{1}(1/\sqrt{x})^{2}} \Bigr).
\end{align}
Applying Lemma~\ref{lemma1-proof-f-h-main-properties} yields 
\begin{align}\label{ineq2-h'-onebit-property-proof}
	(i.)\;\; |h_{1}'(1/\sqrt{x}) &\cdot x^{-1.5}| \leq \frac{5x^{-1.5}}{(1+1/x)^{1.5}} \leq 5, \qquad (ii.)\;\; h_2(x) \leq C \qquad \text{ and } \nonumber \\
	&(iii.)\;\; |h_{2}'(1/\sqrt{x}) \cdot x^{-1.5}| \leq \frac{30x^{-1.5}}{(1+1/x)^{2.5}} \leq 30x. 
\end{align}
We consider two cases: $x \leq 100$ and $x > 100$.

\bigskip
\noindent\underline{Case 1: $x \leq 100$.} Note that $h_{1}(x)$ is a monotonly increasing function, whence
\[
h_{1}(1/\sqrt{x}) \geq h_{1}(1/\sqrt{100}) \geq 0.1/1.01^{3/2} \qquad \text{ for } x \leq 100,
\]
Consequently, substituting the inequalities~\eqref{ineq2-h'-onebit-property-proof}, $h_{1}(1/\sqrt{x}) \geq 0.1/1.01^{3/2}$ and $h_{2}(x) \leq C$ into inequality~\eqref{ineq1-h'-onebit-property-proof}, we obtain that for $x\leq100$
\[
|\ratiomapbit'(x)| \leq \frac{\pi^{2}}{4} \cdot \frac{1+\sigma^{2}}{C(\oversamp)} \cdot \frac{5}{(0.1/1.01^{3/2})^{3}} + \frac{20}{C(\oversamp)} \cdot \left( \frac{30x}{0.1/1.01^{3/2}} +  \frac{5C}{(0.1/1.01^{3/2})^{2}} \right) \leq \frac{C(1+\sigma^{2})}{C(\oversamp)},
\]
where the last inequality follows since $x\leq100$ and $C$ a large enough constant.  This concludes the first case.

\bigskip
\noindent\underline{Case 2: $x > 100$.} By Lemma~\ref{lemma1-proof-f-h-main-properties}, $h_{1}(1/\sqrt{x}) \geq 1/(1.01^{1.5}\sqrt{x})$ for $x\geq 100$. Consequently, we obtain that for $x\geq 100$,
\[
|\ratiomapbit'(x)| \leq \frac{\pi^{2}}{4} \cdot \frac{1+\sigma^{2}}{C(\oversamp)} \cdot \frac{5}{(1/(1.01^{1.5}\sqrt{x}))^{3}} + \frac{20}{C(\oversamp)} \cdot \left( \frac{30x}{1/(1.01^{1.5}\sqrt{x})} +  \frac{5C}{(1/(1.01^{1.5}\sqrt{x}))^{2}} \right) \leq \frac{Cx^{1.5}(1+\sigma^{2})}{C(\oversamp)},
\]
which concludes the second case. 

Combining the two cases yields part (a), so we turn now to part (b).

\paragraph{Proof of Lemma~\ref{f-h-properties-nonlinear}(b)} Use the function $h_1$ to write
\[
\parmapbit(\parcompX,\perpcompX) = \frac{2}{\pi} \cdot (\parcompX^{2} + \perpcompX^{2})^{-1/2} \cdot h_{1}(\parcompX/\perpcompX).
\]
If $\parcompX/\perpcompX \geq 1$,then applying the monotone increasing nature of  $h_{1}(x)$ in conjunction with Lemma~\ref{lemma1-proof-f-h-main-properties} yields the lower bound $h_{1}(\parcompX/\perpcompX) \geq h_{1}(1) \geq 2^{-1.5}$.  Consequently, 
\[
\parmapbit(\parcompX,\perpcompX) \geq \frac{1}{\sqrt{2}\pi} \cdot (\parcompX^{2} + \perpcompX^{2})^{-1/2}, \qquad \text{ if }\qquad \frac{\parcompX}{\perpcompX} \geq 1.
\]
Conversely, if $0 \leq \parcompX/\perpcompX \leq 1$, then by Lemma~\ref{lemma1-proof-f-h-main-properties}, 
\[
h_{1}(\parcompX/\perpcompX) \geq \frac{\parcompX/\perpcompX}{(1+\parcompX^{2}/\perpcompX^{2})^{1.5}} \geq \parcompX/\perpcompX \cdot 2^{-1.5}.
\]
Consequently in this case, 
\[
\parmapbit(\parcompX,\perpcompX) \geq \frac{1}{\sqrt{2}\pi} \cdot (\parcompX^{2} + \perpcompX^{2})^{-1/2} \cdot \frac{\parcompX}{\perpcompX}, \qquad \text{ if }\qquad 0\leq\frac{\parcompX}{\perpcompX} \leq 1.
\]
Combining the two cases yields the conclusion. \qed

\subsubsection{Proof of Lemma~\ref{lem:h-sgn-properties}}\label{sec:proof-lem-h-sgn-properties}
Using the functions $h_1$ and $h_2$~\eqref{eq:def-h1-h2}, we write
\begin{align*}
	\ratiomapbit(x) = \frac{\pi^{2}}{4} \cdot \frac{1+\sigma^{2}}{C(\oversamp)} \cdot \frac{1}{h_{1}(1/\sqrt{x})^{2}} + \frac{1}{C(\oversamp)} \cdot \frac{ C_3(\oversamp) }{ C_2(\oversamp) } \cdot \left( 1 - 2\cdot \frac{h_{2}(1/\sqrt{x})}{h_{1}(1/\sqrt{x})} \right).
\end{align*}
Applying Lemma~\ref{lemma1-proof-f-h-main-properties} yields the pair of inequalities (which hold for $x > 0$)
\[
\frac{1}{\sqrt{x}} \cdot \frac{1}{(1+1/x)^{1.5}} \leq h_{1}\Bigl(\frac{1}{\sqrt{x}}\Bigr) \leq  \frac{1}{\sqrt{x}} \vee 5 \qquad \text{ and }\qquad h_{2}\Bigl(\frac{1}{\sqrt{x}}\Bigr) \leq \frac{1}{\sqrt{x}} \vee C
\]
Consequently, we note the lower bound $h_{1}(1/\sqrt{x}) \geq \frac{1}{1.01^{1.5}\sqrt{x}}$ for $x\geq 100$.  Note additionally that
\[
\frac{C_3(\oversamp)}{C_2(\oversamp)} \leq \frac{\EE\{W^{4}\} }{ \EE\left\{ \frac{W^{2}}{ (1+W^{2})^{2}} \right\} } \leq 20.
\]
Thus, we deduce the upper bound
\[
\ratiomapbit(x) \leq \frac{\pi^{2}}{4} \cdot \frac{1+\sigma^{2}}{C(\oversamp)} \cdot 1.01^{3} \cdot x + \frac{20}{C(\oversamp)} \leq \frac{\pi^{2}}{2} \cdot \frac{1+\sigma^{2}}{C(\oversamp)} \cdot x + \frac{20(1+\sigma^{2})}{C(\oversamp)}, \qquad \text{ for } x \geq 100,
\]
as well as the lower bound
\begin{align*}
	\ratiomapbit(x) \geq  \frac{\pi^{2}}{4} \cdot \frac{1+\sigma^{2}}{C(\oversamp)} \cdot x + \frac{1}{C(\oversamp)} \cdot \frac{ C_3(\oversamp) }{ C_2(\oversamp)}\cdot (1-2\cdot1.01^{3/2}) &\geq \frac{\pi^{2}}{4} \cdot \frac{1+\sigma^{2}}{C(\oversamp)} \cdot x - \frac{25}{C(\oversamp)} \\&\overset{\1}{\geq} \frac{\pi^{2}}{8}\cdot \frac{1+\sigma^{2}}{C(\oversamp)} \cdot x + \frac{20(1+\sigma^{2})}{C(\oversamp)} \quad \text{ for } x \geq 100,
\end{align*}
where we note that step $\1$ follows since $x \geq 100$.  This proves part (a) and we turn our attention to part (b).  To this end,  note that $h_{1}(x)$ is a monotone increasing function. Thus,
\[
h_{1}(1/\sqrt{x}) \geq h_{1}(1/\sqrt{100}) \geq 0.1/1.01^{3/2} \qquad \text{ for } x \leq 100,
\]
where the final inequality follows upon applying Lemma~\ref{lemma1-proof-f-h-main-properties}.  Finally, we note that when $x \leq 100$,
\[
\ratiomapbit(x) \leq \frac{\pi^{2}}{4} \cdot \frac{1+\sigma^{2}}{C(\oversamp)} \cdot \frac{1}{h_{1}(1/\sqrt{x})^{2}} + \frac{20}{C(\oversamp)} \leq \frac{\pi^{2}}{4} \cdot \frac{1+\sigma^{2}}{C(\oversamp)} \cdot \frac{1.01^{3}}{0.01} + \frac{20}{C(\oversamp)} \leq 50 \pi^{2} \cdot \frac{1+\sigma^{2}}{C(\oversamp)}.
\]
We are left to prove the last part. Recall from equation~\eqref{eq:C2-C3} that
\[
	C_{2}(\oversamp) = \EE\left\{ \frac{W^{2}}{(C(\oversamp) + W^{2})^{2}}\right\} \qquad \text{ and } \qquad C_{3}(\oversamp) = \EE\left\{ \frac{W^{4}}{(C(\oversamp) + W^{2})^{2}} \right\}.
\]
Let $T = \frac{2}{\pi} \cdot \oversamp \cdot \EE\bigg\{ \frac{|W|\phi\big( \frac{|W|}{\sqrt{x}}\big)}{C(\oversamp)+W^{2}}\bigg\}$. Straightforward calculation yields that
\[
	\ratiomapbit(x) \cdot T^{2} = \frac{1+\sigma^{2}}{C(\oversamp)} + \frac{1}{C(\oversamp)C_{2}(\oversamp)} \cdot \bigg(C_{3}(\oversamp)T^{2} - \frac{4T}{\pi} \EE\bigg\{ \frac{W^{3} \phi\big( \frac{|W|}{\sqrt{x}}\big) }{(C(\oversamp)+W^{2})^{2}}\bigg\} \bigg).
\]
We next find the lower bound of the RHS of the equation in the display above. Minimizing the term in parenthesis in the above display in $T$ (which is a quadratic function in $T$) yields the lower bound
\begin{align*}
	C_{3}(\oversamp)T^{2} - \frac{4T}{\pi} \EE\bigg\{ \frac{W^{3} \phi\big( \frac{|W|}{\sqrt{x}}\big) }{(C(\oversamp)+W^{2})^{2}}\bigg\} \geq -\frac{4}{\pi^{2}} \cdot \frac{1}{C_{3}(\oversamp)} \cdot \EE\bigg\{ \frac{W^{3} \phi\big( \frac{|W|}{\sqrt{x}}\big) }{(C(\oversamp)+W^{2})^{2}}\bigg\}^{2}.
\end{align*}
Next, we apply the numeric inequality $\phi(x) = \int_{0}^{x} e^{-t^{2}/2} \mathrm{d}t \leq \sqrt{\frac{\pi}{2}}$ for $x\geq 0$ to obtain the inequality
\[
	\EE\bigg\{ \frac{W^{3} \phi\big( \frac{|W|}{\sqrt{x}}\big) }{(C(\oversamp)+W^{2})^{2}}\bigg\}^{2} \leq \frac{\pi}{2} \EE\bigg\{ \frac{|W|^{3}}{(C(\oversamp)+W^{2})^{2}}\bigg\}^{2} \overset{\1}{\leq} \frac{\pi}{2} \cdot \EE\bigg\{ \frac{W^{2}}{(C(\oversamp)+W^{2})^{2}}\bigg\} \cdot \EE\bigg\{ \frac{W^{4}}{(C(\oversamp)+W^{2})^{2}}\bigg\} = \frac{\pi}{2}C_{2}(\oversamp)C_{3}(\oversamp),
\]
where in the step $\1$ we applied the Cauchy--Schwarz inequality $\EE\{ab\}^{2} \leq \EE\{a^{2}\}\EE\{b^{2}\}$, taking $a = \frac{|W|}{C(\oversamp)+W^{2}}$ and $b = \frac{W^{2}}{C(\oversamp)+W^{2}}$. Putting the pieces together thus yields the inequality
\[
	\frac{1}{C(\oversamp)C_{2}(\oversamp)} \cdot \bigg(C_{3}(\oversamp)T^{2} - \frac{4T}{\pi} \EE\bigg\{ \frac{W^{3} \phi\big( \frac{|W|}{\sqrt{x}}\big) }{(C(\oversamp)+W^{2})^{2}}\bigg\} \bigg) \geq - \frac{2}{\pi} \cdot \frac{1}{C(\oversamp)}.
\]
Consequently, we obtain that
\[
		\ratiomapbit(x) \cdot T^{2} \geq \frac{1+\sigma^{2}}{C(\oversamp)} - \frac{2}{\pi} \cdot \frac{1}{C(\oversamp)} \geq (1-2/\pi) \cdot \frac{1+\sigma^{2}}{C(\oversamp)}.
\]
We next obtain an upper bound of $T$. Once again using the numeric inequality $\phi(x) \leq \sqrt{\frac{\pi}{2}}$ for $x\geq 0$ yields the inequality
\[
	T = \frac{2}{\pi} \cdot \oversamp \cdot \EE\bigg\{ \frac{|W|\phi\big( \frac{|W|}{\sqrt{x}}\big)}{C(\oversamp)+W^{2}}\bigg\} \lesssim \frac{1}{\oversamp} \cdot \EE\bigg\{ \frac{|W|}{C(\oversamp)+W^{2}}\bigg\} \leq \frac{\oversamp}{C(\oversamp)} \EE\{|W|\} \lesssim 1,
\]
where in the last step we use $\oversamp \asymp C(\oversamp)$. We thus deduce that when $x>0$,
\[
	\ratiomapbit(x) \geq (1-2/\pi) \cdot \frac{1+\sigma^{2}}{C(\oversamp)} \cdot \frac{1}{T^{2}} \gtrsim \frac{1+\sigma^{2}}{C(\oversamp)}.
\]
This concludes the proof.  \qed

\subsection{Global convergence analysis when $\oversamp \geq \sqrt{d}$} \label{sec:global-convergence-large-oversamp}
This section is dedicated to the proof of the following proposition, which demonstrates that---from a random initialization---when $\oversamp \geq \sqrt{d}$, then two iterations of AM suffice to reach very small error in both the linear observation model and the nonlinear observation model. 
\begin{proposition}\label{prop:convergence-lambda-rootd}
	Suppose that $\frac{1}{50\sqrt{d}} \leq \parcompX_{0} \leq \frac{1}{\sqrt{d}}$, $\parcompX_{0}^{2} + \perpcompX_{0}^{2} = 1$ and $\oversamp \geq \sqrt{d}$.  Then, there exists a universal, positive constant $c$ such that for all $\sigma^{2} \leq d^{c}$ and $\log(n) \leq d^{c}$, and for both $\psi(w)=w$ and $\psi(w)=\sign(w)$, the following holds with probability at least $1-4n^{-10}$,
	\begin{align*}
		\frac{\perpcompX_{2}^{2}}{\parcompX_{2}^{2}} \lesssim \frac{(1+\sigma^{2})d}{n} + \frac{(1+\sigma^{2})\perplogn}{\sqrt{n}}.
	\end{align*}
\end{proposition}

\begin{proof}
Recall the definition of events $\mathcal{A}_{t}$ and $\mathcal{B}_{t}$~\eqref{good-event-A-B}. We will work on the event $\mathcal{A}_{1} \cap \mathcal{B}_{1} \cap \mathcal{A}_{2} \cap \mathcal{B}_{2} $, which holds with probability exceeding $1-4n^{-10}$.  We first provide the proof in the linear model, when $\psi(w) = w$, before turning to the one-bit model.

\paragraph{Convergence in the linear model:} After one step, we obtain the deterministic updates
\begin{align*}
	\parcompZ_{1}^{\mathsf{det}} = \parmapid(\parcompX_{0},\perpcompX_{0}) = \parcompX_{0} \qquad \text{ and } \qquad
	(\perpcompZ_{1}^{\mathsf{det}})^{2} = \perpmapid(\parcompX_{0},\perpcompX_{0}) = \frac{1+\sigma^{2}}{C(\oversamp)} \cdot \perpcompX_{0}^{2} + \frac{\sigma^{2}}{C(\oversamp)} \cdot \parcompX_{0}^{2} \leq \frac{1+\sigma^{2}}{C(\oversamp)}.
\end{align*}
Note that on event $\mathcal{A}_1$, 
\[
\lvert \parcompZ_{1} -\parcompZ_{1}^{\mathsf{det}} \rvert \lesssim (1+\sigma)\pardevn \qquad  \text{ and } \qquad 
\lvert \perpcompZ_{1}^{2} -(\perpcompZ_{1}^{\mathsf{det}})^{2} \rvert \lesssim \frac{\perplogn(1+\sigma^{2})}{\sqrt{n}}.
\]
Using $\sigma^{2} \leq d^{c}$, $\log(n) \leq d^{c}$ and $\parcompX_{0} \gtrsim d^{-1/2}$ and putting the two pieces together yields
\begin{align}\label{ineq1-convergence-proof-large-oversamp}
\parcompZ_{1} \asymp \parcompX_{0} \asymp d^{-1/2} \quad \text{and} \quad \perpcompZ_{1}^{2} \lesssim \frac{1+\sigma^{2}}{C(\oversamp)} + \frac{\perplogn(1+\sigma^{2})}{\sqrt{n}} \lesssim d^{c-1/2} + d^{9c-3/4},
\end{align}
where in the last step we use $\sigma^{2},\log(n) \leq d^{c}$ and $\oversamp \geq \sqrt{d}$.
Continuing, we let 
\[
	\parcompX_{1}^{\mathsf{det}} = \frac{ \parcompZ_{1} }{\parcompZ_{1}^{2} + \perpcompZ_{1}^{2}} \quad \text{and} \quad
	(\perpcompX_{1}^{\mathsf{det}})^{2} =  \frac{1+\sigma^{2}}{C(\oversamp)} \frac{ \perpcompZ_{1}^{2} }{(\parcompZ_{1}^{2} + \perpcompZ_{1}^{2})^{2}} + \frac{\sigma^{2}}{C(\oversamp)} \frac{ \parcompZ_{1}^{2} }{(\parcompZ_{1}^{2} + \perpcompZ_{1}^{2})^{2}}.
\]
On event $\mathcal{B}_{1}$, we obtain 
\[
	\lvert \parcompX_{1} -\parcompX_{1}^{\mathsf{det}} \rvert \lesssim \frac{1+\sigma}{\sqrt{\parcompZ_{1}^{2} + \perpcompZ_{1}^{2} }} \pardevn \quad  \text{ and } \quad 
\lvert \perpcompX_{1}^{2} -(\perpcompX_{1}^{\mathsf{det}})^{2} \rvert \lesssim  \frac{1+\sigma^{2}}{\parcompZ_{1}^{2} + \perpcompZ_{1}^{2}} \frac{\perplogn}{\sqrt{n}}.
\]
Using inequality~\eqref{ineq1-convergence-proof-large-oversamp}, we obtain 
\[
	\parcompX_{1}^{\mathsf{det}} \gtrsim \frac{d^{-1/2}}{d^{-1} + d^{c-1/2} + d^{9c-3/4} } \text{ and }
	\frac{1+\sigma}{\sqrt{\parcompZ_{1}^{2} + \perpcompZ_{1}^{2} }} \pardevn \lesssim d^{1/2 + 2c - 3/4} = o_{d}(\parcompX_{1}^{\mathsf{det}}).
\]
Consequently, we obtain that
\[
	\parcompX_{1} \gtrsim \parcompX_{1}^{\mathsf{det}} \gtrsim \frac{ \parcompZ_{1} }{\parcompZ_{1}^{2} + \perpcompZ_{1}^{2}}.
\]
Putting together the pieces yields
\begin{align*}
	\frac{ \perpcompX_{1}^{2}}{\parcompX_{1}^{2}} &\lesssim \frac{(\perpcompX_{1}^{\mathsf{det}})^{2} + \frac{1+\sigma^{2}}{\parcompZ_{1}^{2} + \perpcompZ_{1}^{2}} \frac{\perplogn}{\sqrt{n}} }{(\parcompX_{1}^{\mathsf{det}})^{2}}
	\\ & \leq \frac{1+\sigma^{2}}{C(\oversamp)} \frac{\perpcompZ_{1}^{2}}{\parcompZ_{1}^{2}} + \frac{\sigma^2}{C(\oversamp)} + 
	\frac{(1+\sigma^{2})(\parcompZ_{1}^{2} + \perpcompZ_{1}^{2})}{\parcompZ_{1}^{2}} \frac{\perplogn}{\sqrt{n}}
	\\& = \Big( \frac{1+\sigma^{2}}{C(\oversamp)} + \frac{\perplogn(1+\sigma^{2})}{\sqrt{n}} \Big) \cdot \frac{\perpcompZ_{1}^{2}}{\parcompZ_{1}^{2}} + \frac{(1+\sigma^{2})\perplogn}{\sqrt{n}}
	\\& \overset{\1}{\lesssim} (d^{c-1/2} + d^{9c-3/4})^{2} d + d^{9c-3/4} \lesssim d^{2c},
\end{align*}
where in step $\1$ we combined inequality~\eqref{ineq1-convergence-proof-large-oversamp} and $\sigma^{2},\log(n) \leq d^{c}, \oversamp \geq \sqrt{d}$, and in last step we let $c$ small enough. Note that $C(\oversamp)/(1+\sigma^{2}) \gtrsim d^{1/2-c}$. Consequently, if $c$ is small enough, we obtain that 
\[	
\frac{ \perpcompX_{1}^{2}}{\parcompX_{1}^{2}} \lesssim \frac{C(\oversamp)}{1+\sigma^{2}}.
\]
Consequently, on event $\mathcal{A}_{2}$, applying Lemma~\ref{help-lemma2-GD-linear} yields
\begin{align*}
	&\Big|\frac{\perpcompZ_{2}}{\parcompZ_{2}} - h_{\mathsf{id}}\Big( \frac{\perpcompX_{1}^{2}}{\parcompX_{1}^{2}}\Big) \Big| \lesssim d^{2c} \frac{(1+\sigma^{2}) \perplogn}{\sqrt{n}} \quad \text{and consequently}
	\\& \frac{\perpcompZ_{2}}{\parcompZ_{2}} \leq \frac{1+\sigma^{2}}{C(\oversamp)} \frac{\perpcompX_{1}^{2}}{\parcompX_{1}^{2}} + \frac{\sigma^{2}}{C(\oversamp)} + d^{2c} \frac{(1+\sigma^{2}) \perplogn}{\sqrt{n}} \lesssim 1,
\end{align*}
where the last step follows by $\sigma^{2},\log(n) \leq d^{c}$, $\oversamp \geq d^{1/2}$ and $\perpcompX_{1}^{2}/\parcompX_{1}^{2} \lesssim d^{2c}$. Consequently, on event $\mathcal{B}_{2}$, applying Lemma~\ref{help-lemma2-GD-linear} again yields
\begin{align*}
	\frac{\perpcompX_{2}}{\parcompX_{2}} \leq \frac{1+\sigma^{2}}{C(\oversamp)} \frac{\perpcompZ_{2}^{2}}{\parcompZ_{2}^{2}} + \frac{\sigma^{2}}{C(\oversamp)} + \frac{(1+\sigma^{2}) \perplogn}{\sqrt{n}} \lesssim \frac{1+\sigma^{2}}{C(\oversamp)} + \frac{(1+\sigma^{2}) \perplogn}{\sqrt{n}} \lesssim \frac{(1+\sigma^{2})d}{n}+\frac{(1+\sigma^{2}) \perplogn}{\sqrt{n}},
\end{align*}
where the last step follows from $C(\oversamp) \asymp \oversamp = n/d$.

We turn now to proving the claim in the nonlinear model. 
\paragraph{Convergence in the nonlinear model:} Using the function $\parmapbit$~\eqref{eq:parmapbit} in conjunction with the inequality $\phi(x) = \int_{0}^{x} e^{-t^2/2}\mathrm{d}t \leq x$ yields the upper bound
\begin{align*}
	\parcompdetZ_{1} = \parmapbit(\parcompX_{0}, \perpcompX_{0}) \leq \frac{2}{\pi} \cdot \frac{1}{\sqrt{\parcompX_{0}^2 + \perpcompX_{0}^2}} \cdot \frac{\parcompX_{0}}{\perpcompX_{0}} \leq \frac{4\parcompX_{0}}{\pi}.
\end{align*}
Next, applying Lemma~\ref{f-h-properties-nonlinear}(b) in conjunction with the upper bound $\parcompX_{0} /\perpcompX_{0} \leq 1$ yields the lower bound 
\begin{align*}
	\parcompZ_{1}^{\mathsf{det}} \geq \frac{1}{\sqrt{2\pi}} \cdot \frac{\parcompX_{0}}{\perpcompX_{0}} \cdot \frac{1}{\sqrt{\parcompX_{0}^{2} + \perpcompX_{0}^{2}}} \geq \frac{\parcompX_{0}}{\sqrt{2\pi}}.
\end{align*}
We next find upper and lower bounds on $(\perpcompdetZ_{1})^2 = \perpmapbit(\parcompX_{0}, \perpcompX_{0})$~\eqref{eq:perpmapbit}.  To this end, we note that 
\[
\frac{C_3(\oversamp)}{C_2(\oversamp)} \leq \EE\{W^{4}\} \bigg/ \EE\left\{ \frac{W^{2}}{(1 + W^{2})^{2}}\right\} \leq 20,
\]
where $W \sim \mathsf{N}(0, 1)$.  Straightforward computation thus yields the upper bound
\begin{align*}
	(\perpcompZ_{1}^{\mathsf{det}})^{2} \leq \frac{1+\sigma^{2}}{C(\oversamp)} \cdot \frac{1}{\parcompX_{0}^{2}+\perpcompX_{0}^{2}} + \frac{20(\parcompZ_{1}^{\mathsf{det}})^{2}}{C(\oversamp)} &\leq 
	\frac{1+\sigma^{2}}{C(\oversamp)} + \frac{20}{C(\oversamp)} \cdot \frac{4}{\pi^{2}} \cdot \frac{\parcompX_{0}^{2}}{\perpcompX_{0}^{2}} \leq \frac{2(1+\sigma^{2})}{C(\oversamp)}.
\end{align*}
On event $\mathcal{A}_{1}$, we obtain
\[
	\parcompZ_{1} \asymp d^{-1/2} \quad \text{and} \quad \perpcompZ_{1}^{2} \lesssim \frac{1+\sigma^{2}}{C(\oversamp)} + \frac{\perplogn (1+\sigma^{2})}{\sqrt{n}}.
\]
Consequently, using $\sigma^{2},\log(n) \leq d^{c}$ and $\oversamp \geq \sqrt{d}$, we obtain 
\[
	\frac{\perpcompZ_{1}}{\parcompZ_{1}} \lesssim d^{1/2}(d^{c/2} d^{-1/4} + d^{4.5c} d^{-3/8}) \lesssim d^{5c+1/4},
\]
where in the last step we let $c$ small enough. Continuing, we let 
\[
	\parcompX_{1}^{\mathsf{det}} = F_{\mathsf{sgn}}(\parcompZ_{1},\perpcompZ_{1}) \gtrsim \frac{\min(\parcompZ_{1}/\parcompZ_{1},1)}{\sqrt{\parcompZ_{1}^{2} + \perpcompZ_{1}^{2}}} \quad \text{and} \quad (\perpcompX_{1}^{\mathsf{det}})^{2} = G_{\mathsf{sgn}}(\parcompZ_{1},\perpcompZ_{1}).
\]
On event $\mathcal{B}_{1}$, we obtain that
\[
	\lvert \parcompX_{1} -\parcompX_{1}^{\mathsf{det}} \rvert \lesssim \frac{1+\sigma}{\sqrt{\parcompZ_{1}^{2} + \perpcompZ_{1}^{2} }} \pardevn \quad  \text{ and } \quad 
	\lvert \perpcompX_{1}^{2} -(\perpcompX_{1}^{\mathsf{det}})^{2} \rvert \lesssim  \frac{1+\sigma^{2}}{\parcompZ_{1}^{2} + \perpcompZ_{1}^{2}} \frac{\perplogn}{\sqrt{n}}.		
\]
Putting together the pieces, we note
\[
	\frac{1+\sigma}{\sqrt{\parcompZ_{1}^{2} + \perpcompZ_{1}^{2} }} \pardevn \lesssim \frac{d^{2c - 3/4}}{\sqrt{\parcompZ_{1}^{2} + \perpcompZ_{1}^{2}}} \quad \text{and} \quad  \parcompX_{1}^{\mathsf{det}} \gtrsim \frac{d^{-2c}}{\sqrt{\parcompZ_{1}^{2} + \perpcompZ_{1}^{2}}}.
\] 
Consequently, when $c$ is a small enough constant, we obtain $\parcompX_{1} \gtrsim \parcompX_{1}^{\mathsf{det}}$. Putting together the pieces yields
\begin{align*}
	\frac{\perpcompX_{1}^{2}}{\parcompX_{1}^{2}} &\lesssim \frac{(\perpcompX_{1}^{\mathsf{det}})^{2} + \frac{1+\sigma^{2}}{\parcompZ_{1}^{2} + \perpcompZ_{1}^{2}} \frac{\perplogn}{\sqrt{n}} }{(\parcompX_{1}^{\mathsf{det}})^{2}}
	\\& \overset{\1}{\lesssim} \Big( \frac{1+\sigma^{2}}{C(\oversamp)} + \frac{(1+\sigma^{2})\perplogn}{\sqrt{n}} \Big) \frac{1}{\min(1,\parcompZ_{1}^{2} / \perpcompZ_{1}^{2})} + \frac{1}{C(\oversamp)} 
	\\&\lesssim (d^{c-1/2} + d^{9c - 3/4}) d^{10c+1/2} + d^{-1/2},
\end{align*}
where in step $\1$ we applyed Lemma~\ref{f-h-properties-nonlinear}(b) to lower bound $\parcompX_{1}^{\mathsf{det}}$ and used the definition of $G_{\mathsf{sgn}}$~\eqref{eq:perpmapbit}. We consequently deduce the upper bound
\[
	\frac{\perpcompX_{1}^{2}}{\parcompX_{1}^{2}} \lesssim \frac{C(\oversamp)}{1+\sigma^{2}}.
\]
The rest of the proof follows indetical steps of the linear model case by applying Lemma~\ref{help-lemma2-GD-linear} and Lemma~\ref{lem:h-sgn-properties}. So we omit the remaining steps.
\end{proof}

\section{Non-asymptotic random matrix theory} \label{sec:non-asymptotic-rmt}
This section provides some non-asymptotic random matrix theory guarantees that are used throughout the proof of Theorem~\ref{thm:one-step}.  In Section~\ref{sec:bounds-minimum-eigenvalue}, we provide some useful bounds on the minimum eigenvalue of the random matrix $\bX^{\top} \bG^2 \bX$.  In Section~\ref{sec:concentration-trace-inverse}, we show that the trace of the inverse of the random matrix $\bX^{\top} \bG^2 \bX$ concentrates around the solution to the fixed point equation~\eqref{definition-of-C}.

\subsection{Bounds on the minimum eigenvalue} \label{sec:bounds-minimum-eigenvalue}
We first require bounds on the extremal eigenvalues of the random matrix $\bX^{\top} \bG^2 \bX$.  
\begin{lemma}
	\label{lem:eigenvalues-G}
	Let $\bG = \diag(G_1, G_2, \dots, G_n)$, with $(G_i)_{1 \leq i \leq n} \overset{\mathsf{i.i.d.}}{\sim} \mathsf{N}(0, 1)$, let the random matrix $\bX \in \mathbb{R}^{n \times d}$ consist of entries $(X_{ij})_{1 \leq i \leq n, 1 \leq j \leq d} \overset{\mathsf{i.i.d.}}{\sim} \mathsf{N}(0, 1)$, and consider the random matrix $\bX^{\top} \bG^2 \bX$.  As long as $n \geq 2d$, the following hold.
	\begin{itemize}
		\item[(a)] There exists a permutation $\pi: [n] \rightarrow [n]$, which depends only on the $G_{i}$'s such that
		\[
		\lambda_{\min}(\bX^{\top} \bG^2 \bX) \geq \mathsf{med}\Bigl(\bigr(G_i^2\bigr)_{i=1}^{n}\Bigr) \cdot \lambda_{\min}\biggl(\sum_{i = n/2}^{n} \bx_{\pi(i)} \bx_{\pi(i)}^{\top}\biggr).
		\]
		\item[(b)]
		There exists a pair of universal, positive constants $(c, c')$, such that with probability at least $1 - 2e^{-cn}$, 
		\[
		\lambda_{\min}(\bX^{\top} \bG^2 \bX)\geq c_1 n.
		\]
	\end{itemize}
\end{lemma}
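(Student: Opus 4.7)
}

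The plan is to decouple the contribution of $\bG$ from that of $\bX$ via a rank-ordering argument, then to reduce the lower bound on $\lambda_{\min}(\bX^\top \bG^2 \bX)$ to two independent events: one concerning the bulk of a Wishart matrix, and one concerning the sample median of $n$ iid chi-squared random variables.

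For part (a), I would let $\pi: [n] \to [n]$ be the (random) permutation that sorts the squared Gaussians in non-decreasing order, i.e.\ $G_{\pi(1)}^2 \leq G_{\pi(2)}^2 \leq \cdots \leq G_{\pi(n)}^2$, breaking ties arbitrarily. Since $\pi$ is measurable with respect to $\bG$ and $\bX \perp\!\!\!\perp \bG$, it is independent of $\bX$. Writing $\bX^\top \bG^2 \bX = \sum_{i=1}^n G_{\pi(i)}^2\, \bx_{\pi(i)} \bx_{\pi(i)}^\top$ and dropping the first $n/2 - 1$ summands in the PSD order, I would factor out the smallest remaining weight $G_{\pi(n/2)}^2$, which by construction equals the sample median of $(G_i^2)_{i=1}^n$. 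This yields the claimed bound directly.

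For part (b), the key observation is that conditional on $\pi$, the collection $\{\bx_{\pi(i)}\}_{i=n/2}^n$ is a set of $n/2 + 1$ iid $\mathsf{N}(0, \bI_d)$ vectors (a random permutation of iid vectors, with permutation independent of the vectors, produces iid samples). Hence $\sum_{i=n/2}^n \bx_{\pi(i)} \bx_{\pi(i)}^\top$ is a Wishart matrix with $d$ rows and $n/2 + 1 \geq d$ degrees of freedom, and a standard non-asymptotic bound on the smallest singular value of a tall Gaussian matrix~\citep[see, e.g.,][Theorem~6.1]{wainwright2019high} gives $\lambda_{\min}\bigl(\sum_{i \geq n/2} \bx_{\pi(i)} \bx_{\pi(i)}^\top\bigr) \geq c_2 n$ with probability at least $1 - e^{-c n}$. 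Separately, the sample median of the iid random variables $(G_i^2)_{i=1}^n$ concentrates around the population median $(\Phi^{-1}(3/4))^2 > 0$: letting $t$ be a constant strictly below this population median, Hoeffding's inequality applied to the Bernoulli count $\sum_{i=1}^n \mathbbm{1}\{G_i^2 \geq t\}$ shows that the sample median exceeds $t$ with probability $1 - e^{-c'n}$. A union bound and the factorization from part (a) combine to give part (b).

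I do not expect a serious technical obstacle here. The only point that requires a bit of care is justifying the independence structure used in the Wishart bound: concretely, I would argue that $\pi$ and $\bX$ are independent, so after conditioning on the event $\{\pi = \sigma\}$ for a fixed permutation $\sigma$, the columns $\bx_{\sigma(n/2)}, \ldots, \bx_{\sigma(n)}$ remain iid $\mathsf{N}(0,\bI_d)$; the Wishart bound therefore applies conditionally and hence unconditionally. Everything else reduces to standard iid concentration tools.
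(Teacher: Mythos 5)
Your proposal is correct and follows essentially the same route as the paper: sort by $G_{\pi(i)}^2$ so that $\pi$ depends only on $\bG$ (hence is independent of $\bX$), drop the lower half of the sum in the PSD order, factor out the median to get part~(a), and then for part~(b) combine the Wishart small-ball bound (conditioning on $\pi$ to justify that the retained rows are still i.i.d.\ Gaussian) with a Hoeffding bound showing the sample median of $(G_i^2)$ stays bounded away from zero. The only cosmetic difference is that the paper anchors the median-concentration step at the first quartile of $\chi^2(1)$ rather than at a constant strictly below the population median, but this is the same argument.
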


\begin{proof}
	Let $\bx_i \in \mathbb{R}^d$ denote the $i$-th row of the random matrix $\bX$ and note the expansion
	\[
	\bX^{\top} \bG^2 \bX = \sum_{i=1}^{n} G_i^2 \bx_i \bx_i^{\top}.
	\]
	Next, let the random permutation $\pi: [n] \rightarrow [n]$ denote the ordering of the random variables $(G_i)_{i=1}^{n}$ (indexed so that $G_{\pi(n)} = \max\{G_1, G_2, \dots, G_n\}$).  Consequently,
	\[
	\sum_{i=1}^{n} G_i^2 \bx_i \bx_i^{\top} = \sum_{i=1}^{n} G_{\pi(i)}^2 \bx_{\pi(i)} \bx_{\pi(i)}^{\top} \succeq \sum_{i=n/2}^{n} G_{\pi(i)}^2 \bx_{\pi(i)} \bx_{\pi(i)}^{\top} \succeq \mathsf{med}\Bigl(\bigr(G_i^2\bigr)_{i=1}^{n}\Bigr) \sum_{i = n/2}^{n} \bx_{\pi(i)} \bx_{\pi(i)}^{\top},
	\]
	which proves part (a).  Proceeding to part (b), let $Q_{\star}$ denote the first quartile of a $\chi^2(1)$--distributed random variable and $I_i := \mathbbm{1}\{G_i^2 \leq Q_{\star}\}$.  Note that
	\[
	\Prob\Bigl\{\mathsf{med}\Bigl(\bigr(G_i^2\bigr)_{i=1}^{n}\Bigr) \leq Q_{\star}\Bigr\} \leq \Prob \Bigl\{\frac{1}{n} \sum_{i=1}^{n} I_i \geq \frac{1}{2}\Bigr\} \leq e^{-cn},
	\]
	where the final inequality follows by noting that $(I_i)_{1 \leq i \leq n}$ are i.i.d., sub-Gaussian, and have expectation $\EE I_i = 1/4$, and applying Hoeffding's inequality.  Thus, with probability at least $1 - e^{-cn}$, we deduce the lower bound
	\[
	\sum_{i=1}^{n} G_i^2 \bx_i \bx_i^{\top} \succeq Q_{\star} \cdot \sum_{i=n/2}^{n} \bx_{\pi(i)}\bx_{\pi(i)}^{\top}.
	\]
	The proof is complete upon noticing that the random vectors $(\bx_{i})_{1 \leq i \leq n}$ are independent of the random permutation $\pi$, whence we apply~\citet[Theorem 6.1]{wainwright2019high} in conjunction with the display above to obtain the inequality
	\[
	\lambda_{\min}(\bX^{\top} \bG^2 \bX)\geq c' n, \quad \text{ with probability } \geq 1 - 2e^{-cn}. 
	\]
\end{proof}
The next lemma uses the above result to bound quadratic forms involving a leave-one-out sequence.
\begin{lemma} \label{lem:general-trace-concentration}
	Consider a random matrix $\bX = [\bx_1 \mid \bx_2 \mid \dots \mid \bx_n]^{\top}$, where $(\bx_i)_{1 \leq i \leq n} \overset{\mathsf{i.i.d.}}{\sim} \mathsf{N}(0, \bI_d)$ as well as a random diagonal matrix $\bG = \diag(G_1, G_2, \dots, G_n)$, where $(G_i)_{1 \leq i \leq n} \overset{\mathsf{i.i.d.}}{\sim} \mathsf{N}(0, 1)$.  Consider the random matrix $\bSig_{i} = \sum_{j \neq i}^{n} G_j \bx_j \bx_j^{\top}$. Suppose $n\geq 2d$. There exists a universal, positive constant $c$ such that for all $i \in [n]$,
		\[
		\Prob \Bigl\{ \bigl\lvert \bx_{i}^{\top} \bSig_{i}^{-1} \bx_{i} - \trace(\bSig_i^{-1}) \bigr\rvert \geq t \Bigr\} \leq 2\exp\Bigl\{ -c n \cdot \min\bigl(t^2, t\bigr)\Bigr\} + e^{-cn}.
		\]
\end{lemma}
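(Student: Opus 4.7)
}
The plan is to condition on $\bSig_i$ and apply the Hanson--Wright inequality, leveraging that $\bx_i$ is independent of $\bSig_i$ (since $\bSig_i$ is built only from $\{\bx_j, G_j\}_{j \neq i}$). Thus, conditionally on $\bSig_i$, the vector $\bx_i$ remains standard Gaussian, and $\bSig_i^{-1}$ is a fixed symmetric matrix. The classical Hanson--Wright inequality then yields, for any fixed realization of $\bSig_i$,
\[
\Prob\Bigl\{\bigl\lvert \bx_i^\top \bSig_i^{-1} \bx_i - \trace(\bSig_i^{-1}) \bigr\rvert \geq t \;\Big|\; \bSig_i \Bigr\} \leq 2\exp\!\Bigl(-c \min\bigl(t^2/\|\bSig_i^{-1}\|_F^2,\; t/\|\bSig_i^{-1}\|_{\op}\bigr)\Bigr).
\]

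The key step is then to control $\|\bSig_i^{-1}\|_{\op}$ and $\|\bSig_i^{-1}\|_F$ on a high-probability event. Applying Lemma~\ref{lem:eigenvalues-G}(b) to the $(n-1) \times d$ leave-one-out data matrix (using $n \geq 2d$ so that $n - 1 \geq 2d$ holds up to a harmless constant adjustment), we obtain $\lambda_{\min}(\bSig_i) \geq c_1 n$ on an event $\mathcal{E}_i$ of probability at least $1 - e^{-cn}$. On this event, $\|\bSig_i^{-1}\|_{\op} \leq 1/(c_1 n)$, and the crude bound $\|\bSig_i^{-1}\|_F^2 \leq d \cdot \|\bSig_i^{-1}\|_{\op}^2 \leq d/(c_1 n)^2 \leq C/n$ (using $d \leq n/2$) suffices.

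Plugging these bounds back into the Hanson--Wright estimate gives, on $\mathcal{E}_i$,
\[
\min\bigl(t^2/\|\bSig_i^{-1}\|_F^2,\; t/\|\bSig_i^{-1}\|_{\op}\bigr) \geq c n \min(t^2, t),
\]
and hence the desired conditional concentration bound with the right rate. Finally, the law of total probability absorbs the failure of $\mathcal{E}_i$ into the additive $e^{-cn}$ term in the statement.

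The proof is essentially routine once independence is exploited, so I do not expect a serious obstacle. The only point requiring care is ensuring that both the operator and Frobenius norm bounds on $\bSig_i^{-1}$ come from the \emph{same} min-eigenvalue event (so we pay the $e^{-cn}$ probability only once), and that the near-linear regime $n \gtrsim d$ guaranteed by the hypothesis $n \geq 2d$ makes the Frobenius bound compatible with the operator bound to produce matching Gaussian and sub-exponential tails.
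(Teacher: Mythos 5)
Your proposal is correct and follows essentially the same route as the paper: both obtain $\|\bSig_i^{-1}\|_{\op} \lesssim 1/n$ and $\|\bSig_i^{-1}\|_F^2 \lesssim 1/n$ from the minimum-eigenvalue bound in Lemma~\ref{lem:eigenvalues-G}(b) on a high-probability event, then condition on $\bSig_i$ and apply the Hanson--Wright inequality to the independent Gaussian $\bx_i$, with the failure probability of the eigenvalue event absorbed into the additive $e^{-cn}$ term. The extra detail you supply (the conditioning step made explicit, the $\|A\|_F^2 \le d\|A\|_{\op}^2$ bound, and the $n-1$ vs.\ $n$ adjustment) is in line with what the paper leaves implicit.
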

\begin{proof}
	Applying Lemma~\ref{lem:eigenvalues-G} yields the pair of inequalities
	\[
	\| \bSig_i^{-1} \|_F^2 \lesssim \frac{1}{n}, \quad \text{ and } \quad \| \bSig_i^{-1} \|_{\op} \lesssim \frac{1}{n}, \quad \text{ with probability } \geq 1 - e^{-cn}.
	\]
	The conclusion follows upon applying the Hanson--Wright inequality.
\end{proof}

\subsection{Concentration of the trace inverse} \label{sec:concentration-trace-inverse}
This section is dedicated to the proof of the following lemma.
\begin{lemma}
	\label{lem:tau-concentration}
	Let $\bG = \diag(G_1, G_2, \dots, G_n)$ consist of entries $(G_i)_{1 \leq i \leq n} \overset{\mathsf{i.i.d.}}{\sim} \mathsf{N}(0, 1)$ and let the random matrix $\bX\in \mathbb{R}^{n \times d}$ consist of entries $(X_{ij})_{1 \leq i \leq n, 1 \leq j \leq d} \overset{\mathsf{i.i.d.}}{\sim} \mathsf{N}(0, 1)$.  Recall the solution $C(\oversamp)$ to the fixed point equation~\eqref{definition-of-C}.
	There exist universal, positive constants $c$ and $C > 2$ such that for all $n\geq Cd$ and for all $t \geq C/\sqrt{n}$, 
	\[
	\Prob\bigl\{\bigl \lvert\mathsf{tr}\bigl((\bX^{\top} \bG^2\bX)^{-1}\bigr) - C(\oversamp)^{-1} \bigr \rvert \geq t\bigr\} \leq 2 \exp\bigl\{-cnt^2\bigr\}+e^{-cn}.
	\]
\end{lemma}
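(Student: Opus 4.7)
The plan splits into three ingredients: (i) a deterministic bound on $\tau_n := \mathsf{tr}((\bX^{\top}\bG^2\bX)^{-1})$ on a high-probability event, (ii) concentration of $\tau_n$ around its expectation with exponential tails, and (iii) identification of that expectation with $C(\oversamp)^{-1}$ up to a bias of order $1/\sqrt{n}$. The final claim then follows by the triangle inequality.

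For ingredient (i), I would invoke Lemma~\ref{lem:eigenvalues-G}(b) to obtain an event $\mathcal{E}$ of probability at least $1 - e^{-cn}$ on which $\lambda_{\min}(\bX^\top\bG^2\bX) \geq c_1 n$; on $\mathcal{E}$, $\tau_n \leq d/(c_1 n) = 1/(c_1\oversamp)$ deterministically, so that $\tilde{\tau}_n := \tau_n \mathbbm{1}_{\mathcal{E}}$ is bounded and any deviation contribution from $\mathcal{E}^c$ is absorbed into the additive $e^{-cn}$ in the final tail bound. For ingredient (ii), I would layer a truncation argument on top of the non-asymptotic Stieltjes-transform concentration of Guntuboyina--Leeb (cited in the text). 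The key observation is that on $\mathcal{E}$ the map $\bX \mapsto \tau_n(\bX;\bG)$ is Lipschitz in $\bX$ with constant bounded in terms of $\lambda_{\min}^{-1}$ and $\|\bG\|_{\op}$, and a similar statement holds for $\bG$. Conditional Gaussian concentration together with a standard maximum-of-Gaussians bound on $\|\bG\|_{\op}$ and the truncation event then yields
\[
\Pr\bigl\{\bigl\lvert \tau_n - \EE\tilde{\tau}_n \bigr\rvert \geq t\bigr\} \leq 2\exp\{-cnt^2\} + e^{-cn}.
\]

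Ingredient (iii) is the substantive step and proceeds through the Sherman--Morrison identity applied row-by-row. Writing $\bA = \bX^\top\bG^2\bX$ and $\bA_i = \bA - G_i^2 \bx_i \bx_i^\top$, the identity $\mathsf{tr}(\bA^{-1}\bA) = d$ becomes
\[
\frac{1}{\oversamp} \;=\; \frac{1}{n} \sum_{i=1}^n \frac{G_i^2 s_i}{1 + G_i^2 s_i}, \qquad s_i := \bx_i^\top \bA_i^{-1} \bx_i.
\]
By Lemma~\ref{lem:general-trace-concentration} (Hanson--Wright applied conditionally on $\bA_i$), $s_i$ concentrates around $\mathsf{tr}(\bA_i^{-1})$ at rate $\widetilde{O}(n^{-1/2})$; a rank-one inequality using $\lambda_{\min}(\bA_i) \gtrsim n$ on $\mathcal{E}$ gives $\lvert \mathsf{tr}(\bA_i^{-1}) - \tau_n\rvert \lesssim 1/n$, so $\max_i |s_i - \tau_n| = \widetilde{O}(n^{-1/2})$. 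Bernstein concentration of the average over the i.i.d.\ variables $G_i$ then converts the empirical identity into an approximate fixed-point equation for $\tau_n$:
\[
\EE\biggl\{\frac{G^2 \tau_n}{1 + G^2 \tau_n}\biggr\} \;=\; \frac{1}{\oversamp} + \widetilde{O}(n^{-1/2}) \qquad \text{on }\mathcal{E}.
\]
Comparing with the defining relation~\eqref{definition-of-C} for $C(\oversamp)$ and invoking strict monotonicity of $\tau \mapsto \EE\{G^2\tau/(1+G^2\tau)\}$, whose derivative $\EE\{G^2/(1+G^2\tau)^2\}$ is bounded below by a positive constant on the interval $[0, 1/(c_1\oversamp)]$ (using $0.3\oversamp \leq C(\oversamp) \leq \oversamp$ from Lemma~\ref{lemma:C(lambda)-lambda}), yields $\lvert \tau_n - C(\oversamp)^{-1}\rvert \lesssim \widetilde{O}(n^{-1/2})$ on $\mathcal{E}$. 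Taking expectations and using boundedness of $\tilde{\tau}_n$ gives $\lvert \EE\tilde{\tau}_n - C(\oversamp)^{-1}\rvert \lesssim 1/\sqrt{n}$, which absorbs into the requirement $t \geq C/\sqrt{n}$.

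The main obstacle will be in ingredient (iii): simultaneously controlling the leave-one-out fluctuations $\max_i |s_i - \tau_n|$ at the optimal $\widetilde{O}(n^{-1/2})$ rate while also showing the derivative lower bound on the fixed-point map is uniform over the relevant range of $\oversamp$. The latter is where the sandwich $C(\oversamp) \asymp \oversamp$ is essential, since otherwise linearizing around the fixed point might incur a $\oversamp$-dependent degradation. Handling the exponential tail on the concentration in ingredient (ii), as opposed to the standard $n^{-1/2}$-type bound available from the general CLT for linear spectral statistics at $z = 0$, is the second genuinely new ingredient and is precisely what the truncation layered on Guntuboyina--Leeb is designed to provide.
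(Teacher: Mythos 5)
Your high-level split — a fluctuation term about the mean plus a deterministic bias, combined via the triangle inequality — coincides with the paper's structure (which assembles Lemmas~\ref{lem:uniqueness},~\ref{lem:fluctuation-tau}, and~\ref{lem:deterministic-tau}). The substantive divergence is in ingredient (iii), where you have two concrete gaps. First, the Bernstein step is not valid as stated: you want concentration for $\frac{1}{n}\sum_i G_i^2\tau_n/(1+G_i^2\tau_n)$, but $\tau_n = \trace\bigl((\bX^{\top}\bG^2\bX)^{-1}\bigr)$ depends on all of $G_1,\dots,G_n$ simultaneously, so the summands are not an independent sequence in which $\tau_n$ may be held fixed. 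One must instead Bernstein-concentrate $\frac{1}{n}\sum_i G_i^2\tau/(1+G_i^2\tau)$ for the \emph{deterministic} $\tau = C(\oversamp)^{-1}$ and control the $s_i \neq \tau$ mismatch separately. Second, the pathwise maximum $\max_i |s_i - \tau_n| = \widetilde{O}(n^{-1/2})$ costs a union bound over $i \in [n]$, i.e.\ a $\sqrt{\log n}$ factor, and feeding this through gives $|\EE\tilde{\tau}_n - C(\oversamp)^{-1}| \lesssim \sqrt{\log n / n}$ — one $\sqrt{\log n}$ short of the lemma's $t \geq C/\sqrt{n}$ threshold. The paper avoids the log by never taking a pathwise union bound: it works at the level of expectations throughout, starting from the \emph{exact} identity $\EE\{G_1^2\tau_1/(1+G_1^2\tau_1)\}=1/\oversamp$ (with $\tau_1 = \tfrac{1}{n}\bx_1^{\top}\bSig_1^{-1}\bx_1$), exploiting independence of $G_1$ and $\tau_1$ to linearize, and bounding the error by $\EE|\tau_1 - \EE\tau_1|$, which is $O(1/\sqrt n)$ without logs because the sub-exponential tail integrates cleanly. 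Replacing your "max over $i$" with a single-index expectation estimate in this spirit closes the proof.

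On ingredient (ii), your Lipschitz route also differs from what the paper does but is a reasonable alternative worth flagging. The paper's Lemma~\ref{lem:fluctuation-tau} relies on the Guntuboyina--Leeb bounded-variation concentration inequality for linear spectral statistics, applied to the truncation $f_M(x) = x^{-1}\mathbbm{1}\{x \geq M\}$; it does not use any Lipschitz estimate in $\bX$. If you pursue the Lipschitz argument, be aware that the Lipschitz constant depends on both $\|\bG\|_{\op}$ and $\lambda_{\min}(\bX^{\top}\bG^2\bX)^{-1}$, both random, and that conditioning on $\bG$ to apply Gaussian concentration gives fluctuations about $\EE[\tau_n \mid \bG]$ rather than $\EE\tau_n$, so you would need an extra concentration step for the conditional mean. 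The bounded-variation route sidesteps both issues.
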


\paragraph{Proof of Lemma~\ref{lem:tau-concentration}}
Lemma~\ref{lem:tau-concentration} follows from three technical lemmas.  The first---whose proof we provide in Section~\ref{sec:proof-uniqueness}---shows that the fixed point equation defining $C(\oversamp)$ admits a unique solution.
\begin{lemma}
	\label{lem:uniqueness}
	Suppose $\Lambda \geq 1$ and let $W \sim \mathsf{N}(0, 1)$. Then, the equation 
	\begin{align*}
		\frac{1}{\Lambda} = \EE \Bigl\{\frac{W^2}{C(\oversamp) +  W^2}\Bigr\},
	\end{align*}
	admits a unique solution for $C(\oversamp)$. Moreover, letting $\tau = 1/C(\oversamp)$,  we have
	\begin{align}\label{eq:C-lambda-inverse}
		\frac{1}{\Lambda} = \EE \Bigl\{\frac{\tau W^2}{1 +  \tau W^2}\Bigr\}.
	\end{align}
\end{lemma}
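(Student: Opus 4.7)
\smallskip
\noindent\textbf{Proof plan.} Define the map $g:[0,\infty) \to [0,1]$ by
\[
g(C) \;=\; \EE\!\left\{\frac{W^2}{C + W^2}\right\}, \qquad W \sim \mathsf{N}(0,1),
\]
with the natural convention $g(0) = \EE\{1\} = 1$. The strategy is the standard monotonicity-plus-intermediate-value argument: I will show that $g$ is continuous on $[0,\infty)$, strictly decreasing on $(0,\infty)$, with $g(0)=1$ and $g(C)\to 0$ as $C\to\infty$, so that for any value $1/\Lambda \in (0,1]$ there is exactly one $C(\Lambda) \in [0,\infty)$ with $g(C(\Lambda)) = 1/\Lambda$.

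The three analytic ingredients are routine applications of dominated convergence. First, for each fixed $w\in\real$ the integrand $w^2/(C+w^2)$ is bounded in modulus by $1$ uniformly in $C\geq 0$, so continuity of $g$ on $[0,\infty)$ follows from DCT. Second, for $0 \leq C_1 < C_2$ the pointwise inequality $w^2/(C_1+w^2) \geq w^2/(C_2+w^2)$ is strict on $\{w\neq 0\}$, which has full Gaussian measure, giving strict monotonicity $g(C_1) > g(C_2)$. Third, $w^2/(C+w^2) \to 0$ as $C\to\infty$ for every $w$, and the bound by $1$ lets DCT conclude $g(C)\to 0$. Combined with $g(0)=1$, the intermediate value theorem then yields existence of a solution $C(\Lambda)\in[0,\infty)$ to $g(C)=1/\Lambda$, and strict monotonicity yields uniqueness. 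Under the paper's standing assumption $\Lambda > 1$ this solution is strictly positive, so $\tau := 1/C(\Lambda)$ is well-defined and finite.

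For the second identity~\eqref{eq:C-lambda-inverse}, I will multiply the numerator and denominator of the integrand by $\tau$:
\[
\frac{W^2}{C(\Lambda) + W^2} \;=\; \frac{\tau W^2}{\tau C(\Lambda) + \tau W^2} \;=\; \frac{\tau W^2}{1 + \tau W^2},
\]
and take expectations. This is a purely algebraic manipulation that requires no additional work. I do not anticipate any obstacle in this proof; the only item warranting a brief comment is the justification of dominated convergence, which I will handle by exhibiting the explicit integrable envelope $1$ (or, at $C=0$, an envelope of $1$ still applies).
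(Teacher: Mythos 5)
Your argument coincides with the paper's proof: both establish that $C \mapsto \EE\{W^2/(C+W^2)\}$ is strictly decreasing and continuous on $[0,\infty)$ with boundary values $1$ at $C=0$ and $0$ at $C\to\infty$ (via dominated convergence), and then invoke the intermediate value theorem for existence and strict monotonicity for uniqueness, followed by the same algebraic cancellation for the second identity. Your write-up is a touch more explicit about the continuity step and the integrable envelope, but the route is the same.
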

The next lemma---whose proof we provide in Section~\ref{sec:proof-fluctuation-tau}---demonstrates that the trace concentrates around its expectation.
\begin{lemma}
	\label{lem:fluctuation-tau}
	Under the assumptions of Lemma~\ref{lem:tau-concentration}, there exist universal positive constants $c$ and $C$ such that for all $n \geq C$, the following holds for all $t \geq Ce^{-cn}$.  
	\[
	\Prob\Bigl\{ \bigl \lvert \trace\bigl((\bX^{\top} \bG^2 \bX)^{-1}\bigr) - \EE \trace\bigl((\bX^{\top} \bG^2 \bX)^{-1}\bigr) \bigr \rvert \geq t\Bigr\} \leq e^{-cn} + 2 \exp\{-cnt^2\}.
	\]
\end{lemma}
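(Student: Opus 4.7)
\textbf{Proof plan for Lemma~\ref{lem:fluctuation-tau}.} The function $f(\bX,\bG) := \trace\!\bigl((\bX^{\top}\bG^{2}\bX)^{-1}\bigr)$ is a smooth function of the $nd+n$ i.i.d.\ standard Gaussians making up $\bX$ and $\diag(\bG)$, so the natural route is Gaussian (Borell--TIS) concentration for Lipschitz functions of Gaussians. The obstacle is that $f$ is only locally Lipschitz: its gradient blows up as $\lambda_{\min}(\bX^{\top}\bG^{2}\bX) \to 0$. The plan is therefore the layered truncation strategy indicated in the main text: (i)~define a good event $\mathcal{E}$ of probability $\geq 1-e^{-cn}$ on which $\|\nabla f\|_{2}$ is polynomially small, (ii)~extend $f|_{\mathcal{E}}$ to a globally Lipschitz function $\widetilde{f}$ via McShane/Kirszbraun (equivalently, invoke the abstract concentration result of~\citet{guntuboyina2009concentration}), (iii)~apply Borell--TIS to $\widetilde{f}$, and (iv)~compare $\EE f$ with $\EE \widetilde{f}$ and $f$ with $\widetilde{f}$ using the small probability of $\mathcal{E}^{c}$.

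First I would set
\[
\mathcal{E} := \bigl\{\lambda_{\min}(\bX^{\top}\bG^{2}\bX) \geq c_{1}n\bigr\} \cap \bigl\{\|\bX\|_{\op} \leq C_{2}\sqrt{n}\,\bigr\} \cap \bigl\{\|\bG\|_{\op}^{2} \leq C_{3}\log n\bigr\},
\]
which by Lemma~\ref{lem:eigenvalues-G}(b), standard operator-norm bounds for Gaussian matrices, and concentration of the maximum of $n$ standard Gaussians satisfies $\Prob(\mathcal{E}) \geq 1-2e^{-cn}$. Writing $\bM = \bX^{\top}\bG^{2}\bX$, matrix calculus yields the identities
\[
\partial_{X_{ij}} \trace(\bM^{-1}) = -2 G_{i}^{2}\,\mathbf{e}_{j}^{\top}\bM^{-2}\bx_{i}, \qquad \partial_{G_{i}} \trace(\bM^{-1}) = -2 G_{i}\,\bx_{i}^{\top}\bM^{-2}\bx_{i},
\]
from which one obtains
\[
\|\nabla f\|_{2}^{2} = 4\,\trace\!\bigl(\bM^{-2}\bX^{\top}\bG^{4}\bX\,\bM^{-2}\bigr) + 4\sum_{i=1}^{n} G_{i}^{2}\,(\bx_{i}^{\top}\bM^{-2}\bx_{i})^{2}.
\]
The first term is at most $\|\bG\|_{\op}^{2}\|\bM\|_{\op}\,\trace(\bM^{-4}) \lesssim \log^{2}(n)/n^{2}$ on $\mathcal{E}$, and a similar calculation bounds the second term. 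This yields a local Lipschitz constant $L \lesssim (\log n)/n$ on $\mathcal{E}$.

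Next I would produce $\widetilde{f}$ as a Lipschitz extension of $f|_{\mathcal{E}}$ to all of $\real^{nd+n}$ with the same constant $L$, and apply the Borell--TIS inequality, which yields
\[
\Prob\bigl\{|\widetilde{f} - \EE\widetilde{f}| \geq t/2\bigr\} \leq 2\exp\!\bigl(-c\, t^{2}/L^{2}\bigr) \leq 2\exp(-cnt^{2})
\]
for $n \geq C$, which is stronger than required. To pass from $\widetilde{f}$ to $f$, I would bound
\[
\bigl|\EE f - \EE \widetilde{f}\bigr| \leq \EE\bigl[(|f|+|\widetilde{f}|)\,\mathbbm{1}_{\mathcal{E}^{c}}\bigr] \leq \bigl(\EE |f|^{2} + \EE |\widetilde{f}|^{2}\bigr)^{1/2}\,\sqrt{\Prob(\mathcal{E}^{c})}
\]
via Cauchy--Schwarz, with $\Prob(\mathcal{E}^{c}) \leq 2 e^{-cn}$, and establish a crude polynomial-in-$n$ upper bound on $\EE|f|^{2}$ by an anti-concentration estimate for $\lambda_{\min}(\bX^{\top}\bG^{2}\bX)$ down to small scales (e.g.\ via the explicit Wishart density after conditioning on $\bG$). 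This gives $|\EE f - \EE \widetilde{f}| \leq e^{-c'n}$, which, together with $\Prob(\{f \neq \widetilde{f}\}) \leq \Prob(\mathcal{E}^{c}) \leq e^{-cn}$, transfers the Borell--TIS bound from $\widetilde{f}$ to $f$ in the regime $t \geq Ce^{-cn}$.

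The two steps I expect to demand the most care are the gradient bound in step (ii) — ensuring that polylogarithmic factors do not grow into genuine polynomial loss when assembling $\|\nabla f\|_{2}^{2}$ from the two pieces — and the crude moment bound $\EE|f|^{2} \lesssim \mathrm{poly}(n)$ that is needed to push the expectation comparison below $e^{-c'n}$. The remaining steps, including the McShane extension and the Borell--TIS application, are standard once those ingredients are in place.
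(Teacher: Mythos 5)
Your proposal takes a genuinely different route from the paper's, and it contains one serious gap and one secondary issue.

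\textbf{Different route.} The paper never invokes Borell--TIS or a Lipschitz extension. It truncates the \emph{scalar function} $f(x)=x^{-1}$ to $f_M(x) = f(x)\mathbbm{1}\{x\geq M\}$, which has total variation $\leq 2/M$, and applies~\citet[Theorem~1]{guntuboyina2009concentration} directly to the linear spectral statistic $\frac{1}{n}\sum_i f_M(\lambda_i(\bSig))$ with $\bSig = \frac{1}{n}\bX^\top\bG^2\bX = \frac{1}{n}\sum_i G_i^2\bx_i\bx_i^\top$, a sum of independent rank-one matrices. That theorem is a bounded-difference (McDiarmid-type) argument over the independent rows $G_i\bx_i$, not a Gaussian Lipschitz argument; it already yields $\exp(-cnt^2)$ without any gradient calculation. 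The two residual pieces are then handled by the exponential-probability eigenvalue bound (Lemma~\ref{lem:eigenvalues-G}) for the random part $T_1$ and a Cauchy--Schwarz argument for the deterministic expectation gap $T_3$. So your parenthetical claim that the Guntuboyina--Leeb result is ``equivalent'' to McShane plus Borell--TIS is misleading: it is a different machinery, tailored to eigenvalue functionals of bounded variation, and it avoids entirely the issues discussed below.

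\textbf{The genuine gap.} You define $\mathcal{E}$ to include the event $\{\|\bG\|_{\op}^2 \leq C_3\log n\}$ and claim $\Prob(\mathcal{E}) \geq 1 - 2e^{-cn}$. This is false. Since $\|\bG\|_{\op}^2 = \max_i G_i^2$ with $G_i\overset{\mathsf{i.i.d.}}{\sim}\mathsf{N}(0,1)$, the tail is $\Prob\{\max_i G_i^2 > C_3\log n\} \asymp n^{1-C_3/2}$, i.e.\ polynomial in $n$, not exponential. Since your transfer argument requires both $\Prob(\mathcal{E}^c)\leq e^{-cn}$ (to absorb $\{f\neq\widetilde f\}$ into the stated $e^{-cn}$ term) and $\sqrt{\Prob(\mathcal{E}^c)}\leq e^{-c'n}$ (to make $|\EE f - \EE\widetilde f|$ smaller than the allowed threshold $Ce^{-cn}$ on $t$), a polynomial failure probability for $\mathcal{E}$ breaks the argument. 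The event can be replaced by $\{\|\bG\|_{\op}^2 \leq Cn\}$ (which does have exponential tails since $\max_i G_i^2 > n$ has probability $\leq ne^{-n/2}$), but then the gradient bound you wrote --- $\log^2(n)/n^2$ --- must be redone from scratch; using $\bX^\top\bG^4\bX \preceq \|\bG\|_{\op}^2\bM$ and $\|\bx_i\|_2\leq\|\bX\|_{\op}\lesssim\sqrt{n}$, one can still obtain $\|\nabla f\|_2^2 \lesssim 1/n$ on the corrected event, but this is a different calculation than the one you gave and needs to be carried out carefully.

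\textbf{Secondary issue.} Even after fixing $\mathcal{E}$, the McShane/Kirszbraun extension preserves the Lipschitz constant of $f|_{\mathcal{E}}$, which is \emph{not} the same as $\sup_{\mathcal{E}}\|\nabla f\|_2$ unless $\mathcal{E}$ is convex. Your $\mathcal{E}$ includes the constraint $\{\lambda_{\min}(\bX^\top\bG^2\bX)\geq c_1 n\}$, which is not a convex set of $(\bX,\bG)$. Two points of $\mathcal{E}$ can be joined by a segment leaving $\mathcal{E}$, along which $\|\nabla f\|_2$ blows up, so the Lipschitz constant of $f|_\mathcal{E}$ could be much larger than the gradient bound on $\mathcal{E}$. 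This needs to be addressed, e.g.\ by replacing $f$ with a globally Lipschitz mollification such as $\trace\bigl(\max(\bX^\top\bG^2\bX, M\bI)^{-1}\bigr)$ and redoing the gradient bound for that surrogate, or by using a concentration tool (such as the one the paper uses) that sidesteps the issue.
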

Finally, the next lemma---whose proof we provide in Section~\ref{sec:proof-deterministic-tau}---demonstrates that the expectation of the trace inverse is nearly the solution to the fixed point equation.  
\begin{lemma}
	\label{lem:deterministic-tau}
	Let $C(\Lambda)$ be as in equation~\eqref{definition-of-C}.  There exists a universal, positive constant $C$ such that 
	\[
	\bigl \lvert \EE \trace\bigl((\bX^{\top} \bG^2 \bX)^{-1}\bigr)  - C(\Lambda)^{-1} \bigr \rvert \leq \frac{C}{\sqrt{n}}.
	\]
\end{lemma}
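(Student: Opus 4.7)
The plan is to derive an \emph{exact} identity for $\widehat{\tau} := \trace(\bM^{-1})$ with $\bM := \bX^{\top}\bG^2\bX$ that mirrors the fixed-point equation~\eqref{definition-of-C} defining $\tau := C(\Lambda)^{-1}$, and then invert via strict monotonicity of the map $\phi(s) := \EE\{G^2 s/(1+G^2 s)\}$ with $G \sim \mathsf{N}(0,1)$. Setting $\bM_{-i} := \bM - G_i^2 \bx_i \bx_i^{\top}$ and $\tau_i := \bx_i^{\top} \bM_{-i}^{-1} \bx_i$, the tautology $d = \trace(\bM^{-1}\bM) = \sum_{i=1}^n G_i^2 \bx_i^{\top}\bM^{-1}\bx_i$ combined with Sherman--Morrison (exactly as in the derivation of Eq.~\eqref{eq:P-entries}) produces the a.s.\ identity
\[
\frac{1}{\Lambda} \;=\; \frac{1}{n}\sum_{i=1}^n \frac{G_i^2 \tau_i}{1 + G_i^2 \tau_i}.
\]
Since $G_i$ is independent of $\tau_i$ (as $\bM_{-i}$ depends only on $(G_j,\bx_j)_{j\neq i}$), taking expectations and using exchangeability of the pairs $(\bx_i,G_i)$ collapses this to $1/\Lambda = \EE\{\phi(\tau_1)\}$.

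The second step replaces $\tau_1$ by $\bar\tau := \EE\widehat{\tau}$ inside this expectation at the cost of an $\order(n^{-1/2})$ error. Since $\phi'(s) = \EE\{G^2/(1+sG^2)^2\} \in [0,1]$, the map $\phi$ is $1$-Lipschitz on $[0,\infty)$, so it suffices to show $\EE|\tau_1 - \bar\tau| \lesssim n^{-1/2}$, which I establish via the three-term decomposition
\[
\tau_1 - \bar\tau \;=\; \bigl(\tau_1 - \trace(\bM_{-1}^{-1})\bigr) + \bigl(\trace(\bM_{-1}^{-1}) - \widehat{\tau}\bigr) + \bigl(\widehat{\tau} - \bar\tau\bigr).
\]
The first summand is a centered quadratic form in $\bx_1 \sim \mathsf{N}(0,\bI_d)$ independent of $\bM_{-1}^{-1}$, with Hanson--Wright tails at scale $n^{-1/2}$ by Lemma~\ref{lem:general-trace-concentration}. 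The second equals $-G_1^2 \bx_1^{\top}\bM_{-1}^{-2}\bx_1/(1+G_1^2\tau_1)$ by Sherman--Morrison and is therefore bounded deterministically by $\|\bM_{-1}^{-1}\|_{\op}$, which Lemma~\ref{lem:eigenvalues-G} controls by $\order(1/n)$ on an event of probability $1-e^{-cn}$. The third summand has $L^1$-norm $\lesssim n^{-1/2}$ by Lemma~\ref{lem:fluctuation-tau}.

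The final step inverts $\phi$. The previous two steps yield $|\phi(\bar\tau) - 1/\Lambda| \lesssim n^{-1/2}$, while Lemma~\ref{lem:uniqueness} asserts $\phi(\tau) = 1/\Lambda$. Both $\tau$ and $\bar\tau$ lie in a bounded interval (via Lemma~\ref{lemma:C(lambda)-lambda} for $\tau$, and via $\widehat{\tau} \leq d\,\|\bM^{-1}\|_{\op}$ on the good spectral event combined with a truncation argument for $\bar\tau$), on which $\phi'(s)$ is bounded below by a positive universal constant. Hence $\phi$ is bi-Lipschitz there, and $|\bar\tau - \tau| \lesssim |\phi(\bar\tau) - \phi(\tau)| \lesssim n^{-1/2}$, which is the claim.

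The main technical obstacle I anticipate is the passage from the in-probability bounds supplied by Lemmas~\ref{lem:general-trace-concentration} and~\ref{lem:eigenvalues-G} to genuine $L^1$-bounds on $\tau_1 - \bar\tau$. On the low-probability event where $\bM_{-1}$ is badly conditioned, $\tau_1$ is not deterministically bounded, so a Cauchy--Schwarz splitting together with a polynomial-moment estimate of the form $\EE\tau_1^2 \leq C$ (derived on the good event from $\tau_1 \leq \|\bM_{-1}^{-1}\|_{\op}\|\bx_1\|_2^2$) is required to push the exponentially small complement probability against the crude worst-case bound on the bad event.
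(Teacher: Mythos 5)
Your proposal is correct and follows essentially the same route as the paper's proof: both hinge on the exact identity $\EE\bigl\{G_1^2\tau_1/(1+G_1^2\tau_1)\bigr\}=1/\Lambda$ obtained via Sherman--Morrison from $d=\trace(\bM^{-1}\bM)$, the $1$-Lipschitz property of $s\mapsto\EE\{G^2s/(1+G^2s)\}$, concentration of the leave-one-out quadratic form around its trace and then around its mean (the paper's inequalities~\eqref{eq2:trace-inverse}--\eqref{eq4:trace-inverse}), and an inversion step that amounts to lower-bounding $\phi'$ (the paper does this by lower-bounding $\EE\{G_1^2/[(1+G_1^2\EE\tau_1)(1+G_1^2\tau)]\}$ after establishing $\tau,\EE\tau_1\leq C$, which is exactly your bi-Lipschitz argument). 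The only organizational difference is that you pivot directly through $\bar\tau=\EE\widehat\tau$ and absorb the paper's $T_2$ comparison $\EE\widehat\tau$ versus $\EE\tau_1$ into one of the three summands of your $\EE|\tau_1-\bar\tau|$ decomposition, whereas the paper keeps $T_1$ and $T_2$ separate; this does not change the substance of the argument.
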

The desired result follows immediately upon decomposing
\[
\bigl \lvert \trace\bigl((\bX^{\top} \bG^2 \bX)^{-1}\bigr) - C(\Lambda)^{-1} \bigr \rvert \leq \bigl \lvert \trace\bigl((\bX^{\top} \bG^2 \bX)^{-1}\bigr) - \EE \trace\bigl((\bX^{\top} \bG^2 \bX)^{-1}\bigr) \rvert + \bigl \lvert \EE \trace\bigl((\bX^{\top} \bG^2 \bX)^{-1}\bigr) - C(\Lambda)^{-1} \rvert,
\]
and subsequently applying Lemma~\ref{lem:fluctuation-tau} to control the first term on the RHS and Lemma~\ref{lem:deterministic-tau} to control the second term. \qed

\subsubsection{Proof of Lemma~\ref{lem:uniqueness}} \label{sec:proof-uniqueness}
To begin, define the function $f: \mathbb{R}_{ \geq 0} \rightarrow \mathbb{R}$ as 
\begin{align*}
	f(x) =  \EE \Bigl\{\frac{W^2}{x + W^2} \Bigr\}.
\end{align*}
It is straightforward to see that $f(x)$ is strictly monotone decreasing.  Moreover, $f(0) = 1$ and by dominated convergence, $\lim_{x \rightarrow \infty} f(x) = 0$.  Consequently, the equation $f(x) = 1/\oversamp$ admits a unique solution when $\oversamp \geq 1$. Note that
\[
\frac{1}{\oversamp} = \EE \Bigl\{\frac{ W^2}{C(\oversamp)+ W^2}\Bigr\} = \EE \Bigl\{\frac{\tau W^2}{\tau C(\oversamp)+ \tau W^2}\Bigr\} = \EE \Bigl\{\frac{\tau W^2}{1+ \tau W^2}\Bigr\}.
\]
The equation~\eqref{eq:C-lambda-inverse} follows immediately.
\qed

\subsubsection{Proof of Lemma~\ref{lem:fluctuation-tau}} \label{sec:proof-fluctuation-tau}
The proof uses~\citet[Theorem 1]{guntuboyina2009concentration}, which provides a concentration inequality for certain functionals of the empirical spectral measure of a Wishart matrix.  To begin, define the shorthand $\bSig = \frac{1}{n} \bX^{\top} \bG^2 \bX$, so that we are interested in the quantity $\frac{1}{n} \trace(\bSig^{-1})$.  Next, define the function $f: \mathbb{R}_{\geq 0} \rightarrow \mathbb{R}$ as $f(x) = x^{-1}$ and its $M$--truncation as $f_M(x) = f(x) \cdot \mathbbm{1}\{x \geq M\}$.  We subsequently note the decomposition 
\begin{align*}
	\frac{1}{n}\trace(\bSig^{-1}) - \frac{1}{n}\EE \trace(\bSig^{-1}) = \underbrace{\frac{1}{n} \sum_{i=1}^{d} f\bigl(\lambda_i(\bSig)\bigr) - f_M\bigl(\lambda_i(\bSig)\bigr) }_{T_{1}}+ \underbrace{\frac{1}{n}\sum_{i=1}^{d} f_M\bigl(\lambda_i(\bSig)\bigr)  - \EE f_M\bigl(\lambda_i(\bSig)\bigr)}_{T_{2}} \\
	+ \underbrace{\frac{1}{n} \sum_{i=1}^{d}  \EE f_M\bigl(\lambda_i(\bSig)\bigr) -  \EE f\bigl(\lambda_i(\bSig)\bigr)}_{T_{3}}.
\end{align*}
Towards controlling each of these terms, note that by Lemma~\ref{lem:eigenvalues-G}, $\lambda_{\min}(\bSig) \geq c_1$, with probability at least $1 - e^{-cn}$. Thus, set $M = c_1$, whence $T_{1}=0$. So we conclude that by setting $M = c_1$ we have
\begin{align}\label{ineq:bound-T1}
	\Prob \{ \lvert T_{1} \rvert \geq t \} \leq e^{-cn},\;\;\text{ for all } t>0.
\end{align}
Turning to the term $T_{2}$, note that the function $f_M$ has bounded variation~\citep[see, e.g.,][]{guntuboyina2009concentration} of at most $1/M$, whence we apply~\citet[Theorem 1]{guntuboyina2009concentration} to obtain
\begin{align} \label{ineq:bound-T2}
	\Prob\Bigl\{ \bigl \lvert T_{2} \bigr \rvert \geq t \Bigr\} \leq 2 \exp\bigl\{-cnt^2\bigr\},\;\;\text{ for all }\;t\geq 0
\end{align}
We finally bound the term $T_{3}$.  To this end, note the equivalence
\[
\frac{1}{n} \sum_{i=1}^{d}  \EE f\bigl(\lambda_i(\bSig)\bigr) -  \EE f_M\bigl(\lambda_i(\bSig)\bigr)  = \frac{1}{n} \EE\biggl\{\Bigl[\sum_{i=1}^{d}   f\bigl(\lambda_i(\bSig)\bigr) -   f_{M}\bigl(\lambda_i(\bSig)\bigr)\Bigr] \cdot \mathbbm{1} \bigl\{ \lambda_{\min}(\bSig) < M\bigr\}\biggr\},
\]
which holds by definition of the truncation $f_M$.  Next, uniformly upper bound each summand by $f(\lambda_{\min}(\bSig))$ and apply the Cauchy--Schwarz inequality to obtain the upper bound
\begin{align*}
	\frac{1}{n} \EE\biggl\{\Bigl[\sum_{i=1}^{d}   f\bigl(\lambda_i(\bSig)\bigr) -   f_{M}\bigl(\lambda_i(\bSig)\bigr)\Bigr] \cdot \mathbbm{1} \bigl\{ \lambda_{\min}(\bSig) < M\bigr\}\biggr\} &\leq \frac{1}{\oversamp}\cdot \sqrt{\EE\bigl\{ f(\lambda_{\min}(\bSig))^2\}} \cdot \sqrt{\Prob\{\lambda_{\min}(\bSig) < M\}}\\
	&\leq \frac{1}{\oversamp} \cdot\sqrt{\EE\bigl\{ f(\lambda_{\min}(\bSig))^2\}} \cdot e^{-cn},
\end{align*}
where the final inequality follows by setting $M = c_1$ and applying Lemma~\ref{lem:eigenvalues-G}(b).  Towards bounding $\EE\bigl\{ f\bigl(\lambda_{\min}(\bSig)\bigr)\bigr\}$, note that $f$ is a decreasing function and apply Lemma~\ref{lem:eigenvalues-G}(a) to obtain the bound
\begin{align*}
	\EE\bigl\{ f(\lambda_{\min}(\bSig))^2\} &\leq n^2 \cdot \EE\Bigl\{\mathsf{med}\Bigl(\bigl(G_i\bigr)_{i=1}^{n}\Bigr)^{-2}\Bigr\} \cdot \EE\biggl\{ \lambda_{\min}\biggl(\biggl(\sum_{i = n/2}^{n} \bx_{i \backslash k} \bx_{i \backslash k}^{\top}\biggr)^{-2}\biggr)\biggr\}\\
	&\overset{\1}{\lesssim}  \EE\Bigl\{\mathsf{med}\Bigl(\bigl(G_i\bigr)_{i=1}^{n}\Bigr)^{-2}\Bigr\} \leq C,
\end{align*}
where step $\1$ follows upon applying~\citet[Lemma 21]{chandrasekher2021sharp} and the final inequality follows from Lemma~\ref{lem:median-chi-square}(b).  Putting the pieces together, we obtain the bound on the expected truncation error
\begin{align} \label{ineq:bound-T3}
	T_{3} = \frac{1}{n} \sum_{i=1}^{d}  \EE f\bigl(\lambda_i(\bSig)\bigr) -  \EE f_M\bigl(\lambda_i(\bSig)\bigr) \leq \frac{C}{\Lambda} e^{-cn}.
\end{align}
Combining the inequalities~\eqref{ineq:bound-T1},~\eqref{ineq:bound-T2} and~\eqref{ineq:bound-T3}, we obtain that there exists a universal constant $C'$ such that for $t\geq C'e^{-cn}$, we have
\begin{align*}
	\Prob\Bigl\{ \bigl \lvert \frac{1}{n}\trace(\bSig^{-1}) - \frac{1}{n}\EE \trace(\bSig^{-1})  \bigr \rvert \geq t \Bigr\} &\leq \Prob \{ \lvert T_{1} \rvert \geq t/3 \} +\Prob \{ \lvert T_{2} \rvert \geq t/3 \} + \Prob \{ \lvert T_{3} \rvert \geq t/3 \} \\&\leq e^{-cn} + 2 \exp\bigl\{-cnt^2\bigr\}.
\end{align*}
The desired result follows immediately. \qed

\subsubsection{Proof of Lemma~\ref{lem:deterministic-tau}} \label{sec:proof-deterministic-tau}
As in the previous section, we will use the notation $\bSig = \frac{1}{n} \bX^{\top} \bG^2 \bX$.  We will write its leave-one-sample-out counterpart $\bSig_i$ as 
$\bSig_i = \frac{1}{n}\sum_{j \neq i} G_j^2 \bx_j \bx_j^{\top}$. We further define the random variable $\tau_{1} = \frac{1}{n} \bx_{1}^{\top}\bSig_{1}^{-1}\bx_{1}$. Applying the triangle inequality yields
\begin{align}\label{ineq:two-terms}
	\lvert \frac{1}{n} \EE\{ \trace(\bSig^{-1}) \} - \tau \rvert \leq  \underbrace{ \lvert \EE\{\tau_{1}\} - \tau \rvert}_{T_{1}} + \underbrace{ \lvert \frac{1}{n} \EE\{ \trace(\bSig^{-1}) \} - \EE\{\tau_{1}\} \rvert}_{T_{2}}.
\end{align}
We bound the terms $T_1$ and $T_2$ in turn.

\paragraph{Bounding $T_{1}$~\eqref{ineq:two-terms}:} Note that
\begin{align*}
	d = \trace \EE \left\{ \bSig^{-1}\bSig \right\} = \trace \EE \left\{ \sum_{i=1}^{n} \bSig^{-1}  \frac{1}{n} G_i^2 \bx_i \bx_i^{\top} \right\} \overset{\1}{=} 
	n \cdot \trace \EE \left\{  \bSig^{-1}  \frac{1}{n} G_1^2 \bx_1 \bx_1^{\top} \right\}  = n \cdot \EE \left\{ \frac{1}{n} \cdot \bx_{1}^{T} \bSig^{-1} \bx_{1} \right\},
\end{align*}
where in step $\1$ we first switch the expectation and summation and then exploit the i.i.d. nature of the rank one matrices $G_i^2 \bx_{i} \bx_{i}$. Applying the Sherman--Morrison formula to $\bSig^{-1} = \big( \bSig_{1} + \frac{1}{n} G_{1}^{2} \bx_{1}\bx_{1}^{\top} \big)^{-1}$ yields
\[
\frac{1}{n} \cdot \bx_{1}^{T} \bSig^{-1} \bx_{1} = \frac{G_{1}^{2}\frac{1}{n} \bx_{1}^{\top}\bSig_{1}^{-1} \bx_{1}}{1+G_{1}^{2}\frac{1}{n} \bx_{1}^{\top}\bSig_{1}^{-1} \bx_{1}} = \frac{G_{1}^{2}\tau_{1}}{1+G_{1}^{2}\tau_{1}}.
\]
Putting the two pieces together yields that
\[
\EE \left\{ \frac{G_{1}^{2}\tau_{1}}{1+G_{1}^{2}\tau_{1}} \right\} = \frac{d}{n} = \frac{1}{\oversamp}.
\]
Consequently, by the fixed point equation~\eqref{eq:C-lambda-inverse}, 
\begin{align}\label{eq1:trace-inverse}
	\EE \left\{ \frac{G_{1}^{2}\tau_{1}}{1+G_{1}^{2}\tau_{1}} \right\} = \EE \left\{ \frac{G_{1}^{2}\tau}{1+G_{1}^{2}\tau} \right\}.
\end{align}
A straightforward calculation then implies
\begin{align}\label{eq2:trace-inverse}
	\bigg|  \EE \left\{ \frac{G_{1}^{2}\tau_{1}}{1+G_{1}^{2}\tau_{1}} \right\} - \EE \left\{ \frac{G_{1}^{2}\EE\{\tau_{1}\}}{1+G_{1}^{2}\EE\{\tau_{1}\}} \right\} \bigg| = 
	\EE \left\{ \frac{G_{1}^{2} \cdot \big| \EE\{\tau_{1}\} - \tau_{1} \big|}{(1+G_{1}^{2}\EE\{\tau_{1}\}) \cdot ( 1+G_{1}^{2}\tau_{1} )} \right\} \leq \EE\left\{ \big| \tau_{1}-\EE\{\tau_{1}\} \big| \right\},
\end{align}
where the last step follows from (i.) $\tau_{1}\geq0$, whence the denominator is lower bounded by $1$ and (ii.) the independent nature of the random variables $G_{1}$ and $\tau_{1}$. Proceeding to bound the final expectation, we write
\[
\Prob \Bigl\{ \bigl\lvert \tau_{1} - \EE\{\tau_{1}\} \bigr\rvert \geq t \Bigr\} \leq \Prob \Bigl\{ \bigl\lvert \frac{1}{n} \bx_{1}^{\top}\bSig_{1}^{-1}\bx_{1}  - \frac{1}{n}\trace(\bSig_{1}^{-1}) \bigr\rvert \geq t/2 \Bigr\} + \Prob \Bigl\{ \bigl\lvert  \frac{1}{n}\trace(\bSig_{1}^{-1}) - \frac{1}{n} \EE\{ \trace(\bSig_{1}^{-1}) \} \bigr\rvert \geq t/2 \Bigr\}.
\]
We now apply Lemma~\ref{lem:fluctuation-tau} to bound the first term on the RHS and apply Lemma~\ref{lem:general-trace-concentration} to bound the second term on the RHS. All in all, for any $t\geq Ce^{-cn}$, we obtain the tail bound \sloppy\mbox{$\Prob\{\lvert \tau_{1} - \EE\{\tau_{1}\} \rvert \geq t\} \leq 4\exp\{-cn \min(t^2, t)\}+e^{-cn}$} so that 
\begin{align}\label{eq3:trace-inverse}
	\EE \{ \lvert \tau_{1}-\EE\{\tau_{1}\} \rvert \} \leq Ce^{-cn} + \int_{Ce^{-cn}}^{\infty} \Prob\{\lvert \tau_{1}-\EE\{\tau_{1}\} \rvert \geq t\} \mathrm{d}t \lesssim \frac{1}{\sqrt{n}}.
\end{align}
Putting the inequalities~\eqref{eq1:trace-inverse},~\eqref{eq2:trace-inverse} and~\eqref{eq3:trace-inverse} together yields
\begin{align}\label{eq4:trace-inverse}
	\EE \left\{ \frac{G_{1}^{2} \cdot \lvert \EE\{\tau_{1}\} - \tau \rvert}{(1+G_{1}^{2}\EE\{\tau_{1}\}) \cdot ( 1+G_{1}^{2}\tau )} \right\} = \bigg|  \EE \left\{ \frac{G_{1}^{2}\tau}{1+G_{1}^{2}\tau} \right\} - \EE \left\{ \frac{G_{1}^{2}\EE\{\tau_{1}\}}{1+G_{1}^{2}\EE\{\tau_{1}\}} \right\} \bigg| \lesssim \frac{1}{\sqrt{n}}.
\end{align}
We next upper bound $\tau$ and $\EE\{\tau_{1}\}$. By definition, we have $\tau = 1/C(\oversamp) \lesssim \frac{1}{\oversamp}$. To bound the expectation, note that
\begin{align}\label{eq5:trace-inverse}
	\begin{split}
		\EE\{\tau_{1}\} = \frac{1}{n}\EE\{\trace(\bSig_{1}^{-1})\} \leq \frac{d}{n} \EE\{\lambda_{\min}(\bSig_{1})^{-1}\} &\overset{\1}{\leq} \frac{n}{\oversamp} \EE\Bigl\{\mathsf{med}\Bigl(\bigl(G_i\bigr)_{i=1}^{n}\Bigr)^{-1}\Bigr\} \cdot \EE\biggl\{ \lambda_{\min}\biggl(\biggl(\sum_{i = n/2}^{n} \bx_{i \backslash k} \bx_{i \backslash k}^{\top}\biggr)^{-1}\biggr)\biggr\} \\ &\overset{\2}{\lesssim}
		\frac{1}{\oversamp} \cdot \EE\Bigl\{\mathsf{med}\Bigl(\bigl(G_i\bigr)_{i=1}^{n}\Bigr)^{-1}\Bigr\} \lesssim \frac{1}{\oversamp},
	\end{split}
\end{align}
where step $\1$ follows from Lemma~\ref{lem:eigenvalues-G}(a) and step $\2$ follows upon applying~\citet[Lemma 21]{chandrasekher2021sharp}. The final inequality follows from Lemma~\ref{lem:median-chi-square}(b). Taking stock, we have proved that $\tau \leq C$ and $\EE\{\tau_{1}\} \leq C$ for some universal constant $C$, so that
\[
\EE \left\{ \frac{G_{1}^{2} }{(1+G_{1}^{2}\EE\{\tau_{1}\}) \cdot ( 1+G_{1}^{2}\tau )} \right\} \geq \EE \left\{ \frac{G_{1}^{2} }{(1+CG_{1}^{2})^{2}} \right\} \gtrsim 1.
\]
Substituting the above inequality into inequality~\eqref{eq4:trace-inverse} yields that
\[
T_{1} = \lvert \EE\{\tau_{1}\} - \tau \rvert \lesssim \frac{1}{\sqrt{n}}.
\]
\paragraph{Bounding $T_{2}$~\eqref{ineq:two-terms}:} By definition, we obtain that
\begin{align*}
	\bigg \lvert \frac{1}{n} \EE\{ \trace(\bSig^{-1}) \} - \EE\{\tau_{1}\} \bigg \rvert = \bigg \lvert \frac{1}{n} \EE\{ \trace(\bSig^{-1}) \} - \frac{1}{n}\EE\{ \trace(\bSig_{1}^{-1}) \} \bigg \rvert \overset{\1}{=}
	\bigg \lvert \frac{1}{n} \EE \left\{  \frac{\frac{1}{n} G_{1}^{2}\bx_{1}^{\top} \bSig_{1}^{-2} \bx_{1} }{1+ \frac{1}{n} G_{1}^{2}\bx_{1}^{\top} \bSig_{1}^{-1}\bx_{1}} \right\} \bigg \rvert \leq \frac{1}{n^{2}} \cdot \EE\{\trace(\bSig_{1}^{-2}) \},
\end{align*} 
where step $\1$ follows upon applying the Sherman--Morrison formula and the last inequality follows since the denominator is lower bounded by $1$. Continuing, we have
\[
\frac{1}{n^{2}} \cdot \EE\{\trace(\bSig_{1}^{-2}) \} \leq \frac{d}{n^{2}} \cdot \EE\{\lambda_{\min}(\bSig_{1})^{-2} \} \lesssim \frac{1}{n},
\]
where the last inequality follows upon applying the same steps as in the proof of inequality~\eqref{eq5:trace-inverse}.\\ 
Putting the pieces together yields the desired result.\qed
\section{Ancillary lemmas} \label{sec:ancillary}
This section contains some useful lemmas and their proofs.  Section~\ref{sec:deferred-calculations} contains some calculations deferred from the main text and Section~\ref{sec:median-chi-square} contains a bound on the median of a collection of $\chi^2$--distributed random variables. 

\subsection{Calculations deferred from the main text}\label{sec:deferred-calculations}
In this section, we collect some miscellaneous items from the main text.  First, Lemma~\ref{lemma:C(lambda)-lambda} proves the claim that $\oversamp \asymp C(\oversamp)$.

\begin{lemma}\label{lemma:C(lambda)-lambda}
Let $C(\oversamp)$ be the solution the fixed point equation~\eqref{definition-of-C}. The following sandwich relation holds.
\[
0.3 \oversamp \leq C(\oversamp) \leq \oversamp, \qquad \text{ as long as } \qquad \oversamp \geq 10.
\]
\end{lemma}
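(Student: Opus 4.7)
Let $f(c) = \EE\{G^2/(c + G^2)\}$ with $G \sim \mathsf{N}(0,1)$, and recall from Lemma~\ref{lem:uniqueness} that $f$ is strictly decreasing on $[0,\infty)$ with $f(0) = 1$ and $f(c) \to 0$, so $C(\Lambda)$ is characterized as the unique root of $f(c) = 1/\Lambda$.

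The plan is to sandwich $C(\Lambda)$ by evaluating $f$ at $\Lambda$ and at $0.3\Lambda$ and using monotonicity. For the upper bound, I would use the pointwise inequality
\[
\frac{G^2}{\Lambda + G^2} \leq \frac{G^2}{\Lambda},
\]
which upon taking expectations and using $\EE G^2 = 1$ gives $f(\Lambda) \leq 1/\Lambda = f(C(\Lambda))$; since $f$ is decreasing, this yields $C(\Lambda) \leq \Lambda$.

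For the lower bound, it suffices to show $f(0.3\Lambda) \geq 1/\Lambda$ for $\Lambda \geq 10$, equivalently, that $h(\Lambda) := \Lambda \cdot f(0.3\Lambda) \geq 1$ on this range. The key observation is that $h$ is monotone increasing: differentiating under the expectation (justified by dominated convergence) gives
\[
h'(\Lambda) = \EE\Bigl\{ \frac{G^4}{(0.3\Lambda + G^2)^2} \Bigr\} > 0,
\]
so it is enough to verify $h(10) \geq 1$, i.e., $\EE\{G^2/(3 + G^2)\} \geq 1/10$. This I would establish by a two-part split on the event $\{G^2 \leq 3\}$ and its complement, using $G^2/(3+G^2) \geq G^2/6$ on the first event and $G^2/(3+G^2) \geq 1/2$ on the second. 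Numerical estimates of the two resulting Gaussian integrals (e.g., $\EE\{G^2 \mathbf{1}[G^2 \leq 3]\} \approx 0.61$ and $\Prob\{G^2 > 3\} \approx 0.083$) then yield $\EE\{G^2/(3 + G^2)\} \gtrsim 0.14 > 0.1$, which suffices. Using the monotonicity of $f$ once more, $f(0.3\Lambda) \geq 1/\Lambda = f(C(\Lambda))$ implies $0.3\Lambda \leq C(\Lambda)$.

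No step presents serious difficulty; the mildly delicate point is the numerical verification at $\Lambda = 10$, which would be the tightest spot—the constant $0.3$ appears to be close to sharp, and a crude truncation might fail to cover the boundary case. Replacing the $\{G^2 \leq 3\}/\{G^2 > 3\}$ split by any finer truncation (or a cruder one) still works, as long as the final numerical inequality has slack of a few percent at $\Lambda = 10$.
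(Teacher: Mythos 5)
Your upper bound argument is the same as the paper's, modulo a cosmetic reshuffling (you apply the pointwise inequality $G^2/(c+G^2)\leq G^2/c$ at $c=\Lambda$ and then invoke monotonicity of $f$, whereas the paper applies it directly at $c=C(\oversamp)$). For the lower bound, however, you take a genuinely different route. The paper proceeds in two stages: it first establishes the crude fact $C(\oversamp) \geq 1$ by contradiction, using $\EE\{G^2/(1+G^2)\} \geq 0.3$ to show no $t \leq 1$ can solve the fixed-point equation once $\Lambda \geq 10$; it then factors $1/C(\oversamp)$ out of the fixed-point identity and bounds $\EE\{G^2/(1 + G^2/C(\oversamp))\} \geq \EE\{G^2/(1+G^2)\} \geq 0.3$ using the just-proved $C(\oversamp)\geq 1$, giving $C(\oversamp) \geq 0.3\Lambda$ directly. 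You instead define $h(\Lambda) = \Lambda\,f(0.3\Lambda)$, compute $h'(\Lambda) = \EE\{G^4/(0.3\Lambda + G^2)^2\} > 0$ (the algebra checks out), and reduce everything to the single boundary verification $h(10) = 10\,\EE\{G^2/(3+G^2)\} \geq 1$; your truncation at $G^2 = 3$ gives $\EE\{G^2/(3+G^2)\} \geq 0.61/6 + 0.083/2 \approx 0.14 > 0.1$, which suffices (the true value is about $0.19$, so the margin is healthier than your sketch suggests). Both approaches terminate in one modest numerical check; the paper's bootstrap has the advantage of only needing the looser estimate $\EE\{G^2/(1+G^2)\}\geq 0.3$, with slack on the order of $0.04$ and no auxiliary monotonicity argument, while your reduction to the boundary case via the monotone auxiliary function $h$ is a slicker way to localize where the constant $0.3$ is tight and would generalize more cleanly if the constants or the distribution of $G$ were changed. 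Both proofs are correct.
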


\begin{proof} We first show the lower bound $C(\oversamp) \geq 0.3 \oversamp$. Note that for $0 \leq t\leq 1$, 
\[
	\EE \bigg\{ \frac{G^{2}}{t + G^{2}}\bigg\} \geq \EE \bigg\{ \frac{G^{2}}{1 + G^{2}}\bigg\} = 1 - \EE \bigg\{ \frac{1}{1 + G^{2}}\bigg\} \geq 0.3,
\]
where the last step follows since $\EE \big\{ \frac{1}{1 + G^{2}}\big\} \leq 0.7$ for $G \sim \mathsf{N}(0,1)$. Consequently, there is no solution of $t$ satisfying the fixed point equation:
\[
	\EE \bigg\{ \frac{G^{2}}{t + G^{2}}\bigg\} = \frac{1}{\oversamp},\qquad \text{ for } \qquad  \oversamp \geq 10 \qquad  \text{ and }\qquad 0 \leq t \leq 1.
\]
We thus deduce $C(\oversamp) \geq 1$ for $\oversamp \geq 10$. Applying this lower bound, we obtain the inequality
\[
	\frac{1}{\oversamp} = \EE \bigg\{ \frac{G^{2}}{C(\oversamp) + G^{2}}\bigg\} = \frac{1}{C(\oversamp)} \EE \bigg\{ \frac{G^{2}}{1 + G^{2}/ C(\oversamp) }\bigg\} \overset{\1}{\geq} \frac{1}{C(\oversamp)} \EE \bigg\{ \frac{G^{2}}{1 + G^{2} }\bigg\} \geq \frac{0.3}{C(\oversamp)},
\]
where step $\1$ follows from the lower bound $C(\oversamp) \geq 1$. Re-arranging yields the desired lower bound $C(\oversamp) \geq 0.3 \oversamp$. 

We turn next to the upper bound $C(\oversamp) \leq \oversamp$. By the definition of $C(\Lambda)$~\eqref{definition-of-C}, we obtain 
\[
	\frac{1}{\oversamp} = \EE \bigg\{ \frac{G^{2}}{C(\oversamp) + G^{2}}\bigg\} \leq \EE \bigg\{ \frac{G^{2}}{C(\oversamp)}\bigg\} \leq \frac{1}{C(\oversamp)}.
\]
Consequently, we obtain the upper bound $C(\oversamp) \leq \oversamp$. 
\end{proof}

The next lemma provides an upper bound of the deterministic update for the parallel component. 

\begin{lemma}\label{lemma:upper-bound-para-det}
Consider the parallel component $\parcompX_{t+1}^{\mathsf{det}}$~\eqref{par-equiv}. There exists a constant $C_{\psi}'$, depending only on the parameter $C_{\psi}$ of Assumption~\ref{assptn:Y-psi} such that
\[
	\parcompX_{t+1}^{\mathsf{det}} \leq \frac{C_{\psi}}{\sqrt{\parcompZ_{t+1}^{2} + \perpcompZ_{t+1}^{2}}}.
\]
\end{lemma}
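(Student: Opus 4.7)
The plan is to treat the numerator and denominator in the expression~\eqref{par-equiv} separately. For the denominator, I would first observe that, using $\tau = 1/C(\oversamp)$ together with the equivalent fixed-point form~\eqref{eq:C-lambda-inverse} established in Lemma~\ref{lem:uniqueness},
\[
\EE\Big\{\frac{G^{2}}{1+\tau G^{2}}\Big\} \;=\; \frac{1}{\tau}\,\EE\Big\{\frac{\tau G^{2}}{1+\tau G^{2}}\Big\} \;=\; \frac{C(\oversamp)}{\oversamp}.
\]
Given this exact evaluation, the remaining task is to bound $\bigl|\EE\{GXY/(1+\tau G^{2})\}\bigr|$ by roughly $C_{\psi}\cdot C(\oversamp)/\oversamp$, and I would carry this out via a case analysis matching the two alternatives in Assumption~\ref{assptn:Y-psi}.

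In the linear case, writing $\psi(w)=cw+c'$ with $|c|\le C_{\psi}$, I would exploit that $X$ is independent of $(G,V)$ with $\EE X = 0$: the affine constant $c'$ contributes $c'\,\EE\{GX/(1+\tau G^{2})\}=0$, and similarly the $\perpcompZ_{t+1}V$ piece of the argument of $\psi$ drops out by independence and zero mean of $V$. What survives is
\[
\EE\Big\{\frac{GXY}{1+\tau G^{2}}\Big\} \;=\; \frac{c\,\parcompZ_{t+1}}{\sqrt{\parcompZ_{t+1}^{2}+\perpcompZ_{t+1}^{2}}}\,\EE\Big\{\frac{G^{2}}{1+\tau G^{2}}\Big\},
\]
and substituting into~\eqref{par-equiv} yields $\parcompdetX_{t+1}= c\,\parcompZ_{t+1}/(\parcompZ_{t+1}^{2}+\perpcompZ_{t+1}^{2})$. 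The claimed inequality then follows directly from $|c|\le C_{\psi}$ and $|\parcompZ_{t+1}|/\sqrt{\parcompZ_{t+1}^{2}+\perpcompZ_{t+1}^{2}}\le 1$.

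In the bounded case $\|\psi\|_{\infty}\le C_{\psi}$, I would apply Cauchy--Schwarz to obtain
\[
\Big|\EE\Big\{\frac{GXY}{1+\tau G^{2}}\Big\}\Big|^{2} \;\le\; \EE Y^{2}\,\cdot\,\EE\Big\{\frac{G^{2}X^{2}}{(1+\tau G^{2})^{2}}\Big\}.
\]
Using $\EE Y^{2}\le C_{\psi}^{2}$, the independence of $X$ and $G$ together with $\EE X^{2}=1$, and the monotonicity $(1+\tau G^{2})^{2}\ge 1+\tau G^{2}$, this is in turn at most $C_{\psi}^{2}\cdot C(\oversamp)/\oversamp$. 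Substituting into~\eqref{par-equiv} and using the denominator identity above gives
\[
|\parcompdetX_{t+1}| \;\le\; \frac{C_{\psi}}{\sqrt{\parcompZ_{t+1}^{2}+\perpcompZ_{t+1}^{2}}}\cdot\sqrt{\tfrac{\oversamp}{C(\oversamp)}}.
\]
The main obstacle is the final step: the Cauchy--Schwarz bound is only useful when $C(\oversamp)\gtrsim \oversamp$, which fails as $\oversamp\downarrow 1$ but holds uniformly (with explicit ratio $\le 10/3$) for $\oversamp\ge 10$ by Lemma~\ref{lemma:C(lambda)-lambda}. This regime is consistent with the oversampling assumptions used elsewhere in the paper, and it is precisely what allows $\sqrt{\oversamp/C(\oversamp)}$ to be absorbed into a final constant $C_{\psi}'$ depending only on $C_{\psi}$.
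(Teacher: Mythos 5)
Your argument is correct and lands on the same two key ingredients as the paper's proof: the Cauchy--Schwarz inequality to bound the numerator $\EE\{GXY/(1+\tau G^2)\}$ via $\EE Y^2 \leq C_\psi^2$, and the bound $C(\oversamp) \asymp \oversamp$ from Lemma~\ref{lemma:C(lambda)-lambda} (equivalently $\tau \lesssim 1$) to keep the denominator $\EE\{G^2/(1+\tau G^2)\}$ bounded below. The paper's version is slightly more streamlined: rather than evaluating the denominator exactly as $C(\oversamp)/\oversamp$ via the fixed-point identity~\eqref{eq:C-lambda-inverse}, it simply notes that $\tau\le 1/3$ (for $\oversamp\ge 10$) implies $\EE\{G^2/(1+\tau G^2)\}\gtrsim 1$, and it uses the crude bound $1+\tau G^2\ge 1$ in the numerator, giving $\EE\{GXY/(1+\tau G^2)\}\le \EE\{G^2X^2\}^{1/2}\,\EE\{Y^2\}^{1/2}\lesssim C_\psi$ in one line with no case split. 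Your separate treatment of the linear case by orthogonality (killing the $V$ contribution and any affine constant) is sound and actually recovers the exact identity $\parcompdetX_{t+1}=c\,\parcompZ_{t+1}/(\parcompZ_{t+1}^2+\perpcompZ_{t+1}^2)$ already noted in Example~\ref{ex:lin}, but it is not needed for the crude upper bound the lemma asks for; the paper's single Cauchy--Schwarz argument suffices for both alternatives in Assumption~\ref{assptn:Y-psi} once one observes that $\EE Y^2\lesssim C_\psi^2$ in both.
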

\begin{proof}
Recall from formula~\eqref{par-equiv} that 
\[
	\parcompX_{t+1}^{\mathsf{det}} = \frac{1}{\sqrt{\parcompX_{t+1}^{2} + \perpcompX_{t+1}^{2}}} \cdot \EE\biggl\{ \frac{G X  Y}{1+\tau G^{2}} \biggr\} \bigg/ \EE\biggl\{ \frac{G^{2}}{1+\tau  G^{2}} \biggr\}.
\]
By Lemma~\ref{lemma:C(lambda)-lambda} that $C(\oversamp) \geq 0.3\oversamp$ for $\oversamp \geq 10$, whence $\tau = C(\oversamp)^{-1} \leq 1/3$ for $\oversamp \geq 10$. Consequently, we deduce the lower bound
\[
	\EE\left\{ \frac{G^{2}}{1+\tau G^{2}} \right\} \geq \EE\left\{ \frac{G^{2}}{1+  G^{2}/3} \right\} \gtrsim 1.
\]
Using $1+\tau G^{2} \geq 1$ and applying the Cauchy--Schwarz inequality yields the inequality
\[
	\EE\left\{ \frac{G X Y}{1+\tau G^{2}} \right\} \leq \EE\{G^{2}X^{2}\}^{1/2} \cdot \EE\{Y^{2}\}^{1/2} \lesssim C_{\psi},
\]
where the last step follows from Assumption~\ref{assptn:Y-psi}. Putting the pieces together yields the desired result.
\end{proof}

\subsection{Median of $\chi^2$ random variables}\label{sec:median-chi-square}
We require some properties of the median of a set of $\chi^2$-distributed random variables.
\begin{lemma}
	\label{lem:median-chi-square}
	Let $W_1, W_2, \dots W_n$ be a collection of i.i.d. $\chi^2(1)$ random variables.  The following holds.
	\begin{itemize}
		\item[(a)] There is a universal positive constant $c$ such that the expectation of the median satisfies the bound 
		\[
		\EE \Bigl\{ \mathsf{med}\bigl(W_1, W_2, \dots, W_n\bigr) \Bigr \} \geq c.
		\]
		\item[(b)] There exists a pair of universal, positive constants $(C, C')$ such that the following holds for all $n \geq C$,
		\[
		\EE \Bigl\{ \bigl[\mathsf{med}\bigl(W_1, W_2, \dots, W_n\bigr)\bigr]^{-2} \Bigr \} \leq C'.
		\]
	\end{itemize}
\end{lemma}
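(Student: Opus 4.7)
For part (a), I would fix $q > 0$ to be the lower $1/4$-quantile of $\chi^2(1)$, so that $\Prob\{W_1 \leq q\} = 1/4$. The median $M := \mathrm{med}(W_1, \dots, W_n)$ exceeds $q$ precisely when at least $\lceil n/2 \rceil$ of the samples satisfy $W_i \geq q$, and since each $W_i \geq q$ with probability $3/4$, Hoeffding's inequality gives $\Prob\{M < q\} \leq \Prob\{\BIN(n, 3/4) < n/2\} \leq e^{-n/8}$, so that $\Prob\{M \geq q\} \geq 1 - e^{-1/8}$ for every $n \geq 1$. Consequently $\EE\{M\} \geq q \cdot \Prob\{M \geq q\} \geq q(1 - e^{-1/8})$, a positive universal constant.

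For part (b), I would start from the layer-cake identity together with the change of variable $u = s^{-1/2}$ to obtain
\[
\EE\{M^{-2}\} = \int_0^\infty \Prob\{M < s^{-1/2}\}\,ds = 2\int_0^\infty t^{-3}\,\Prob\{M < t\}\,dt,
\]
and then control $\Prob\{M < t\}$ for small $t$. Letting $p(t) := \Prob\{W_1 < t\} = 2\Phi(\sqrt{t}) - 1 \leq \sqrt{2t/\pi}$, a binomial union bound yields $\Prob\{M < t\} \leq \binom{n}{\lceil n/2 \rceil} p(t)^{\lceil n/2 \rceil} \leq (4 p(t))^{n/2} \leq (32 t/\pi)^{n/4}$, valid whenever the rightmost expression is at most $1$. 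I would then split the outer integral at $t_0 = \pi/32$: the tail $\int_{t_0}^\infty 2t^{-3}\,dt = t_0^{-2}$ is a universal constant, while the near-zero portion $2(32/\pi)^{n/4} \int_0^{t_0} t^{n/4 - 3}\,dt$ converges provided $n > 8$ and telescopes (using $(32/\pi)^{n/4} \cdot t_0^{n/4 - 2} = t_0^{-2}$) to $2 t_0^{-2}/(n/4 - 2)$, which is uniformly bounded for $n$ above any sufficiently large universal constant.

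The main obstacle is precisely securing convergence of this latter integral near $t = 0$: because $W_1$ itself has $\EE\{W_1^{-1}\} = \infty$, finiteness of $\EE\{M^{-2}\}$ rests entirely on the $\lceil n/2 \rceil$-fold concentration of the median beating the $w^{-1/2}$ singularity of the $\chi^2(1)$ density at the origin, which is exactly what forces the $n \geq C$ requirement in the lemma statement. The order-statistic union bound is sharp enough for this balance because each extra factor of $p(t) \lesssim \sqrt{t}$ contributes a $t^{1/2}$ power, so $n > 8$ samples suffice to tame the $t^{-3}$ weight in the layer-cake representation.
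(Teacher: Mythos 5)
Your proof is correct. Part (a) follows the same strategy as the paper's (fix the first quartile of $\chi^2(1)$ and apply a Hoeffding bound to the count of samples below it), though you make explicit that the resulting bound $q(1 - e^{-1/8})$ holds uniformly over all $n \geq 1$, whereas the paper is content with ``taking $n$ large enough.''

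Part (b) takes a genuinely different route from the paper's. The paper invokes the exact order-statistics density of the median, $f_M(x) = \tfrac{n}{2}\binom{n}{n/2}f_{W_1}(x)F_{W_1}(x)^{n/2-1}(1-F_{W_1}(x))^{n/2}$, estimates $\binom{n}{n/2}$ by Stirling, bounds $F_{W_1}(x)\lesssim\sqrt{x}$ to get $f_M(x)\lesssim\sqrt{n}(4/\sqrt{2\pi})^n x^{n/4-1}$, and integrates $x^{-2}f_M(x)$ over a truncated interval $(0,T]$ with $T=1/32$. You instead pass to the layer-cake identity $\EE\{M^{-2}\} = 2\int_0^\infty t^{-3}\Prob\{M<t\}\,dt$ and bound the lower tail directly by the combinatorial union bound $\Prob\{M<t\}\leq\binom{n}{\lceil n/2\rceil}p(t)^{\lceil n/2\rceil}\leq(32t/\pi)^{n/4}$, splitting the integral at $t_0=\pi/32$. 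The underlying cancellation is the same in both—the $2^n$-scale binomial coefficient is absorbed by the $t^{n/4}$ decay inherited from $\Prob\{W_1<t\}\lesssim\sqrt{t}$, which is also why $n>8$ suffices—but your route avoids the explicit density formula (and the attendant care about the median's definition for even $n$), and your choice of truncation makes the geometric factor exactly $1$ rather than merely strictly below $1$. Both are sound; yours is somewhat more elementary and self-contained.
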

\begin{proof}

	\bigskip
	\noindent \underline{Proof of part (a).} 
	Let $Q_{\star}$ denote the first quartile of the $\xi^2(1)$ distribution and note that $Q_{\star} > 0$.  Now, note that 
	\begin{align*}
		\EE \Bigl\{ \mathsf{med}\bigl(W_1, W_2, \dots, W_n\bigr) \Bigr \} &\geq 	\EE \Bigl\{ \mathsf{med}\bigl(W_1, W_2, \dots, W_n\bigr) \mathbbm{1}\bigl\{\mathsf{med}\bigl(W_1, W_2, \dots, W_n\bigr) > Q_{\star}\bigr\}\Bigr \} \\
		&\geq Q_{\star} \Prob\bigl\{\mathsf{med}\bigl(W_1, W_2, \dots, W_n\bigr) > Q_{\star}\bigr\}\\
		&\geq Q_{\star} \cdot (1 - 2e^{-cn}),
	\end{align*}
	where the final step follows from Hoeffding's inequality.  The result follows upon taking $n$ large enough.  
	
	\bigskip
	\noindent 
	\underline{Proof of part (b).} We define the random variable $M$ as $M = \mathsf{med}\bigl(W_1, W_2, \dots, W_n\bigr)$.  The proof employs a truncation argument.  Specifically, we deduce note the bound
	\begin{align*}
		\EE\{M^{-2}\} &= \EE\bigl\{ M^{-2} \cdot \mathbbm{1} \{M \leq T\}\bigr\}  + \EE\bigl\{ M^{-2} \cdot \mathbbm{1} \{M > T\}\bigr\} \\
		&\leq \EE\bigl\{ M^{-2} \cdot \mathbbm{1} \{M \leq T\}\bigr\} + \frac{1}{T^2}.
	\end{align*}
	We write the first term on the RHS explicitly as 
	\[
	\EE\bigl\{ M^{-2} \cdot \mathbbm{1} \{M \leq T\}\bigr\} = \int_{0}^{T} x^{-2} f_{M}(x) \mathrm{d}x,
	\]
	where $f_{\mathsf{med}}$ denotes the density of the random variable $M$.  Next, we apply~\citet[Theorem 5.4.4]{casella2002statistical} to obtain the density of the median
	\[
	f_{M}(x) = \binom{n}{n/2} \cdot \frac{n \cdot f_{W_1}(x)}{2 \cdot F_{W_1}(x)} \cdot [F_{W_1}(x)]^{n/2} \cdot [1 - F_{W_1}(x)]^{n/2}.
	\]
	Stirling's inequalities yield the estimate
	\begin{align*}
		\binom{n}{n/2} \cdot \frac{n}{2} \asymp \sqrt{n} \cdot 2^n.
	\end{align*}
	Moreover, note that 
	\[
	1 - F_{W_1}(x) \leq \frac{2}{\sqrt{2\pi}} \qquad \text{ and } \qquad F_{W_1}(x) \leq \frac{2}{\sqrt{2\pi}} \cdot \sqrt{x}.
	\] 
	Consequently, we deduce the bound 
	\[
	f_{M}(x) \lesssim \sqrt{n} \cdot \Bigl(\frac{4}{\sqrt{2\pi}}\Bigr)^n \cdot x^{n/4 - 1}.
	\]
	This upper bound on the density yields the inequality 
	\[
	\EE\bigl\{ M^{-2} \cdot \mathbbm{1} \{M \leq T\}\bigr\} = \int_{0}^{T} x^{-2} f_{M}(x) \mathrm{d}x \leq \sqrt{n} \cdot \Bigl(\frac{4}{\sqrt{2\pi}}\Bigr)^n \cdot T^{n/4 - 2}
	\]
	Setting $T = 1/32$, recalling that by assumption $n \geq C$, and putting the pieces together yields the result.
	
\end{proof}
\end{document}